\numberwithin{equation}{section}
\newcommand{\Img}{{\rm Im}_{\tiny {G}}}
\newcommand{\cK}{\mathcal K}
\newcommand{\as}[1]{{\color{black}{#1}}}
\definecolor{darkred}{rgb}{.7,0,0}
\definecolor{darkgreen}{rgb}{0,0.7,0}
\definecolor{darkblue}{rgb}{0,0,0.7}
\newtheorem{theorem}{Theorem}[section]
\newtheorem{corollary}[theorem]{Corollary}
\newtheorem{lemma}[theorem]{Lemma}
\theoremstyle{definition}
\newtheorem{definition}[theorem]{Definition}
\newtheorem{remark}[theorem]{Remark}
\def\HH{\mathring \CH}
\def\Tr{\mathop{\mathrm{Tr}}}
\def\cBc{{\mathsf B}}
\def\cP{{\mathsf P}}
\def\T{\mathbb{T}}
\def\Z{\mathbb{Z}}
\def\hf{\textstyle{1\over 2}}
\def\*{|\!|\!|}
\newcommand\CD{{\mathcal D}}
\newcommand\CC{{C}}
\newcommand\C{C}
\newcommand\LC{\mathsf {L}}
\newcommand\cC{{\mathcal C}}
\newcommand\CM{{\mathcal M}}
\newcommand\CB{B}
\newcommand\CX{X}
\newcommand\CI{I}
\newcommand\CF{{\mathcal F}}
\newcommand\CL{{\mathcal L}}
\newcommand\theo[1]{Theorem~\ref{#1}}
\newcommand{\weakc}{\rightharpoonup}
\newcommand{\bu}{\overline{u}}
\newcommand{\bI}{\overline{I}}
\newcommand{\nx}{\|_X}
\newcommand{\ntu}{\nu^{\mathsf{T}}}
\newcommand{\rp}{\bbP}
\newcommand{\hmu}{\widehat{\mu}}
\newcommand{\hv}{\widehat{v}}
\newcommand{\hw}{\widehat{w}}
\newcommand{\bF}{\overline{f}}
\newcommand{\pdxi}[1]{\frac{\partial #1}{\partial x_i}}
\newcommand{\cN}{N}
\newcommand{\bbE}{\E}
\newcommand{\bbP}{\P}
\newcommand{\dhh}{d_{\mbox {\tiny{\rm Hell}}}}
\newcommand{\dtv}{d_{\mbox {\tiny{\rm TV}}}}
\newcommand{\dkl}{D_{\mbox {\tiny{\rm KL}}}}
\newcommand{\kk}{K}
\newcommand{\brho}{\overline {\rho}}
\newcommand{\nnabla}{D}
\newcommand{\cR}{\mathcal{R}}
\newcommand{\cH}{\mathcal{H}}
\newcommand{\CH}{\mathcal{H}}
\newcommand{\su}{\mathsf u}
\newcommand{\tr}{\mathrm{Tr\,}}
\newcommand{\eps}{\epsilon}
\newcommand{\bbQ}{\mathbb{Q}}
\newcommand{\bbT}{\mathbb{T}}
\newcommand{\bbZ}{\mathbb{Z}}
\newcommand{\RR}{\mathbb{R}}
\newcommand{\R}{\mathbb{R}}
\newcommand{\E}{\mathbb{E}}
\renewcommand{\P}{\mathbb{P}}
\newcommand{\EE}{\E}
\newcommand{\WW}{\mathbb{W}}
\newcommand{\PP}{\P}
\newcommand{\N}{\mathbb{N}}
\newcommand{\h}{\mathcal{X}}
\newcommand{\muy}{\mu^y}
\newcommand{\ud}{\mathrm{d}}
\newcommand{\e}{\mathrm{e}}
\newtheorem{remarks}[theorem]{Remarks}
\newtheorem{assumptions}[theorem]{Assumptions}
\newtheorem{assumption}[theorem]{Assumption}
\newtheorem{example}[theorem]{Example}
\newtheorem{algorithm}[theorem]{Algorithm}
\title[The Bayesian Approach to Inverse Problems]{The Bayesian Approach to Inverse Problems}
\author[Masoumeh Dashti and Andrew M Stuart]
{Masoumeh Dashti and Andrew M Stuart\thanks{The authors are grateful to EPSRC, ERC and ONR 
for financial support which led to the work described in these lecture
notes.}}
\begin{document}

\begin{abstract}
These lecture notes highlight the mathematical and computational
structure relating to the formulation of, and development of algorithms
for, the Bayesian approach to inverse problems in differential equations.
This approach is fundamental in the quantification of uncertainty
within applications involving the blending of mathematical models
with data.
The finite dimensional situation is described first, along with some
motivational examples. Then the development of probability
measures on separable Banach space is undertaken, using a random series 
over an infinite set of functions to construct draws; these probability
measures are used as \as{priors}\index{prior} in the Bayesian approach to inverse problems.
Regularity of draws from the \as{priors}\index{prior} is studied in the natural
Sobolev or Besov spaces implied by the choice of functions in the random
series construction, and the \as{Kolmogorov continuity}
\index{Kolmogorov!continuity criterion} theorem is used to
extend regularity considerations to the space of H\"older continuous
functions. Bayes' theorem is derived in this \as{prior}\index{prior}
setting, and here
interpreted as finding conditions under which the \as{posterior}\index{posterior} 
is absolutely continuous with respect to the \as{prior}\index{prior}, 
and determining 
a formula for the Radon-Nikodym derivative in terms of the 
\as{likelihood}\index{likelihood} of the data. 
Having established the form of the \as{posterior}\index{posterior}, 
we then describe various
properties common to it in the infinite dimensional setting. These
properties include well-posedness, approximation theory,
and the existence of maximum a posteriori estimators. We then
describe measure-preserving dynamics, again on the infinite dimensional
space, including Markov chain-Monte Carlo
and sequential Monte Carlo methods, and measure-preserving reversible
stochastic differential equations. By formulating the theory and
algorithms
on the underlying infinite dimensional space, we obtain a framework
suitable for rigorous analysis of the accuracy of reconstructions,
of computational complexity, as well
as naturally constructing algorithms which perform well under mesh 
refinement, since they are inherently well-defined in infinite dimensions. 
\end{abstract}


\begin{keywords}
Inverse problems, Bayesian inversion, Tikhonov regularization and
MAP estimators, Markov chain Monte Carlo,
sequential Monte Carlo, Langevin stochastic partial differential equations.
\end{keywords}

\maketitle
\vspace{0.3in}
\section{Introduction}
\label{sec:intro}
\renewcommand\theequation{\thesection.\arabic{equation}}
Many uncertainty quantification problems arising in the sciences and engineering
require the incorporation of data into a model; indeed doing so can 
significantly reduce the uncertainty in model predictions and is hence
a very important step in many applications. Bayes' formula provides
the natural way to do this. The purpose of these lecture notes is to
develop the Bayesian approach to inverse problems in order to provide
a rigorous framework for the development of uncertainty quantification
in the presence of data. Of course it is possible to simply discretize the
inverse problem and apply Bayes' formula on a finite dimensional space.
However we adopt a different approach: we formulate Bayes' formula on
a separable Banach space and study its properties in this infinite
dimensional setting. This approach, of course, requires considerably
more mathematical sophistication and it is important to ask whether
this is justified. The answer, of course, is ``yes''. The formulation
of the Bayesian approach on a separable Banach space has numerous
benefits: (i) it reveals an attractive well-posedness framework for the 
inverse problem, allowing for the study of robustness to changes in
the observed data, or to numerical approximation of the forward model;
(ii) it allows for direct links to be established with the classical
theory of regularization, which has been developed in a separable
Banach space setting; (iii) and it leads to new algorithmic 
approaches which build on the full power of analysis and numerical analysis
to leverage the structure of the infinite dimensional inference problem.

The remainder of this section contains a discussion of Bayesian inversion
in finite dimensions, for motivational purposes, and two examples
of partial differential equation (PDE) inverse problems. In section
\ref{sec:prior} we describe the construction of \as{priors}\index{prior}
on separable
Banach spaces, using random series and employing the
random series to discuss various Sobolev, Besov and H\"older regularity
results. Section \ref{sec:post} is concerned with the statement and
derivation of Bayes' theorem in this separable Banach space setting.
In section \ref{sec:common} we describe various properties common to
the \as{posterior}\index{posterior}, 
including well-posedness in the \as{Hellinger}\index{Hellinger distance} metric,
a related approximation theory which leverages well-posedness to
deliver the required stability estimate, and the existence of maximum
a posteriori (MAP) estimators; these address points (i) and (ii)
above, respectively. Then, in section \ref{sec:mpd}, we
discuss various discrete and continuous time Markov processes
which preserve the posterior probability measure, including 
Markov chain-Monte Carlo methods (MCMC),
sequential Monte-Carlo methods (SMC) and reversible stochastic partial
differential equations, addressing point (iii) above. 
The infinite dimensional perspective on
algorithms is beneficial as it provides a direct way to construct algorithms
which behave well under refinement of finite dimensional approximations 
of the underlying separable Banach space. 
We conclude in section \ref{sec:con} and then 
an appendix collects together
a variety of basic definitions and results from the theory of
differential equations and probability. Each section 
is accompanied by bibliographical notes connecting 
the developments herein to the wider literature.  
The notes complement and build on other overviews of Bayesian
inversion, and its relations to uncertainty quantification,
which may be found in \cite{article:Stuart2010, StuartICM}.
All results (lemmas, theorems etc.) which are quoted without
proof are given pointers to the literature, where proofs may be
found, within the bibliography of the section containing the result.

\vspace{0.3in}
\subsection{Bayesian Inversion on $\R^n$}
\label{ssec:1.1}

Consider the problem of finding $u \in \R^n$ from $y \in \R^J$
where $u$ and $y$ are related by the equation
$$y=G(u).$$
We refer to $y$ as {\em observed data} and to $u$ as the {\em unknown}.
This problem may be difficult for a number of reasons. We highlight
two of these, both particularly relevant to our future developments.

\begin{enumerate}

\item The first difficulty, which may be illustrated in the
case where $n=J$,
concerns the fact that often the equation is perturbed by noise
and so we should really consider the equation
\begin{equation}
\label{11} y=G(u)+\eta,
\end{equation}
where $\eta \in \R^J$ represents the {\em observational noise} which enters
the observed data. Assume further that $G$ maps $\R^J$ into a proper subset
of itself, $\Img$, and that $G$ has a unique inverse as a map from
$\Img$ into $\R^J$. It may then be the case that, because of the
noise, $y \notin \Img$ so that simply inverting
$G$ on the data $y$ will not be possible. 
Furthermore, the specific instance of $\eta$ 
which enters the data may not be known to us;
typically, at best, only the statistical properties of a typical
noise $\eta$ are known. Thus we cannot subtract $\eta$ from the
observed data $y$ to obtain something in $\Img$.
Even if $y \in \Img$ the uncertainty caused by the presence of
noise $\eta$ causes problems for the inversion.

\item The second difficulty is manifest
in the case where $n>J$ so that the system is {\em underdetermined}:
the number of equations is smaller than the number of unknowns.
How do we attach a sensible meaning to the concept of
solution in this case where, generically, there will be many solutions?

\end{enumerate}

Thinking probabilistically enables us to
overcome both of these difficulties. We will treat $u,y$ and $\eta$
as random variables and determine the joint probability distribution
of $(u,y)$. We then define the ``solution'' of the inverse
problem to be the probability distribution of $u$ given $y$, denoted
$u|y$. This allows us to model the noise via its statistical
properties, even if we do not know the exact instance of the
noise entering the given data. And it also allows us to specify
{\em a priori} the form of solutions that we believe to be
more likely, thereby enabling us to attach weights to multiple
solutions which explain the data. This is the
{\em Bayesian approach} to inverse problems.

To this end, we
define a random variable $(u,y)\in\R^n\times\R^J$
as follows. We let $u\in\R^n$ be a random variable with (Lebesgue) density
$\rho_0(u)$. Assume that $y|u$ ($y$ given $u$) is defined
via the formula \eqref{11}
where $G: \R^n\rightarrow\R^J$ is measurable, and
$\eta$ is independent of $u$ (we sometimes
write this as $\eta \perp u$) and
distributed according to measure $\bbQ_0$ with Lebesgue density $\rho(\eta)$. 
Then $y|u$ is simply found by shifting $\bbQ_0$ by $G(u)$
to measure $\bbQ_{u}$ with
Lebesgue density $\rho(y-G(u))$. It follows that 
$(u,y)\in\R^n\times\R^J$ is a random variable with
Lebesgue density $\rho(y-G(u))\rho_0(u)$.\\
The following theorem allows us to calculate the
distribution of the random variable $u|y$:

\begin{theorem} {\bf \as{Bayes' Theorem}\index{Bayes' Theorem}}.
\label{t:1.1}
Assume that
\begin{equation*}
Z:=\int_{\R^n}\rho\big(y-G(u)\big)\rho_0(u)du>0.
\end{equation*}
Then $u|y$ is a random variable with Lebesgue density $\rho^y(u)$
given by
\begin{equation*}
\rho^y(u)=\frac{1}{Z}\rho\big(y-G(u)\big)\rho_0(u).
\end{equation*}
\end{theorem}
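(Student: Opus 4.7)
The plan is to identify the joint density of $(u,y)$, extract the marginal density of $y$ by integration, and then invoke the standard definition of a conditional density as the ratio of joint to marginal. By construction, $u$ has density $\rho_0$, and $y\mid u$ has density $\rho(y-G(u))$ (a translate of the noise density because $y=G(u)+\eta$ with $\eta\perp u$). Multiplying prior and likelihood, the joint density of $(u,y)$ on $\mathbb{R}^n\times\mathbb{R}^J$ is $\rho(y-G(u))\rho_0(u)$, as already stated in the excerpt. Integrating out $u$ yields the marginal density $\pi_Y(y)=\int_{\mathbb{R}^n}\rho(y-G(u))\rho_0(u)\,du = Z$, and the hypothesis $Z>0$ is exactly what is needed for the ratio defining $\rho^y$ to make sense at the given $y$.

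Next I would check that $\rho^y$ is a genuine probability density in $u$: nonnegativity is immediate from $\rho,\rho_0\geq 0$, and the normalization $\int\rho^y(u)\,du=1$ follows by construction from the choice of $Z$. To confirm that this density is really the conditional density of $u\mid y$, I would verify the characterizing property of conditional expectation: for every bounded measurable $f:\mathbb{R}^n\to\mathbb{R}$ and $g:\mathbb{R}^J\to\mathbb{R}$,
\begin{equation*}
\mathbb{E}\bigl[f(u)g(y)\bigr] \;=\; \mathbb{E}\!\left[g(y)\int_{\mathbb{R}^n} f(u')\,\rho^y(u')\,du'\right].
\end{equation*}
Expanding the left side with the joint density gives $\int\!\int f(u)g(y)\rho(y-G(u))\rho_0(u)\,du\,dy$, while on the right the factor $Z=Z(y)$ in the denominator of $\rho^y$ cancels with the marginal density $Z(y)$ of $y$, producing the same double integral after applying Fubini. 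This identifies $\rho^y$ with the conditional density and finishes the proof.

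The main conceptual subtlety, rather than a computational obstacle, is that conditional densities are intrinsically defined only almost everywhere with respect to the marginal of $y$, whereas the statement gives a pointwise formula under the pointwise assumption $Z>0$. I would resolve this by noting that the set $\{y: Z(y)=0\}$ carries zero marginal probability (since $\pi_Y=Z$), so specifying $\rho^y$ by the given formula on $\{Z>0\}$ and arbitrarily elsewhere produces a version of the conditional density, and the theorem is simply the statement that this version satisfies the stated explicit formula at any $y$ where $Z(y)>0$. The only technical prerequisite is Fubini, whose application is justified by the nonnegativity of the integrands and by testing against bounded $f,g$.
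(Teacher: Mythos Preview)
Your proof is correct and follows the standard density-based argument for Bayes' formula in finite dimensions. Note, however, that the paper does not actually supply a proof of Theorem~\ref{t:1.1}: it is stated as a classical result, followed only by Remarks~\ref{r:1.2} establishing nomenclature and recasting the conclusion in Radon--Nikodym form. So there is no ``paper's own proof'' to compare against here.

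It is worth observing that the paper's genuine contribution on this front is the infinite-dimensional analogue, Theorem~\ref{t:3.2}, whose proof proceeds differently: rather than manipulating Lebesgue densities (which need not exist), it invokes the abstract conditioning result Theorem~\ref{t:3.1}, exploiting the product structure of the reference measure $\nu_0=\mu_0\otimes\mathbb{Q}_0$ so that the conditional of $u|y$ under $\nu_0$ is trivially $\mu_0$, and then transferring to $\nu$ via the Radon--Nikodym derivative $\exp(-\Phi)$. Your density argument is the natural one in $\mathbb{R}^n$ and is entirely adequate for Theorem~\ref{t:1.1}; the measure-theoretic route of Theorem~\ref{t:3.1} is what one needs when there is no Lebesgue reference measure available.
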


\begin{remarks}
\label{r:1.2}
The following remarks establish the nomenclature of
Bayesian statistics, and also frame the previous theorem
in a manner which generalizes to the infinite dimensional
setting.
\begin{itemize}
\item $\rho_0(u)$ is the {\bf \as{prior}\index{prior} density}.

\item $\rho\big(y-G(u)\big)$ is the {\bf \as{likelihood}}\index{likelihood}.

\item $\rho^y(u)$ is the {\bf \as{posterior}\index{posterior} density}.

\item It will be useful in what follows to define
\begin{equation*}
\Phi(u;y)=-\log\rho \big(y-G(u)\big).
\end{equation*}
We call $\Phi$ the {\bf potential}.
This is the {\bf negative log \as{likelihood}.}\index{likeligood!log}

\item Note that $Z$ is the probability of $y$. \as{Bayes' formula}\index{Bayes' formula} expresses
$$\P(u|y)=\frac{1}{\P(y)}\P(y|u)\P(u).$$

\item Let $\mu^y$ be a measure on $\R^n$ with density $\rho^y$
and $\mu_0$ a measure on $\R^n$ with density $\rho_0$. Then
the conclusion of Theorem \ref{t:1.1} may be written as:
\begin{equation}\label{12}
\begin{split}
\frac{d\mu^y}{d\mu_0}(u)=\frac{1}{Z}\exp\big(-\Phi(u;y)\big),\\
Z=\int_{\R^n}\exp\big(-\Phi(u;y)\big)\mu_0(du).
\end{split}
\end{equation}
Thus the \as{posterior}\index{posteruir} is absolutely continuous with respect to
the \as{prior}\index{prior}, and the Radon-Nikodym derivative is proportional
to the \as{likelihood}\index{likelihood}. 
This is rewriting \as{Bayes' formula}\index{Bayes' formula} in the form
$$\frac{1}{\P(u)}\P(u|y)=\frac{1}{\P(y)}\P(y|u).$$

\item The expression for the Radon-Nikodym
derivative is to be interpreted as the statement that, for
all measurable $f: \R^n \to \R$,
$$\bbE^{\mu^y}f(u)=\bbE^{\mu_0}\Big(\frac{d\mu^y}{d\mu_0}(u)f(u)\Big).$$
Alternatively we may write this in integral form as
\begin{align*}
\int_{\R^n} f(u)\mu^y(du)
&=\int_{\R^n}\Bigl(\frac{1}{Z}\exp\bigl(-\Phi(u;y)\bigr)f(u)\Bigr)\mu_0(du)\\
&=\frac{\int_{\R^n}\exp\bigl(-\Phi(u;y)\bigr)f(u)\mu_0(du)}{\int_{\R^n}\exp\bigl(-\Phi(u;y)\bigr)\mu_0(du)}.
\end{align*}
\end{itemize}
\qed
\end{remarks}

\vspace{0.2in}
\subsection{Inverse Heat Equation}
\label{ssec:1.2}

This inverse problem illustrates the first difficulty, labelled
$1$.\ in the previous subsection,
which motivates the Bayesian approach to inverse problems.
Let $D\subset\R^d$ be a bounded open set, with Lipschitz  
boundary $\partial D$. Then define the Hilbert space $H$ and
operator $A$ as follows:
\begin{eqnarray*}
&&H=\Bigl(L^2(D),\langle\cdot,\cdot\rangle, \|\cdot\|\Bigr); \\
&&A=-\triangle, \quad \CD(A)=H^2(D)\cap H_0^1(D).
\end{eqnarray*}
We make the following assumption about the spectrum of $A$ which
is easily verified for simple geometries, but in fact holds quite
generally.

\begin{assumption}
\label{a:1.3}
The eigenvalue problem
\begin{equation*}
A\varphi_j=\alpha_j\varphi_j,
\end{equation*}
has a countably infinite set of solutions, indexed by
$j\in\mathbb{Z}^+$. They may be normalized to
satisfy the $L^2$-orthonormality condition
\begin{eqnarray*}
\langle\varphi_j,\varphi_k\rangle=\left\{
  \begin{array}{ll}
    1, \quad & j=k\\
    0, \quad & j\neq k,
  \end{array}
\right.
\end{eqnarray*}
and form a basis for $H$.
Furthermore, the eigenvalues are positive and, if ordered to be increasing,
satisfy $\alpha_j\asymp j^{\frac{2}{d}}$.$\qed$
\end{assumption}

Here and in the remainder of the notes, the notation $\asymp$ denotes the
existence of constants $C^{\pm}>0$ such that 
\begin{equation}
\label{eq:quid}
C^-j^{2/d} \le \alpha_j \le C^+ j^{2/d}
\end{equation}
for all $j \in \N$.

Any $w\in H$ can be written as
$$w=\sum_{j=1}^{\infty} \langle w, \varphi_j \rangle \varphi_j$$
and we can define the \as{Hilbert scale} of spaces\index{Hilbert scale} $\mathcal {H}^t=\CD(A^{t/2})$
as explained in Section \ref{sssec:sob} for any $t>0$ and with the norm
$$\|w\|_{\mathcal {H}^t}^2=\sum_{j=1}^\infty j^{\frac{2t}{d}}|w_j|^2$$
where $w_j=\langle w,\varphi_j \rangle$.

Consider the heat conduction equation on $D$, with Dirichlet
boundary conditions, writing it as an ordinary differential equation
in $H$:
\begin{equation}\label{13}
\frac{dv}{dt}+Av=0,\quad v(0)=u.
\end{equation}
We have the following:

\begin{lemma}
\label{l:1.4} Let Assumption \ref{a:1.3} hold.
Then for every $u\in H$ and every $s>0$
there is a unique solution $v$ of
equation \eqref{13} in the space $\CC([0,\infty); H)\cap \CC((0,\infty);\mathcal {H}^s)$. We write $v(t)=\exp(-At)u$.
\end{lemma}

To motivate this statement, and in particular the high degree
of regularity seen at each fixed $t$, we argue as follows.
Note that, if the initial condition is expanded in the
eigenbasis as
$$u=\sum\limits_{j=1}^\infty u_j\varphi_j, \quad u_j=\langle u,
\varphi_j\rangle,$$ then the solution of \eqref{13} has the form
$$v(t)=\sum\limits_{j=1}^\infty u_j
e^{-\alpha_j t}\varphi_j.$$
Thus 
\begin{align*}
\|v(t)\|_{\mathcal {H}^s}^2 & = \sum\limits_{j=1}^\infty j^{2s/d}e^{-2\alpha_j t}|u_j|^2
\asymp \sum\limits_{j=1}^\infty \alpha_j^s e^{-2\alpha_j t}|u_j|^2\\
& = t^{-s}\sum\limits_{j=1}^\infty (\alpha_j t)^s e^{-2\alpha_j t}|u_j|^2
\le Ct^{-s}\sum\limits_{j=1}^\infty |u_j|^2\\
& = Ct^{-s}\|u\|_{H}^2.
\end{align*}
It follows that $v(t) \in \mathcal {H}^s$ for any $s>0$, provided
$u \in{H}$.

We are interested in the inverse
problem of finding $u$ from $y$ where
$$y=v(1)+\eta = G(u)+\eta = e^{-A}u+\eta\;.$$
Here $\eta\in H$ is noise and $G(u):=v(1)=e^{-A}u$. Formally this
looks like an infinite dimensional linear version of the inverse problem
\eqref{11}, extended from finite dimensions to a Hilbert space
setting. However, the infinite dimensional setting throws up significant
new issues. To see this, assume that there is $\beta_c>0$ such that
$\eta$ has regularity $\mathcal {H}^\beta$ if and only if 
$\beta<\beta_c$. Then $y$ is not in the image space
of $G$ which is, of course, contained in $\cap_{s>0} \mathcal {H}^s$.
Applying the formal inverse of $G$ to $y$ results in an object which is
not in $H$.

To overcome this problem,
we will apply a Bayesian approach and hence will need to put probability
measures on the Hilbert space $H$; in particular we will want to study
$\P(u)$, $\P(y|u)$ and
$\P(u|y)$, all probability measures on $H$.

\vspace{0.2in}
\subsection{Elliptic Inverse Problem}
\label{ssec:1.3}

One motivation for adopting the Bayesian approach to inverse
problems is that \as{prior}\index{prior} modelling is a transparent approach
to dealing with under-determined inverse problems; it 
forms a rational approach to dealing with  
the second difficulty, labelled $2$. in subsection \ref{ssec:1.1}.
The elliptic inverse problem we now describe is a concrete
example of an under-determined inverse problem.

As in subsection \ref{ssec:1.2},
$D\subset\R^d$ denotes a bounded open set,
with Lipschitz boundary $\partial D$.
We define the Gelfand triple of Hilbert spaces $V\subset H\subset V^*$ 
by
\begin{equation}
H=\bigl(L^2(D),\langle\cdot,\cdot\rangle, \|\cdot\|\bigr)\;,\qquad 
V=\bigl(H_0^1(D),\langle \nabla \cdot,\nabla \cdot\rangle, \|\cdot\|_V=\|\nabla\cdot\|\bigr) \;. 
\end{equation}
and $V^*$ the dual of $V$ with respect to the pairing induced by $H$.
Note that $\|\cdot\|\leq C_{\mathsf p}\|\cdot\|_V$ for some constant $C_{\mathsf p}$: the Poincar\'e inequality.

Let $\kappa\in X:=L^\infty(D)$ satisfy
\begin{equation}\label{14}
\text{ess}\inf_{x\in D}\kappa(x)=\kappa_{\min}>0.
\end{equation}

Now consider the equation
\begin{subequations}
\label{15}
\begin{align}
-\nabla\cdot (\kappa\nabla p)&=f,\quad x\in D, \label{15a}
\\
p&=0,\quad x\in \partial D.\label{15b}
\end{align}
\end{subequations}
Lax-Milgram theory yields the following:

\begin{lemma} \label{l:1.5}
Assume that $f\in V^*$ and that $\kappa$ satisfies \eqref{14}. Then
(\ref{15}) has a unique weak solution $p\in V$. This solution
satisfies
\begin{equation*}
\|p\|_V\leq \|f\|_{V^*}/\kappa_{\min}
\end{equation*}
and, if $f\in H$,
\begin{equation*}
\|p\|_V\leq C_p\|f\|/\kappa_{\min}.
\end{equation*}
\end{lemma}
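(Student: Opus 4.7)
The plan is to cast \eqref{15} in weak form and invoke the Lax--Milgram theorem on $V = H_0^1(D)$. Multiplying \eqref{15a} by a test function $q \in V$, integrating over $D$ and using \eqref{15b} together with the divergence theorem, I obtain the symmetric bilinear form
$$a(p,q) := \int_D \kappa(x) \, \nabla p(x) \cdot \nabla q(x) \, dx,$$
and the weak formulation: find $p \in V$ such that $a(p,q) = \langle f, q\rangle_{V^*,V}$ for every $q \in V$. Here $\langle\cdot,\cdot\rangle_{V^*,V}$ denotes the duality pairing.

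Next I would verify the hypotheses of Lax--Milgram. Continuity of $a$ follows from $\kappa \in L^\infty(D)$: by Cauchy--Schwarz, $|a(p,q)| \le \|\kappa\|_{L^\infty} \|\nabla p\|\,\|\nabla q\| = \|\kappa\|_{L^\infty}\|p\|_V\|q\|_V$. Coercivity is the key place where \eqref{14} is used: since $\kappa \ge \kappa_{\min}$ almost everywhere,
$$a(p,p) = \int_D \kappa |\nabla p|^2\,dx \ge \kappa_{\min} \|\nabla p\|^2 = \kappa_{\min} \|p\|_V^2.$$
The right-hand side $q \mapsto \langle f, q\rangle_{V^*,V}$ is a bounded linear functional on $V$ with norm $\|f\|_{V^*}$ by definition of the dual norm. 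Lax--Milgram then delivers a unique weak solution $p \in V$.

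For the a priori bound, I would simply test the weak equation against $q = p$:
$$\kappa_{\min} \|p\|_V^2 \le a(p,p) = \langle f, p\rangle_{V^*,V} \le \|f\|_{V^*}\,\|p\|_V,$$
and divide by $\|p\|_V$ (the case $p = 0$ being trivial) to obtain $\|p\|_V \le \|f\|_{V^*}/\kappa_{\min}$. For the improved bound when $f \in H$, I would compare the two norms on the right: since $\|q\| \le C_p \|q\|_V$ by Poincar\'e, the pairing $\langle f,q\rangle = \langle f,q\rangle_H$ satisfies $|\langle f,q\rangle| \le \|f\|\,\|q\| \le C_p\|f\|\,\|q\|_V$, so $\|f\|_{V^*} \le C_p \|f\|$, and substituting into the first bound yields the second.

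There is no real obstacle here: the result is classical and all the work is in checking that the coefficient lower bound \eqref{14} and the Poincar\'e inequality on $V$ (embedded in the definition of $\|\cdot\|_V$) supply exactly the coercivity and duality comparison needed. The only mild subtlety is being careful with the duality pairing when $f \in V^*$ versus the $L^2$ inner product when $f \in H$; in the latter case the two coincide on $V \subset H \subset V^*$ by the Gelfand triple identification fixed at the start of the subsection.
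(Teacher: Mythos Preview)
Your proof is correct and complete. The paper does not actually give a proof of this lemma in the main text; it simply states the result and, in the bibliographical notes, points to the Lax--Milgram theorem as the underlying tool, which is exactly the route you take.
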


We will be interested in the inverse problem of finding $\kappa$
from $y$ where
\begin{equation}
\label{16} y_j=l_j(p)+\eta_j,\quad j=1,\cdots, J.
\end{equation}
Here $l_j\in V^*$ is a continuous linear functional on $V$ and $\eta_j$ is a
noise.

Notice that the unknown, $\kappa\in X$, is a function
(infinite dimensional) whereas the data from which we wish to
determine $\kappa$ is finite dimensional: $y\in\R^J$. The problem is severely under-determined,
illustrating point $2$. from subsection \ref{ssec:1.1}.
One way to treat such problems is by adopting  
the Bayesian framework, using \as{prior}\index{prior} 
modelling to fill-in missing
information. We will take the unknown function to be $u$ where
either $u=\kappa$ or $u=\log \kappa$. In either case, we will define
$G_j(u)=l_j(p)$ and, noting that $p$ is then a nonlinear function
of $u$, \eqref{16} may be written as
\begin{equation}
\label{17} y=G(u)+\eta
\end{equation}
where $y,\eta \in \R^J$ and $G: X^{+} \subseteq X \to \R^J$.
The set $X^{+}$ is introduced because
$G$ may not be defined on the whole of $X$. In particular, 
the positivity constraint \eqref{14} is only satisfied 
on 
\begin{equation}
X^{+} :=\Bigl\{u \in X: \text{ess}\inf_{x\in D}u(x)>0 \Bigr\}\subset X
\label{eq:Xp}
\end{equation}
in the case where $\kappa=u$.
On the other hand if $\kappa=\exp(u)$ then the positivity constraint \eqref{14}
is satisfied for any $u \in X$ and we may take $X^+=X.$ 

Notice that we again need probability measures on function space,
here the Banach space $X=L^\infty(D)$.
Furthermore, in the case where $u=\kappa$, these probability measures should
charge only positive functions, in view of the desired inequality
(\ref{14}). Probability on Banach spaces of
functions is most naturally developed in the setting of \as{separable
spaces}\index{separable space}, 
which $L^\infty(D)$ is not. This difficulty can be circumvented
in various different ways as we describe in what follows.

\vspace{0.2in}
\subsection{Bibliographic Notes}

\begin{itemize}

\item Subsection \ref{ssec:1.1}. See \cite{bs94} for a general overview of
the Bayesian approach to statistics in the finite dimensional
setting. The Bayesian approach to linear inverse problems
with Gaussian noise and \as{prior}\index{prior} in finite dimensions is
discussed in ~\cite[Chapters
2 and 6]{article:Stuart2010} and, with a more algorithmic flavour,
in the book ~\cite{KS05}.

\item Subsection \ref{ssec:1.2}. For details on the heat equation 
as an ODE in Hilbert space, and the 
regularity estimates of Lemma \ref{l:1.4}, see \cite{pazy,lunardi95}.
The classical approach to linear inverse problems is described
in numerous books; see, for example, \cite{kir96,ehn96}. The
case where the spectrum of the forward map $G$ decays exponentially,
as arises for the heat equation, is sometimes termed
{\em severely ill-posed}.
The Bayesian approach to linear inverse problems was developed
systematically in \cite{Man84,LPS89}, following from the seminal
paper \cite{Fr70} in which the approach was first described;
for further reading on
ill-posed linear problems see \cite[Chapters 3 and 6]{article:Stuart2010}.
Recovering the truth underlying the data from the Bayesian
approach, known as {\em Bayesian posterior consistency}, is the
topic of \cite{KVZ11a,ALS12}; generalizations to severely
ill-posed problems, such as the heat equation, may be found
in \cite{KVZ11b,ASZ12}.

\item Subsection \ref{ssec:1.3}. See \cite{evans}
for the Lax-Milgram theory which gives rise to Lemma
\ref{l:1.5}.  For classical inversion theory for the elliptic
inverse problem -- determining the permeability from the pressure in
a Darcy model of flow in a porous medium -- see \cite{richter81,BK};
for Bayesian formulations see \cite{DS11,DHS12}. For posterior
consistency results see \cite{Vol13}.

\end{itemize}

\vspace{0.3in}
\section{\as{Prior}\index{prior} Modeling}
\label{sec:prior}

In this section we show how to construct probability measures on a function
space, adopting a constructive approach based on random series. As explained
in section 6.2, the natural setting for probability in a function space is that
of a separable Banach space. A countable infinite sequence in the Banach space
$X$ will be used for our random series; in the case where $X$ is not separable 
the resulting probability measure will be constructed on a separable subspace 
$X'$ of $X$ (see the discussion in subsection 2.1).

Subsection \ref{ssec:2.1} describes this general setting, and subsections
\ref{ssec:2.2}, \ref{ssec:2.3} and \ref{ssec:2.4} consider, in turn,
three classes of \as{priors}\index{prior!uniform}\index{prior!Besov}\index{prior!Gaussian} termed uniform, Besov and Gaussian. In
subsection \ref{ssec:grf} we link the random series construction to the
widely used random field perspective on spatial stochastic
processes and we summarize in subsection \ref{ssec:2.5}.
We denote the \as{prior}\index{prior} 
measures constructed in this section by $\mu_0.$

\subsection{General Setting}
\label{ssec:2.1}

We let $\{\phi_j\}_{j=1}^\infty$ denote an infinite sequence in the
Banach space $X$, with norm $\|\cdot\|$, of $\R$-valued
functions defined on a domain $D$. We will either take
$D\subset\R^d$, a bounded, open set
with Lipschitz boundary, or $D=\bbT^d$ the $d$-dimensional torus.
We normalize these functions so that
$\|\phi_j\|=1$ for $j=1,\cdots,\infty$. We also introduce
another element $m_0 \in X$, not necessarily normalized to $1$. 
Define the function $u$ by
\begin{equation}\label{21}
u=m_0+\sum_{j=1}^\infty u_j\phi_j.
\end{equation}
By randomizing $\su:=\{u_j\}_{j=1}^\infty$ we create real-valued
\as{random functions}\index{random function} on $D$. 
(The extension to $\R^n$-valued
random functions is straightforward, but omitted for brevity.)

We now define the deterministic sequence
$\gamma=\{\gamma_j\}_{j=1}^\infty$ and the \as{i.i.d.\ random sequence}\index{i.i.d.\ sequence}
$\xi=\{\xi_j\}_{j=1}^\infty$, and set $u_j=\gamma_j\xi_j$. We assume that
$\xi$ is centred, i.e. that it has mean zero. Formally we
see that the average value of $u$ is then $m_0$ so that this element
of $X$ should be thought of as the {\em \as{mean function}}\index{mean function}. We
assume that $\gamma \in \ell_w^p$ for some $p \in [1,\infty)$ and
some positive weight sequence $\{w_j\}$ (see subsection \ref{sssec:elp}). 
We define $\Omega=\R^{\infty}$ and view $\xi$ as a random element 
in the probability space 
$\bigl(\Omega,\cBc(\Omega),\P\bigr)$ 
of \as{i.i.d.\ sequences}\index{i.i.d.\ sequence} 
equipped with the product $\sigma$-algebra; we let $\bbE$ denote expectation.
This sigma algebra can be generated by cylinder sets 
if an appropriate distance $d$ is defined
on sequences. However the distance $d$ captures
nothing of the properties of the \as{random function}\index{random function}
$u$ itself. For this reason
we will be interested in the pushforward of the measure $\P$
on the measure space $\bigl(\Omega,\cBc(\Omega)\bigr)$
into a measure $\mu$ on  $\bigl(X',\cBc(X')\bigr)$,
where $X'$ is a separable\index{separable space} Banach space and
$\cBc(X')$ denotes its Borel $\sigma$-algebra.
Sometimes $X'$ will be the same as $X$ but not always: 
the space $X$ may not be separable; and, although we have stated the
normalization of the $\phi_j$ 
in $X$, they may of course live in smaller spaces $X'$, and $u$ may do so too.
For either of these reasons $X'$ may be a proper subspace of $X$.

In the next three subsections we
demonstrate how this general setting may be adapted to create a
variety of useful \as{prior}\index{prior} measures on function space; the fourth subsection,
which follows these three, relates the random series construction, in the
Gaussian case, to the standard construction of Gaussian random fields.
We will express many of our results in terms of the probability measure $\P$
on i.i.d sequences, but all such results will, of course, have direct 
implications for the induced pushforward measures on the function spaces where 
the random functions $u$ live. We discuss this perspective in the summary 
section \ref{ssec:2.5}.  In dealing with the random series construction we will 
also find it useful to consider the truncated random functions
\begin{equation}\label{23a}
u^N=m_0+\sum_{j=1}^N u_j\phi_j, \quad u_j=\gamma_j\xi_j.
\end{equation}

\vspace{0.2in}
\subsection{Uniform \as{Prior}s\index{prior!uniform}}
\label{ssec:2.2}

To construct the \as{random function}s\index{random function} \eqref{21} we take 
$X=L^\infty(D)$, choose the deterministic sequence 
$\gamma=\{\gamma_j\}_{j=1}^\infty\in \ell^1$
and specify the i.i.d.\ sequence $\xi=\{\xi_j\}_{j=1}^\infty$ by $\xi_1\sim
U[-1,1]$, uniform random variables on $[-1,1].$ 
Assume further that there are finite, strictly positive constants $m_{\min}$,
$m_{\max}$, and $\delta$ such that
\begin{eqnarray*}
&&\text{ess}\inf_{x\in D}m_0(x)\geq m_{\min};\\
&&\text{ess}\sup_{x\in D}m_0(x)\leq m_{\max};\\
&&\|\gamma\|_{\ell^1}=\frac{\delta}{1+\delta}m_{\min}.
\end{eqnarray*}
The space $X$ is not \as{separable}\index{separable space} and so, instead, we work with
the space $X'$ found as the closure of the linear span of
the functions $(m_0,\{\phi_j\}_{j=1}^{\infty})$ with respect to
the norm $\|\cdot\|_{\infty}$ on $X$. The Banach space $\bigl(X',\|\cdot\|_{\infty}\bigr)$
is separable.\index{separable space}

\begin{theorem}
\label{t:2.1}
The following holds $\P$-almost
surely: the sequence of functions $\{u^N\}_{N=1}^\infty$ given by
\eqref{23a} is Cauchy in $X'$ and the limiting function $u$ given by (\ref{21})
satisfies
\begin{equation*}
\frac{1}{1+\delta}m_{\min}\leq
u(x)\leq m_{\max}+\frac{\delta}{1+\delta}m_{\min} \quad
\text{a.e.} \quad x\in D.
\end{equation*}
\end{theorem}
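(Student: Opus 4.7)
\medskip

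\noindent\textbf{Proof Plan.} The heart of the argument is that the random coefficients $\xi_j$ are bounded by $1$ surely and the basis functions satisfy $\|\phi_j\|_\infty=1$, so the series defining $u$ can be controlled directly by the deterministic $\ell^1$ sequence $\gamma$. Nothing very probabilistic is needed beyond combining countably many null sets.

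\medskip

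\noindent\emph{Step 1: Cauchy property in $X$.} For $M>N\geq 1$, I would estimate
\begin{equation*}
\|u^M-u^N\|_X \;=\; \Bigl\| \sum_{j=N+1}^M \gamma_j\xi_j \phi_j \Bigr\|_\infty
\;\leq\; \sum_{j=N+1}^M |\gamma_j|\,|\xi_j|\,\|\phi_j\|_\infty
\;\leq\; \sum_{j=N+1}^M |\gamma_j|,
\end{equation*}
using that $|\xi_j(\omega)|\leq 1$ for \emph{every} $\omega$ (not just a.s.) because $\xi_j\sim U[-1,1]$, and that $\|\phi_j\|=1$ by the normalization in Section~\ref{ssec:2.1}. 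Since $\gamma\in\ell^1$, the right-hand side is the tail of a convergent series and therefore tends to zero. Hence $\{u^N\}$ is Cauchy in the Banach space $X=L^\infty(D)$ for every $\omega\in\Omega$, and converges to some $u\in X$; in particular the limit agrees a.s.\ with the formal series \eqref{21}.

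\medskip

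\noindent\emph{Step 2: Pointwise a.e.\ bounds on $u$.} For each fixed $j$, the normalization $\|\phi_j\|_\infty=1$ means that there is a Lebesgue null set $N_j\subset D$ outside of which $|\phi_j(x)|\leq 1$. Similarly let $N_0$ be a null set outside of which $\phi_{\min}\leq\phi_0(x)\leq\phi_{\max}$. Setting $N=\bigcup_{j\geq 0} N_j$, which is again null, I have, for $x\notin N$ and every $\omega$,
\begin{equation*}
|u(x)-\phi_0(x)| \;\leq\; \sum_{j=1}^\infty |\gamma_j|\,|\xi_j|\,|\phi_j(x)|
\;\leq\; \|\gamma\|_{\ell^1} \;=\; \frac{\delta}{1+\delta}\phi_{\min}.
\end{equation*}
Combining with $\phi_{\min}\leq\phi_0(x)\leq\phi_{\max}$ on $D\setminus N$ yields
\begin{equation*}
\phi_{\min}-\frac{\delta}{1+\delta}\phi_{\min}\;\leq\; u(x)\;\leq\; \phi_{\max}+\frac{\delta}{1+\delta}\phi_{\min},
\end{equation*}
and the left-hand side simplifies to $\frac{1}{1+\delta}\phi_{\min}$, giving exactly the stated two-sided inequality a.e.\ on $D$.

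\medskip

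\noindent\emph{Main obstacle.} There is no serious analytic obstacle: the sure bound $|\xi_j|\leq 1$ reduces both the uniform convergence and the pointwise estimate to comparison with the deterministic $\ell^1$ tail of $\gamma$. The only minor bookkeeping point is making sure that the a.e.\ bounds on $\phi_0$ and on each $\phi_j$ can be used \emph{simultaneously}, which is handled by taking the countable union of the associated null sets as above. The choice $\|\gamma\|_{\ell^1}=\tfrac{\delta}{1+\delta}\phi_{\min}$ is what makes the lower bound on $u$ strictly positive, in accord with the positivity constraint \eqref{14} that motivates this prior.
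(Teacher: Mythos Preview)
Your proof is correct and follows essentially the same route as the paper's: both establish the Cauchy property in $L^\infty$ by dominating the tail with $\sum_{j>N}|\gamma_j|$ via $|\xi_j|\le 1$ and $\|\phi_j\|_\infty=1$, and then obtain the pointwise bounds from $|u(x)-\phi_0(x)|\le \|\gamma\|_{\ell^1}=\tfrac{\delta}{1+\delta}\phi_{\min}$ combined with the essential bounds on $\phi_0$. Your explicit handling of the countable union of null sets is a bit more careful than the paper's writeup, but the underlying argument is identical.
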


\noindent{\bf Proof.} Let $N>M$. Then, $\P$-a.s.,
\begin{align*}
\|u^N-u^M\|_{\infty}&=\Bigl\|\sum_{j=M+1}^N u_j\phi_j\Bigr\|_{\infty}\\
& \le \Bigl\|\sum_{j=M+1}^N \gamma_j \xi_j \phi_j\Bigr\|_{\infty}\\
& \le \sum_{j=M+1}^\infty |\gamma_j||\xi_j|\|\phi_j\|_{\infty}\\
& \le \sum_{j=M+1}^\infty |\gamma_j|.
\end{align*}
The right hand side tends to zero as $M \to \infty$ by the dominated
convergence theorem and hence the sequence is Cauchy in $X'$.

We have $\P$-a.s.\ and
for a.e.\ $x\in D$,
\begin{eqnarray*}
u(x)&\geq&m_0(x)-\sum_{j=1}^\infty |u_j|\|\phi_j\|_\infty\\
&\geq&\text{ess}\inf_{x\in D}m_0(x)-\sum_{j=1}^\infty
|\gamma_j|\\
&\geq&m_{\min}-\|\gamma\|_{\ell^1}\\
&=&\frac{1}{1+\delta}m_{\min}.
\end{eqnarray*}
Proof of the upper bound is similar. $\Box$\\

\begin{example} 
Consider the random function (\ref{21}) as
specified in this section. By Theorem \ref{t:2.1}
we have that, $\P$-a.s.,
\begin{equation}\label{22}
u(x)\geq\frac{1}{1+\delta}m_{\min}>0, \quad \text{a.e.}\quad x \in D.
\end{equation}
Set $\kappa=u$ in the elliptic equation (\ref{14}), so that
the coefficient $\kappa$ in the equation and the
solution $p$ are random variables on
$\bigl(\R^{\infty},{\mathcal B}(\R^{\infty}),\P\bigr)$.
Since (\ref{22}) holds $\P$-a.s., Lemma \ref{l:1.5} shows that,
again $\P$-a.s.,
\begin{equation*}
\|p\|_V\leq (1+\delta)\|f\|_{V^*}/m_{\min}.
\end{equation*}
Since the r.h.s.\ is non-random we have that for all $r\in
\mathbb{Z}^+$ the random variable $p\in L^r_{\P}(\Omega;V)$:
\begin{equation*}
\bbE\|p\|_V^r<\infty.
\end{equation*}
In fact $\bbE\exp(\alpha\|p\|_V^r)<\infty$ for all $r\in
\mathbb{Z}^+$ and $\alpha\in (0,\infty)$. $\quad\Box$
\label{ex:lpb}
\end{example}

We now consider the situation where the family $\{\phi_j\}_{j=1}^{\infty}$ 
have a uniform H\"older exponent
$\alpha$ and study the implications for H\"older continuity of the random
function $u$. Specifically we assume that 
there are $C,a>0$ and $\alpha \in (0,1]$
such that, for all $j \ge 1$, 
\begin{equation}
\label{eq:phih}
|\phi_j(x)-\phi_j(y)| \le Cj^{a}|x-y|^{\alpha},\, x,y \in D.
\end{equation}
and
\begin{equation}
\label{eq:phih22}
|m_0(x)-m_0(y)| \le C|x-y|^{\alpha},\, x,y \in D.
\end{equation}

\begin{theorem}
Assume that $u$ is given by \eqref{21} where the collection of
functions $(m_0,\{\phi_j\}_{j=1}^{\infty})$ satisfy \eqref{eq:phih} and
\eqref{eq:phih22}.
Assume further that
$\sum_{j=1}^{\infty}|\gamma_j|^2 j^{a\theta}<\infty$ for some
$\theta\in (0,2)$. Then $\P$-a.s. we have $u \in \CC^{0,\beta}(D)$ for all
$\beta<\frac{\alpha \theta}{2}$. 
\end{theorem}

\begin{proof}
This is an application of Corollary \ref{cor:Kol} of the \as{Kolmogorov 
continuity}
theorem\index{Kolmogorov!continuity criterion} 
and $S_1$ and $S_2$ are as defined there.
We use $\theta$ in place of the parameter  $\delta$ 
appearing in Corollary \ref{cor:Kol} in order to avoid 
confusion with $\delta$ appearing in Theorem \ref{t:2.1} above
and in (\ref{eq:tg}) below.
Note that, since $m_0$ has assumed H\"older regularity $\alpha$,
which exceeds $\frac{\alpha \theta}{2}$ since $\theta\in (0,2)$, 
it suffices to consider the centred case where $m_0\equiv 0$.
We let $f_j=\gamma_j\phi_j$ and complete the proof by noting that
$$S_1=\sum_{j=1}^{\infty} |\gamma_j|^2 \le S_2 \le \sum_{j=1}^{\infty}
|\gamma_j|^2 j^{a\theta}<\infty.$$

\end{proof}

\begin{example}
\label{ex:ihp}
Let $\{\phi_j\}$ denote the Fourier basis for $D=[0,1]^d$. Then
we may take $a=\alpha=1$. If $\gamma_j=j^{-s}$ then $s>1$ ensures
$\gamma \in \ell^1$. Furthermore
$$\sum_{j=1}^{\infty}|\gamma_j|^2 j^{a\theta}=\sum_{j=1}j^{\theta-2s}<\infty$$
for $\theta<2s-1$. We thus deduce that $u \in \CC^{0,\beta}([0,1]^d)$
for all $\beta<\min\{s-\frac12,1\}$.
\end{example}

\vspace{0.2in}
\subsection{Besov \as{Prior}s\index{prior!uniform}}
\label{ssec:2.3}

For this construction of \as{random functions}\index{random function} 
we take $X$ to be the Hilbert space
$$X:=\dot{L}^2(\mathbb{T}^d)=\Bigl\{u:\bbT^d \to \R\Big| \int_{\bbT^d} |u(x)|^2 dx<\infty, \int_{\bbT^d} u(x)dx=0\Bigr\}$$
of real valued periodic functions in dimension $d \le 3$
with inner-product and norm denoted by
$\langle \cdot, \cdot \rangle$ and $\|\cdot\|$ respectively.
We then set $m_0=0$ and let $\{\phi_j\}_{j=1}^\infty$ be an
orthonormal basis for $X$. 
Consequently, for any $u \in X$, we have for a.e. $x \in \bbT^d$,
\begin{equation}\label{212}
u(x)=\sum_{j=1}^\infty u_j\phi_j(x), \quad u_j=\langle u,\phi_j\rangle.
\end{equation}
Given a function $u: \bbT^d \to \R$ and the $\{u_j\}$ as
defined in \eqref{212} we define the Banach space $X^{t,q}$ by
\begin{equation*}
X^{t,q}=\Bigl\{u:\bbT^d \to \R\Big|
\|u\|_{X^{t,q}}<\infty, 
\int_{\bbT^d} u(x)dx=0\Bigr\}
\end{equation*}
where
\begin{equation*}
\|u\|_{X^{t,q}}=\Big(\sum_{j=1}^\infty
j^{(\frac{tq}{d}+\frac{q}{2}-1)}|u_j|^q\Big)^{\frac{1}{q}}
\end{equation*}
with $q \in [1,\infty)$ and $s>0$.
If $\{\phi_j\}$ form the Fourier basis and $q=2$ then
$X^{t,2}$ is the Sobolev space ${\dot H}^t(\bbT^d)$ of mean-zero
periodic functions with $t$ (possibly non-integer)
square-integrable derivatives; in particular 
$X^{0,2}=\dot{L}^2(\mathbb{T}^d)$.
On the other hand,
if the $\{\phi_j\}$ form certain wavelet bases, then
$X^{t,q}$ is the Besov space $B^{t}_{qq}$.

As described above, we assume that $u_j=\gamma_j\xi_j$ where
$\xi=\{\xi_j\}_{j=1}^\infty$ is an i.i.d.\ sequence and
$\gamma=\{\gamma_j\}_{j=1}^\infty$ is deterministic. Here we assume
that $\xi_1$ is drawn from the centred  measure on $\R$ with
density proportional to
$\exp\big(-\frac{1}{2}|x|^q\big)$ for some $1\leq q<\infty$ -- we
refer to this as a {\em $q$-exponential distribution}, noting that
$q=2$ gives a Gaussian and $q=1$ a Laplace-distributed random
variable. Then for $s>0$ and $\delta>0$ we define
\begin{equation}
\gamma_j=j^{-(\frac{s}{d}+\frac{1}{2}-\frac{1}{q})}\big(\frac{1}{\delta}\big)^{\frac{1}{q}}.
\label{eq:tg}
\end{equation}
The parameter $\delta$ is a key scaling parameter  which will appear
in the statement of exponential moment bounds below.

We now prove convergence of the
series (found from \eqref{23a} with $m_0=0$)
\begin{equation}\label{23}
u^N=\sum_{j=1}^N u_j\phi_j, \quad u_j=\gamma_j\xi_j
\end{equation}
to the limit function
\begin{equation}
u(x)=\sum_{j=1}^\infty u_j\phi_j(x), \quad u_j=\gamma_j\xi_j,
\label{eq:KL}
\end{equation}
in an appropriate space. To understand the sequence of functions $\{u^N\}$,
it is useful to introduce the following function space:
$$L^q_{\P}(\Omega;X^{t,q}):=\Big\{v:D \times \Omega  \to \R\Big|
\EE\bigl(\|v\|_{X^{t,q}}^{q}\bigr)<\infty\Big\}.$$
This is a Banach space, 
when equipped with the
norm $\Bigl(\EE\bigl(\|v\|_{X^{t,q}}\bigr)^q\Bigr)^{\frac{1}{q}}$.
Thus every Cauchy sequence is convergent in this space.

\begin{theorem} \label{t:2.4a}
For $t<s-\frac{d}{q}$ the sequence of functions
$\{u^N\}_{N=1}^\infty$, given by (\ref{23}) and (\ref{eq:tg}) with $\xi_1$ drawn from a centred $q$-exponential 
distribution,
 is Cauchy in
the Banach space
$L^q_{\P}(\Omega;X^{t,q})$.
Thus the infinite series \eqref{eq:KL}
exists as an $L^q_\P$-limit and takes values in $X^{t,q}$ almost surely,
for all $t<s-\frac{d}{q}$.
\end{theorem}

\noindent{\bf Proof.} For $N>M$,
\begin{eqnarray*}
\bbE\|u^N-u^M\|_{X^{t,q}}^{q}&=&\delta^{-1}\bbE\sum_{j=M+1}^N
j^{\frac{(t-s)q}{d}}|\xi_j|^q\\
&\asymp&\sum_{j=M+1}^N j^{\frac{(t-s)q}{d}}\leq\sum_{j=M+1}^\infty
j^{\frac{(t-s)q}{d}}.
\end{eqnarray*}
The sum on the right hand side tends to $0$ as $M\rightarrow\infty$,
provided $\frac{(t-s)q}{d}<-1$, by the dominated convergence
theorem. This completes the proof.
$\Box$

The previous theorem gives a sufficient condition, on $t$, for
existence of the limiting random function. The following theorem
refines this to an if and only if statement, in the
context of almost sure convergence. 

\begin{theorem}
\label{t:2.2}
Assume that $u$ is given by \eqref{eq:KL} and \eqref{eq:tg}
with $\xi_1$ drawn from a centred $q$-exponential distribution.  
Then the following are equivalent:
\begin{itemize}
\item[] i) $\|u\|_{X^{t,q}}<\infty$ $\P$-a.s.;

\item[] ii) $\bbE\big(\exp(\alpha\|u\|_{X^{t,q}}^q)\big)<\infty$ for
any $\alpha\in [0,\frac{\delta}{2})$;

\item[] iii) $t<s-\frac{d}{q}$.
\end{itemize}
\end{theorem}

\noindent{\bf Proof.} We first note that, for the random function in question,
\begin{eqnarray*}
\|u\|_{X^{t,q}}^q=\sum_{j=1}^\infty
j^{(\frac{tq}{d}+\frac{q}{2}-1)}|u_j|^q=\sum_{j=1}^\infty
\delta^{-1}j^{-\frac{(s-t)q}{d}}|\xi_j|^q.
\end{eqnarray*}
Now, for $\alpha<\frac{1}{2}$,
\begin{eqnarray*}
\bbE\exp\big(\alpha|\xi_1|^q\big)&=& \int_{\R}\exp\Big(-
\bigl(\frac{1}{2}-\alpha\bigr)|x|^q\Big)dx\Big/\int_{\R}\exp\Big(-\frac{1}{2}|x|^q\Big)dx\\
&=&(1-2\alpha)^{-\frac{1}{q}}.
\end{eqnarray*}

iii) $\Rightarrow$ ii).
\begin{eqnarray*}
\bbE\Big(\exp(\alpha\|u\|_{X^{t,q}}^q)\Big)&=&\bbE\Big(\exp(\alpha\sum_{j=1}^\infty
\delta^{-1}j^{-\frac{(s-t)q}{d}}|\xi_j|^q)\Big)\\
&=&
\prod_{j=1}^{\infty}\Big(1-\frac{2\alpha}{\delta}j^{-\frac{(s-t)q}{d}}\Big)^{-\frac{1}{q}}.
\end{eqnarray*}
For $\alpha<\frac{\delta}{2}$ the product converges if
$\frac{(s-t)q}{d}>1$ i.e. $t<s-\frac{d}{q}$ as required.\\

ii) $\Rightarrow$ i).\\

If (i) does not hold, $Z:=\|u\|_{X^{t,q}}^q$ is positive infinite on a set of positive measure $S$.
Then, since for $\alpha>0$, $\exp(\alpha Z)=+\infty$ if $Z=+\infty$, and
$\bbE \exp(\alpha Z)\ge \bbE(\mathbbm{1}_S\exp(\alpha Z))$ we get a contradiction.
\\

i) $\Rightarrow$ iii).\\

To show that (i) implies (iii) note that (i) implies that, almost
surely,
$$\sum_{j=1}^{\infty}j^{(t-s)q/d}|\xi_j|^q<\infty.$$
This implies that $t<s$. To see this assume for
contradiction that $t \ge s$. Then, almost surely,
$$\sum_{j=1}^{\infty}|\xi_j|^q<\infty.$$
Since there is a constant $c>0$ with $\EE|\xi_j|^q =c$ for any $j\in\N$, this contradicts the
law of large numbers.

Now define $\zeta_j=j^{(t-s)q/d}|\xi_j|^q$. Using the fact that the
$\zeta_j$ are non-negative and independent we deduce from Lemma
\ref{l:2.3} (below) that
$$\sum_{j=1}^{\infty}\EE\big(\zeta_j \wedge 1\big) =\sum_{j=1}^{\infty}
\EE\Big(j^{(t-s)q/d}|\xi_j|^q\wedge1\Big)<\infty.$$ 
Since $t<s$ we note that then
\begin{eqnarray*}
\EE\zeta_j&=&\EE\Big(j^{-(s-t)q/d}|\xi_j|^q \Big)\\
&=& \EE\Big(j^{-(s-t)q/d}|\xi_j|^q \mathbb{I}_{\{|\xi_j|\leq
j^{(s-t)/d}\}}\Big) +\EE\Big(j^{-(s-t)q/d}|\xi_j|^q \mathbb{I}_{\{|\xi_j| > j^{(s-t)/d}\}}\Big)\\
&\leq& \EE\Big(\big(\zeta_j \wedge 1\big) \mathbb{I}_{\{|\xi_j| \le j^{{(s-t)}/d}\}}\Big)+I\\
&\leq&\EE\Big(\zeta_j \wedge 1\Big)+I,
\end{eqnarray*}
where
$$I \propto j^{-(s-t)q/d}\int_{j^{(s-t)/d}}^{\infty} x^q e^{-x^q/2} dx.$$
Noting that, since $q \geq 1$, the function $x \mapsto  x^q
e^{-x^q/2}$ is bounded, up to a constant of proportionality, by the
function $x \mapsto e^{-\alpha x}$ for any $\alpha<\frac12$, we see
that there is a positive constant $K$ such that
\begin{eqnarray*}
I&\leq& Kj^{-(s-t)q/d}\int_{j^{(s-t)/d}}^{\infty} e^{-\alpha x}dx\\
&=& \frac{1}{\alpha}Kj^{-(s-t)q/d} \exp\big(-\alpha j^{(s-t)/d}\big)\\
&:=&\iota_j.
\end{eqnarray*}
Thus we have shown that
\begin{eqnarray*}
\sum_{j=1}^{\infty} \bbE \Big(j^{-(s-t)q/d}|\xi_j|^q \Big)
\leq \sum_{j=1}^\infty \bbE\Big(\zeta_j \wedge
1\Big)+\sum_{j=1}^{\infty} \iota_j
<\infty.
\end{eqnarray*}
Since the $\xi_j$ are i.i.d.\ this implies that
$$\sum_{j=1}^{\infty}j^{(t-s)q/d}<\infty,$$
from which it follows that $(s-t)q/d>1$ and (iii) follows. $\Box$

\begin{lemma}
\label{l:2.3}
Let $\{I_j\}_{j=1}^\infty$ be an
independent sequence of $\R^+$-valued random variables. Then
\begin{equation*}
\sum_{j=1}^\infty I_j<\infty \quad \text{a.s.} \Leftrightarrow
\sum_{j=1}^\infty\bbE(I_j\wedge 1)<\infty.
\end{equation*}
\end{lemma}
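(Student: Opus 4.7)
The plan is to replace the bounded nonnegative quantity $x\wedge 1$ by the comparable quantity $1-e^{-x}$ and then recast both sides of the equivalence as positivity (or not) of the infinite product $\prod_j\mathbb{E} e^{-I_j}$; independence is what makes this last recasting effective.

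First I would verify the elementary pointwise sandwich
\[
(1-e^{-1})(x\wedge 1)\;\leq\; 1-e^{-x}\;\leq\; x\wedge 1 \qquad (x\geq 0),
\]
by checking $x\in[0,1]$ and $x\geq 1$ separately. Taking expectations and summing then gives
\[
\sum_{j=1}^\infty\mathbb{E}(I_j\wedge 1)<\infty \;\Longleftrightarrow\; \sum_{j=1}^\infty\mathbb{E}\big(1-e^{-I_j}\big)<\infty.
\]

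Next, set $S_n=\sum_{j=1}^n I_j$ and $S=\sum_{j=1}^\infty I_j\in[0,\infty]$. Because the $I_j$ are nonnegative and independent, $e^{-S_n}\downarrow e^{-S}$ (with the convention $e^{-\infty}:=0$), and monotone convergence combined with independence yields
\[
\mathbb{E}\,e^{-S}\;=\;\lim_{n\to\infty}\prod_{j=1}^n\mathbb{E}\,e^{-I_j}\;=\;\prod_{j=1}^\infty a_j,\qquad a_j:=\mathbb{E}\,e^{-I_j}\in(0,1].
\]
By Kolmogorov's $0$--$1$ law applied to the tail $\sigma$-algebra of $\{I_j\}$, we have $\mathbb{P}(S<\infty)\in\{0,1\}$, so $S<\infty$ a.s.\ iff $\mathbb{E} e^{-S}>0$ iff $\prod_j a_j$ is strictly positive.

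The last ingredient is the standard fact that an infinite product $\prod a_j$ with $a_j\in(0,1]$ is strictly positive iff $\sum(1-a_j)<\infty$ (via $-\log a_j\geq 1-a_j$ always, and $-\log a_j\leq 2(1-a_j)$ once $a_j$ is close enough to $1$). Since $\sum(1-a_j)=\sum\mathbb{E}(1-e^{-I_j})$, chaining the three equivalences closes the proof. The main obstacle is genuinely just the use of independence, which enters both through the product formula for $\mathbb{E} e^{-S}$ and through the $0$--$1$ law, precisely in the direction $\sum I_j<\infty\text{ a.s.}\Rightarrow\sum\mathbb{E}(I_j\wedge 1)<\infty$; the reverse direction could alternatively be obtained by Tonelli (to get $\sum(I_j\wedge 1)<\infty$ a.s.) plus a first--Borel--Cantelli argument on $\{I_j>1\}$, but the exponential route handles both directions uniformly.
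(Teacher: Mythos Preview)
The paper states Lemma~\ref{l:2.3} without proof, so there is no in-paper argument to compare against. Your proposal is correct: the sandwich $(1-e^{-1})(x\wedge 1)\le 1-e^{-x}\le x\wedge 1$, the product formula $\mathbb{E}e^{-S}=\prod_j \mathbb{E}e^{-I_j}$ via independence and monotone convergence, the Kolmogorov $0$--$1$ law giving $\mathbb{P}(S<\infty)\in\{0,1\}$ so that $S<\infty$ a.s.\ iff $\mathbb{E}e^{-S}>0$, and the standard equivalence $\prod_j a_j>0\Leftrightarrow \sum_j(1-a_j)<\infty$ for $a_j\in(0,1]$ all check out and chain together as claimed. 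One small remark: you should note explicitly that $a_j=\mathbb{E}e^{-I_j}>0$ because $I_j$ is $\mathbb{R}^+$-valued (hence a.s.\ finite), which you need both for the logarithm step and so that the finitely many terms with $a_j<1/2$ do not spoil $\sum(-\log a_j)<\infty$.

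As you observe, the reverse implication has a more elementary route via Tonelli and Borel--Cantelli, and this is closer in spirit to how such lemmas are usually quoted (the result is essentially a reformulation of the Kolmogorov three-series theorem specialised to nonnegative summands, where the truncated-mean condition alone suffices). Your Laplace-transform approach is a clean alternative that handles both directions symmetrically.
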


As in the previous subsection, we now study the situation where the 
family $\{\phi_j\}$ have a uniform H\"older exponent
$\alpha$ and study the implications for H\"older continuity of the random
function $u$. In this case, however, the basis functions are normalized
in $\dot{L}^2$ and not $L^{\infty}$; thus we must make additional
assumptions on the possible growth of the $L^{\infty}$ norms of $\{\phi_j\}$
with $j$. We assume that there are $C,a,b>0$ and $\alpha \in (0,1]$
such that, for all $j \ge 0$, 
\begin{subequations}
\label{eq:phih2}
\begin{align}
|\phi_j(x)|& =\beta_j \le  Cj^{b},\, x \in D.\\
|\phi_j(x)-\phi_j(y)| &\le  Cj^{a}|x-y|^{\alpha},\, x,y \in D.
\end{align}
\end{subequations}
We also assume that $a > b$ as, since $\|\phi_j\|_{L^2}=1$, 
it is natural that the pre-multiplication
constant in the H\"older estimate on the $\{\phi_j\}$ grows in $j$
at least as fast as the bound on the functions themselves.

\begin{theorem}\label{t:bcty1}
Assume that $u$ is given by \eqref{eq:KL} and \eqref{eq:tg}
with $\xi_1$ drawn from a centred $q$-exponential distribution.  
Suppose also that \eqref{eq:phih2}
hold and that $s>d\bigl(b+q^{-1}+\frac12\theta(a-b)\bigr)$ for some 
$\theta \in (0,2)$. Then $\P$-a.s.\ we have $u \in \CC^{0,\beta}(\bbT^d)$ for all
$\beta<\frac{\alpha \theta}{2}$. 
\end{theorem}

\begin{proof}
We apply Corollary \ref{cor:Kol} 
of the \as{Kolmogorov continuity theorem}
\index{Kolmogorov!continuity criterion}
and $S_1$ and $S_2$ are as defined there.
We use $\theta$ in place of the parameter $\delta$ 
appearing in Corollary \ref{cor:Kol} in order to avoid 
confusion with $\delta$ appearing in Theorem \ref{t:2.1} and (\ref{eq:tg})
above.  
Let $f_j=\gamma_j\phi_j$ and note that
\begin{subequations}
\begin{align*}
S_1 &=\sum_{j=1}^{\infty} |\gamma_j|^2 \beta_j^2 \lesssim \sum_{j=1}^{\infty}
j^{-c_1}\\
S_2 &=\sum_{j=1}^{\infty} |\gamma_j|^{2-\theta} \beta_j^{2-\theta} 
\gamma_j^{\theta}j^{a\theta} \lesssim \sum_{j=1}^{\infty}
j^{-c_2}.
\end{align*}
\end{subequations}
Short calculation shows that
\begin{subequations}
\begin{align*}
c_1 & =\frac{2s}{d}+1-\frac{2}{q}-2b,\\
c_2 & =\frac{2s}{d}+1-\frac{2}{q}-2b-\theta(a-b).
\end{align*}
\end{subequations}
We require $c_1>1$ and $c_2>1$ and since $a>b$ satisfaction of
the second of these will imply the first. Satisfaction of the second
gives the desired lower bound on $s$.
\end{proof}

We note that the result of Theorem \ref{t:bcty1} holds true when the 
\as{mean function}\index{mean function} is nonzero if it satisfies 
\begin{subequations}
\begin{align*}
|m_0(x)|& \le  C,\, x \in D.\\
|m_0(x)-m_0(y)| &\le  C|x-y|^{\alpha},\, x,y \in D.
\end{align*}
\end{subequations}

We have the following sharper result if  the family $\{\phi_j\}$ is regular enough to be a basis for $B^t_{qq}$ instead of satisfying (\ref{eq:phih2}):

\begin{theorem}
Assume that $u$ is given by \eqref{eq:KL} and \eqref{eq:tg}
with $\xi_1$ drawn from a centred $q$-exponential distribution.  
Suppose also that $\{\phi_j\}_{j\in\N}$ form a basis for $B^t_{qq}$
for some $t<s-\frac{d}{q}$. Then $u \in \CC^{0,t}(\mathbb{T}^d)$ $\P$-almost surely.
\end{theorem}

\begin{proof}
For any $m\ge 1$,
using the definition of $X^{t,q}$-norm we can write
\begin{align*}
\|u\|_{B^t_{m q,m q}}^{m q}
&=(\tfrac{1}{\delta})^m
\sum_{j=1}^\infty j^{\frac{mq t}{d}+\frac{m q}{2}-1}
       j^{-m q(\frac{s}{d}+ \frac{1}{2} -\frac{1}{q})}|\xi_j|^{m q}.
\end{align*}
For every $m\in\N$ there exists a constant $C_m$ with $\bbE |\xi_j|^{m q}=C_m$. 
Since each term of the above series is measurable we can swap the sum and the integration and write
\begin{align*}
\bbE\|u\|_{B^t_{m q,m q}}^{m q}
=C_m(\tfrac{1}{\delta})^m
\sum_{j=1}^\infty j^{\frac{m q}{d}(t-s)+m-1}
\,\le\, \tilde{C}_m,
\end{align*}
noting that the exponent of $j$ is smaller than $-1$ (since $t<s-d/q$).
Now for a given $t<s-d/q$, one can choose $m$ 
large enough so that $\frac{d}{m q}<s-d/q-t$. 
Then the embedding $B^{t_1}_{m q,m q}\subset C^t$ 
for any $t_1$ satisfying $t+\frac{d}{m q}<t_1<s-d/q$ implies that
$\bbE\|u\|_{C^t(\mathbb{T}^d)}^{mq}<\infty$. It follows that $u \in C^t$
$\mathbb{P}$-almost surely.

\end{proof}

If the \as{mean function}\index{mean function} $m_0$ is $t$-H\"older continuous, the result of the above theorem holds for a random series with nonzero mean function as well.

\vspace{0.2in}
\subsection{Gaussian Priors}
\label{ssec:2.4}

Let $X$ be a Hilbert space
$\mathcal {H}$ of real-valued functions on bounded open $D \subset \R^d$
with Lipschitz boundary, and 
with inner-product and norm denoted by
$\langle \cdot, \cdot \rangle$ and $\|\cdot\|$ respectively;
for example $\mathcal {H}=L^2(D;\R)$.
Assume that $\{\phi_j\}_{j=1}^\infty$ is
an orthonormal basis for $\mathcal {H}$.
We study the Gaussian case where $\xi_1 \sim N(0,1)$ and then
equation \eqref{21} with $u_j=\gamma_j\xi_j$ generates random draws from the
Gaussian measure $N(m_0,\cC)$ on $\mathcal{H}$ where the 
\as{covariance operator}\index{covariance operator} 
$\cC$ depends on the sequence $\gamma=\{\gamma_j\}_{j=1}^\infty$. 
See the Appendix
for background on {Gaussian measures} in a Hilbert space. 
As in Section \ref{ssec:2.3},
we consider the setting in which $m_0=0$ so that the function
$u$ is given by \eqref{212} and has mean zero. We thus focus
on identifying $\cC$ from the random series \eqref{212}, and
studying the regularity of random draws from $N(0,\cC)$.

Define the \as{Hilbert scale}\index{Hilbert scale} of spaces $\mathcal {H}^t$
as in Subsection \ref{sssec:sob} with, recall, norm
$$\|u\|_{\mathcal {H}^t}^2=\sum_{j=1}^\infty j^{\frac{2t}{d}}|u_j|^2.$$
We choose $\xi_1\sim N(0,1)$ and study convergence of the
series \eqref{23} for $u^N$ 
to a limit function $u$ given by \eqref{eq:KL}; the spaces in which
this convergence occurs will depend upon the sequence $\gamma.$
To understand the sequence of functions $\{u^N\}$,
it is useful to introduce the following function space:
$$L^2_{\P}(\Omega;\mathcal{H}^t):=\Big\{v:D \times \Omega\to \R\Big|
\EE\bigl(\|v\|_{\cH^t}\bigr)^2<\infty\Big\}.$$
This is in fact a Hilbert space, although we will not use the Hilbert
space structure. We will only use the fact that $L^2_{\P}$ is a 
Banach space when equipped with the
norm $\Bigl(\EE\bigl(\|v\|_{\cH^t}^2\bigr)\Bigr)^{\frac12}$ and that hence
every Cauchy sequence is convergent.

\begin{theorem} \label{t:2.4b}
Assume that $\gamma_j\asymp j^{-\frac{s}{d}}$. 
Then the sequence of functions
$\{u^N\}_{N=1}^\infty$ given by \eqref{23} is Cauchy in
the Hilbert space
$L^2_{\P}(\Omega;\mathcal{H}^t)$,
$t<s-\frac{d}{2}$.
Thus the infinite series \eqref{eq:KL}
exists as an $L^2_\P$-limit and takes values in $\cH^t$ almost surely, for $t<s-\frac{d}{2}$.
\end{theorem}

\noindent{\bf Proof.} For $N>M$,
\begin{eqnarray*}
\bbE\|u^N-u^M\|_{\cH^t}^2&=&\bbE\sum_{j=M+1}^N
j^{\frac{2t}{d}}|u_j|^2\\
&\asymp&\sum_{j=M+1}^N j^{\frac{2(t-s)}{d}}\leq\sum_{j=M+1}^\infty
j^{\frac{2(t-s)}{d}}.
\end{eqnarray*}
The sum on the right hand side tends to $0$ as $M\rightarrow\infty$,
provided $\frac{2(t-s)}{d}<-1$, by the dominated convergence
theorem. This completes the proof.
$\Box$

\begin{remarks}
\label{rem:28}

We make the following remarks concerning the Gaussian random
functions constructed in the preceding theorem.

\begin{itemize}

\item The preceding theorem shows that the sum \eqref{23} has an
$L^2_{\PP}$ limit in $\cH^t$ when $t<s-d/2$,
as one can also see from the following direct calculation
\begin{eqnarray*}
\bbE\|u\|^2_{\cH^t}
&=&\sum_{j=1}^\infty j^{\frac{2t}{d}}\bbE(\gamma_j^2\xi_j^2)\\
&=&\sum_{j=1}^\infty j^{\frac{2t}{d}}\gamma_j^2\\
&\asymp&\sum_{j=1}^\infty j^{\frac{2(t-s)}{d}}<\infty.
\end{eqnarray*}
Thus $u\in {\mathcal H}^t$ a.s., for $t<s-\frac{d}{2}$.

\item From the preceding theorem we see that, provided $s>\frac{d}{2}$,
the random function in \eqref{eq:KL} generates a mean zero
Gaussian measure on $\mathcal {H}$. The expression \eqref{eq:KL}
is known as the {\em Karhunen-Lo\`eve expansion}, and
the eigenfunctions $\{\phi_j\}_{j=1}^{\infty}$ as the 
{\em Karhunen-Lo\`eve basis}. 

\item The \as{covariance operator}\index{covariance operator} $\mathcal{C}$
of a measure $\mu$ on $\cH$ may then be viewed as a bounded linear
operator from $\cH$ into itself defined to satisfy 
\begin{equation}
\label{e:propCmu2}
\mathcal{C} \ell =\int_{\cH} \langle \ell,u \rangle u \,\mu(du)\;,
\end{equation}
for all $\ell \in \cH$.
Thus
\begin{equation}
\label{e:defVar2}
\mathcal{C}=\int_{\cH} u \otimes u\,\mu(du)\;.
\end{equation}
The following formal
calculation, which can be made rigorous if $\mathcal{C}$ is
trace-class on $\cH$, gives an expression for the 
\as{covariance operator}\index{covariance operator}:
\begin{align*}
\mathcal{C}&=\bbE u\otimes u\\
&=\bbE\Bigl(\sum_{j=1}^\infty \sum_{k=1}^\infty \gamma_j\gamma_k
\xi_j \xi_k \phi_j\otimes\phi_k\Bigr)\\
&=\Bigl(\sum_{j=1}^\infty \sum_{k=1}^\infty \gamma_j\gamma_k
\bbE(\xi_j \xi_k)\phi_j\otimes\phi_k\Bigr)\\
&=\Bigl(\sum_{j=1}^\infty \sum_{k=1}^\infty \gamma_j\gamma_k
\delta_{jk}\phi_j\otimes\phi_k\Bigr)\\
& =\sum_{j=1}^\infty
\gamma_j^2\phi_j\otimes\phi_j.
\end{align*}
From this expression for the covariance, we may find
eigenpairs explicitly:
\begin{eqnarray*}
\mathcal{C}\phi_k&=&\Big(\sum_{j=1}^\infty
\gamma_j^2\phi_j\otimes\phi_j\Big)\phi_k\\
&=&\sum_{j=1}^\infty \gamma_j^2\langle\phi_j,\phi_k\rangle\phi_j
=\sum_{j=1}^\infty \gamma_j^2\delta_{jk}\phi_k =\gamma_k^2\phi_k.
\end{eqnarray*}

\item The Gaussian measure is denoted by $\mu_0:=N(0,\mathcal{C})$,
a Gaussian with \as{mean function}\index{mean function}
$0$ and \as{covariance operator}\index{covariance operator} $\mathcal{C}.$
The eigenfunctions of $\mathcal{C}$, $\{\phi_j\}_{j=1}^\infty$, are
known as the {\em Karhunen-Lo\`eve} basis for measure $\mu_0$. The
$\gamma_j^2$ are the eigenvalues associated with this eigenbasis,
and thus $\gamma_j$ is the standard deviation of the Gaussian
measure in the direction $\phi_j$.
\end{itemize}
\end{remarks}

In the case where $\mathcal
{H}=\dot{L}^2(\mathbb{T}^d)$ we are in the setting of
Section \ref{ssec:2.3} and we briefly consider this case.
We assume that the $\{\phi_j\}_{j=1}^\infty$
constitute the Fourier basis. Let $A=-\triangle$ denote
the negative Laplacian equipped with periodic boundary conditions on $[0,1)^d$,
and restricted to functions which integrate to zero over
$[0,1)^d$. This operator is positive self-adjoint and has eigenvalues
which grow like $j^{2/d}$, analogously to the Assumption \ref{a:1.3} 
made in the case of Dirichlet boundary conditions. 
It then follows that $\mathcal {H}^t=\CD(A^{t/2})=\dot{H}^t(\mathbb{T}^d)$, 
the Sobolev space of periodic
functions on $[0,1)^d$ with spatial mean equal to zero and $t$
(possibly negative or fractional) square integrable
derivatives. Thus, by the preceding Remarks \ref{rem:28},
$u$ defined by \eqref{eq:KL} is in
the space $\dot{H}^t$ a.s., $t<s-\frac{d}{2}$.
In fact we can say more about regularity, 
using the \as{Kolmogorov continuity test}\index{Kolmogorov!continuity criterion} and Corollary \ref{cor:Kolmogorov}; this
we now do.

\begin{theorem} 
\label{t:gh}
Consider the Karhunen-Lo\`eve expansion \eqref{eq:KL}
so that $u$ is a sample from the measure $N(0,\mathcal{C})$ 
in the case where $\mathcal{C}=A^{-s}$ with $A=-\triangle$,
$\CD(A)=\dot{H}^2(\mathbb{T}^d)$ and $s>\frac{d}{2}$. 
Then, $\P$-a.s., $u \in \dot{H}^t$, $t<s-\frac{d}{2}$, and 
$u \in \CC^{0,t}(\bbT^d)$ a.s., $t<1 \wedge (s-\frac{d}{2})$.
\end{theorem}

\begin{proof}
Because of the stated properties of the eigenvalues of
the Laplacian, it follows that the eigenvalues of $\mathcal{C}$
satisfy $\gamma_j^2\asymp
j^{-\frac{2s}{d}}$ and the eigenbasis $\{\phi_j\}$ is the Fourier
basis. Thus we may apply the conclusions stated
in Remarks \ref{rem:28} to deduce that $u\in \dot{H}^t$,
$t<\alpha-\frac{d}{2}$. 
Furthermore we may apply Corollary \ref{cor:Kol} to obtain H\"older
regularity of $u$. To do this we note that the $\{\phi_j\}$ are
bounded in $L^{\infty}(\bbT^d)$, and are Lipschitz with constants
which grow like $j^{1/d}$. We apply that corollary with $\alpha=1$
and obtain
$$S_1=\sum_{j=1}^\infty \gamma_j^2, \quad S_2=\sum_{j=1}^{\infty}
\gamma_j^2 j^{\delta/d}.$$
The corollary delivers the desired result after noting that
any $\delta<2s-d$ will make $S_2$, and hence $S_1$, summable.
\end{proof}

The previous example illustrates the fact that, although
we have constructed Gaussian measures in a Hilbert space setting,
and that they are naturally defined on a range of Hilbert
(Sobolev-like) spaces
defined through fractional powers of the Laplacian,
they may also be defined on Banach spaces, such as the
space of H\"older continuous functions.
We now return to the setting of the general domain $D$, rather
than the $d$-dimensional torus. In this general context 
it is important to highlight the Fernique Theorem, here restated from
the Appendix because of its importance:

\begin{theorem}[{\bf Fernique Theorem}]
\label{t:2.5}
Let $\mu_0$ be a
Gaussian measure on the separable  Banach space $X$. Then there exists
$\beta_c\in (0,\infty)$ such that, for all $\beta\in
(0,\beta_c)$,
\begin{equation*}
\bbE^{\mu_0}\exp\big(\beta\|u\|^2_X\big)<\infty.
\end{equation*}
\end{theorem}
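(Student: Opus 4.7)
The plan is to prove Fernique's theorem by the classical symmetrisation argument: use the rotational invariance of the joint law of two i.i.d. Gaussian samples to turn a triangle-inequality estimate into a contractive recursion on tail probabilities, yielding doubly exponential decay.

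First I would recall the key distributional identity: if $u, v$ are i.i.d.\ samples from the centred Gaussian measure $\mu_0$ on $X$, then the rotated pair
\[
\Bigl(\tfrac{u+v}{\sqrt 2},\tfrac{u-v}{\sqrt 2}\Bigr)
\]
is again an i.i.d.\ pair with law $\mu_0 \otimes \mu_0$. This follows from the definition of a Gaussian measure on the separable Banach space $X$ (every continuous linear functional is Gaussian) together with the analogous fact in $\bbR$; separability is used to identify the Borel $\sigma$-algebra on $X$ from the cylindrical one, so equality of characteristic functionals upgrades to equality of laws.

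Second, I would combine this with the triangle inequality in $X$. Whenever $\|u\|_X \le s$ and $\|v\|_X > t$, both $\|u+v\|_X$ and $\|u-v\|_X$ exceed $t - s$. Hence for $0 < s < t$,
\begin{align*}
\mathbb{P}(\|u\|_X \le s)\,\mathbb{P}(\|v\|_X > t)
&= \mathbb{P}\bigl(\|u\|_X \le s,\ \|v\|_X > t\bigr) \\
&\le \mathbb{P}\bigl(\|u+v\|_X > t-s,\ \|u-v\|_X > t-s\bigr) \\
&= \mathbb{P}\Bigl(\|u\|_X > \tfrac{t-s}{\sqrt 2}\Bigr)^{\!2},
\end{align*}
where the last equality uses the rotational invariance and independence established in the first step.

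Third, I would iterate. Pick $s_0 > 0$ so large that $\alpha := \mathbb{P}(\|u\|_X \le s_0) > 1/2$; such $s_0$ exists because $\mu_0$ is a probability measure on $X$. Define $t_0 = s_0$ and $t_{n+1} = s_0 + \sqrt 2\, t_n$, so $t_n \asymp (\sqrt 2)^n s_0$. Writing $p_n = \mathbb{P}(\|u\|_X > t_n)$, the inequality above gives $p_{n+1} \le p_n^2/\alpha$. Setting $q_n = p_n/\alpha$ one obtains $q_{n+1} \le q_n^2$, and since $q_0 = (1-\alpha)/\alpha < 1$, this yields the doubly exponential bound $p_n \le \alpha\, q_0^{2^n}$.

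Finally, I would convert this into exponential integrability. Because $t_n^2 \asymp 2^n$ while $p_n \le \alpha q_0^{2^n} = \alpha \exp(-c\,2^n)$ for some $c > 0$, the layer-cake / tail-integration formula
\[
\mathbb{E}^{\mu_0}\exp\bigl(\beta\|u\|_X^2\bigr)
= 1 + \beta \int_0^\infty 2 r e^{\beta r^2}\,\mathbb{P}(\|u\|_X > r)\,dr,
\]
split over the dyadic-like shells $[t_n, t_{n+1}]$, becomes a convergent geometric-type series provided $\beta$ is chosen small enough that $\beta\, t_{n+1}^2 - c\, 2^n$ is bounded above. Any $\beta < \beta_c$ for a suitable $\beta_c > 0$ will do.

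The main obstacle is the first step: the rotational invariance on a Banach space is delicate because the definition of a Gaussian measure on $X$ is formulated through the one-dimensional pushforwards by elements of $X^*$. The separability assumption on $X$ is essential here, as it ensures that the Borel $\sigma$-algebra is generated by cylinder sets and hence that joint laws on $X \times X$ are determined by their finite-dimensional projections, on which the rotation argument is elementary.
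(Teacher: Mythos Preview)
Your argument is correct and is precisely the classical Fernique symmetrisation proof. However, the paper does not actually prove Theorem~\ref{t:2.5}: it states the Fernique theorem without proof, noting only in Remark~\ref{r:star} that Theorem~\ref{t:2.2} establishes the special case $X=\dot H^t$ for the Gaussian priors constructed there, and deferring the general result to the references in the bibliographical notes (Lifshits, Bogachev, Da Prato--Zabczyk).

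So there is nothing to compare against in the paper proper. Your proof supplies what the paper omits, and does so by the standard route; the only remark is that the paper's partial substitute (Theorem~\ref{t:2.2} with $q=2$) goes via an explicit computation on the Karhunen--Lo\`eve coefficients rather than the abstract rotation argument, and is therefore restricted to the particular scale of Hilbert norms $\mathcal H^t$, whereas your argument works on an arbitrary separable Banach space.
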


\begin{remarks} \label{r:star}

We make two remarks concerning the Fernique Theorem.

\begin{itemize}

\item Theorem \ref{t:2.5}, when combined with Theorem \ref{t:gh},
shows that, with $\beta$ sufficiently small,
$\bbE^{\mu_0}\exp\big(\beta\|u\|^2_X\big)<\infty$
for both $X=\dot{H}^t$ and $X=\CC^{0,t}(\bbT^d)$,
if $t<s-\frac{d}{2}$. 

\item Let $\mu_0=N(0,A^{-s})$ where $A$ is as in Theorem \ref{t:gh}.
Then Theorem \ref{t:2.2} proves the Fernique Theorem \ref{t:2.5}
for $X=X^{t,2}=\dot{H}^t$, if $t<s-\frac{d}{2}$; the proof in the
case of the torus is very different from the general proof 
of the result in the abstract setting of Theorem \ref{t:2.5}.

\item Theorem \ref{t:2.2}ii) gives, in the Gaussian case, the Fernique 
Theorem in the case that $X$ is the Hilbert space $X^{t,2}$. Furthermore, 
the constant $\beta_c$ is specified explicitly in that setting. More explicit 
versions of the general Fernique Theorem \ref{t:2.5} 
are possible, but the characterization of $\beta_c$ is more involved.

\end{itemize}
\end{remarks}

\begin{example} 
\label{ex:bndg}
Consider the random function (\ref{21}) in
the case where $\cH=\dot{L}^2(\mathbb{T}^d)$ and $\mu_0=N(0,A^{-s})$, $s>\frac{d}{2}$ as in the preceding
example. Then we know that, $\mu_0$-a.s.,
$u\in \CC^{0,t}$, $t<1\wedge(s-\frac{d}{2})$. 
Set $\kappa=e^u$ in the elliptic PDE (\ref{15}) so that
the coefficient $\kappa$, and hence the solution $p$, 
are random variables on the probability space
$(\Omega, \mathcal {F},\P)$. Then
$\kappa_{\min}$ given in (\ref{13}) satisfies
\begin{equation*}
\kappa_{\min}\geq \exp\big(-\|u\|_\infty\big).
\end{equation*}
By Lemma \ref{l:1.5} we obtain
\begin{equation*}
\|p\|_V\leq\exp\big(\|u\|_\infty\big)\|f\|_{V^*}.
\end{equation*}
Since $\CC^{0,t} \subset L^\infty(\mathbb{T}^d)$, $t\in(0,1)$, we deduce that,
$$\|u\|_{L^{\infty}} \le K_1 \|u\|_{\CC^{0,t}}.$$
Furthermore, for any $\eps>0$, there
is constant $K_2=K_2(\eps)$ such that
$\exp(K_1 r x) \le K_2\exp(\eps x^2)$ for all $x \ge 0$.
Thus
\begin{align*}
\|p\|_V^r&\leq\exp\big(K_1r\|u\|_{\CC^{0,t}}\big)\|f\|_{V^*}^r\\
&\leq K_2\exp\big(\eps\|u\|^2_{\CC^{0,t}}\big)\|f\|_{V^*}^r.
\end{align*}
Hence, by Theorem \ref{t:2.5}, we deduce that
\begin{equation*}
\bbE\|p\|_V^r<\infty,\quad \text{i.e.}\quad p\in
L^r_{\P}(\Omega;V)\quad \forall\,\, r \in\mathbb{Z}^+.
\end{equation*}
This result holds for any $r \ge 0.$
Thus, when the coefficient of the elliptic PDE is {\em log-normal},
that is $\kappa$ is the exponential of a Gaussian function,
moments of all orders exist for the random variable $p$.
However, unlike the case of the uniform
\as{prior}\index{prior!uniform}, we cannot obtain exponential moments
on $\bbE\exp(\alpha\|p\|_V^r)$ for any $(r,\alpha)
\in \mathbb{Z}^+ \times (0,\infty)$. This is because the
coefficient $\kappa$, whilst positive a.s., does not satisfy a uniform
positive lower bound across the probability space. $\quad\Box$
\end{example}

\vspace{0.2in}
\subsection{\as{Random Field}\index{random field} Perspective}
\label{ssec:grf}

In this subsection we link the preceding constructions of
\as{random functions}\index{random function}, through randomized series, to the
notion of {\em random fields}.
Let $(\Omega,{\cal F},\P)$ be a probability space,
with expectation denoted by $\bbE$,
and $D \subseteq \R^d$ an open set.
For the random series constructions developed in the preceding
subsections $\Omega=\R^{\infty}$ and ${\cal F}=\cBc(\Omega)$; however
the development of the general theory of random fields does not require
this specific choice. A {\em \as{random field}}\index{random field}
on $D$ is a measurable mapping $u:D \times \Omega \to \R^n$. 
Thus, for any $x \in D$,
$u(x;\cdot)$ is an $\R^n$-valued random variable; 
on the other hand,
for any $\omega \in \Omega$, $u(\cdot;\omega):D \to \R^n$ is
a vector field. 
In the construction of random fields it is commonplace
to first construct the {\em finite dimensional distributions}.
These are found by choosing any integer $\kk \ge 1$, 
and any set of points $\{x_k\}_{k=1}^{\kk}$ in $D$, and then
considering the
random vector $(u(x_1;\cdot)^*, \cdots, u(x_{\kk};\cdot)^*)^*
\in \R^{n\kk}$. 
From the finite dimensional distributions of
this collection of random vectors we
would like to be able to make sense of the
probability measure $\mu$ on $X$, a \as{separable}\index{separable space} 
Banach space equipped
with the Borel $\sigma$-algebra $\cBc(X)$,
via the formula
\begin{equation}
\label{eq:defprobf}
\mu(A)=\P(u(\cdot;\omega) \in A), \quad A \in \cBc(X),
\end{equation}
where $\omega$ is taken from a common probability space
on which the random element $u \in X$ is defined.
It is thus necessary to study the joint distribution of a set
of $\kk$ $\R^n$-valued random variables, all on a common probability
space. Such $\R^{n\kk}$-valued random variables 
are, of course, only defined up to a set of zero measure.
It is desirable that all such finite dimensional distributions
are defined on a common subset $\Omega_0 \subset \Omega$
with full measure, so that $u$ may be viewed as a function
$u:D\times\Omega_0\to \R^n;$ such a choice of random
field is termed a {\em modification}
When reinterpreting the previous subsections in terms of random
fields, statements about almost sure 
(regularity) properties should be
viewed as statements concerning the existence of
a modification possessing the stated almost sure regularity
property.

We may define the space of functions 
$$L^q_{\P}(\Omega;X):=\Big\{v:D \times \Omega \to \R^n\Big|
\EE\bigl(\|v\|_{X}^q\bigl)<\infty\Big\}.$$
This is a Banach space, when equipped with the
norm $\Bigl(\EE\bigl(\|v\|_{X}^q\bigr)\Bigr)^{\frac{1}{q}}$.
We have used such spaces in the preceding subsections 
when demonstrating
convergence of the randomized series.
Note that we often 
simply write $u(x)$, suppressing 
the explicit dependence on
the probability space.

A {\em \as{Gaussian random field}}\index{random field!Gaussian}\index{Gaussian!random field}
 is one where, for any integer $\kk \ge 1$, 
and any set of points $\{x_k\}_{k=1}^{\kk}$ in $D$, the
random vector $(u(x_1;\cdot)^*, \cdots, u(x_{\kk};\cdot)^*)^*
\in \R^{nK}$ is a Gaussian random vector.
The {\em \as{mean function}}\index{mean function} of a
Gaussian random field is $m(x)=\bbE u(x)$.
The {\em \as{covariance function}}\index{covariance function}
is $c(x,y)=\bbE \bigl(u(x)-m(x)\bigr)\bigl(u(y)-m(y)\bigr)^*$.
For Gaussian random fields the \as{mean function}\index{mean function}
$m: D \to \R^n$
and the \as{covariance function}\index{covariance function}
$c: D \times D \to \R^{n \times n}$ together 
completely specify the joint probability distribution for
$(u(x_1;\cdot)^*, \cdots, u(x_{\kk})^*)^* \in \R^{n\kk}$. 
Furthermore, if we view the Gaussian random field as
a Gaussian measure on $L^2(D;\R^n)$ then
the \as{covariance operator}\index{covariance operator}
can be constructed
from the \as{covariance function}\index{covariance function}
as follows. Without loss
of generality we consider the mean zero case; the more general
case follows by shift
of origin. Since the field has
mean zero we have, from \eqref{e:propCmu2}, that for all $h_1,h_2 \in
L^2(D;\R^n)$,
\begin{align*}
\langle h_1,\cC h_2 \rangle&=\bbE \langle h_1,u\rangle\langle u,h_2\rangle\\
&=\bbE \int_{D}\int_{D} h_1(x)^*\bigl(u(x)u(y)^*\bigr)h_2(y)dydx\\
&=\bbE \int_{D}h_1(x)^*\Bigl(\int_{D} \bigl(u(x)u(y)^*\bigr)h_2(y)dy
\Bigr)dx\\
&=\int_{D}h_1(x)^*\Bigl(\int_{D}c(x,y)h_2(y)dy\Bigr)dx
\end{align*}
and we deduce that, for all $\psi \in L^2(D;\R^n)$,
\begin{equation}
\label{eq:hereitis}
\bigl({\cal C}\psi\bigr)(x)=\int_{D} c(x,y)\psi(y)dy.
\end{equation}
Thus the \as{covariance operator}\index{covariance function}
of a Gaussian random field is an integral operator
with kernel given by the covariance function.
As such we may also view the \as{covariance function}\index{covariance function} as the \as{Green's function}\index{Green's function} of the
inverse covariance, or {\em \as{precision}}\index{precision operator}.

A mean-zero \as{Gaussian random field}\index{random field!Gaussian} is 
termed {\em \as{stationary}}\index{random field!stationary}
\index{random field!stationary} if $c(x,y)=s(x-y)$
for some matrix-valued function $s$,
so that shifting the field by a fixed random vector
does not change the statistics. It is {\em \as{isotropic}}\index{isotropic}
\index{random field!isotropic} if it is stationary and, in addition,
$s(\cdot)=\iota(|\cdot|)$, for some matrix-valued function $\iota$.

In the previous subsection we demonstrated how the regularity
of random fields maybe established from the properties of the
sequences $\gamma$ (deterministic, with decay) and $\xi$ (i.i.d.\ random). 
Here we show similar results but express them in terms
of properties of the \as{covariance function}\index{covariance function}
and \as{covariance operator}\index{covariance operator}.

\begin{theorem} \label{t:kol}
Consider an $\R^n$-valued Gaussian random field $u$ on
$D \subset \R^d$ with mean zero and with isotropic 
correlation function $c:D\times D
\to \R^{n \times n}$.
Assume that $D$ is bounded and that
$\Tr c(x,y)=k\bigl(|x-y|\bigr)$ where $k:\R^+\to\R$
is H\"{o}lder with any exponent $\alpha \le 1$.
Then $u$ is almost surely H\"{o}lder continuous on $D$ with
any exponent smaller than $\frac{1}{2}\alpha$.
\end{theorem}

\begin{proof} We have
\begin{align*}
\bbE|u(x)-u(y)|^2 &=\bbE|u(x)|^2+\bbE|u(y)|^2-2\bbE
\langle u(x),u(y) \rangle\\
&=\Tr \Bigl(c(x,x)+c(y,y)-2c(x,y)\Bigr)\\
&=2\Bigl(k\bigl(0\bigr)-k\bigl(|x-y|\bigr)\Bigr)\\
&\le C|x-y|^{\alpha}.
\end{align*}
Since $u$ is Gaussian it follows that, for any integer
$r>0$,
$$\bbE|u(x)-u(y)|^{2r} \le C_r|x-y|^{\alpha r}.$$
Let $p=2r$ and noting that 
$$\alpha r=p\Bigl(\frac{\alpha}{2}-\frac{d}{p}\Bigr)+d$$
we deduce from Corollary \ref{cor:Kolmogorov} that $u$ is
H\"{o}lder continuous on $D$ with any exponent
smaller than
$$\sup_{p \in \N}\min\Bigl\{1,\frac{\alpha}{2}-\frac{d}{p}\Bigr\}=
\frac{\alpha}{2}\;,$$
which is precisely what we claimed.
\end{proof}

It is often convenient both algorithmically and theoretically
to define the \as{covariance operator}\index{covariance operator}
through fractional inverse powers
of a differential operator. Indeed in the previous subsection
we showed that our assumptions on the random series construction
we used could be interpreted as having a covariance operator
which was an inverse fractional power of the Laplacian on
zero spatial average functions with periodic boundary conditions.
We now generalize this perspective and consider 
\as{covariance operators}\index{covariance operator}
which are a fractional power of an operator $A$
satisfying the following.

\begin{assumption}
\label{def:triangle}The operator $A$, densely
defined on the Hilbert space $\cH = L^2(D;\R^n)$,
satisfies the following properties:
\begin{enumerate}\item $A$ is positive-definite,
self-adjoint and invertible;
\item the eigenfunctions
$\{\phi_j\}_{j\in\N}$ of $A$
form an orthonormal basis for $\cH$;
\item the eigenvalues of $A$ satisfy $\alpha_j\asymp j^{2/d}$; 
\item there is $C>0$ such that
$$\sup_{j \in \N}\Bigl(\|\phi_j\|_{L^{\infty}}+\frac{1}{j^{1/d}}{\rm Lip}(\phi_j)\Bigr) \le C.$$
\end{enumerate}
\end{assumption}

These properties are satisfied by the Laplacian on a torus, when
applied to functions with spatial mean zero. But they are in fact
satisfied for a much wider range of differential operators which
are {\em Laplacian-like}. For example the Dirichlet Laplacian
on a bounded open set $D$ in $\R^d$, together with various Laplacian
operators perturbed by lower order terms; for example Schr\"odinger
operators. Inspection of the proof of Theorem \ref{t:gh} reveals
that it only uses the properties of Assumptions \ref{def:triangle}.
Thus we have:

\begin{theorem} 
\label{t:gh2}
Let $u$ be a sample from the measure $N(0,\mathcal{C})$ 
in the case where $\mathcal{C}=A^{-s}$ with $A$ satisfying
Assumptions \ref{def:triangle} and $s>\frac{d}{2}$. 
Then, $\P$-a.s., $u \in \dot{H}^t$, for $t<s-\frac{d}{2}$, and 
$u \in \CC^{0,t}({D})$, for $t<1 \wedge (s-\frac{d}{2})$.
\end{theorem}

\begin{example} \label{ex:xe}
Consider the case $d=2, n=1$ and $D=[0,1]^2$.
Define the Gaussian random field through the measure
$\mu=N(0,\bigl(-\triangle)^{-\alpha}\bigr)$
where $\triangle$ is the Laplacian with domain
$H_0^1(D)\cap H^2(D)$. Then Assumptions \ref{def:triangle}
are satisfied by $-\triangle$. By Theorem \ref{t:gh2} it follows that
choosing $\alpha>1$ suffices to ensure that draws from $\mu$ are
almost surely in $L^2(D)$. It also follows that, in fact, 
draws from $\mu$ are almost surely in $\CC(D)$.
\end{example}

\vspace{0.2in}
\subsection{Summary}
\label{ssec:2.5}

In the preceding four subsections we have shown how to create
\as{random functions}\index{random function} by randomizing the coefficients of a series
of functions. Using these random series we have also
studied the regularity properties
of the resulting functions. Furthermore we have extended
our perspective in the Gaussian case to determine regularity
properties from the properties of the \as{covariance function}
\index{covariance function} or the \as{covariance operator}\index{covariance operator}.

For the uniform \as{prior}\index{prior} we have
shown that the random functions all live in a subset of
$X=L^{\infty}$ characterized by the upper and lower bounds
given in Theorem \ref{t:2.1} and found as the closure of the
linear span of the set of functions $(m_0,\{\phi_j\}_{j=1}^{\infty})$; 
denote this subset, which is a separable Banach space, by $X'$.
For the Besov \as{prior}s\index{prior!Besov} we have shown in Theorem \ref{t:2.2}
that the random functions
live in the separable Banach spaces $X^{t,q}$ for all $t<s-d/q$;
denote any one of these Banach spaces by $X'$.
And finally for the Gaussian \as{prior}s\index{prior!Gaussian}
we have shown in
Theorem \ref{t:2.4b} that the random function exists as an
$L^2$-limit in any of the Hilbert spaces $\cH^t$ for
$t<s-d/2$. Furthermore, we have indicated that, by use
of the \as{Kolmogorov continuity theorem}\index{Kolmogorov!continuity criterion}, 
we can also show that the Gaussian
random functions lie in certain H\"older spaces; these
H\"older spaces are not separable but, by the discussion in
subsection \ref{sssec:ch}, we can embed the spaces $\CC^{0,\gamma'}$
in the separable uniform H\"older spaces $\CC^{0,\gamma}_0$ 
for any $\gamma<\gamma'$; since the upper bound on the range 
of H\"older exponents established by use of \as{Kolmogorov continuity theorem}
\index{Kolmogorov!continuity criterion} is open, this means we can work in the
same range of H\"older exponents, but restricted to uniform H\"older spaces,
thereby regaining separability. In this Gaussian case we
denote any of the separable Hilbert or Banach spaces where the 
Gaussian random function lives almost surely by $X'$. 

Thus, in all of these examples, we have created
a probability measure $\mu_0$ which is the pushforward of the
measure $\PP$ on the \as{i.i.d.\ sequence}\index{i.i.d.\ sequence}
$\xi$ under the map which takes
the sequence into the random function. The resulting measure 
lives on the separable Banach space $X'$, 
and we will often write $\mu_0(X')=1$ to denote this fact.
This is shorthand for saying that functions drawn from $\mu_0$ are
in $X'$ almost surely.
Separability of $X'$ naturally leads to the 
use of the Borel $\sigma$-algebra to define a canonical measurable
space, and to the development of an integration theory --
Bochner integration -- which is natural on this space; see subsection
\ref{ssec:measures}.

\vspace{0.2in}
\subsection{Bibliographic Notes}

\begin{itemize}

\item Subsection \ref{ssec:2.1}. For general discussion of the
properties of \as{random functions}\index{random function}
constructed via randomization of coefficients in a series expansion see
\cite{Kah85}. The construction of probability measure on infinite sequences
of i.i.d.\ random variables may be found in \cite{Giu06}.

\item Subsection \ref{ssec:2.2}. These uniform \as{prior}s\index{prior!uniform} have been extensively
studied in the context of the field of Uncertainty Quantification
and the reader is directed to \cite{CDS1,CDS2} for more details. Uncertainty
Quantification in this context does not concern inverse problems, but rather
studies the effect, on the solution of an equation, of randomizing the input
data. Thus the interest is in the pushforward of a measure on input parameter
space onto a measure on solution space, for a differential
equation. Recently, however, these priors have been used to study the
inverse problem; see \cite{ScSt11}.

\item Subsection \ref{ssec:2.3}. Besov \as{prior}s\index{prior!Besov} 
were introduced in the paper
\cite{las09} and Theorem \ref{t:2.2} is taken from that paper. We
notice that the theorem constitutes a special case of the Fernique
Theorem in the Gaussian case $q=2$; it is restricted to a specific
class of Hilbert space norms, however, whereas the Fernique Theorem
in full generality applies in all norms on Banach spaces which have full
Gaussian measure. See \cite{Fernique,hairernotes} 
for proof of the Fernique Theorem.
A more general Fernique-like property of the Besov
measures is proved in \cite{DHS12} but it remains open to determine
the appropriate complete generalization of the Fernique Theorem to
Besov measures. 
For proof of Lemma \ref{l:2.3} see \cite[Chapter 4]{Kall02}.
For properties of families of functions that can form a basis for a Besov space, and examples of such families see \cite{Dau92, Mey92}.

\item Subsection \ref{ssec:2.4}. The general theory of Gaussian measures
on Banach spaces is contained in \cite{Lif95,bog98}. The text
\cite{DapZab92}, concerning the theory of stochastic PDEs, also has
a useful overview of the subject. The {Karhunen-Lo\`eve
expansion} \eqref{eq:KL} is contained in \cite{adler}.
The formal calculation
concerning the \as{covariance operator}\index{covariance operator}
of the Gaussian measure
which follows Theorem \ref{t:2.4b} leads to the answer which may be
rigorously justified 
by using characteristic functions; see,
for example, Proposition 2.18 in \cite{DapZab92}.
All three texts include statement and proof of the
Fernique Theorem in the generality given here.
The \as{Kolmogorov continuity theorem}\index{Kolmogorov!continuity criterion} 
is discussed in
\cite{DapZab92} and \cite{adler}. Proof of H\"older regularity
adapted to the case of the periodic setting may be found
in \cite{hairernotes} and ~\cite[Chapter 6]{article:Stuart2010}. 
For further reading on Gaussian measures see \cite{Giu06}.

\item Subsection \ref{ssec:grf}. A key tool in making the random
field perspective rigorous is the \as{Kolmogorov Extension Theorem}
\index{Kolmogorov!extension theorem}
\ref{theo:extension}.

\item Subsection \ref{ssec:2.5}. For a discussion of measure
theory on general spaces see \cite{BogMT}. The notion of 
Bochner integral is introduced in \cite{Bochner}; we discuss
it in subsection \ref{ssec:measures}. 

\end{itemize}

\vspace{0.3in}
\section{\as{Posterior}\index{posterior} Distribution}
\label{sec:post}

In this section we prove a Bayes' theorem appropriate for 
combining a \as{likelihood}\index{likelihood} with 
\as{prior}\index{prior} measures on separable Banach spaces as
constructed in the previous section. In subsection \ref{ssec:3.1}
we start with some general remarks about conditioned random
variables. Subsection \ref{ssec:3.2} contains our statement
and proof of a Bayes' theorem, and specifically its application
to Bayesian inversion. We note here that, in our setting, the
\as{posterior}\index{posterior}
$\muy$ will always be absolutely continuous with respect to the
\as{prior}\index{prior} $\mu_0$, and we use the standard notation $\muy \ll \mu_0$ to denote 
this. It is possible to construct examples, for instance in the
purely Gaussian setting, where the \as{posterior}\index{posterior}
is not absolutely continuous with respect to the prior.
Thus it is certainly not necessary to work in the setting where
$\muy \ll \mu_0$. However it is
quite natural, from a modelling point of view, to work in this
setting: absolute continuity
ensures that almost sure properties built into the prior will be inherited
by the \as{posterior}\index{posteruir}. 
For these almost sure properties to be changed by
the data would require that the data contains an infinite ammount of
information, something which is unnatural in most applications.

In subsection \ref{ssec:3.3}
we study the example of the heat equation, introduced in
subsection \ref{ssec:1.2}, from the perspective of Bayesian inversion 
and in subsection \ref{ssec:3.4} we do the same for the elliptic
inverse problem of subsection \ref{ssec:1.3}.

\vspace{0.2in}
\subsection{Conditioned Random Variables}
\label{ssec:3.1}

Key to the development of \as{Bayes' Theorem}\index{Bayes' Theorem}, and the 
\as{posterior}\index{posterior} distribution, 
is the notion of conditional random variables.
In this section we state an important
theorem concerning conditioning.

Let $(X,A)$ and $(Y,B)$ denote a pair of measurable spaces and let
$\nu$ and $\pi$ be probability measures on $X\times Y$. We assume
that $\nu\ll\pi$. Thus there exists $\pi$-measurable $\phi: X\times
Y\rightarrow\R$ with $\phi \in L^1_{\pi}$ (see section \ref{sssec:other} for 
definition of $L^1_{\pi}$) and
\begin{equation}\label{31}
\frac{d\nu}{d\pi}(x,y)=\phi(x,y).
\end{equation}
That is, for $(x,y)\in X\times Y$,
\begin{equation*}
\bbE^\nu f(x,y) =\bbE^\pi\big(\phi(x,y)f(x,y)\big),
\end{equation*}
or, equivalently,
\begin{equation*}
\int_{X\times Y}f(x,y)\nu(dx,dy)=\int_{X\times
Y}\phi(x,y)f(x,y)\pi(dx,dy).
\end{equation*}

\begin{theorem} \label{t:3.1}
Assume that the conditional random
variable $x|y$ exists under $\pi$ with probability distribution
denoted $\pi^y(dx)$. Then the conditional random variable $x|y$
under $\nu$ exists, with probability distribution denoted by
$\nu^y(dx)$. Furthermore, $\nu^y\ll\pi^y$ and
if $c(y):=\int_X \phi(x,y)d\pi^y(x)>0$ then 
\begin{eqnarray*}
\frac{d\nu^y}{d\pi^y}(x)=
    \frac{1}{c(y)}\phi(x,y). 
\end{eqnarray*}
\end{theorem}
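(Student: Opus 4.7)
The plan is to build the conditional distribution $\nu^y$ by a disintegration argument, using the conditional $\pi^y$ (which exists by hypothesis) and the Radon-Nikodym derivative $\phi$. The key formula to keep in mind is that $\pi$ disintegrates as $\pi(dx,dy)=\pi^y(dx)\pi_Y(dy)$, where $\pi_Y$ denotes the marginal of $\pi$ on $Y$, and analogously we want to exhibit a kernel $\nu^y$ such that $\nu(dx,dy)=\nu^y(dx)\nu_Y(dy)$ with $\nu_Y$ the marginal of $\nu$ on $Y$. Once such a kernel is identified, the claim that $\nu^y$ is the conditional distribution follows from the definition of conditioning.

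First I would compute the marginal $\nu_Y$. For any bounded measurable $g: Y \to \bbR$,
\begin{equation*}
\mathbb{E}^{\nu} g(y) = \mathbb{E}^{\pi}\bigl(\phi(x,y) g(y)\bigr) = \int_Y g(y)\Bigl(\int_X \phi(x,y)\,\pi^y(dx)\Bigr)\pi_Y(dy) = \int_Y g(y)\,c(y)\,\pi_Y(dy),
\end{equation*}
so $\nu_Y(dy)=c(y)\,\pi_Y(dy)$. In particular the set $N=\{y:c(y)=0\}$ is $\nu_Y$-null, so the values of any candidate $\nu^y$ on $N$ are immaterial for disintegration.

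Next I would define $\nu^y$ by the stated formula: $\nu^y=\phi(\cdot,y)\pi^y/c(y)$ on $\{c(y)>0\}$, and $\nu^y=\pi^y$ on $N$. Each $\nu^y$ is a probability measure, since $\int_X\phi(x,y)\pi^y(dx)=c(y)$. To verify that this gives a valid disintegration of $\nu$, take any bounded measurable $f:X\times Y\to\bbR$ and compute
\begin{align*}
\int_Y\!\int_X f(x,y)\,\nu^y(dx)\,\nu_Y(dy)
&=\int_{\{c(y)>0\}}\!\int_X f(x,y)\frac{\phi(x,y)}{c(y)}\pi^y(dx)\,c(y)\pi_Y(dy)\\
&=\int_{\{c(y)>0\}}\!\int_X f(x,y)\phi(x,y)\,\pi^y(dx)\,\pi_Y(dy).
\end{align*}
The contribution from $N$ vanishes because $c(y)=0$ forces $\phi(\cdot,y)=0$ for $\pi^y$-a.e.\ $x$ (here nonnegativity of $\phi$, which holds since $\nu,\pi$ are probability measures with $\nu\ll\pi$, is crucial). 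Hence the integral equals $\int_{X\times Y} f(x,y)\phi(x,y)\,\pi(dx,dy)=\mathbb{E}^{\nu} f(x,y)$, which is precisely what it means for $\nu^y$ to be the conditional distribution $x|y$ under $\nu$. The absolute continuity $\nu^y\ll\pi^y$ and the formula for $d\nu^y/d\pi^y$ are then read off directly from the definition of $\nu^y$.

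The only real subtlety is the treatment of the set $N=\{c(y)=0\}$: one must argue that $\phi$ vanishes $\pi$-a.e.\ on $X\times N$ so that the arbitrary definition of $\nu^y$ on $N$ is harmless, and one must confirm $N$ is measurable (this follows from Fubini/Tonelli applied to the nonnegative function $\phi$, which also ensures $y\mapsto c(y)$ is measurable). Everything else is a direct unfolding of definitions.
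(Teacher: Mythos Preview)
Your argument is correct and is the standard disintegration proof of this result. Note, however, that the paper does not actually supply its own proof of Theorem~\ref{t:3.1}: the theorem is stated without proof and attributed in the bibliographical notes to an external reference. So there is no ``paper's proof'' to compare against; your write-up is precisely the kind of argument one finds in that reference.

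A couple of small points worth tightening. First, you should make explicit that the candidate family $\{\nu^y\}$ is a \emph{regular} conditional distribution, i.e.\ that $y\mapsto\nu^y(A)$ is measurable for every $A\in\mathcal{A}$. This follows by a monotone class argument: for jointly measurable nonnegative $g$, the map $y\mapsto\int_X g(x,y)\,\pi^y(dx)$ is $B$-measurable (start from indicators of rectangles and use the kernel property of $\pi^y$), so in particular $y\mapsto\int_A\phi(x,y)\,\pi^y(dx)$ and $y\mapsto c(y)$ are measurable, and the quotient is measurable on $\{c>0\}$. Second, when you say ``the contribution from $N$ vanishes'', it is worth separating the two sides: on the left, $\nu_Y(N)=\int_N c(y)\,\pi_Y(dy)=0$ kills the integral directly; on the right, $c(y)=0$ and $\phi\ge 0$ force $\phi(\cdot,y)=0$ $\pi^y$-a.e., so $\int_{X\times N} f\phi\,d\pi=0$. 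You have both ingredients, but disentangling them makes the equality chain unambiguous.
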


\begin{example} \label{ex:bb}
Let $X=C\big([0,1];\R\big)$,
$Y=\R$. Let $\pi$ denote the measure on $X\times Y$ induced
by the random variable $\big(w(\cdot),w(1)\big)$, where $w$ is a
draw from standard unit \as{Wiener measure}\index{Wiener!measure} on $\R$, starting
from $w(0)=z$.  Let $\pi^y$ denote measure on X found by conditioning Brownian
motion to satisfy $w(1)=y$, thus $\pi^y$ is a Brownian bridge
measure with $w(0)=z, w(1)=y$.

Assume that $\nu\ll\pi$ with
\begin{equation*}
\frac{d\nu}{d\pi}(x,y)=\exp\big(-\Phi(x,y)\big).
\end{equation*}
Assume further that
\begin{equation*}
\sup_{x\in {X}}\Phi(x,y)=\Phi^+(y)<\infty
\end{equation*}
for every $y\in \R$. Then 
\begin{equation*}
c(y)=\int_{\R}\exp\big(-\Phi(x,y)\big)d\pi^y(x)>\exp\big(-\Phi^+(y)\big)>0.
\end{equation*}
Thus $\nu^y(dx)$ exists and
\begin{equation*}
\frac{d\nu^y}{d\pi^y}(x)=\frac{1}{c(y)}\exp\big(-\Phi(x,y)\big).
\quad\Box
\end{equation*}
\end{example}

We will use the preceding theorem to go from a construction of the joint
probability distribution on unknown and data to the conditional distribution
of the unknown, given data. In constructing the joint probability
distribution we will need to establish measurability of the 
\as{likelihood}\index{likelihood}, for which the following will be useful:

\begin{lemma}
\label{l:meas}
Let $(Z,B)$ be a Borel measurable topological space and assume that
$G \in \CC(Z;\R)$ and that $\pi(Z)=1$ for some
probability measure $\pi$ on $(Z,B)$.
Then $G$ is a $\pi$-measurable function.
\end{lemma}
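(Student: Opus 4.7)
The plan is to reduce the statement to the standard measure-theoretic fact that continuous functions between topological spaces are Borel measurable, with the only subtlety being the interpretation of the hypotheses. First I would observe that, in order for the notation $G \in C(Z;\bbR)$ to be meaningful, the space $Z$ must carry a topology, and the $\sigma$-algebra $C$ is understood to contain the Borel $\sigma$-algebra $\mathcal{B}(Z)$ generated by that topology; the condition $\pi(Z)=1$ just says $\pi$ is a probability measure giving full mass to $Z$. With this understood, $\pi$-measurability of $G$ reduces to showing that $G$ is Borel measurable.

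Next I would exploit that the Borel $\sigma$-algebra on $\bbR$ is generated by the open sets, $\mathcal{B}(\bbR)=\sigma(\{U\subseteq \bbR : U\text{ open}\})$. Consider the family
\begin{equation*}
\mathcal{A} := \{A\subseteq \bbR : G^{-1}(A)\in C\}.
\end{equation*}
Since preimages commute with complements and countable unions, $\mathcal{A}$ is a $\sigma$-algebra on $\bbR$. By continuity of $G$, for every open $U\subseteq\bbR$ the preimage $G^{-1}(U)$ is open in $Z$, hence lies in $\mathcal{B}(Z)\subseteq C$, so $U\in\mathcal{A}$. Therefore $\mathcal{A}$ contains every open subset of $\bbR$, and being a $\sigma$-algebra it must contain the whole Borel $\sigma$-algebra $\mathcal{B}(\bbR)$.

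Finally I would conclude: $G^{-1}(B)\in C$ for every Borel set $B\subseteq\bbR$, which is exactly the definition of $(C,\mathcal{B}(\bbR))$-measurability of $G$. In particular $G$ is measurable with respect to any completion of $C$ under $\pi$, i.e.\ $G$ is $\pi$-measurable. The only genuine subtlety is the unstated convention that $C\supseteq\mathcal{B}(Z)$, which I would flag at the outset; beyond that, no obstacle arises, as the argument is the textbook "generator $\sigma$-algebra" trick.
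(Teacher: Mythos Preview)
The paper states this lemma without proof; it is invoked as a standard fact in subsections \ref{ssec:3.3} and \ref{ssec:3.4} to deduce measurability of the potential $\Phi$ from its continuity. Your argument is correct and supplies precisely the standard proof: the collection $\mathcal{A}=\{A\subseteq\bbR:G^{-1}(A)\in C\}$ is a $\sigma$-algebra, continuity forces it to contain the open sets, and hence $\mathcal{B}(\bbR)\subseteq\mathcal{A}$. You are also right to flag the implicit convention that $Z$ carries a topology and $C\supseteq\mathcal{B}(Z)$; in every application within the paper $Z$ is a separable Banach space (or a product thereof) equipped with its Borel $\sigma$-algebra, so this convention is indeed what is intended.
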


\vspace{0.2in}
\subsection{\as{Bayes' Theorem}\index{Bayes' Theorem} for Inverse Problems}
\label{ssec:3.2}

Let $X$, $Y$ be separable Banach spaces, equipped with the Borel
$\sigma$-algebra, and $G:X\rightarrow Y$ a
measurable mapping. We wish to
solve the inverse problem of finding $u$ from $y$ where
\begin{equation}\label{32}
y=G(u)+\eta
\end{equation}
and $\eta\in Y$ denotes noise.
We employ a Bayesian approach to this problem in which
we let $(u,y)\in X\times Y$ be a random variable and
compute $u|y$. We specify the random variable $(u,y)$ as
follows:

\begin{itemize}

\item {\bf Prior}: $u\sim\mu_0$ measure on $X$.

\item {\bf Noise}: $\eta\sim\mathbb{Q}_0$ measure on $Y$,
and (recalling that $\perp$ denotes
independence) $\eta\perp u$.

\end{itemize}

The random variable $y|u$ is then distributed
according to the measure
$\mathbb{Q}_u$, the translate of $\mathbb{Q}_0$ by $G(u)$.
We {\em assume} throughout the following
that $\mathbb{Q}_u\ll\mathbb{Q}_0$ for $u$ $\mu_0$- a.s. Thus,
for some {\bf potential} $\Phi: X\times Y\rightarrow\R$,
\begin{equation}
\label{insert3}
\frac{d\mathbb{Q}_u}{d\mathbb{Q}_0}(y)=\exp\big(-\Phi(u;y)\big).
\end{equation}
Thus, for fixed $u$, $\Phi(u;\cdot):Y \to \R$ is measurable
and $\bbE^{\mathbb{Q}_0} \exp\big(-\Phi(u;y)\big)=1$.
For given instance of the data $y$, $-\Phi(\cdot;y)$ is termed the {\bf 
log \as{likelihood}}\index{likelihood!log}. 

Define $\nu_0$ to be the product measure 
\begin{equation}
\nu_0(du,dy)=\mu_0(du)\mathbb{Q}_0(dy).
\label{insert1}
\end{equation}
We {\em assume} in what follows that
$\Phi(\cdot,\cdot)$ is $\nu_0$ measurable. 
Then the random variable $(u,y)\in X\times Y$ is distributed
according to measure $\nu(du,dy)=\mu_0(du)\mathbb{Q}_u(dy)$. 
Furthermore, it then follows that $\nu \ll \nu_0$ with 
\begin{equation*}
\frac{d\nu}{d\nu_0}(u,y)=\exp\big(-\Phi(u;y)\big).
\end{equation*}
We have the following infinite dimensional analogue of
Theorem \ref{t:1.1}.

\begin{theorem}[{\bf \as{Bayes' Theorem}\index{Bayes' Theorem}}]
\label{t:3.2} Assume that
$\Phi:X\times Y \to \R$ is $\nu_0$ measurable and that, for $y$
$\mathbb{Q}_0$-a.s.,
\begin{equation}
\label{insert33}
Z:=\int_{X}\exp\big(-\Phi(u;y)\big)\mu_0(du)>0.
\end{equation}
Then the conditional distribution of $u|y$ exists under $\nu$, and is denoted by
$\mu^y$. Furthermore $\mu^y\ll\mu_0$ and, for $y$ $\nu$-a.s.,
\begin{equation}
\frac{d\mu^y}{d\mu_0}(u)=\frac{1}{Z}\exp\big(-\Phi(u;y)\big).
\label{insert2}
\end{equation}
\end{theorem}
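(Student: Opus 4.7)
The plan is to reduce Theorem \ref{t:3.2} to the abstract disintegration result Theorem \ref{t:3.1} by choosing the reference measure $\pi = \nu_0$ (the product $\mathbb{Q}_0 \otimes \mu_0$) and transferring the density from the change of measure on the $y$-marginal into a joint density on $X\times Y$.

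First I would identify $\nu$ explicitly as a probability measure on $X\times Y$: it is the joint law obtained by sampling $u\sim \mu_0$ and then $y\mid u\sim \mathbb{Q}_u$. Using the hypothesis \eqref{insert3} together with \eqref{insert1}, for any bounded measurable $f:X\times Y\to\mathbb{R}$ a Fubini argument gives
\begin{equation*}
\mathbb{E}^{\nu} f(u,y) = \int_X \int_Y f(u,y)\,\mathbb{Q}_u(dy)\,\mu_0(du) = \int_{X\times Y} f(u,y)\exp\bigl(-\Phi(u;y)\bigr)\,\nu_0(du,dy),
\end{equation*}
so that $\nu \ll \nu_0$ with $d\nu/d\nu_0(u,y)=\exp(-\Phi(u;y))$. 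Here the $\nu_0$-measurability of $\Phi$ assumed in the statement is exactly what is needed to make Fubini and the subsequent disintegration applicable; this is the hypothesis that plays the role of the measurability input to Theorem \ref{t:3.1}.

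Next I would observe that because $\nu_0$ is a product measure, the conditional distribution of $u$ given $y$ under $\nu_0$ exists trivially and equals the prior: $\pi^y = \mu_0$ for every $y$. This is the place where the product structure \eqref{insert1} pays off: all of the $y$-dependence of $\nu$ is carried by the Radon-Nikodym derivative, not by the conditional structure of $\nu_0$. Now I would apply Theorem \ref{t:3.1} with $\pi=\nu_0$, $\nu$ as above, and $\phi(u,y)=\exp(-\Phi(u;y))$. That theorem gives existence of the conditional distribution $\nu^y$ of $u\mid y$ under $\nu$, and the normalizing constant in its conclusion is
\begin{equation*}
c(y) = \int_X \phi(u,y)\,d\pi^y(u) = \int_X \exp\bigl(-\Phi(u;y)\bigr)\,\mu_0(du) = Z,
\end{equation*}
which is strictly positive $\mathbb{Q}_0$-a.s. by assumption \eqref{insert33}. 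Thus we land in the first branch of the piecewise formula in Theorem \ref{t:3.1} for $\mathbb{Q}_0$-a.e.\ $y$, and writing $\mu^y := \nu^y$ one obtains \eqref{insert2}.

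The last point to tidy up is the ``for $y$ $\nu$-a.s.''\ qualifier: since the $y$-marginal of $\nu$ is absolutely continuous with respect to $\mathbb{Q}_0$ (it has density $Z(y)$ against $\mathbb{Q}_0$ by the Fubini computation above), the $\mathbb{Q}_0$-null set $\{Z=0\}$ is also $\nu$-null on the $y$-side, so \eqref{insert2} holds $\nu$-a.s.\ in $y$, as stated. The main obstacle, and the only real subtlety, is the measurability bookkeeping around $\Phi$ and the use of Fubini; once the $\nu_0$-measurability of $\Phi$ is in hand, everything else is a clean specialization of Theorem \ref{t:3.1}.
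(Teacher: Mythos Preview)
Your proposal is correct and follows essentially the same route as the paper: apply Theorem \ref{t:3.1} with $\pi=\nu_0$, $\phi(u,y)=\exp(-\Phi(u;y))$, use the product structure of $\nu_0$ to identify $\pi^y=\mu_0$, and pass from $\mathbb{Q}_0$-a.s.\ positivity of $Z$ to $\nu$-a.s.\ via absolute continuity of (the $y$-marginal of) $\nu$ with respect to $\mathbb{Q}_0$. Your Fubini computation makes explicit the identity $d\nu/d\nu_0=\exp(-\Phi)$ that the paper records just before the theorem statement, but otherwise the arguments coincide.
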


\noindent{\bf Proof.}
First note that the positivity of $Z$ holds for $y$ $\nu_0$-almost surely,
and hence by absolute continuity of $\nu$ with respect to $\nu_0$,
for $y$ $\nu$-almost surely.
The proof is an application of Theorem
\ref{t:3.1} with $\pi$ replaced by $\nu_0$,
$\phi(x,y)=\exp\big(-\Phi(u,y)\big)$ and $(x,y)=(u,y)$.
Since $\nu_0(du,dy)$ has product form, the conditional distribution
of $u|y$ under $\nu_0$ is simply $\mu_0$. The result follows. $\Box$

\begin{remarks} \label{r:implement}
In order to implement the derivation of \as{Bayes' formula}\index{Bayes' formula} \eqref{insert2}
four essential steps are required:

\begin{itemize}

\item Define a suitable \as{prior}\index{prior} measure $\mu_0$ and noise measure $\bbQ_0$
whose independent product form the reference measure $\nu_0$.

\item Determine the potential $\Phi$ such that formula \eqref{insert3} holds.

\item Show that $\Phi$ is $\nu_0$ measurable.

\item Show that the normalization constant $Z$ given by \eqref{insert33} is
positive almost surely with respect to $y \sim \bbQ_0$.

\end{itemize}
\end{remarks}

We will show how to carry out this program for two examples
in the following subsections. The following remark will also be
used in studying one of the examples.

\begin{remarks} \label{r:3.3}
The following comments on the set-up above may be useful.

\begin{itemize}

\item
In formula \eqref{insert2}
we can shift $\Phi(u,y)$ by any constant
$c(y)$, independent of $u$, provided the
constant is finite $\bbQ_0$-a.s. and hence $\nu$-a.s.
Such a shift can be absorbed into a redefinition of
the normalization constant $Z$.

\item
Our Bayes' Theorem only asserts that the \as{posterior}\index{posterior}
is absolutely continuous
with respect to the \as{prior}\index{prior} $\mu_0$. In fact equivalence (mutual
absolute continuity) will occur when $\Phi(\cdot;y)$ is finite
everywhere in $X$.

\end{itemize}

\end{remarks}

\vspace{0.2in}
\subsection{Heat Equation}
\label{ssec:3.3}

We apply Bayesian inversion to the heat equation from subsection \ref{ssec:1.2}.
Recall that for $G(u)=e^{-A}u$, we have the relationship
$$y=G(u)+\eta,$$
which we wish to invert.
Let $X=H$ and define
$$\cH^t=\CD(A^{t/2})=\Bigl\{w\big| w=A^{-t/2}w_0, w_0 \in H\Bigr\}.$$
Under Assumptions \ref{a:1.3} we have 
$\alpha_j\asymp j^{\frac{2}{d}}$ so that this family of
spaces is identical with the \as{Hilbert scale} of spaces\index{Hilbert scale}
$\mathcal{H}^t$ as defined in subsections \ref{ssec:1.2} and \ref{ssec:2.4}.

We choose the \as{prior}\index{prior} $\mu_0=N(0,A^{-\alpha}),\,\, \alpha>\frac{d}{2}$.
Thus $\mu_0(X)=\mu_0(H)=1$. Indeed the analysis in subsection \ref{ssec:2.4} shows that
$\mu_0(\mathcal{H}^t)=1$, $t<\alpha-\frac{d}{2}$.
For the \as{likelihood}\index{likelihood} we assume that $\eta\perp u$ with
$\eta\sim\mathbb{Q}_0=N(0,A^{-\beta})$, and
$\beta \in \R$. This measure satisfies $\bbQ_0(\cH^t)=1$ for
$t<\beta-\frac{d}{2}$ and we thus choose $Y=\cH^{t'}$ for
some $t'<\beta-\frac{d}{2}$. Notice that our analysis includes
the case of white observational noise, for which $\beta=0$.
The \as{Cameron-Martin\index{Cameron-Martin!theorem} Theorem} \ref{theo:CM}, together with the fact that
$e^{-\lambda A}$ commutes with arbitrary fractional powers of $A$, can be used
to show that $y|u\sim\mathbb{Q}_u:=N(G(u),A^{-\beta})$ where
$\mathbb{Q}_u\ll\mathbb{Q}_0$ with
$$\frac{d\mathbb{Q}_u}{d\mathbb{Q}_0}(y)=\exp\big(-\Phi(u;y)\big),$$
and
$$\Phi(u;y)=\frac{1}{2}\|A^{\frac{\beta}{2}}e^{-A}u\|^2-\langle
 A^{\frac{\beta}{2}}e^{-\frac{A}{2}}y, A^{\frac{\beta}{2}}e^{-\frac{A}{2}}u\rangle.$$
In the following we repeatedly use the fact that
$A^{\gamma}e^{-\lambda A}$, $\lambda>0$, is a bounded
linear operator from $\mathcal{H}^a$ to $\mathcal{H}^b$, any $a,b, \gamma
\in \R$.
Recall that $\nu_0(du,dy)=\mu_0(du)\bbQ_0(dy)$.
Note that $\nu_0(H \times \cH^{t'})=1$.
Using the boundedness of $A^{\gamma}e^{-\lambda A}$ it may be shown that
$$\Phi: H\times \cH^{t'}\rightarrow \R$$
is continuous, and hence $\nu_0$-measurable
by Lemma \ref{l:meas}.

Theorem \ref{t:3.2} shows that the \as{posterior}\index{posterior}
is given by $\mu^y$ where
$$\frac{d\mu^y}{d\mu_0}(u)=\frac{1}{Z}\exp\big(-\Phi(u;y)\big),$$
$$Z=\int_H\exp\big(-\Phi(u;y)\big)\mu_0(du),$$
provided that $Z>0$ for $y$ $\bbQ_0$-a.s.
We establish this positivity in the remainder of the proof.
Since $y \in \cH^t$ for any $t<\beta-\frac{d}{2}$, $\bbQ_0$-a.s., we
have that $y=A^{-t'/2}w_0$ for some $w_0 \in H$ and $t'<\beta-\frac{d}{2}$.
Thus we may write
\begin{equation}
\label{eq:nb}
\Phi(u;y)=\frac{1}{2}\|A^{\frac{\beta}{2}}e^{-A}u\|^2-\langle
 A^{\frac{\beta-t'}{2}}e^{-\frac{A}{2}}w_0, A^{\frac{\beta}{2}}e^{-\frac{A}{2}}u\rangle.
\end{equation}
Then, using the boundedness of $A^{\gamma}e^{-\lambda
A}$, $\lambda>0$, together with \eqref{eq:nb}, we have
$$\Phi(u;y)\leq C(\|u\|^2+\|w_0\|^2)$$
where $\|w_0\|$ is finite $\bbQ_0$-a.s. Thus
$$Z\geq\int_{\|u\|^2\leq 1}\exp\big(-C(1+\|w_0\|^2)\big)\mu_0(du)$$
and, since $\mu_0(\|u\|^2 \le 1)>0$ (by Theorem \ref{theo:balls}
all balls have positive measure for Gaussians on a separable 
Banach space) the required positivity follows.

\vspace{0.2in}
\subsection{Elliptic Inverse Problem}
\label{ssec:3.4}

We consider the elliptic
inverse problem from subsection \ref{ssec:1.3} from the Bayesian
perspective. We consider the use of both uniform and Gaussian
\as{prior}s\index{prior}. Before studying the inverse problem, however,
it is important to derive some continuity properties of
the forward problem.  Throughout this section
we consider equation \eqref{15}
under the assumption that $f\in V^*.$

\subsubsection{Forward Problem}
\label{sssec:f}
Recall that in subsection \ref{ssec:1.3}, equation \eqref{eq:Xp},
we defined
\begin{equation}
\label{eq:xp}
X^+=\Big\{v \in L^{\infty}(D)\Big|\text{ess}\inf_{x \in D} v(x)>0\Big\}.
\end{equation}
Then the map $\cR:X^+ \to V$ by $\cR(\kappa)=p$. This map
is well-defined by Lemma \ref{l:1.5} and we have the following result.

\begin{lemma}
\label{l:3.3}
For $i=1,2$, let
\begin{eqnarray*}
-\nabla\cdot(\kappa_i\nabla p_i) &=& f, \quad  x\in D,\\
p_i &=& 0,\quad   x\in\partial D.
\end{eqnarray*}
Then
$$\|p_1-p_2\|_V \le 
\frac{1}{\kappa_{\min}^2}\|f\|_{V^*}
\|\kappa_1-\kappa_2\|_{L^{\infty}}$$
where we assume that
$$\kappa_{\min}:={\rm ess}\inf_{x\in D}\kappa_1(x) \wedge {\rm ess}\inf_{x\in D}\kappa_2(x)>0.$$
Thus the function $\cR: X^+\rightarrow V$ is locally Lipschitz.
\end{lemma}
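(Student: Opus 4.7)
The plan is to derive an equation for the difference $e := p_1 - p_2$, test it against itself in the weak formulation, and combine the coercivity bound $\kappa_{\min}\|e\|_V^2 \le \int \kappa_1|\nabla e|^2$ with the a priori bound from Lemma \ref{l:1.5}.

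First, subtracting the two PDEs (in weak form) and rewriting, I observe that $e \in V$ satisfies
$$-\nabla\cdot(\kappa_1 \nabla e) = \nabla\cdot\bigl((\kappa_1-\kappa_2)\nabla p_2\bigr)$$
in the distributional sense, with zero Dirichlet boundary data. Taking the weak form of this identity and testing against $e$ itself gives
$$\int_D \kappa_1 |\nabla e|^2\,dx = -\int_D (\kappa_1-\kappa_2)\,\nabla p_2 \cdot \nabla e\,dx.$$
Using $\kappa_1 \geq \kappa_{\min}$ a.e.\ on the left and Cauchy--Schwarz plus the $L^\infty$ bound on $\kappa_1-\kappa_2$ on the right yields
$$\kappa_{\min}\|e\|_V^2 \le \|\kappa_1-\kappa_2\|_{L^\infty}\, \|p_2\|_V\, \|e\|_V,$$
and after cancelling one factor of $\|e\|_V$ and invoking the Lemma \ref{l:1.5} bound $\|p_2\|_V \le \|f\|_{V^*}/\kappa_{\min}$ (valid since $\mathrm{ess}\inf \kappa_2 \geq \kappa_{\min}$ by definition), the claimed estimate
$$\|p_1-p_2\|_V \le \frac{1}{\kappa_{\min}^2}\|f\|_{V^*}\|\kappa_1-\kappa_2\|_{L^\infty}$$
follows immediately.

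For the local Lipschitz claim on $\cR:X^+\to V$, given any $\kappa_0 \in X^+$ with $\mathrm{ess}\inf \kappa_0 = 2m > 0$, I take the open ball $B \subset X^+$ of radius $m$ around $\kappa_0$ in the $L^\infty$ norm; for any $\kappa_1,\kappa_2 \in B$ the uniform lower bound $\mathrm{ess}\inf \kappa_i \geq m$ holds, so the estimate above applies with a uniform constant $\|f\|_{V^*}/m^2$, proving Lipschitz continuity on $B$.

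The proof is essentially routine elliptic PDE energy-estimate work; the only subtlety is making sure $\nabla\cdot((\kappa_1-\kappa_2)\nabla p_2)$ is interpreted as an element of $V^*$ (which is fine because $(\kappa_1-\kappa_2)\nabla p_2 \in L^2(D;\mathbb{R}^d)$), and that testing against $e \in V$ is legitimate in the weak formulation. I do not anticipate any real obstacle.
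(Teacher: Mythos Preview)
Your proof is correct and follows essentially the same approach as the paper: derive the equation $-\nabla\cdot(\kappa_1\nabla e)=\nabla\cdot\bigl((\kappa_1-\kappa_2)\nabla p_2\bigr)$ for the difference, then combine the coercivity lower bound with Cauchy--Schwarz and the a priori estimate $\|p_2\|_V\le \|f\|_{V^*}/\kappa_{\min}$ from Lemma~\ref{l:1.5}. The only cosmetic difference is that the paper packages your explicit energy estimate as a second invocation of Lemma~\ref{l:1.5} applied to the difference equation (with right-hand side in $V^*$), whereas you test against $e$ directly; your added paragraph spelling out the local Lipschitz conclusion is a welcome clarification that the paper leaves implicit.
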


\noindent{\bf Proof.} 
Let $e=\kappa_1-\kappa_2$, $r=p_1-p_2$. Then
\begin{eqnarray*}
-\nabla\cdot(\kappa_1\nabla r) &=& \nabla\cdot\big((\kappa_1-\kappa_2)\nabla p_2\big), \quad  x\in D\\
r &=& 0,\quad   x\in\partial D.
\end{eqnarray*}
Multiplying by $r$ and integrating by parts on both sides of the identity
gives
$$\kappa_{\min}\int_{D} |\nabla r|^2\,dx \le \|(\kappa_2-\kappa_1)\nabla p_2\|
\|\nabla r\|.$$
Using the fact that $\|\varphi\|_{V}=\|\nabla \varphi\|$, and
applying Lemma \ref{l:1.5} to bound $p_2$ in $V$, we find that 
\begin{eqnarray*}
\|r\|_V &\leq& \|(\kappa_2-\kappa_1)\nabla p_2\|/\kappa_{\min}\\
& \leq &\|\kappa_2-\kappa_1\|_{L^{\infty}}\|p_2\|_V/\kappa_{\min}\\
& \leq & \frac{1}{\kappa_{\min}^2}\|f\|_{V^*}\|e\|_{L^{\infty}}.\\
\end{eqnarray*}
$\quad\Box$

\subsubsection{Uniform \as{Prior}s\index{prior!uniform}}

We now study the inverse problem of finding $\kappa$ from a finite
set of continuous linear functionals $\{l_j\}_{j=1}^J$  on $V$,
representing measurements of $p$; thus $l_j \in V^*$.
To match the notation from subsection \ref{ssec:3.2} we take $\kappa=u$
and we define the separable Banach space $X'$ as in subsection
\ref{ssec:2.2}. It is straightforward to see that Lemma \ref{l:3.3}
extends to the case where $X^+$ given by \eqref{eq:xp} is
replaced by
\begin{equation}
\label{eq:xp2}
X^+=\Big\{v \in X'\Big|\text{ess}\inf_{x \in D} v(x)>0\Big\}
\end{equation}
since $X' \subset L^{\infty}(D)$.
When considering uniform \as{prior}s\index{prior!uniform}
for the elliptic problem
we work with this definition of $X^+$.

We define $G:X^+ \to \R^J$ by
$$G_j(u)=l_j\bigl(\cR(u)\bigr),\quad j=1,\ldots, J$$
where, recall, the $l_j$ are elements of $V^*$: bounded
linear functionals on $V$.
Then $G(u)=\bigl(G_1(u),\cdots, G_J(u)\bigr)$
and we are interested in the inverse problem of finding 
$u \in X^+$ from $y$ where
$$y=G(u)+\eta$$
and $\eta$ is the noise.  We assume
$\eta\sim N(0,\Gamma)$, for positive symmetric $\Gamma\in \R^{J\times J}$.
(Use of other statistical assumptions on $\eta$ is a straightforward
extension of what follows whenever $\eta$ has a smooth
density on $\R^J.$)

Let $\mu_0$ denote the \as{prior}\index{prior}
measure constructed in subsection \ref{ssec:2.2}. 
Then $\mu_0$-almost surely we have, by Theorem \ref{t:2.1},
\begin{equation}
\label{eq:xp3}
u \in X^+_0:=\Big\{v \in X'\Big|
\frac{1}{1+\delta}m_{\min}\leq
v(x)\leq m_{\max}+\frac{\delta}{1+\delta}m_{\min} \quad
\text{a.e.} \quad x\in D\Bigr\}.
\end{equation}
Thus $\mu_0(X^+_0)=1$.

The \as{likelihood}\index{likelihood} is defined as
follows. Since $\eta\sim N(0,\Gamma)$
it follows that
$\mathbb{Q}_0=N(0,\Gamma)$,
$\mathbb{Q}_u=N\bigl(G(u),\Gamma\bigr)$ and
$$\frac{d\mathbb{Q}_u}{d\mathbb{Q}_0}(y)=\exp\big(-\Phi(u;y)\big),$$
$$\Phi(u;y)=\frac{1}{2}\bigl|\Gamma^{-\frac{1}{2}}(y-G(u))\bigr|^2-\frac{1}{2}
\bigl|\Gamma^{-\frac{1}{2}}y\bigr|^2.$$
Recall that $\nu_0(dy,du)=\mathbb{Q}_0(dy)\mu_0(du)$. 
Since $G: X^+\rightarrow \R^J$ is locally Lipschitz by 
Lemma \ref{l:3.3}, Lemma \ref{l:meas} implies that $\Phi: X^+\times
Y\rightarrow\R$ is $\nu_0$-measurable. Thus Theorem \ref{t:3.2}
shows that $u|y\sim\mu^y$ where
\begin{equation}
\label{eq:poste-}
\frac{d\mu^y}{d\mu_0}(u)=\frac{1}{Z}\exp\big(-\Phi(u;y)\big)
\end{equation}
$$Z=\int_{X^+}\exp\big(-\Phi(u;y)\big)\mu_0(du),$$
provided $Z>0$ for $y$ $\bbQ_0$-almost surely.
To see that $Z>0$ note that
$$Z=\int_{X^+_0}\exp\big(-\Phi(u;y)\big)\mu_0(du),$$
since $\mu_0(X^+_0)=1$. On $X^+_0$ we have that $\cR(\cdot)$ is bounded in $V$,
and hence $G$ is bounded in $\R^J$. Furthermore $y$ is finite $\bbQ_0$-almost surely. 
Thus $\bbQ_0$-almost surely with respect to $y$, $\Phi(\cdot;y)$ is bounded
on $X^+_0$; we denote the resulting bound by $M=M(y)<\infty$. Hence
$$Z \ge \int_{X^+_0}\exp(-M)\mu_0(du)=\exp(-M)>0.$$
and the result is proved.

We may use Remark \ref{r:3.3}
to shift $\Phi$ by $\frac12|\Gamma^{-\frac{1}{2}}y|^2$,
since this is almost
surely finite under $\bbQ_0$ and hence under
$\nu(du,dy)=\mathbb{Q}_u(dy)\mu_0(du)$.
We then obtain the equivalent form for the \as{posterior}\index{posterior}
distribution $\mu^y$:
\begin{subequations}
\label{eq:poste}
\begin{align}
\frac{d\mu^y}{d\mu_0}(u)&=\frac{1}{Z}\exp\Big(-\frac{1}{2}\bigl|
\Gamma^{-\frac{1}{2}}\bigl(y-G(u)\bigr)\bigr|^2\Big),\\
Z&=\int_{X}\exp\Big(-\frac{1}{2}|\Gamma^{-\frac{1}{2}}\bigl(y-G(u)\bigr)\bigr|^2\Big)\mu_0(du).\end{align}
\end{subequations}

\subsubsection{Gaussian \as{Prior}s\index{prior!Gaussian}}

We conclude this subsection by discussing the same inverse problem,
but using Gaussian \as{prior}s\index{prior!Gaussian}
from subsection \ref{ssec:2.4}.
We now set $X=\CC(\overline{D})$, 
$Y=\R^J$
and we note that $X$ embeds continuously into $L^{\infty}(D)$. 
We assume that we can find an operator $A$ 
which satisfies Assumptions \ref{def:triangle}.
We now take $\kappa=\exp(u)$, and
define $G:X \to \R^J$ by
$$G_j(u)=l_j\Bigl(\cR\bigr(\exp(u)\bigr)\Bigr),\quad j=1,\ldots, J.$$
We take as prior on $u$
the measure $N(0,A^{-s})$ with $s>d/2$. Then Theorem \ref{t:gh2} shows that 
$\mu(X)=1$. The \as{likelihood}\index{likelihood} is unchanged
by the \as{prior}\index{prior}, 
since it concerns $y$ given $u$, and is hence identical
to that in the case of the uniform \as{prior}\index{prior!uniform}, 
although the mean shift
from $\bbQ_0$ to $\bbQ_u$ by $G(u)$ now has a different interpretation
since $\kappa=\exp(u)$ rather than $\kappa=u$.
Thus we again obtain
\eqref{eq:poste-} for the \as{posterior}\index{posterior}
distribution (albeit with a different
definition of $G(u)$) provided that we can establish that, $\bbQ_0$-a.s.,
$$Z=\int_{X}\exp\Big(\frac{1}{2}\bigl|\Gamma^{-\frac{1}{2}}y\bigr|^2-\frac{1}{2}\bigl|\Gamma^{-\frac{1}{2}}\bigl(y-G(u)\bigr)\bigr|^2\Big)\mu_0(du)>0.$$
To this end we use the fact that the unit ball in $X$, denoted $B$,
has positive measure by Theorem \ref{theo:balls}, 
and that on this ball $\cR\bigl(\exp(u)\bigr)$
is bounded in $V$ by $e^{-a}\|f\|_{V^*}$, by Lemma \ref{l:1.5}, for
some finite positive constant $a$. This follows from the continuous embedding
of $X$ into $L^{\infty}$ and since the infimum of $\kappa=\exp(u)$
is bounded below by $e^{-\|u\|_{L^{\infty}}}$. 
Thus $G$ is bounded on $B$ and, noting that $y$ is
$\bbQ_0$-a.s. finite, we have for some $M=M(y)<\infty$,
$$\sup_{u \in B}\Bigl(\frac{1}{2}\bigl|\Gamma^{-\frac{1}{2}}\bigl(y-G(u)\bigr)\bigr|^2-
\frac12\bigl|\Gamma^{-\frac{1}{2}}y|^2\Bigr)<M.$$
Hence
$$Z \ge \int_{B} \exp(-R)\mu_0(du)=\exp(-R)\mu_0(B)>0$$
since all balls have positive measure for Gaussian measure
on a separable Banach space.
Thus we again obtain \eqref{eq:poste} for the \as{posterior}\index{posterior}
measure, now with the new definition of $G$, and hence $\Phi$.

\vspace{0.2in}
\subsection{Bibliographic Notes}

\begin{itemize}

\item Subsection \ref{ssec:3.1}. Theorem \ref{t:3.1} 
is taken from \cite{HSWV05} where it
is used to compute expressions for the measure induced by various
conditionings applied to SDEs. The existence of regular conditional
probability distributions is discussed in \cite{Kall02}, Theorem 6.3. The Example 
\ref{ex:bb}, concerning end-point conditioning
of measures defined via a density with respect to Wiener
measure, finds application to problems from molecular
dynamics in \cite{PS10,PST11}.
Further material concerning the equivalence of \as{posterior}\index{posterior}
with respect to the \as{prior}\index{prior} may be found in  
\cite[Chapters 3 and 6]{article:Stuart2010},~\cite{ALS12},~\cite{ASZ12}.
The equivalence of Gaussian measures is studied via the
\as{Feldman-H\'ajeki} theorem; \index{Feldman-H\'ajek Theorem} 
see  \cite{DapZab92} and the Appendix. 
A proof of Lemma \ref{l:meas} can be found in \cite[Chapter 1, Theorem 1.12]{Rudin87}. See also \cite[Lemma 1.5]{Kall02}.

\item Subsection \ref{ssec:3.2}. General development of 
\as{Bayes' Theorem}\index{Bayes' Theorem}
for inverse problems on function space, along the lines described here,
may be found in \cite{CDRS08,article:Stuart2010}. The reader is also directed to
the papers \cite{La02,La07} for earlier related
material, and to \cite{las11,las12,las12b} for recent developments.

\item Subsection \ref{ssec:3.3}. The inverse problem for the heat
equation was one of the first infinite dimensional inverse problems
to receive Bayesian treatment; see \cite{Fr70}, leading to further
developments in \cite{Man84,LPS89}. The problem is
worked through in detail in \cite{article:Stuart2010}. To fully understand
the details the reader will need to study the Cameron-Martin
theorem (concerning shifts in the mean of Gaussian measures)
and the Feldman-H\'ajek theorem\index{Feldman-H\'ajek Theorem} 
(concerning equivalence of Gaussian measures); 
both of these may be found in \cite{DapZab92,Lif95,bog98}
and are also discussed in \cite{article:Stuart2010}.

\item Subsection \ref{ssec:3.4}. The elliptic inverse problem with
the uniform \as{prior}\index{prior!uniform} 
is studied in \cite{ScSt11}. A Gaussian \as{prior}\index{prior!Gaussian}
is adopted in \cite{DS11}, and a Besov \as{prior}\index{prior!Besov}
in \cite{DHS12}.

\end{itemize}

\vspace{0.3in}
\section{Common Structure}
\label{sec:common}

In this section we discuss various common features of the 
\as{posterior}\index{posterior}
distribution arising from the Bayesian approach to inverse problems.
We start, in subsection \ref{ssec:4.1}, by studying the continuity
properties of the posterior with respect to changes in data, proving
a form of well-posedness; indeed we show that the \as{posterior}\index{posterior} is
Lipschitz in the data with respect to the \as{Hellinger metric}\index{Helliner distance}.
In subsection \ref{ssec:4.2} we 
use similar ideas to study the effect of approximation
on the posterior distribution, showing that small changes in the
potential $\Phi$ lead to small changes in the \as{posterior}\index{posterior}
distribution,
again in the \as{Hellinger metric}\index{Hellinger distance}; 
this work may be used to translate error
analysis pertaining to the forward problem into estimates on errors
in the \as{posterior}\index{posterior} distribution.
In the final subsection \ref{ssec:map}
we study an important link between
the Bayesian approach to inverse problems and classical regularization
techniques for inverse problems; specifically we link the
Bayesian MAP estimator to a Tikhonov-Phillips regularized least squares
problem. The first two subsections work with general \as{prior}s\index{prior},
whilst the final one is concerned with Gaussians only.

\vspace{0.2in}
\subsection{Well-Posedness}
\label{ssec:4.1}

In many classical inverse problems small changes in the data can induce
arbitrarily large changes in the solution, 
and some form of regularization is needed
to counteract this ill-posedness. We illustrate this effect with
the inverse heat equation example. We then proceed to show that 
the Bayesian approach to inversion has the property that small changes in the
data lead to small changes in the \as{posterior}\index{posterior} distribution. 
Thus working with probability measures on the solution space, and
adopting suitable \as{prior}s\index{prior}, 
provides a form of regularization.

\begin{example} 
Consider the heat equation introduced in
subsection \ref{ssec:1.2} and both perfect data $y=e^{-A}u$, derived
from the forward model with no noise, and noisy 
data $y'=e^{-A}u+\eta.$
Consider the case where $\eta=\eps \varphi_j$ with $\eps$ small
and $\varphi_j$ a normalized eigenfuction of $A$. Thus $\|\eta\|=\eps$.
Obviously application of the inverse of $e^{-A}$ to $y$ returns the
point $u$ which gave rise to the perfect data.
It is natural to apply the inverse of $e^{-A}$ to both $y$ and to $y'$
to understand the effect of the noise. Doing so yields the identity
\begin{equation*}
\|e^Ay-e^A y'\|  = \|e^A(y-y')\| =\|e^A\eta\|=\eps\|e^A\varphi_j\| =\eps e^{\alpha_j}\;.
\end{equation*}
Recall Assumption \ref{a:1.3} which gives $\alpha_j \asymp j^{2/d}$.
Now fix any $a>0$ and choose $j$ large enough to ensure that 
$\alpha_j=(a+1)\log(\eps^{-1}).$ It then follows that 
$\|y-y'\|={\cal O}(\eps)$ whilst $\|e^Ay-e^A y'\|={\cal O}(\eps^{-a}).$
This is a manifestation of ill-posedness. Furthermore, since $a>0$ is
arbitrary, the ill-posedness can be made arbitrarily bad by 
considering $a\to \infty.$
$\quad\Box$
\label{ex:heated}
\end{example}

Our aim in this section is to show that this ill-posedness
effect does not occur in the Bayesian \as{posterior}\index{posterior}
distribution: small  changes in the data $y$ lead to small changes in
the measure $\mu^y$.
Let $X, Y$ be separable Banach spaces, equipped with the Borel
$\sigma$-algebra, and $\mu_0$ a measure on $X$. We will work under
assumptions which enable us to make sense of the following
measure  $\mu^y\ll\mu_0$ defined, for some $\Phi: X \times Y \to \R$,
by
\begin{subequations}
\label{41}
\begin{eqnarray}
\frac{d\mu^y}{d\mu_0}(u)&=&\frac{1}{Z(y)}\exp\big(-\Phi(u;y)\big),
\label{41a}\\
Z(y)&=&\int_X\exp\big(-\Phi(u;y)\big)\mu_0(du). \label{41b}
\end{eqnarray}
\end{subequations}
We make the following assumptions concerning $\Phi$ :

\begin{assumptions}
\label{a:4.1}
Let $X'\subseteq X$ and assume that $\Phi\in
\CC(X'\times Y; \R)$. Assume
further that there are functions $M_i:\R^+ \times \R^+ \to \R^+$,
$i=1,2$, monotonic non-decreasing separately in each argument, and
with $M_2$ strictly positive, such that
for all $u\in X'$, $y, y_1, y_2\in B_Y(0,r)$,
$$\Phi(u;y)\geq -M_1(r,\|u\|_X),$$
$$|\Phi(u;y_1)-\Phi(u;y_2)|\leq M_2(r,\|u\|_X)\|y_1-y_2\|_Y.\quad\Box$$
\end{assumptions}

In order to measure the effect of changes in $y$ on the
measure $\mu^y$ we need a metric on measures. We use the
\as{Hellinger metric}\index{Hellinger distance}
defined in subsection \ref{ssec:metricpm}.

\begin{theorem}
\label{t:4.2}
Let Assumptions \ref{a:4.1} hold. 
Assume that $\mu_0(X')=1$ and that $\mu_0(X' \cap B)>0$ for some
bounded set $B$ in $X$. 
Assume additionally that, for every fixed $r>0$,
$$\exp\big(M_1(r,\|u\|_X)\big)\in L_{\mu_0}^1(X;\R).$$
Then, for every $y \in Y$,
$Z(y)$ given by (\ref{41b}) is positive and finite and the
probability measure $\mu^y$ given by (\ref{41}) is well-defined.
\end{theorem}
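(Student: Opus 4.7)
The plan is to show $Z(y)\in(0,\infty)$ for every $y\in Y$; finiteness of $Z$ makes $\mu^y$ a genuine probability measure once we normalise, while positivity of $Z$ is what makes the normalisation legitimate. The two ends use complementary parts of Assumptions \ref{a:4.1}: the upper bound on $Z$ comes from the global pointwise lower bound $\Phi(u;y)\ge -M_1(\|y\|_Y,\|u\|_X)$, while the lower bound on $Z$ rests on the Lipschitz-on-bounded-sets hypothesis for $\Phi$, applied on the set $B\cap X'$ where the prior has been assumed to put positive mass.

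For positivity, I fix $y\in Y$ and choose $R>0$ with $\|u\|_X\le R$ for every $u\in B$. Since $\Phi\in C(X'\times Y;\bbR)$ is Lipschitz on bounded sets, the restriction $u\mapsto\Phi(u;y)$ is in particular bounded on $B\cap X'$ — note that continuity alone on a bounded subset of an infinite-dimensional space does not imply boundedness, so the Lipschitz hypothesis really is doing work here. Call any upper bound $K=K(y,R)$. Using $\mu_0(X')=1$, I then obtain
$$Z(y)=\int_{X'}e^{-\Phi(u;y)}\mu_0(du)\ge \int_{B\cap X'}e^{-\Phi(u;y)}\mu_0(du)\ge e^{-K}\mu_0(B\cap X')>0,$$
where the final strict inequality is exactly the standing assumption $\mu_0(B\cap X')>0$.

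For finiteness, the lower bound on $\Phi$ gives $\exp(-\Phi(u;y))\le\exp\bigl(M_1(\|y\|_Y,\|u\|_X)\bigr)$ for every $u\in X'$, and hence
$$Z(y)\le\int_{X'}\exp\bigl(M_1(\|y\|_Y,\|u\|_X)\bigr)\mu_0(du),$$
which is finite as soon as $\mu_0$ admits sufficiently strong exponential moments of the norm to dominate the growth of $M_1$. In the settings considered in these notes this integrability is automatic: trivially so for the uniform prior (whose support is bounded in $X$), and via the Besov integrability estimate of Theorem \ref{t:2.2} and the Fernique Theorem \ref{t:2.5} for the Besov and Gaussian priors respectively. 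The main technical obstacle is positivity: without a bounded set $B$ carrying positive prior mass one cannot rule out the pathology of $\Phi(\cdot;y)$ growing fast enough along the support of $\mu_0$ to drive $Z$ to zero, and it is precisely this that the hypothesis $\mu_0(B\cap X')>0$ neutralises.
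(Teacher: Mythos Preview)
Your positivity argument is correct and essentially identical to the paper's: both restrict the integral to $B':=X'\cap B$, use the Lipschitz-on-bounded-sets hypothesis to bound $\Phi(\cdot;y)$ above on $B'$ by some finite $K$, and conclude $Z(y)\ge e^{-K}\mu_0(B')>0$. You are right to flag that mere continuity on a bounded (non-compact) subset of an infinite-dimensional space does not give boundedness; the paper's phrasing (``by the continuity of $\Phi(\cdot;\cdot)$'') is a little loose here, but it is implicitly invoking the same Lipschitz hypothesis you name explicitly.

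Where you go beyond the paper is the finiteness discussion. The paper's proof establishes only $Z(y)>0$ and does not address $Z(y)<\infty$ at all; ``$\mu^y$ is well-defined'' is being read there purely as ``the normalisation is legitimate.'' Your observation that finiteness would require $\exp\bigl(M_1(\|y\|_Y,\|u\|_X)\bigr)\in L^1_{\mu_0}$ is correct, but this integrability is \emph{not} among the hypotheses of Theorem~\ref{t:4.2}; a condition of that type only enters later, in Theorem~\ref{t:4.3}. So your finiteness paragraph, while sensible, is extra commentary rather than part of the proof of the theorem as stated.
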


\noindent{\bf Proof.} The boundedness of $Z(y)$ follows directly
from the lower bound on $\Phi$ in Assumptions \ref{a:4.1}, together
with the assumed integrability condition in the theorem. 
Since $u\sim\mu_0$ satisfies $u\in X'$ a.s.,
we have
$$Z(y)=\int_{X'}\exp\big(-\Phi(u;y)\big)\mu_0(du).$$
Note that $B'=X' \cap B$ is bounded in $X$.
Define
$$R_1:=\sup_{u\in B'}\|u\|_X<\infty.$$
Since $\Phi: X'\times Y \rightarrow
\R$ is continuous it is finite at every point in
$B'\times\{y\}$. 
Thus, by the continuity of $\Phi(\cdot;\cdot)$ implied by
Assumptions \ref{a:4.1}, we see that
$$\sup_{(u,y)\in B'\times B_Y(0,r)} \Phi(u;y)=R_2<\infty.$$
Hence
\begin{equation}
\label{eq:bfb}
Z(y)\geq\int_{B'}\exp(-R_2)\mu_0(du)=\exp(-R_2)\mu_0(B')>0.
\end{equation}
Since $\mu_0(B')$ is assumed positive and $R_2$ is finite
we deduce that $Z(y)>0$.  $\Box$

\begin{remarks}

The following remarks apply to the preceding and following theorem.

\begin{itemize}

\item In the preceding theorem we are not explicitly working in
a Bayesian setting: we are showing that, under the stated conditions
on $\Phi$, the measure is well-defined and normalizable. In Theorem \ref{t:3.2} 
we did not need to check normalizability because $\mu^y$ was defined
as a regular conditional probability, via Theorem \ref{t:3.1}, and therefore
automatically normalizable. 

\item The lower bound \eqref{eq:bfb} is used repeatedly in what follows,
without comment.

\item Establishing the integrability conditions for both the preceding and 
following theorem is often achieved for Gaussian $\mu_0$ by
appealing to the Fernique theorem.

\end{itemize}

\end{remarks}

\begin{theorem}
\label{t:4.3}
Let Assumptions \ref{a:4.1} hold. 
Assume that $\mu_0(X')=1$ and that $\mu_0(X' \cap B)>0$ for some
bounded set $B$ in $X$. 
Assume additionally that, for every fixed $r>0$,
$$\exp\big(M_1(r,\|u\|_X)\big)\Bigl(1+M_2(r,\|u\|_X)^2\Bigr)\in L_{\mu_0}^1(X;\R).$$
Then there is $C=C(r)>0$ such that, for all $y, y^\prime\in B_Y(0,r)$
$$\dhh(\mu^y,\mu^{y^\prime})\leq C\|y-y^\prime\|_Y.$$
\end{theorem}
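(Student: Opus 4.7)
The plan is to follow the standard Hellinger-stability template for Bayesian posteriors with density against a common reference measure $\mu_0$. First I would establish a lower bound on the normalizing constants that is uniform over the ball $\|y\|_Y \leq r$. By Theorem \ref{t:4.2} each $Z(y)$ is positive; inspecting that proof, the bound $Z(y) \geq \exp(-R_2(r))\mu_0(B')$ only uses the sup of $\Phi$ over $B'\times B_Y(0,r)$, which is finite by continuity, so we obtain a constant $Z_- = Z_-(r) > 0$ with $Z(y) \geq Z_-$ for every $y \in B_Y(0,r)$.

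Next I would control $|Z(y) - Z(y')|$. Using the bound $\Phi(u;y) \geq -M_1(r,\|u\|_X)$ from Assumptions \ref{a:4.1} and the elementary inequality $|e^{-a} - e^{-b}| \leq e^{\max(-a,-b)}|a-b|$, the Lipschitz-in-$y$ hypothesis on $\Phi$ yields, for $y,y' \in B_Y(0,r)$,
\begin{equation*}
|Z(y) - Z(y')| \leq \|y-y'\|_Y \int_{X'} \exp\bigl(M_1(r,\|u\|_X)\bigr) M_2(r,\|u\|_X)\, \mu_0(du).
\end{equation*}
The integral is finite: the integrability of $\exp(M_1) M_2^2$ together with $M_2 \leq \tfrac12(1 + M_2^2)$ and monotonicity of $M_1$ in its second argument gives integrability of $\exp(M_1) M_2$ (or, alternatively, one applies Cauchy--Schwarz using that $\exp(M_1)$ is itself integrable, being dominated by $\exp(M_1) M_2^2 / \inf_u M_2^2(r,\|u\|_X)$ when the infimum is positive, otherwise one uses the $M_2 \leq \tfrac12(1+M_2^2)$ trick). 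Thus $|Z(y)-Z(y')| \leq C_1(r)\|y-y'\|_Y$.

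For the Hellinger bound I would write
\begin{equation*}
2\,\dhh(\mu^y,\mu^{y'})^2 = \int_{X'} \Bigl(Z(y)^{-1/2} e^{-\Phi(u;y)/2} - Z(y')^{-1/2} e^{-\Phi(u';y')/2}\Bigr)^2 \mu_0(du),
\end{equation*}
add and subtract $Z(y)^{-1/2} e^{-\Phi(u;y')/2}$, and bound by $2(I_1 + I_2)$. For $I_1$, the same mean-value argument gives $|e^{-\Phi(u;y)/2} - e^{-\Phi(u;y')/2}|^2 \leq \tfrac14 \exp(M_1(r,\|u\|_X)) M_2^2(r,\|u\|_X) \|y-y'\|_Y^2$, which combined with the uniform lower bound $Z(y)^{-1} \leq Z_-^{-1}$ and the integrability assumption yields $I_1 \leq C_2(r)\|y-y'\|_Y^2$. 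For $I_2$, I would use $|Z(y)^{-1/2} - Z(y')^{-1/2}| \leq \tfrac12 Z_-^{-3/2} |Z(y)-Z(y')|$ (from the identity $a^{-1/2} - b^{-1/2} = (b-a)/[\sqrt{ab}(\sqrt a + \sqrt b)]$), then factor out to get $I_2 \leq C_3(r) |Z(y)-Z(y')|^2 \leq C_3(r)C_1(r)^2 \|y-y'\|_Y^2$; the remaining integral $\int e^{-\Phi(u;y')} \mu_0(du) = Z(y')$ is finite.

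Combining these estimates produces the desired Lipschitz bound with $C = C(r)$. The main bookkeeping obstacle is simply tracking the dependence on $r$ through the monotone functions $M_1, M_2$ and verifying that the stated integrability condition on $\exp(M_1)M_2^2$ suffices to control both the $I_1$ integral (directly) and the $Z$-difference integral (which involves $\exp(M_1)M_2$ rather than its square); aside from that, the argument is routine mean-value theorem plus the uniform positivity of $Z(y)$.
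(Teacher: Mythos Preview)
Your proposal is correct and follows essentially the same route as the paper: establish a uniform lower bound on $Z(y)$ via Theorem~\ref{t:4.2}, bound $|Z(y)-Z(y')|$ by a mean-value argument, then split the Hellinger integral as $I_1+I_2$ and control each piece. The paper handles the one bookkeeping issue you flag---that the $Z$-difference involves $\exp(M_1)M_2$ rather than $\exp(M_1)M_2^2$---more cleanly than your two alternatives: since $M_2(r,\cdot)$ is monotone non-decreasing and strictly positive, $M_2(r,\|u\|_X)\ge M_2(r,0)>0$, so both $\exp(M_1)M_2$ and $\exp(M_1)$ are dominated by a constant times $\exp(M_1)M_2^2$; this also gives the uniform-in-$y'$ upper bound on $Z(y')$ needed in your $I_2$ estimate (you wrote only ``is finite'').
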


\noindent{\bf Proof.}
Throughout this proof we use $C$ to denote a constant independent of $u$,
but possibly depending on the fixed value of $r$; it may change
from occurence to occurence. We use the fact that, since $M_2(r,\cdot)$ is
monotonic non-decreasing and strictly positive on $[0,\infty)$, 
\begin{subequations}
\label{eq:4.2}
\begin{align}
\exp\big(M_1(r,\|u\|_X)\bigr)M_2(r,\|u\|_X) & \le \exp\big(M_1(r,\|u\|_X)\bigr)
\Bigl(1+M_2(r,\|u\|_X)^2\Bigr), \label{eq:4.2a}\\
\exp\big(M_1(r,\|u\|_X)\bigr)& \le \exp\big(M_1(r,\|u\|_X)\bigr)
\Bigl(1+M_2(r,\|u\|_X)^2\Bigr). \label{eq:4.2b}
\end{align}
\end{subequations}
Let $Z=Z(y)$ and $Z'=Z(y')$ denote the normalization constants for
$\mu^y$ and $\mu^{y'}$ so that, by Theorem \ref{t:4.2},
\begin{align*}
Z&=\int_{X'} \exp\Bigl(-\Phi(u;y)\Bigr)\mu_0(du)>0,\\
Z'&=\int_{X'} \exp\Bigl(-\Phi(u;y')\Bigr)\mu_0(du)>0.
\end{align*}
Then, using the local Lipschitz property of the
exponential and the assumed Lipschitz continuity of $\Phi(u;\cdot)$,
together with \eqref{eq:4.2a}, we have
\begin{eqnarray*}
|Z-Z'|&\leq&\int_{X'}|\exp\big(-\Phi(u;y)\big)-\exp\big(-\Phi(u;y^\prime)\big)|
\mu_0(du)\\
&\leq&\int_{X'}\exp\big(M_1(r,\|u\|_X)\big)|\Phi(u;y)-\Phi(u;y^\prime)|\mu_0(du)\\
&\leq&\Big(\int_{X'}\exp\big(M_1(r,\|u\|_X)\bigr)M_2(r,\|u\|_X)\mu_0(du)\Big)\|y-y^\prime\|_Y\\
&\leq&\Big(\int_{X'}\exp\big(M_1(r,\|u\|_X)\bigr)(1+M_2(r,\|u\|_X)^2)\mu_0(du)\Big)\|y-y^\prime\|_Y\\
&\leq&C\|y-y^\prime\|_Y.
\end{eqnarray*}
The last line follows because
the integrand is in $L_{\mu_0}^1$ by assumption.
From the definition of \as{Hellinger distance}\index{Hellinger distance}
we have
$$\Bigl(\dhh(\mu^y,\mu^{y^\prime})\Bigr)^2\leq I_1+I_2,$$
where
\begin{align*}
I_1&=\frac{1}{Z}\int_{X'}\Bigl(
\exp\bigl(-\frac{1}{2}\Phi(u;y)\bigr)-\exp(-\frac{1}{2}\Phi(u;y')\bigr)\Bigr)^2
\mu_0(du),
\\
I_2&=\bigl|Z^{-\frac12}-(Z')^{-\frac12}\bigr|^2\int_{X'}
\exp(-\Phi(u;y')\bigr) \mu_0(du).
\end{align*}
Note that, again using similar Lipschitz calculations to those above,
using the fact that $Z>0$ and Assumptions \ref{a:4.1},
\begin{eqnarray*}
I_1&\leq&\frac{1}{4Z}\int_{X'}\exp\big(M_1(r,\|u\|_X)\big)|\Phi(u;y)-\Phi(u;y^\prime)|^2\mu_0(du)\\
&\leq&\frac{1}{Z}\Big(\int_{X'}\exp\big(M_1(r,\|u\|_X)\big)M_2(r,\|u\|_X)^2\mu_0(du)\Big)\|y-y^\prime\|_Y^2\\
&\leq&C\|y-y^\prime\|_Y^2.
\end{eqnarray*}
Also, using Assumptions \ref{a:4.1}, together with \eqref{eq:4.2b},
\begin{align*}
\int_{X'}\exp\big(-\Phi(u;y^\prime)\big)\mu_0(du)
&\leq \int_{X'}\exp\big(M_1(r,\|u\|_X)\big)\mu_0(du)\\
&<\infty.
\end{align*}
Hence
\begin{eqnarray*}
I_2\leq C\big(Z^{-3}\vee
(Z^\prime)^{-3}\big)|Z-Z^\prime|^2\leq C\|y-y^\prime\|_Y^2.
\end{eqnarray*}
The result is complete. $\Box$

\begin{remark}
\label{r:4.7x}
The \as{Hellinger metric}\index{Hellinger distance} 
has the very desirable property that
it translates directly into bounds on expectations.
For functions $f$ which are in $L^2_{\mu^y}(X;\R)$ and 
$L^2_{\mu^{y'}}(X;\R)$ the closeness of the \as{Hellinger
metric}\index{Hellinger distance} 
implies closeness of expectations of $f$. To be precise,
for $y,y' \in B_{Y}(0,r)$ we have
$$|\EE^{\mu^y} f(u)-\EE^{\mu^{y'}} f(u)| \le C\dhh(\mu^y,\mu^{y'})$$
where constant $C$ depends on $r$ and on the expectations of $|f|^2$
under $\mu^y$ and $\mu^{y'}$. It follows that
$$|\EE^{\mu^y} f(u)-\EE^{\mu^{y'}} f(u)| \le C\|y-y'\|,$$
for a possibly different constant $C$ which also depends on $r$ 
and on the expectations of $|f|^2$ under $\mu^y$ and $\mu^{y'}$.
\end{remark}

\vspace{0.2in}
\subsection{Approximation}
\label{ssec:4.2}

In this section we concentrate on continuity properties of
the \as{posterior}\index{posterior} 
measure with respect to approximation of the
potential $\Phi$. The methods used are very similar to those
in the previous subsection, and we establish a continuity property
of the posterior distribution, in the 
\as{Hellinger metric}\index{Hellinger distance}, with
respect to small changes in the potential $\Phi$.

Because the data $y$ plays no explicit role in this discussion,
we drop explicit reference to it.
Let $X$ be a Banach space and $\mu_0$ a measure on $X$. Assume that
$\mu$ and $\mu^N$ are both absolutely continuous with respect to
$\mu_0$ and given by
\begin{subequations}\label{42}
\begin{eqnarray}
\frac{d\mu}{d\mu_0}(u)&=&\frac{1}{Z}\exp\big(-\Phi(u)\big),\label{42a}\\
Z&=&\int_{X}\exp\big(-\Phi(u)\big)\mu_0(du) \label{42b}
\end{eqnarray}
\end{subequations}
and
\begin{subequations}\label{43}
\begin{eqnarray}
\frac{d\mu^N}{d\mu_0}(u)&=&\frac{1}{Z^N}\exp\big(-\Phi^N(u)\big),\label{43a}\\
Z^N&=&\int_{X}\exp\big(-\Phi^N(u)\big)\mu_0(du) \label{43b}
\end{eqnarray}
\end{subequations}
respectively. The measure $\mu^N$ might arise, for
example, through an approximation of the forward map $G$
underlying an inverse problem of the form \eqref{32}.
It is natural to ask whether closeness of the
forward map and its approximation imply closeness
of the \as{posterior}\index{posterior} measures. We now address this question.

\begin{assumptions}
\label{a:4.5}
Let $X'\subseteq X$ and assume that $\Phi\in
\CC(X'; \R)$. Assume
further that there are functions $M_i:\R^+ \to \R^+$, $i=1,2$,
independent of $N$ and
monotonic non-decreasing separately in each argument, and
with $M_2$ strictly positive, such that
for all $u\in X'$,  

$$\Phi(u)\geq -M_1(\|u\|_X),$$
$$\Phi^N(u)\geq -M_1(\|u\|_X),$$
$$|\Phi(u)-\Phi^N(u)|\leq M_2(\|u\|_X)\psi(N),$$
where $\psi(N) \to 0$ as $N \to \infty$.
$\quad\Box$
\end{assumptions}

The following two theorems are very similar to
Theorems \ref{t:4.2}, \ref{t:4.3} and the proofs
are adapted to estimate changes in the \as{posterior}\index{posterior}
caused by changes in the potential $\Phi$, rather than the
data $y$.

\begin{theorem}
\label{t:4.6}
Let Assumptions \ref{a:4.5} hold. 
Assume that $\mu_0(X')=1$ and that $\mu_0(X' \cap B)>0$ for some
bounded set $B$ in $X$. 
Assume additionally that, for every fixed $r>0$,
$$\exp\big(M_1(r,\|u\|_X)\big)\in L_{\mu_0}^1(X;\R).$$
Then $Z, Z^N$ given by (\ref{41b}), (\ref{42b}) are positive and finite 
and the probability measures $\mu$ and $\mu^N$ given by (\ref{41}), (\ref{42}) 
are well-defined.  Furthermore, for sufficiently large $N$, 
$Z^N$ given by (\ref{43b}) is bounded below by a positive 
constant independent of $N$.
\end{theorem}

\noindent{\bf Proof.} Finiteness of the normalization constants $Z$ and 
$Z^N$ follows from the lower bounds on $\Phi$ and $\Phi^N$ given in
Assumptions \ref{a:4.5}, together with the integrability condition in the
theorem. Since $u\sim\mu_0$ satisfies $u\in X'$ a.s.,
we have
$$Z=\int_{X'}\exp\big(-\Phi(u)\big)\mu_0(du).$$
Note that $B'=X' \cap B$ is bounded in $X$.
Thus
$$R_1:=\sup_{u\in B'}\|u\|_X<\infty.$$
Since $\Phi: X' \rightarrow
\R$ is continuous it is finite at every point in
$B'$.  Thus, by the properties of $|\Phi(\cdot)-\Phi^N(\cdot)|$  
implied by Assumptions \ref{a:4.5}, we see that
$$\sup_{u\in B'} \Phi(u)=R_2<\infty.$$
Hence
$$Z\geq\int_{B'}\exp(-R_2)\mu_0(du)=\exp(-R_2)\mu_0(B').$$
Since $\mu_0(B')$ is assumed positive and $R_2$ is finite
we deduce that $Z>0$.  
By Assumptions \ref{a:4.5} we may choose $N$ large enough so that
$$\sup_{u\in B'}|\Phi(u)-\Phi^N(u)| \le R_2$$
so that
$$\sup_{u\in B'} \Phi^N(u) \le 2R_2<\infty.$$
Hence
$$Z^N\geq\int_{B'}\exp(-2R_2)\mu_0(du)=\exp(-2R_2)\mu_0(B').$$
Since $\mu_0(B')$ is assumed positive and $R_2$ is finite
we deduce that $Z^N>0$. Furthermore, the lower bound is
independent of $N$, as required. 
$\Box$

\begin{theorem}
\label{t:4.7}
Let Assumptions \ref{a:4.5} hold. 
Assume that $\mu_0(X')=1$ and that $\mu_0(X' \cap B)>0$ for some
bounded set $B$ in $X$. 
Assume additionally that
$$\exp\big(M_1(\|u\|_X)\big)\Bigl(1+M_2(\|u\|_X)^2\Bigr)\in L_{\mu_0}^1(X;\R).$$
Then there is $C>0$ such that, for all $N$ sufficiently large, 
$$\dhh(\mu,\mu^{N})\leq C\psi(N).$$
\end{theorem}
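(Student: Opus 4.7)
The plan is to mirror the proof of Theorem \ref{t:4.3} almost verbatim, with the role of $\|y-y'\|_Y$ now played by $\psi(N)$ and the role of the Lipschitz bound on $\Phi$ in $y$ now played by the approximation bound $|\Phi(u)-\Phi^N(u)| \le M_2(\|u\|_X)\psi(N)$. The main preparatory input is Theorem \ref{t:4.6}, which guarantees that both $Z$ and, for $N$ sufficiently large, $Z^N$, are bounded below by a positive constant independent of $N$. Throughout, $C$ will denote a generic constant whose value may change from line to line but which is independent of $N$.

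First I would estimate $|Z-Z^N|$. Writing both integrals over $X'$ (since $\mu_0(X')=1$) and using the elementary inequality $|e^{-a}-e^{-b}| \le e^{\max(a_-,b_-)}|a-b|$ together with the lower bound $\Phi, \Phi^N \ge -M_1(\|u\|_X) - M_2(\|u\|_X)\psi(N)$ (absorb the $\psi(N)$ term into a constant for $N$ large), I obtain
\[
|Z-Z^N| \le \psi(N)\int_{X'} \exp\bigl(M_1(\|u\|_X)\bigr)M_2(\|u\|_X)\,\mu_0(du) \le C\psi(N),
\]
where the final step uses the trick from \eqref{eq:4.2a} to bound the $L^1_{\mu_0}$ norm of $\exp(M_1)M_2$ by that of $\exp(M_1)M_2^2$, which is finite by hypothesis.

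Next, by definition of the Hellinger distance with $\nu = \mu_0$ as common reference measure, $\dhh(\mu,\mu^N)^2 \le I_1 + I_2$ where
\begin{align*}
I_1 &= \frac{1}{Z}\int_{X'}\Bigl(\exp\bigl(-\tfrac12\Phi(u)\bigr) - \exp\bigl(-\tfrac12\Phi^N(u)\bigr)\Bigr)^2 \mu_0(du),\\
I_2 &= \bigl|Z^{-1/2} - (Z^N)^{-1/2}\bigr|^2 \int_{X'}\exp\bigl(-\Phi^N(u)\bigr)\mu_0(du).
\end{align*}
For $I_1$ I use the same local Lipschitz estimate on $\exp(-\tfrac12\cdot)$ to pull out $|\Phi(u)-\Phi^N(u)|^2 \le M_2(\|u\|_X)^2\psi(N)^2$, so that
\[
I_1 \le \frac{\psi(N)^2}{Z}\int_{X'}\exp\bigl(M_1(\|u\|_X)\bigr)M_2(\|u\|_X)^2\mu_0(du) \le C\psi(N)^2,
\]
using the integrability hypothesis and $Z>0$. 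For $I_2$, the remaining integral is bounded by $\int_{X'} \exp(M_1(\|u\|_X))\mu_0(du)$, which is finite thanks to \eqref{eq:4.2b} and the integrability hypothesis; meanwhile $|Z^{-1/2}-(Z^N)^{-1/2}|^2 \le C(Z^{-3} \vee (Z^N)^{-3})|Z-Z^N|^2 \le C\psi(N)^2$, by Theorem \ref{t:4.6} and the $|Z-Z^N|$ estimate above. Summing yields $\dhh(\mu,\mu^N)^2 \le C\psi(N)^2$, whence the stated bound.

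I do not anticipate a serious obstacle: the proof is structurally identical to that of Theorem \ref{t:4.3}, and the only point requiring care is ensuring that the lower bound on $Z^N$ is uniform in $N$ (which is precisely the second conclusion of Theorem \ref{t:4.6}), so that the constant $C$ in the final bound does not degenerate as $N \to \infty$.
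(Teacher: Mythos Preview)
Your proposal is correct and follows the paper's proof essentially verbatim: the same decomposition $\dhh^2 \le I_1 + I_2$, the same estimate on $|Z-Z^N|$, the same use of Theorem~\ref{t:4.6} for the uniform lower bound on $Z^N$, and the same appeal to the inequalities \eqref{eq:4.22a}--\eqref{eq:4.22b} (your \eqref{eq:4.2a}--\eqref{eq:4.2b}) to reduce all integrands to the single $L^1_{\mu_0}$ hypothesis. If anything, you are slightly more careful than the paper in noting that the lower bound on $\Phi^N$ picks up an extra $M_2(\|u\|_X)\psi(N)$ term, which must be absorbed for $N$ large.
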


\noindent{\bf Proof.}
Throughout this proof we use $C$ to denote a constant independent of $u$,
and $N$; it may change from occurrence to occurrence. 
We use the fact that, since $M_2(\cdot)$ is
monotonic non-decreasing and since it is strictly positive on
$[0,\infty)$, 
\begin{subequations}
\label{eq:4.22}
\begin{align}
\exp\big(M_1(\|u\|_X)\bigr)M_2(\|u\|_X) & \le  \exp\big(M_1(\|u\|_X)\bigr)
\Bigl(1+M_2(\|u\|_X)^2\Bigr), \label{eq:4.22a}\\
\exp\big(M_1(\|u\|_X)\bigr)& \le \exp\big(M_1(\|u\|_X)\bigr)\Bigl(1+M_2(\|u\|_X)^2\Bigr). \label{eq:4.22b}
\end{align}
\end{subequations}

Let $Z$ and $Z^N$ denote the normalization constants for
$\mu$ and $\mu^{N}$ so that for all $N$ sufficiently large, by Theorem \ref{t:4.6},
\begin{align*}
Z&=\int_{X'} \exp\Bigl(-\Phi(u)\Bigr)\mu_0(du)>0,\\
Z^N&=\int_{X'} \exp\Bigl(-\Phi^N(u)\Bigr)\mu_0(du)>0,
\end{align*}
with positive lower bounds independent of $N$.
Then, using the local Lipschitz property of the
exponential and the approximation property of $\Phi^N(\cdot)$ from
Assumptions \ref{a:4.5},
together with \eqref{eq:4.22a}, we have
\begin{eqnarray*}
|Z-Z^N|&\leq&\int_{X'}|\exp\big(-\Phi(u)\big)-\exp\big(-\Phi^N(u)\big)|
\mu_0(du)\\
&\leq&\int_{X'}\exp\big(M_1(\|u\|_X)\big)|\Phi(u)-\Phi^N(u)|\mu_0(du)\\
&\leq&\Big(\int_{X'}\exp\big(M_1(\|u\|_X)\bigr)M_2(\|u\|_X)\mu_0(du)\big)\Big)\psi(N)\\
&\leq&\Big(\int_{X'}\exp\big(M_1(\|u\|_X)\bigr)(1+M_2(\|u\|_X)^2)\mu_0(du)\big)\Big)\psi(N)\\
&\leq&C\psi(N).
\end{eqnarray*}
The last line follows because
the integrand is in $L_{\mu_0}^1$ by assumption.
From the definition of \as{Hellinger distance}\index{Hellinger distance}
we have
$$\Bigl(\dhh(\mu^y,\mu^{y^\prime})\Bigr)^2\leq I_1+I_2,$$
where
\begin{align*}
I_1&=\frac{1}{Z}\int_{X'}\Bigl(
\exp\bigl(-\frac{1}{2}\Phi(u)\bigr)-\exp(-\frac{1}{2}\Phi^N(u)\bigr)\Bigr)^2
\mu_0(du),
\\
I_2&=\bigl|Z^{-\frac12}-(Z^N)^{-\frac12}\bigr|^2\int_{X'}
\exp(-\Phi^N(u)\bigr) \mu_0(du).
\end{align*}
Note that, again by means of similar Lipschitz calculations to those above,
using the fact that $Z,Z^N>0$ uniformly for $N$
sufficiently large by Theorem \ref{t:4.6}, and Assumptions \ref{a:4.5},
\begin{eqnarray*}
I_1&\leq&\frac{1}{4Z}\int_{X'}\exp\big(M_1(\|u\|_X\big)|\Phi(u)-\Phi^N(u)|^2\mu_0(du)\\
&\leq&\frac{1}{Z}\Big(\int_{X'}\exp\big(M_1(\|u\|_X)\big)M_2(\|u\|_X)^2\mu_0(du)\Big)\psi(N)^2\\
&\leq&C\psi(N)^2.
\end{eqnarray*}
Also, using Assumptions \ref{a:4.5}, together with \eqref{eq:4.22b},
\begin{align*}
\int_{X'}\exp\big(-\Phi^N(u)\big)\mu_0(du)
&\leq \int_{X'}\exp\big(M_1(\|u\|_X)\big)\mu_0(du)\\
&<\infty,
\end{align*}
and the upper bound is independent of $N$. Hence
\begin{eqnarray*}
I_2\leq C\big(Z^{-3}\vee
(Z^N)^{-3}\big)|Z-Z^N|^2\leq C\psi(N)^2.
\end{eqnarray*}
The result is complete. $\Box$

\begin{remarks}
\label{r:4.7}
The following two remarks are relevant to establishing the
conditions of the preceding two theorems, and to applying them.

\begin{itemize}

\item As mentioned in the previous susbection concerning well-posedness,
the Fernique Theorem can frequently be used to establish integrability
conditions, such as those in the two preceding theorems when $\mu_0$
is Gaussian.

\item Using the ideas underlying Remark \ref{r:4.7x},
the preceding theorem enables us to translate errors arising
from approximation of the forward problem into
errors in the Bayesian solution of the
inverse problem. Furthermore, the errors in the forward
and inverse problems scale the
same way with respect to $N$.
For functions $f$ which are in $L^2_{\mu}$ and $L^2_{\mu^N}$,
uniformly with respect to $N$, the closeness of the 
\as{Hellinger metric}\index{Hellinger distance}
implies closeness of expectations of $f$:
$$|\EE^{\mu} f(u)-\EE^{\mu^N} f(u)| \le C\psi(N).$$

\end{itemize}

\end{remarks}

\vspace{0.2in}
\subsection{MAP Estimators and Tikhonov Regularization}
\label{ssec:map}

The aim of this section is to connect the probabilistic approach
to inverse problems with the classical method of Tikhonov regularization.
We consider the setting in which the \as{prior}\index{prior!Gaussian} 
measure is a Gaussian.
We then show that MAP estimators, points of maximal probability,
coincide with minimizers of a Tikhonov-Phillips regularized
least-squares function, with regularization being with respect to
the Cameron-Martin norm of the Gaussian prior. 
The data $y$ plays no explicit role
in our developments here and so we work in the setting of equation 
\eqref{42}. Recall, however, that in the context of inverse problems,
a classical methodology is to simply try and minimize (subject
to some regularization) $\Phi(u)$. Indeed for finite data and
Gaussian observational noise with Gaussian distribution $N(0,\Gamma)$
we have 
$$\Phi(u)=\frac{1}{2}\bigl|
\Gamma^{-\frac{1}{2}}\bigl(y-G(u)\bigr)\bigr|^2.$$
Thus $\Phi$ is simply a covariance weighted model-data
misfit least squares function. 

In this section we show that maximizing probability
under $\mu$ (in a sense that we will make precise in what follows)
is equivalent to minimizing
\begin{equation}\label{eq:I}
  I(u) =
  \begin{cases}
    \Phi(u) + \frac12\|u\|_E^2 & \mbox{if $u \in E$, and} \\
    +\infty & \mbox{else.}
  \end{cases}
\end{equation}
Here $(E,\|\cdot\|_E)$ denotes the Cameron-Martin space associated
to $\mu$.
We view $\mu$ as a Gaussian probability measure on a separable Banach space
$(X,\|\cdot\|_X)$ so that $\mu_0(X)=1$. We make the following assumptions
about the function $\Phi:$

\begin{assumption} \label{a:asp1}
  The function $\Phi\colon X\to\R$ satisfies the following conditions:
  \begin{itemize}
  \item[(i)] For every $\eps>0$ there is an $M=M(\epsilon)\in \R$, such that
    for all $u\in X$,
    \begin{equation*}
      \Phi(u) \geq M -\eps\|u\|_X^2.
    \end{equation*}
  \item[(ii)] $\Phi$ is locally bounded from above, \textit{i.e.}\ for
    every $r>0$ there exists $K=K(r)>0$ such that, for all $u\in X$
    with $\|u\|_X<r$ we have
    \begin{equation*}
      \Phi(u) \leq K.
    \end{equation*}
  \item[(iii)] $\Phi$ is locally Lipschitz continuous, \textit{i.e.}\
    for every $r>0$ there exists $L=L(r)>0$ such that for all
    $u_1,u_2\in X$ with $\|u_1\|_X,\|u_2\|_X < r$ we have
    \begin{equation*}
      |\Phi(u_1)-\Phi(u_2)| \leq L\|u_1-u_2\|_X.
    \end{equation*}
  \end{itemize}
\end{assumption}

In finite dimensions, for measures which have a continuous density with respect to
Lebesgue measure, there is an obvious notion of most likely point(s):
simply the point(s) at which the Lebesgue density is maximized.
This way of thinking does not translate into the infinite dimensional
context, but there is a way of restating it which does. Fix a small
radius $\delta>0$ and identify centres of balls of radius $\delta$ 
which have maximal probability. Letting $\delta \to 0$ then recovers
the preceding definition, when there is a continuous Lebesgue
density. We adopt this small ball approach in the infinite dimensional setting.

For $z \in E$, let $B^\delta(z)\subset X$ be the open ball centred at
$z\in X$ with radius~$\delta$ in~$X$.  Let
\begin{equation*}
  J^{\delta}(z)
  = \mu\bigl( B^\delta(z) \bigr)
\end{equation*}
be the mass of the ball~$B^\delta(z)$ under the measure $\mu$.
Similarly we define
\begin{equation*}
  J_0^{\delta}(z)
  = \mu_0\bigl( B^\delta(z) \bigr)
\end{equation*}
the mass of the ball~$B^\delta(z)$ under the Gaussian \as{prior}\index{prior!Gaussian}. Recall that all balls in a separable Banach space 
have positive Gaussian measure, by Theorem \ref{theo:balls};
it thus follows that $J_0^\delta(z)$ is finite and positive for any $z \in E.$
By Assumptions~\ref{a:asp1}(i),(ii) together with the Fernique Theorem 
\ref{t:2.5} the same is true for $J^\delta(z).$
Our first theorem encapsulates the idea that probability is maximized
where $I$ is minimized. To see this, fix any point $z_2$ in the
Cameron-Martin space $E$ and notice that the probability of the small
ball at $z_1$ is maximized, asymptotically as the radius of the ball
tends to zero, at minimizers of $I$.

\begin{theorem}\label{t:OM}
  Let Assumptions~\ref{a:asp1} hold and assume that $\mu_0(X)=1$.  Then the function $I$ defined
  by~\eqref{eq:I} satisfies, for any $z_1,z_2\in E$, 
  \begin{equation*}
    \lim_{\delta\to 0}\frac{J^\delta (z_1)}{J^\delta(z_2)}
    =\exp\bigl(I(z_2)-I(z_1)\bigr).
  \end{equation*}
\end{theorem}

\begin{proof}
  Since $J^\delta(z)$ is finite and positive for any $z \in E$ 
the ratio of interest is finite and positive. The key estimate in the proof
  is given in Theorem \ref{t:smallball}: 
  \begin{equation}\label{eq:need}
    \lim_{\delta\to 0}
\frac{J_0^\delta (z_1)}{J_0^\delta(z_2)}
    = \exp\left( \frac12\|z_2\|_E^2-\frac12\|z_1\|_E^2 \right).
  \end{equation}
  This estimate transfers questions
  about probability, naturally asked on the space $X$ of full measure
  under $\mu_0$, into statements concerning the Cameron-Martin norm of
  $\mu_0$; note that under this norm a random variable distributed
as $\mu_0$ is almost surely infinite so the result is non-trivial.

 We have
  \begin{align*}
    \frac{J^\delta(z_1)}{J^\delta(z_2)}
    &=\frac{ \int_{B^\delta(z_1)}\exp (-\Phi(u))\,\mu_0(\ud u) }
    { \int_{B^\delta(z_2)}\exp (-\Phi(v))\,\mu_0(\ud v) } \\
    &=\frac{ \int_{B^\delta(z_1)}\exp (-\Phi(u)+\Phi(z_1))\exp(-\Phi(z_1))\,\mu_0(\ud u) }
    { \int_{B^\delta(z_2)}\exp (-\Phi(v)+\Phi(z_2))\exp (-\Phi(z_2))\,\mu_0(\ud v) }.
  \end{align*}
  By Assumption~\ref{a:asp1} (iii) there is $L=L(r)$ such that,
for all $u,v\in X$ with $\max\{\|u\|_X,\|v\|_X\}<r$,
  \begin{equation*}
    -L\,\|u-v\|_X\,\le\,\Phi(u)-\Phi(v)\,\le\,L\,\|u-v\|_X.
  \end{equation*}
  If we define $L_1=L(\|z_1\|_X+\delta)$ and $L_2=L(\|z_2\|_X+\delta)$ then
we have
  \begin{align*}
    \frac{J^\delta(z_1)}{J^\delta(z_2)}
    &\le \e^{\delta(L_1+L_2)}\frac{ \int_{B^\delta(z_1)}\exp (-\Phi(z_1))\,\mu_0(\ud u) }
    { \int_{B^\delta(z_2)}\exp (-\Phi(z_2))\,\mu_0(\ud v) } \\
    &=\e^{\delta(L_1+L_2)}\e^{-\Phi(z_1)+\Phi(z_2)}
\frac{  \int_{B^\delta(z_1)}\,\mu_0(\ud u) }
    { \int_{B^\delta(z_2)}\,\mu_0(\ud v) }.
  \end{align*}
  Now, by \eqref{eq:need}, we have
  \begin{equation*}
    \frac{J^\delta(z_1)}{J^\delta(z_2)}
    \le r_1(\delta)\,\e^{\delta (L_2+L_1)}\e^{-I(z_1)+I(z_2)}
  \end{equation*}
  with $r_1(\delta)\to 1$ as $\delta\to 0$. Thus
  \begin{equation}\label{e:Jlims}
    \limsup_{\delta\to 0}\frac{J^\delta(z_1)}{J^\delta(z_2)}
    \,\le\, \e^{-I(z_1)+I(z_2)}.
  \end{equation}
  Similarly we obtain
  \begin{equation*}
    \frac{J^\delta(z_1)}{J^\delta(z_2)}
    \ge \frac{1}{r_2(\delta)}\,\e^{-\delta (L_2+L_1)}\e^{-I(z_1)+I(z_2)}
  \end{equation*}
   with $r_2(\delta)\to 1$ as $\delta\to 0$ and deduce that
  \begin {equation}\label{e:Jlimi}
    \liminf_{\delta\to 0}\frac{J^\delta(z_1)}{J^\delta(z_2)}
    \,\ge\, \e^{-I(z_1)+I(z_2)}
\end{equation}
  Inequalities \eqref{e:Jlims} and \eqref{e:Jlimi} give the desired
  result.
\end{proof}

We have thus linked the Bayesian approach to inverse problems with
a classical regularization technique. We conclude the subsection
by showing that, under the prevailing Assumptions \ref{a:asp1}, 
the minimization problem for $I$ is well-defined. We first recall a
basic definition and lemma from the calculus of variations.

\begin{definition} The function $I:E \to \R$ is {\em weakly
lower semicontinuous} if
$$\liminf_{n \to \infty} I(u_n) \ge I(u)$$
whenever $u_n \weakc u$ in $E$.
The function $I:E \to \R$ is {\em weakly
continuous} if
$$\lim_{n \to \infty} I(u_n) = I(u)$$
whenever $u_n \weakc u$ in $E$.
$\quad\Box$
\end{definition}

Clearly weak continuity implies weak lower
semicontinuity.

\begin{lemma} \label{lem:wlsc}
If $(E,\langle\cdot,\cdot\rangle_{E})$ is a Hilbert
space with induced norm $\|\cdot\|_{E}$
then the quadratic form $J(u):=\frac12\|u\|_{E}^2$ is
weakly lower semicontinuous.
\end{lemma}

\begin{proof} The result follows from the fact that
\begin{align*}
J(u_n)-J(u)&=\frac12\|u_n\|_E^2-\frac12\|u\|_E^2\\
&=\frac12\langle u_n-u,u_n+u\rangle_E\\
&=\frac12 \langle u_n-u,2u\rangle_E+\frac12\|u_n-u\|_E^2\\
&\ge \frac12 \langle u_n-u,2u\rangle_E.
\end{align*}
But the right hand side tends to zero since $u_n \weakc u$ in $E$.
Hence the result follows. \end{proof}

\begin{theorem} \label{t:comeon}
Suppose that Assumptions \ref{a:asp1} hold
and let $E$ be a Hilbert space compactly embedded
in $X$.
Then there exists $\bu \in E$ such that
$$I(\bu)=\bI:=\inf\{I(u): u\in E\}.$$
Furthermore, if $\{u_n\}$ is a minimizing sequence
satisfying $I(u_n) \to I(\bu)$ then there is a subsequence
$\{u_{n'}\}$ that converges strongly to $\bu$ in $E$.
\end{theorem}

\begin{proof}

Compactness of $E$ in $X$
implies that, for some universal constant $C$,
$$\|u\nx^2 \le C\|u\|_{E}^2.$$
Hence, by Assumption \ref{a:asp1}(i),
it follows that, for any $\eps>0$,
there is $M(\eps) \in \R$ such that
$$\Bigl(\frac12-C\eps\Bigr)\|u\|_{E}^2+
M(\eps)\le I(u).$$
By choosing $\eps$ sufficiently small,
we deduce that there is $M \in \R$ such that,
for all $u \in E$,
\begin{equation}
\label{eq:coerce}
\frac14\|u\|_{E}^2+M \le I(u).
\end{equation}

Let $u_n$ be an infimizing sequence satisfying $I(u_n) \to \bI$
as $n \to \infty$.
For any $\delta>0$ there is $N=N_1(\delta)$:
\begin{equation}
\label{eq:refer}
\bI \le I(u_n) \le \bI+\delta, \quad \forall n \ge N_1.
\end{equation}
Using \eqref{eq:coerce} we deduce that
the sequence $\{u_n\}$ is bounded in $E$ and,
since $E$ is a Hilbert space,
there exists $\bu \in E$ such that
$u_n \weakc \bu$ in $E$. By the compact embedding of $E$ in $X$
we deduce that $u_n \to \bu$,
strongly in $X$. By the Lipschitz continuity of $\Phi$
in $X$ (Assumption \ref{a:asp1}(iii))
we deduce that $\Phi(u_n) \to \Phi(\bu)$.
Thus $\Phi$ is weakly continuous on $E$.
The functional
$J(u):=\frac12\|u\|_{E}^2$ is weakly lower
semicontinuous on $E$ by Lemma \ref{lem:wlsc}. 
Hence $I(u)=J(u)+\Phi(u)$ is weakly lower
semicontinuous on $E$.
Using this fact in \eqref{eq:refer}
it follows that, for any $\delta>0$,
$$\bI \le I(\bu) \le \bI+\delta.$$
Since $\delta$ is arbitrary the first result follows.

By passing to a further subsequence,
and for $n,\ell \ge N_2(\delta)$,
\begin{align*}
\frac14\|u_n-u_{\ell}\|_{E}^2&=\frac12\|u_n\|_{E}^2+\frac12\|u_{\ell}\|_{E}^2-
\frac14\|u_n+u_{\ell}\|_{E}^2\\
&=I(u_n)+I(u_{\ell})-2I\Bigl(\frac12(u_n+u_{\ell})\Bigr)
-\Phi(u_n)-\Phi(u_{\ell})+2\Phi\Bigl(\frac12(u_n+u_{\ell})\Bigr)\\
&\le 2(\bI+\delta)-2\bI-\Phi(u_n)-\Phi(u_{\ell})+
2\Phi\Bigl(\frac12(u_n+u_{\ell})\Bigr)\\
&\le 2\delta-\Phi(u_n)-\Phi(u_{\ell})+2\Phi\Bigl(\frac12(u_n+u_{\ell})\Bigr).
\end{align*}
But $u_n, u_{\ell}$ and $\frac12(u_n+u_{\ell})$ all converge strongly to $\bu$ in $X$.
Thus, by continuity of $\Phi$, we deduce that
for all $n,\ell \ge N_3(\delta)$,
$$\frac14\|u_n-u_{\ell}\|_{E}^2 \le 3\delta.$$
Hence the sequence is Cauchy in $E$ and converges
strongly and the proof is complete.
\end{proof}

\begin{corollary}\label{c:comeon}
Suppose that Assumptions \ref{a:asp1} hold
and the Gaussian measure $\mu_0$ with Cameron-Martin space space $E$ 
satisfies $\mu_0(X)=1$.
Then there exists $\bu \in E$ such that
$$I(\bu)=\bI:=\inf\{I(u): u\in E\}.$$
Furthermore, if $\{u_n\}$ is a minimizing sequence
satisfying $I(u_n) \to I(\bu)$ then there is a subsequence
$\{u_{n'}\}$ that converges strongly to $\bu$ in $E$.

\end{corollary}

\begin{proof}
By Theorem \ref{t:CMcompact}, $E$ is compactly embedded in $X$.
Hence the result follows by Theorem \ref{t:comeon}.

\end{proof}

\vspace{0.2in}

\subsection{Bibliographic Notes}

\begin{itemize}

\item Subsection \ref{ssec:4.1}. The well-posedness theory described
here was introduced in the papers \cite{CDRS08} and \cite{article:Stuart2010}.
Relationships between the \as{Hellinger dostance}\index{Hellinger distance} on probability measures,
and the Total Variation
distance and Kullback-Leibler divergence may be found in
~\cite{GS02},~\cite{Poll}, as well as in \cite{article:Stuart2010}.

\item Subsection \ref{ssec:4.2}.
Generalization of the well-posedness theory to study the effect of
numerical approximation of the forward model on the inverse problem
may be found in \cite{CDS10}.
The relationship between expectations and \as{Hellinger distance}\index{Hellinger distance},
as used in Remark \ref{r:4.7}, is demonstrated
in \cite{article:Stuart2010}.

\item Subsection \ref{ssec:map}. The connection between Tikhonov-Phillips
regularization and MAP estimators is widely appreciated in 
computational Bayesian inverse problems; see \cite{KS05}.
Making the connection rigorous in the separable Banach
space setting is the subject of the paper \cite{DLSV13}; 
further references to the historical development of the subject
may be found therein. 
Related to Lemma \ref{lem:wlsc} see also
\cite[Chapter 3]{Daco09}.

\end{itemize}

\vspace{0.3in}
\section{Measure Preserving Dynamics}
\label{sec:mpd}

The aim of this section 
is to study Markov processes, in continuous time, and Markov chains,
in discrete time, which preserve the measure $\mu$ given by
\eqref{42}. The overall setting is described in subsection \ref{ssec:set},
and introduces the role of detailed balance and reversibility in
constructing measure-preserving Markov chains/processes.  
Subsection \ref{ssec:4.4} 
concerns Markov chain-Monte Carlo (MCMC) methods; these are
Markov chains which are invariant with respect to $\mu$. Metropolis-Hastings
methods are introduced and the role of detailed balance in their
construction is explained. The benefits of conceiving MCMC methods
which are defined on the infinite dimensional space is emphasized. 
In particular, the idea of using proposals which 
preserve the \as{prior}\index{prior}, more specificallty which are prior reversible,
is introduced as an example. 
In subsection \ref{ssec:pf} we show how sequential Monte Carlo
(SMC) methods can be used to construct approximate samples from
the measure $\mu$ given by \eqref{42}. Again our perspective
is to construct algorithms which are provably well-defined
on the infinite dimensional space and in fact we find an upper
bound for the  approximation error of the SMC method 
which proves its convergence on an infinite dimensional space. 
The MCMC methods from the previous
section play an important role in the construction of these
SMC methods. 
Subsections \ref{ssec:start}--\ref{ssec:IDL} concern continuous time
$\mu$-reversible processes. In particular they 
concern derivation and study of
a  Langevin equation which is invariant with respect to the
measure $\mu$.
(Note that this is called the overdamped Langevin equation for a physicist,
the plain Langevin equation for a statistician.)
In continuous time we work entirely in the case
of Gaussian \as{prior}\index{prior!Gaussian} 
measure $\mu_0$ on Hilbert space $\mathcal {H}$ 
with inner-product and norm denoted by
$\langle \cdot, \cdot \rangle$ and $\|\cdot\|$ respectively;
however in discrete time our analysis
is more general, applying on a separable Banach space $(X,\|\cdot\|)$ 
and for quite general \as{prior}\index{prior} measure.

\vspace{0.2in}
\subsection{General Setting}
\label{ssec:set}

This section is devoted to Banach space 
valued Markov chains or processes which are invariant with respect
to the \as{posterior}\index{posterior} measure $\mu^y$ constructed in subsection 
\ref{ssec:3.2}. Within this section, the data $y$ arising in
the inverse problems plays no explicit role; 
indeed the theory applies to a wide range of measures
$\mu$ on separable Banach space $X$.
Thus the discussion in this chapter includes, but is not limited to, 
Bayesian inverse problems.  All of the Markov chains we construct 
will exploit structure in a reference measure $\mu_0$ 
with respect to which the measure
$\mu$ is absolutely continuous; thus $\mu$ has a density 
with respect to $\mu_0$.
In continuous time we will explicitly use the Gaussianity
of $\mu_0$, but in discrete time we will be more general.

Let $\mu_0$ be a reference measure on the 
separable Banach space $X$ 
equipped with the Borel $\sigma$-algebra $\cBc(X).$
We assume that $\mu\ll\mu_0$ is given by
\begin{subequations}\label{045}
\begin{eqnarray}
\frac{d\mu}{d\mu_0}(u)&=&\frac{1}{Z}\exp\big(-\Phi(u)\big),\label{045a}\\
Z&=&\int_{X}\exp\big(-\Phi(u)\big)\mu_0(du), \label{045b}
\end{eqnarray}
\end{subequations}
where $Z \in (0,\infty)$.
In the following we let $P(u,dv)$ denote a Markov transition kernel so
that $P(u,\cdot)$ is a probability measure on $\bigl(X,\cBc(X)\bigr)$ for
each $u \in X$. Our interest is in probability kernels which preserve $\mu$.

\begin{definition} 
\label{d:db}
The Markov chain with transition kernel $P$
is {\em invariant} with respect to $\mu$ if
$$\int_{X}\mu(du)P(u,\cdot)=\mu(\cdot)$$
as measures on $\bigl(X,\cBc(X)\bigr)$.
The Markov kernel is said to satisfy {\em detailed
balance} with respect to $\mu$ if
$$\mu(du)P(u,dv)=\mu(dv)P(v,du)$$
as measures on $\bigl(X \times X,\cBc(X) \otimes \cBc(X)\bigr)$.
The resulting Markov chain is then said to be {\em reversible} with respect
to $\mu$.
$\quad\Box$
\end{definition}

It is straightforward to see, by integrating the detailed balance condition
with respect to $u$ and using  the fact that $P(v,du)$ is a Markov kernel,
the following:

\begin{lemma} A Markov chain which is reversible with respect to $\mu$
is also invariant with respect to $\mu$.
\end{lemma}

Reversible Markov chains and processes
 arise naturally in many physical systems which
are in statistical equilibrium. They are also important, however, as a means
of {\em constructing} Markov chains which are invariant with respect to
a given probability measure.
We demonstrate this in subsection \ref{ssec:4.4} where we consider
the Metropolis-Hastings variant of MCMC methods.
Then, in subsections \ref{ssec:start}, \ref{ssec:FDL} and \ref{ssec:IDL},
we move to continuous time Markov processes. In particular we
show that the equation
\begin{equation}\label{44}
\frac{du}{dt}=-u-\cC D\Phi(u)+\sqrt{2}\frac{dW}{dt}, \;\,\, u(0)=u_0,
\end{equation}
preserves the measure $\mu$, where $W$ is a $\cC$-\as{Wiener process}\index{Wiener!process},
defined below in subsection \ref{sec:Ito}. 
Precisely we show that if $u_0\sim\mu$, independently of
the driving Wiener process, then
$\bbE\varphi\big(u(t)\big)=\bbE\varphi(u_0)$ for all $t>0$
for continuous bounded $\varphi$ defined on an appropriately
chosen subspaces, under boundedness conditions
on $\Phi$ and its derivatives.

\begin{example}
\label{ex:rev1}
Consider the (measurable) Hilbert space $\bigl(\cH,\cBc(\cH)\bigr)$
equipped, as usual, with the Borel $\sigma$-algebra.
Let $\mu$ denote the Gaussian measure $N(0,C)$ on $\cH$ and, for fixed $u$,
let $P(u,dv)$ denote the Gaussian measure 
$N\bigl((1-\beta^2)^{\frac12}u,\beta^2C\bigr)$, also viewed 
as a probability measure on $\cH.$ Thus $v \sim P(u,dv)$ 
can be expressed as $v=(1-\beta^2)^{\frac12}u+
\beta \xi$ where $\xi \sim N(0,C)$ is independent of $u$. We show that
$P$ is reversible, and hence invariant, with respect to $\mu$. 
To see this we note that $\mu(du)P(u,dv)$ 
is a centred Gaussian measure on $\cH \times \cH$, equipped with the 
$\sigma$-algebra $\cBc(\cH)\otimes \cBc(\cH)$.
The covariance of the jointly varying random variable is
characterized by the identities
\begin{align}\label{e:mGcov}
\bbE u \otimes u=C,\; \bbE v \otimes v=C,\; \bbE u \otimes v=(1-\beta^2)^{\frac12}C.
\end{align}
Indeed, letting $\nu(du,dv):=\mu(du)P(u,dv)$, and with
$\langle\cdot,\cdot\rangle$ and $\|\cdot\|$ the inner product and norm 
on $\cH$ respectively, we can write, using (\ref{e:FTG}),
\begin{align*}
\hat\nu(d\xi,d\eta)
&=\int_{\cH \times \cH} \e^{i\langle u,\xi\rangle+i\langle v,\eta\rangle}\mu(du)P(u,dv)\\
&=\int_\cH\e^{i\langle u,\xi\rangle}\int_\cH \e^{i\langle v,\eta\rangle} \,P(u,dv)\,\mu(du)\\
&=\int_\cH  \e^{i\langle u,\xi\rangle} \e^{i\sqrt{1-\beta^2}\langle u,\eta\rangle-\frac{1}{2}\|\beta C^{\frac12}\eta\|^2}\mu(du)\\
&=\e^{-\frac{\beta^2}{2}\| C^{\frac12}\eta\|^2}\int_\cH  \e^{i\langle u,\sqrt{1-\beta^2}\,\eta+\xi\rangle}\mu(du)\\
&=\e^{-\frac{\beta^2}{2}\| C^{\frac12}\eta\|^2}\e^{-\frac{1}{2}\| C^{\frac12}(\sqrt{1-\beta^2}\,\eta+\xi)\|^2}\\
&=\exp\left({-\frac{1}{2}\| C^{\frac12}\eta\|^2-\frac{1}{2}\| C^{\frac12}\xi\|^2-({1-\beta^2})^{\frac12}\langle C^{\frac12}\xi,C^{\frac12}\eta\rangle}\right).
\end{align*}
Hence, by Lemma \ref{prop:FT} and equation (\ref{e:FTG}), $\mu(du)P(u,dv)$ 
is a centred Gaussian measure with the \as{covariance operator}\index{covariance operator} given by (\ref{e:mGcov}).
Since the expression in the last line of the above equation 
is symmetric in $\xi$ and $\eta$, 
$\mu(dv)P(v,du)$ is a centred Gaussian measure with
the same covariance as $\mu(du)P(u,dv)$ and so the reversibility is proved.
$\quad\Box$
\end{example}

\begin{example}
\label{ex:MHs}
Consider the equation
\begin{equation}\label{a44}
\frac{du}{dt}=-u+\sqrt{2}\frac{dW}{dt}, \,\, u(0)=u_0,
\end{equation}
where $W$ is a $\cC$-\as{Wiener process}\index{Wiener!process} (defined in
subsection \ref{sec:Ito} below).
Then
$$u(t)=e^{-t}u_0+\sqrt{2}\int_0^t e^{-(t-s)}dW(s).$$
Use of the It\^o isometry demonstrates that $u(t)$ 
is distributed according to the Gaussian $N\bigl(e^{-t}u_0,(1-e^{-2t}){\mathcal C}\bigr)$.
Setting $\beta^2=1-e^{-2t}$ and employing the previous example shows that
the Markov process is reversible since, for every $t>0$, the transition
kernel of the process is reversible.
$\quad\Box$
\end{example}

\vspace{0.2in}
\subsection{Metropolis-Hastings Methods}
\label{ssec:4.4}

In this section we study Metropolis-Hastings methods designed
to sample from the probability measure $\mu$ given by \eqref{045}.
The perspective that we have described on inverse problems,
specifically the formulation of Bayesian inversion on function
space, leads to new sampling methods which are specifically tailored
to the high dimensional problems which arise from
discretization of the infinite dimensional setting. 
In particular it leads naturally to the philosophy that it is
advantageous to design algorithms which, in principle, make
sense in infinite dimensions; it is these methods which will
perform well under refinement of finite dimensional approximations. 
Most Metropolis-Hastings methods which are defined in finite 
dimensions will not make sense in the infinite dimensional limit.
This is because the acceptance probability for Metropolis-Hastings methods
is defined as the Radon-Nikodym derivative between two measures
describing the behaviour of the Markov chain in stationarity.
Since measures in infinite dimensions have a tendency to be mutually 
singular, only carefully designed methods will have interpretations
in infinite dimensions. To simplify the presentation we work with
the following assumptions throughout:

\begin{assumptions}
\label{ass:bdd}
The function $\Phi:X\rightarrow\R$ 
is bounded on bounded subsets of $X$.
$\quad\Box$
\end{assumptions}

We now consider the following prototype
Metropolis-Hastings method which accept-rejects proposals
from a Markov kernel $Q$ to produce a Markov chain with kernel
$P$ which is reversible with respect to $\mu$.

\begin{algorithm}\label{alg:pmh}
Given $a:X \times X \to [0,1]$ generate $\{u^{(k)}\}_{k \ge 0}$ as follows:
\begin{itemize}

\item [1] Set $k=0$ and pick $u^{(0)} \in X$.

\item [2] Propose $v^{(k)} \sim Q(u^{(k)},dv)$.

\item [3] Set $u^{(k+1)}=v^{(k)}$ with probability $a(u^{(k)},v^{(k)})$, 
independently of $(u^{(k)},v^{(k)})$.

\item [4] Set $u^{(k+1)}=u^{(k)}$ otherwise.

\item [5] $k \to k+1$ and return to 2.

\end{itemize}
$\Box$
\end{algorithm}

Given a proposal kernel $Q$, a key question in the design of
MCMC methods is the question of how to 
choose $a(u,v)$ to ensure that $P(u,dv)$ satisfies
detailed balance with respect to $\mu$. If the resulting Markov chain
is ergodic this then yields an algorithm which, asymptotically,
samples from $\mu$, and can be used to estimate expectations against $\mu$.

To determine conditions on $a$ which
are necessary and sufficient for detailed
balance we first note that the Markov kernel
which arises from accepting/rejecting proposals from $Q$ is given by
\begin{equation}
\label{eq:sulp1}
P(u,dv)=Q(u,dv)a(u,v)+\delta_u(dv)\int_{X}\bigl(1-a(u,w)\bigr) Q(u,dw).
\end{equation}
Notice that 
$$\int_{X} P(u,dv)=1$$
as required.  Substituting the expression for $P$ into the 
detailed balance condition from Definition \ref{d:db} we obtain 
\begin{eqnarray*}
&\mu(du)Q(u,dv)a(u,v)+\mu(du)\delta_u(dv)\int_{X}\bigl(1-a(u,w)\bigr) Q(u,dw)\\
=&\\
&\mu(dv)Q(v,du)a(v,u)+\mu(dv)\delta_v(du)\int_{X}\bigl(1-a(v,w)\bigr) Q(v,dw).
\end{eqnarray*}
We now note that the measure $\mu(du)\delta_u(dv)$ is in fact symmetric
in the pair $(u,v)$ and that $u=v$ almost surely under it. As a consequence
the identity reduces to
\begin{equation}
\label{eq:orez1}
\mu(du)Q(u,dv)a(u,v)=\mu(dv)Q(v,du)a(v,u).
\end{equation}

Our aim now is to identify choices of $a$ which ensure that \eqref{eq:orez1}
is satisfied. This will then ensure that the prototype algorithm does
indeed lead to a Markov chain for which $\mu$ is invariant. To this end we  
define the measures
$$\nu(du,dv)=\mu(du)Q(u,dv)$$
and 
$$\ntu(du,dv)=\mu(dv)Q(v,du)$$ 
on
$\bigl(X \times X,\cBc(X) \otimes \cBc(X)\bigr)$.
The following theorem determines a necessary and sufficient
condition for the choice of $a$ to make the algorithm $\mu$
reversible, and identifies the canonical Metropolis-Hastings choice.

\begin{theorem}
\label{t:mh1}
Assume that $\nu$ and $\ntu$ are equivalent as measures on 
$X \times X$, equipped with the 
$\sigma$-algebra $\cBc(X)\otimes \cBc(X)$,
and that $\nu(du,dv)=r(u,v)\ntu(du,dv)$.
Then the probability kernel \eqref{eq:sulp1} satisfies
detailed balance if and only if
\begin{equation}
\label{eq:reggad1}
r(u,v)a(u,v)=a(v,u), \quad \nu\mbox{-a.s.}\,.
\end{equation}
In particular the choice $\alpha_{\rm mh}(u,v)=\min\{1,r(v,u)\}$
will imply detailed balance.
\end{theorem}

\begin{proof}
Since $\nu$ and $\ntu$ are equivalent \eqref{eq:orez1} holds if and only if
$$\frac{d\nu}{d\ntu}(u,v)a(u,v)=a(v,u).$$
This is precisely \eqref{eq:reggad1}. Now note that
$\nu(du,dv)=r(u,v)\ntu(du,dv)$ and $\ntu(du,dv)=r(v,u)\nu(du,dv)$ since
$\nu$ and $\ntu$ are equivalent. Thus $r(u,v)r(v,u)=1$. It follows that
\begin{align*}
r(u,v)\alpha_{\rm mh}(u,v) &=\min\{r(u,v),r(u,v)r(v,u)\}\\
&=\min\{r(u,v),1\}\\
&=\alpha_{\rm mh}(v,u)
\end{align*}
as required.
\end{proof}

A good example of the resulting methodology arises in the case where
$Q(u,dv)$ is reversible with respect to $\mu_0:$

\begin{theorem}
\label{t:pr} Let Assumption \ref{ass:bdd} hold.
Consider Algorithm \ref{alg:pmh} applied to
\eqref{045} in the case where the proposal kernel $Q$ is
reversible with respect to the \as{prior}\index{prior} 
$\mu_0$. Then the resulting
Markov kernel $P$ given by \eqref{eq:sulp1} is 
reversible with respect to $\mu$ if $a(u,v)=\min\{1,\exp\bigl(\Phi(u)-\Phi(v)\bigr)\}$.  
\end{theorem}

\begin{proof}
Prior reversibility implies that
$$\mu_0(du)Q(u,dv)=\mu_0(dv)Q(v,du).$$
Multiplying both sides by $\exp\bigl(-\Phi(u)\bigr)$
gives
$$\mu(du)Q(u,dv)=\exp\bigl(-\Phi(u)\bigr)\mu_0(dv)Q(v,du)$$
and then multiplication by $\exp\bigl(-\Phi(v)\bigr)$ gives
$$\exp\bigl(-\Phi(v)\bigr)\mu(du)Q(u,dv)=\exp\bigl(-\Phi(u)\bigr)\mu(dv)Q(v,du).$$
This is the statement that
$$\exp\bigl(-\Phi(v)\bigr)\nu(du,dv)=\exp\bigl(-\Phi(u)\bigr)\ntu(du,dv).$$
Since $\Phi$ is bounded on bounded sets by Assumption \ref{ass:bdd}
we deduce that
$$\frac{d\nu}{d\ntu}(u,v)=r(u,v)=\exp\bigl(\Phi(v)-\Phi(u)\bigr).$$
Theorem \ref{t:mh1} gives the desired result.
\end{proof}

We provide two examples of prior reversible proposals,
the first applying in the general Banach space setting,
and the second when the prior is a Gaussian measure.

\begin{algorithm} \label{alg:I}
{\bf Independence Sampler}
The independence sampler arises when $Q(u,dv)=\mu_0(dv)$ so that proposals
are independent draws from the \as{prior}\index{prior}. Clearly prior reversibility is
satisfied. The following algorithm results:
Define
$$a(u,v)=\min\{1,\exp\bigl(\Phi(u)-\Phi(v)\bigr)\}$$
and generate $\{u^{(0)}\}_{k \ge 0}$ as follows:

\begin{enumerate}

\item Set $k=0$ and pick $u^{(0)} \in X$.

\item Propose $v^{(k)}\sim \mu_0$ independently of $u^{(k)}$.

\item Set $u^{(k+1)}=v^{(k)}$ with probability $a(u^{(k)},v^{(k)})$, independently of $(u^{(k)},v^{(k)})$.

\item Set $u^{(k+1)}=u^{(k)}$ otherwise.

\item $k \to k+1$ and return to 2.

\end{enumerate}
$\quad\Box$
\end{algorithm}

The preceding algorithm works well when the \as{likelihood}\index{likelihood} 
is not {\em too} informative;
however when the information in the \as{likelihood}\index{likelihood} 
is substantial, and $\Phi(\cdot)$
varies significantly depending on where it is evaluated, the 
independence sampler will
not work well. In such a situation it is typically the case that {\em local}
proposals are needed, with a parameter controlling the degree of locality;
this parameter can then be optimized by choosing it as large as possible,
consistent with achieving a reasonable acceptance probability.
The following algorithm is an example of this concept, with parameter
$\beta$ playing the role of the locality parameter.
The algorithm may be viewed as the natural generalization of
the random Walk Metropolis method, for targets defined by
density with respect to Lebesgue measure, to the situation where
the targets are defined by density with respect to Gaussian measure.
The name pCN is used because of the original derivation of
the algorithm via a Crank-Nicolson discretization of the Hilbert
space valued SDE \eqref{a44}.

\begin{algorithm} \label{alg:pcn}
{\bf pCN Method}
Assume that $X$ is a Hilbert space $\bigl(\cH,\cBc(\cH)\bigr)$ 
and that $\mu_0=N(0,C)$ is a Gaussian \as{prior}\index{prior!Gaussian}
on $\cH$. Now define $Q(u,dv)$ to be the Gaussian 
measure $N\bigl((1-\beta^2)^{\frac12}u,\beta^2C\bigr)$,
also on $\cH$. 
Example \ref{ex:rev1} shows that $Q$ is $\mu_0$ reversible.
The following algorithm results:

Define
$$a(u,v)=\min\{1,\exp\bigl(\Phi(u)-\Phi(v)\bigr)\}$$
and generate $\{u^{(0)}\}_{k \ge 0}$ as follows:

\begin{enumerate}

\item Set $k=0$ and pick $u^{(0)} \in X$.

\item Propose $v^{(k)}=
\sqrt{(1-\beta^2)}u^{(k)}+\beta  \xi^{(k)},
\quad \xi^{(k)} \sim N(0,\cC)$.

\item Set $u^{(k+1)}=v^{(k)}$ with probability $a(u^{(k)},v^{(k)})$, independently of $(u^{(k)},\xi^{(k)})$.

\item Set $u^{(k+1)}=u^{(k)}$ otherwise.

\item $k \to k+1$ and return to 2.

\end{enumerate}
$\quad\Box$
\end{algorithm}

\begin{example}
\label{ex:pCN}
Example \ref{ex:MHs} shows that using the proposal from
Example \ref{ex:rev1} within a Metropolis-Hastings context 
may be viewed as using a proposal based on the 
$\mu$-measure-preserving equation (\ref{44}), but with the $D\Phi$ term dropped. 
The accept-reject mechanism of Algorithm 5.10, which is based on
differences of $\Phi$, then compensates for the missing $D\Phi$ term.

\end{example}

\vspace{0.2in}
\subsection{Sequential Monte Carlo Methods}
\label{ssec:pf}

In this section we introduce sequential Monte Carlo
methods and show how these may be viewed as a generic
tool for sampling the \as{posterior}\index{posterior} distribution arising
in Bayesian inverse problems. These methods have their
origin in filtering of dynamical systems but, as we
will demonstrate, have the potential as algorithms
for probing a very wide class of probability measures.
The key idea is to introduce
a sequence of measures which evolve the \as{prior}\index{prior}
distribution into the posterior distribution. Particle filtering
methods are then applied to this sequence of measures
in order to evolve a set of particles that are prior
distributed into a set of particles that are approximately
posterior distributed. From a practical perspective,
a key step in the construction of these methods is
the use of MCMC methods which preserve the measure of
interest, and other measures closely related to it;
furthermore, our interest is in designing SMC methods which, 
in principle, are well-defined on the infinite dimensional 
space; for these two reasons the MCMC methods from the previous
subsection play a central role in what follows. 

Given integer $J$, let $h=J^{-1}$ and for non-negative integer $j\le J$
define the sequence of measures $\mu_j\ll\mu_0$ by
\begin{subequations}\label{145}
\begin{eqnarray}
\frac{d\mu_j}{d\mu_0}(u)&=&\frac{1}{Z_j}\exp\big(-jh\Phi(u)\big),\label{145a}\\
Z_j&=&\int_{\mathcal{H}}\exp\big(-jh\Phi(u)\big)\mu_0(du) \label{145b}.
\end{eqnarray}
\end{subequations}
Then $\mu_J=\mu$ given by \eqref{045}; thus our interest
is in approximating $\mu_J$ and we will achieve this by approximating
the sequence of measures $\{\mu_j\}_{j=0}^J$, using information
about $\mu_j$ to inform approximation of $\mu_{j+1}.$ To simplify
the analysis we assume that
$\Phi$ is bounded above and below on $X$ so that there is
$\phi^{\pm} \in \RR$ such that
\begin{equation}
\label{pb}
\phi^- \le \Phi(u) \le \phi^+\quad \forall u \in X.
\end{equation}
Without loss of generality we assume that $\phi^-\le 0$ and that $\phi^+\ge 0,$
which may be achieved by normalization.
Note that then the family of measures $\{\mu_j\}_{j=0}^{J}$
are mutually absolutely continuous and, furthermore,
\begin{equation}
\label{245}
\frac{d\mu_{j+1}}{d\mu_{j}}(u)=\frac{Z_j}{Z_{j+1}}\exp\big(-h\Phi(u)\big).
\end{equation}
An important idea here is that, whilst $\mu_0$ and $\mu$ may be
quite far apart as measures, the pair of measures 
$\mu_j, \mu_{j+1}$ can be quite close, for sufficiently
small $h$. This fact can be used to incrementally evolve
samples from $\mu_0$ into approximate samples of $\mu_J$.

Let $\LC$ denote the operator on probability measures which 
corresponds to application of Bayes' theorem with 
\as{likelihood}\index{likelihood}
proportional to $\exp\big(-h\Phi(u)\big)$ and let $P_j$
denote any Markov kernel which preserves the measure $\mu_j$;
such kernels arise, for example, from the MCMC methods of
the previous subsection.  These considerations imply that
\begin{equation}
\label{eq:prop}
\mu_{j+1}=\LC P_j\mu_j.
\end{equation}
Sequential Monte Carlo methods proceed by approximating
the sequence $\{\mu_j\}$ by a set of Dirac measures, as we now 
describe. It is useful to break up the iteration \eqref{eq:prop}
and write it as 
\begin{subequations}
\label{eq:2prop}
\begin{align}
\label{eq:2propa}
\hmu_{j+1}&=P_j\mu_j,\\
\mu_{j+1}&=\LC \hmu_{j+1}.
\label{eq:2propb}
\end{align}
\end{subequations}
We approximate each of the two steps in \eqref{eq:2prop}
separately. To this end it helps to note that, since
$P_j$ preserves $\mu_j$,
\begin{equation}
\label{345}
\frac{d\mu_{j+1}}{d\hmu_{j+1}}(u)=\frac{Z_j}{Z_{j+1}}\exp\big(-h\Phi(u)\big).
\end{equation}

To define the method, we write
an $N$-particle Dirac measure approximation of the form 
\begin{equation}
\mu_j \approx 
\mu_j^{N}:= 
\sum_{n=1}^{N} w_j^{(n)} \delta(v_j - v_j^{(n)}).
\label{eq:emp_filt}
\end{equation}
The approximate distribution is completely
defined by particle positions $v_j^{(n)}$ and weights $w_j^{(n)}$
respectively. Thus the objective of the method is to
find an update rule for $\{ v_j^{(n)}, w_j^{(n)}\}_{n=1}^N \mapsto
\{ v_{j+1}^{(n)}, w_{j+1}^{(n)}\}_{n=1}^{N}$.
The weights must sum to one. To do this we proceed as follows.
First each particle $v_j^{(n)}$ is updated 
by proposing a new candidate particle $\hv_{j+1}^{(n)}$ according
to the Markov kernel $P_j$; this corresponds to \eqref{eq:2propa}
and creates an approximation to
$\hmu_{j+1}.$ (See the last two parts of Remark \ref{rem:fil} for 
a discussion on the role of $P_j$ in the algorithm.) 
We can think of this approximation
as a \as{prior}\index{prior} distribution for
application of Bayes' rule in the form \eqref{eq:2propb},
or equivalently \eqref{345}.  Secondly, each new 
particle is re-weighted according to the desired
distribution $\mu_{j+1}$ given by \eqref{345}. 
The required calculations are very straightforward 
because of the assumed form of the measures as 
sums of Dirac's, as we now explain.

The first step of the algorithm has made the approximation
\begin{equation}
\hmu_{j+1} \approx \hmu_{j+1}^N=\sum_{n=1}^{N} w_j^{(n)} \delta(v_{j+1} - \hv_{j+1}^{(n)}).
\label{eq:emp_filt_ratz}
\end{equation}
We now apply Bayes' formula in
the form \eqref{345}. 
Using an approximation proportional to
(\ref{eq:emp_filt_ratz}) for $\hmu_{j+1}$
we obtain 
\begin{equation}
\mu_{j+1} \approx \mu_{j+1}^N:=\sum_{n=1}^{N} w_{j+1}^{(n)} \delta(v_{j+1} - \hv_{j+1}^{(n)}).
\label{eq:emp_filt2}
\end{equation}
where
\begin{equation}
\label{eq:waits}
\hw_{j+1}^{(n)} = \exp\bigl(-h\Phi(\hv_{j+1}^{(n)})\bigr)w_{j}^{(n)}
\end{equation}
and normalization requires
\begin{equation}
\label{eq:rewait}
w_{j+1}^{(n)}=\hw_{j+1}^{(n)}/\bigl(\sum_{n=1}^{N} \hw_{j+1}^{(n)}\bigr).
\end{equation}

Practical experience shows that some weights become very small 
and for this reason it is desirable to add a {\em resampling step}
to determine the $\{v_{j+1}^{(n)}\}$ by drawing from
(\ref{eq:emp_filt2}); this has the effect of removing particles
with very low weights and replacing them with multiple
copies of the particles with higher weights.
Because the initial measure $\rp(v_0)$ is not
in Dirac form it is convenient to place this resampling step at the
{\em start} of each iteration, rather than at the end as we have
presented here, as this naturally introduces a particle approximation
of the initial measure. This reordering makes no difference to the iteration 
we have described and results in the following algorithm.

\begin{algorithm}
\label{alg:smc}
\begin{enumerate}
\item[1.] Let $\mu_0^N=\mu_0$ and set $j=0$.
\item[2.] Draw $v_{j}^{(n)} \sim \mu_j^N$, $n=1,\dots,N$. 
\item[3.] Set $w_{j}^{(n)}=1/N$, $n=1,\dots,N$ and define
$\mu_j^N$ by \eqref{eq:emp_filt}.
\item[4.] Draw $\hv_{j+1}^{(n)} \sim P_j(v_{j}^{(n)},\cdot)$.
\item[5.] Define $w_{j+1}^{(n)}$ by \eqref{eq:waits}, \eqref{eq:rewait} and $\mu_{j+1}^N$ by \eqref{eq:emp_filt2}. 
\item[6.] $j+1\to j$ and return to 2.
\end{enumerate}
$\quad\Box$
\end{algorithm}

We define $S^N$ to be the mapping between probability
measures defined by sampling $N$ i.i.d.\ points
 from a measure and approximating that measure by an
equally weighted sum of Dirac's at the sample points. Then
the preceding algorithm may be written as
\begin{equation}
\label{eq:pfn}
\mu_{j+1}^N=\LC S^N P_j \mu_j^N.
\end{equation}
Although we have written the sampling step
$S^N$ {\em after} application of $P_j$, some reflection shows that
this is well-justified: applying $P_j$ followed by $S^N$ can be shown,
by first conditioning on the initial point and sampling with respect
to $P_j$, and then sampling over the distribution of the initial point,
to be the algorithm as defined.
The sequence of distributions that we wish to approximate 
simply satisfies the iteration
\eqref{eq:prop}.
Thus analyzing the particle filter requires estimation of the
error induced by application of $S^N$ (the {\em resampling error})
together with estimation of the rate of accumulation of this error
in time.

The operators $\LC, P_j$ and $S^N$ map the space $\cP(X)$ 
of probability measures on $X$ into itself according to the
following: 
\begin{subequations}
\begin{align*}
(\LC \mu) (dv) &= \frac{\exp\bigl(-h\Phi(v)\bigr) \mu(dv)}{\int_{X} \exp\bigl(-h\Phi(v)\bigr) \mu(dv)}, \\
(P_j \mu) (dv) &= \int_{X} P_j(v',dv) \mu(dv'), \\
(S^N \mu)(dv) &= \frac{1}{N} \sum_{n=1}^N \delta(v-v^{(n)}) dv, \quad v^{(n)} \sim \mu ~~~ {\rm i.i.d.}. 
\end{align*}
\end{subequations}
where $P_j$ is the kernel associated with the $\mu_j$-invariant
Markov chain.

Let $\mu=\mu^{(\omega)}$ denote, for each $\omega$, an element of $\cP(X)$.
If we assume that $\omega$ is a random variable, and let $\bbE^{\omega}$
denote expectation over $\omega$, then we may define a distance $d(\cdot,\cdot)$
between two random probability measures $\mu^{(\omega)}, 
\nu^{(\omega)}$, as follows:
$$
d(\mu,\nu) = {\rm sup}_{|f|_\infty \leq 1} \sqrt{ \bbE^{\omega} |\mu(f)-\nu(f)|^2 },
$$
with $|f|_\infty:=\sup_{v\in X}|f(v)|$, and 
where we have used the convention that $\mu(f) = \int_{X} f(v)\mu(dv)$ 
for measurable $f:X \to \R$, and similar for $\nu$. 
This distance does indeed generate a metric and, in particular,
satisfies the triangle inequality.
In fact it is simply the \as{total variation distance}\index{total variation distance} in the case of 
measures which are not random.

With respect to this distance between random probability measures
we may prove that the SMC method
generates a good approximation of the true measure $\mu$, in the
limit $N \to \infty$.
We use the fact that, under \eqref{pb}, we have
$$
\exp\bigl(-h\phi^+\bigr)< \exp\bigl(-h\Phi(v)\bigr) < \exp\bigl(-h\phi^-\bigr). 
$$ 
Since $\phi^- \le 0$ and $\phi^+ \ge 0$ we deduce that
there exists $\kappa \in (0,1)$ such that for all $v\in X$ 
$$
\kappa< \exp\bigl(-h\Phi(v)\bigr) < \kappa^{-1}.
$$ 
This constant $\kappa$ appears in the following.

\begin{theorem}\label{th39}
We assume in the following that \eqref{pb} holds.
Then
$$d(\mu^N_J,\mu_J) \le \sum_{j=1}^J (2\kappa^{-2})^j \frac{1}{\sqrt N}.$$
\end{theorem}

\begin{proof}
The desired result is a consequence of the following three 
facts, whose proof we postpone to three lemmas at the end of the subsection:
\begin{subequations}
\begin{align*}
 \sup_{\mu \in \cP(X)}d(S^N \mu,\mu) &\leq \frac{1}{\sqrt{N}}, \\
 d(P_j \nu , P_j \mu) &\leq   d(\nu,\mu), \\
 d(\LC \nu , \LC \mu) &\leq 2 \kappa^{-2}  d( \nu , \mu ). 
\end{align*}
\end{subequations}
By the triangle inequality we have, for $\nu_j^N=P\mu_j^N$,
\begin{align*}
d(\mu^N_{j+1},\mu_{j+1}) &= d(\LC S^N P_j \mu^N_j,\LC P_j \mu_j)\\
& \le d(\LC P_j \mu^N_j,\LC P_j \mu_j)+d(\LC S^N P_j \mu^N_j,\LC P_j \mu^N_j)\\
& \le 2\kappa^{-2}\Bigl(d(\mu^N_j,\mu_j)+d(S^N \nu_j^N,\nu^N_j)\Bigr)\\
& \le 2\kappa^{-2}\Bigl(d(\mu^N_j,\mu_j)+\frac{1}{\sqrt N}\Bigr).
\end{align*}
Iterating, after noting that $\mu_0^N=\mu_0$, gives the desired result.
\end{proof}

\begin{remarks}
\label{rem:fil}
This theorem shows that the sequential particle filter actually
reproduces the true \as{posterior}\index{posterior} distribution $\mu=\mu_J$, 
in the limit $N \to \infty$. 
We make some comments about this.

\begin{itemize}

\item
The measure $\mu=\mu_J$ is well-approximated by 
$\mu_j^N$ in the sense that, as the number of particles
$N \to \infty$, the approximating measure converges to the
true measure. The result holds in the infinite dimensional
setting. As a consequence the algorithm as stated is
robust to finite dimensional approximation.

\item Note that $\kappa=\kappa(J)$ and that $\kappa \to 1$ as
$J \to \infty.$ Using this fact shows that the error constant
in Theorem \ref{th39} behaves as 
$\sum_{j=1}^J (2\kappa^{-2})^j \asymp J\,2^{J}.$
Optimizing this upper bound does not give a useful rule-of-thumb
for choosing $J$, and in fact suggests choosing $J=1$.
In any case in applications $\Phi$ is not bounded
from above, or even  below in general, and a more refined analysis
is then required.

\item In principle the theory applies even if the Markov
kernel $P_j$ is simply the identity mapping on probability
measures. However, moving the particles according to a 
non-trivial $\mu_j$-invariant measure is absolutely essential for the methodology
to work in practice. This can be seen by noting that if $P_j$
is indeed taken to be the identity map on measures then the
particle positions will be unchanged as $j$ changes, meaning
that the measure $\mu=\mu_J$ is approximated by weighted
samples from the \as{prior}\index{prior}, clearly undesirable in general. 

\item In fact, if the Markov kernel $P_j$ is ergodic 
then it is sometimes possible to obtain bounds which are {\em uniform} in $J$.

\end{itemize}

\end{remarks}

We now prove the three lemmas which underly the convergence proof.

\begin{lemma}
The sampling operator satisfies
$$\sup_{\mu \in \cP(X)}d(S^N \mu,\mu) \leq \frac{1}{\sqrt{N}}.$$
\end{lemma}

\begin{proof} 
Let $\nu$ be an element of $\cP(X)$ and $\{v^{(k)}\}_{k=1}^N$
a set of  i.i.d.\ samples with $v^{(1)} \sim \nu$; the randomness
entering the probability measures is through these samples, expectation
with respect to which we denote by $\bbE^{\omega}$ in what follows. Then
$$S^N \nu(f)=\frac{1}{N}\sum_{k=1}^N f(v^{(k)})$$
and, defining $\bF=f-\nu(f)$, we deduce that
$$S^N\nu(f)-\nu(f)=\frac{1}{N}\sum_{k=1}^N \bF(v^{(k)}).$$
It is straightforward to see that
$$\bbE^{\omega} \bF(v^{(k)})\bF(v^{(l)})=\delta_{kl}\bbE^{\omega} |\bF(v^{(k)})|^2.$$
Furthermore, for $|f|_\infty \le 1$,
$$\bbE^{\omega} |\bF(v^{(1)})|^2=\bbE^{\omega} |f(v^{(1)})|^2-|\bbE^{\omega} f(v^{(1)})|^2 \le 1.$$
It follows that, for $|f|_\infty \le 1$, 
$$\bbE^{\omega}|\nu(f)- S^N\nu(f)|^2=\frac{1}{N^2}\sum_{k=1}^N \bbE^{\omega} |\bF(v^{(k)})|^2 \le \frac{1}{N}.$$
Since the result is independent of $\nu$ we may take the supremum over all
probability measures and obtain the desired result.
\end{proof}

\begin{lemma}
Since $P_j$ is a Markov kernel we have
$$d(P_j \nu , P_j \nu') \leq   d(\nu,\nu').$$
\end{lemma}

\begin{proof}
The result is generic for any Markov kernel $P$, so we drop the
index $j$ on $P_j$ for the duration of the proof.
Define
$$q(v')=\int_{X} P(v',dv)f(v),$$
that is the expected value of $f$ under one-step of the Markov 
chain started from $v'$. Clearly,
since
$$|q(v')| \le \Bigl(\int_X P(v',dv)\Bigr)\sup_{v}|f(v)|=\sup_{v}|f(v)|$$
it follows that
$$\sup_{v} |q(v)| \le \sup_{v}|f(v)|.$$
Also, since
$$P\nu(f)=\int_X f(v) \Bigl(\int_X P(v',dv)\nu(dv')\Bigr),$$
exchanging the order of integration shows that
$$|P\nu(f)-P\nu'(f)|=|\nu(q)-\nu'(q)|.$$
Thus 
\begin{align*}
d(P\nu,P\nu')&=\sup_{|f|_\infty\le 1}\Bigl(\bbE^{\omega}|P\nu(f)-P\nu'(f)|^2\Bigr)^{\frac12}\\
& \le \sup_{|q|_\infty\le 1}\Bigl(\bbE^{\omega}|\nu(q)-\nu'(q)|^2\Bigr)^{\frac12}\\
& = d(\nu,\nu')
\end{align*}
as required.
\end{proof}

\begin{lemma}
Under the Assumptions of Theorem \ref{th39} we have
$$d(\LC \nu , \LC \mu) \leq 2 \kappa^{-2}  d( \nu , \mu ).$$ 
\end{lemma}

\begin{proof}
Define $g(v)=\exp\bigl(-h\Phi(v)\bigr)$.
Notice that for $|f|_\infty <\infty$  
we can rewrite 
\begin{subequations}
\begin{align*}
(\LC \nu)(f) - (\LC \mu)(f)  = & \frac{\nu(f g)}{\nu(g)} - \frac{\mu(f g)}{\mu(g)} \\
 = & \frac{\nu(f g)}{\nu(g)}  - \frac{\mu(f g)}{\nu(g)} + \frac{\mu(f g)}{\nu(g)} - \frac{\mu(f g)}{\mu(g)} \\
 = & \frac{\kappa^{-1}}{\nu(g)}[\nu(\kappa f g) -\mu(\kappa f g)] + 
 \frac{\mu(f g)}{\mu(g)} \frac{\kappa^{-1}}{\nu(g)} [\mu(\kappa g) - \nu(\kappa g)].
\end{align*}
\end{subequations}
Now notice that 
$\nu(g)^{-1} \leq \kappa^{-1}$ and that, for $|f|_{\infty} \le 1$,
$\mu(fg)/\mu(g) \le 1$ since the expression corresponds to
an expectation with respect to measure found from $\mu$ by reweighting
with \as{likelihood}\index{likelihood} proportional to $g$. Thus 
$$|(\LC \nu)(f) - (\LC \mu)(f)| \le \kappa^{-2}|\nu(\kappa f g) -\mu(\kappa f g)|+\kappa^{-2}|\nu(\kappa g) - \mu(\kappa g)|.$$ 
Since $|\kappa g| \le 1$ it follows that
$$\bbE^{\omega}|(\LC \nu)(f) - (\LC \mu)(f)|^2 \le 4\kappa^{-4}
\sup_{|f|_\infty \le 1} \bbE^{\omega}|\nu(f) - \mu(f)|^2$$
and the desired result follows.
\end{proof}

\vspace{0.2in}
\subsection{Continuous Time Markov Processes}
\label{ssec:start}

In the remainder of this section we shift our attention to 
continuous time processes which
preserve $\mu$; these are important in the construction of
proposals for MCMC methods, and also as diffusion limits for
MCMC. Our main goal is to show that the equation \eqref{44} preserves $\mu$.
Our setting is to work in the separable
Hilbert space $\mathcal {H}$ with 
inner-product and norm denoted by
$\langle \cdot, \cdot \rangle$ and $\|\cdot\|$ respectively.
We assume that the \as{prior}\index{prior!Gaussian}
$\mu_0$ is a Gaussian on $\mathcal {H}$ and,
furthermore, we specify the space $X \subset \mathcal {H}$ 
that will play a central role in this continuous time setting. 
This choice of space $X$ will link the properties
of the reference measure $\mu_0$ and the potential $\Phi$.
We assume that $\mathcal{C}$ has eigendecomposition
\begin{equation}\label{047}
\mathcal{C}\phi_j=\gamma_j^2\phi_j
\end{equation}
where $\{\phi_j\}_{j=1}^\infty$ forms an orthonormal basis for
$\mathcal{H}$, and where $\gamma_j\asymp j^{-s}$. Necessarily
$s>\frac{1}{2}$ since $\mathcal{C}$ must be trace-class 
to be a covariance on $\mathcal{H}$.
We define the following scale of Hilbert subspaces, defined 
for $r>0$, by
$$\mathcal{X}^r=\Bigl\{u\in\mathcal{H}\big|\sum_{j=1}^{\infty} j^{2r}|\langle u,\phi_j\rangle|^2<\infty\Bigr\}$$
and then extend to superspaces $r<0$ by duality. We use 
$\|\cdot\|_r$ to denote the norm induced by the inner-product 
$$\langle u,v \rangle_r=\sum_{j=1}^{\infty} j^{2r} u_j v_j$$
for $u_j=\langle u,\phi_j \rangle$ and $v_j=\langle v,\phi_j \rangle$.
Application of Theorem \ref{t:2.2} with $d=1$ and $q=2$ shows that
$\mu_0(\mathcal{X}^r)=1$ for all $r \in [0,s-\frac12)$.
In what follows we will take $X=\mathcal{X}^t$ for some fixed
$t \in [0,s-\frac12)$.

Notice that we have not assumed that the underlying Hilbert
space is comprised of $L^2$ functions mapping $D \subset
\R^d$ into $\R$, and hence we have not introduced the dimension $d$ of
an underlying physical space $\R^d$ into either the decay
assumptions on the $\gamma_j$ or the spaces $\mathcal{X}^r$.
However, note that the spaces $\mathcal{H}^t$ introduced in
subsection \ref{ssec:2.4} are, in the case where $\mathcal{H}=L^2(D;\R)$,
the same as the spaces $\mathcal{X}^{t/d}$.

We now break our developments into introductory discussion of 
the finite dimensional setting, in subsection \ref{ssec:FDL},
and into the Hilbert space setting in subsection \ref{ssec:IDL}. 
In subsection \ref{sssec:1} we introduce a family of Langevin
equations which are invariant with respect to a given measure
with smooth Lebesgue density. Using this, in subsection \ref{sssec:2},
we motivate equation \eqref{44} showing that, in finite dimensions,
it corresponds to a particular choice of Langevin equation. 
In subsection \ref{sssec:3}, for the infinite-dimensional setting, we describe the precise assumptions
under which we will prove invariance of measure $\mu$ under the
dynamics \eqref{44}. Subsection \ref{sssec:4} describes the
elements of the finite dimensional approximation of \eqref{44}
which will underly our proof of invariance. Finally, subsection
\ref{sssec:5} contains statement of the measure invariance
result as Theorem \ref{t:4.14}, together with its proof; this
is preceded by Theorem \ref{t:4.13} which establishes existence
and uniqueness of a solution to \eqref{44}, as well as
continuous dependence of the solution on the initial condition
and Brownian forcing. Theorems \ref{t:4.10} and \ref{t:4.9} are
the finite dimensional analogues of Theorems \ref{t:4.14} and \ref{t:4.13}
respectively and play a useful role in motivating the
infinite dimensional theory.

\vspace{0.2in}
\subsection{Finite Dimensional Langevin Equation} 
\label{ssec:FDL}
 
\subsubsection{Background Theory}
\label{sssec:1}

Before setting up the (rather involved) technical assumptions
required for our proof of measure invariance, we give some
finite-dimensional intuition.
Recall that $|\cdot|$ denotes the Euclidean
norm on $\R^n$ and we also use this notation for
the induced matrix norm on
$\R^n$. We assume that
$$I\in \CC^2(\R^n,\R^+), \,\,\,\,
\int_{\R^n} e^{-I(u)}du=1.$$ Thus $\rho(u)=e^{-I(u)}$ is the
Lebesgue density corresponding to a
random variable on $\R^n$. Let $\mu$ be 
the corresponding measure.

Let $\mathbb{W}$ denote standard \as{Wiener measure}\index{Wiener!measure} 
on $\R^n$.
Thus $B\sim \mathbb{W}$ is a standard Brownian motion in $\CC([0,\infty);
\R^n)$.
Let $u\in \CC([0,\infty); \R^n)$ satisfy the SDE
\begin{equation}\label{048}
\frac{du}{dt}=-A\,DI(u)+\sqrt{2A}\frac{dB}{dt}, \,\, u(0)=u_0
\end{equation}
where $A\in\R^{n\times n}$ is symmetric and strictly positive definite
and $DI \in \CC^1(\R^n,\R^n)$ is the gradient of $I$.
Assume that $\exists M>0: \forall u\in\R^n$, the Hessian of $I$
satisfies
$$|D^2I(u)|\leq M.$$
We refer to equations of the form \eqref{048} as {\em Langevin
equations} (as mentioned earlier they correspond to
overdamped Langevin equations in the physics literature, and to Langevin
equations in the statistics literature), 
and the matrix $A$ as a {\em preconditioner}.

\begin{theorem}\label{t:4.9}
For every $u_0 \in \R^n$ and $\mathbb{W}$-a.s., equation
(\ref{048}) has a unique global in time solution $u\in \CC([0,\infty);
\R^n)$.
\end{theorem}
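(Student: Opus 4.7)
The plan is to reduce the claim to the standard global existence and uniqueness theorem for stochastic differential equations with globally Lipschitz drift and (constant) diffusion coefficients, since the hypothesis on $D^2I$ gives exactly the Lipschitz property that we need.

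First, I would observe that the bound $|D^2I(u)|\le M$ for all $u\in\mathbb{R}^n$, combined with the mean value theorem applied componentwise to $DI\in C^1(\mathbb{R}^n,\mathbb{R}^n)$, yields the global Lipschitz estimate
$$|DI(u)-DI(v)|\le M|u-v| \qquad \forall u,v\in\mathbb{R}^n.$$
Consequently the drift $b(u):=-A\,DI(u)$ satisfies $|b(u)-b(v)|\le |A|M|u-v|$, and it also satisfies the linear growth bound $|b(u)|\le |A|(|DI(0)|+M|u|)$. The diffusion coefficient $\sqrt{2A}$ is constant in $u$ and is well-defined as the symmetric positive definite square root of $2A$, so it is trivially Lipschitz and of linear growth.

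Next, for arbitrary fixed $T>0$, I would run the Picard iteration in the Banach space $L^2_{\mathbb{W}}\bigl(\Omega;C([0,T];\mathbb{R}^n)\bigr)$, setting $u^{(0)}(t)\equiv u_0$ and
$$u^{(k+1)}(t) = u_0 - \int_0^t A\,DI\bigl(u^{(k)}(s)\bigr)\,ds + \sqrt{2A}\,B(t).$$
Using the Lipschitz bound on $b$, the Itô isometry (or just boundedness of the stochastic integral term, which is the same for every iterate since the diffusion is constant), and Grönwall's inequality, one shows the sequence $\{u^{(k)}\}$ is Cauchy in this Banach space, producing a pathwise continuous limit $u\in C([0,T];\mathbb{R}^n)$ which satisfies \eqref{048} in integral form. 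Uniqueness on $[0,T]$ follows from a standard Grönwall argument on the difference of two solutions.

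Finally, since $T>0$ is arbitrary and the solutions on different intervals agree on their common domain by uniqueness, I patch them together to obtain a global solution $u\in C([0,\infty);\mathbb{R}^n)$, $\mathbb{W}$-a.s. There is no real obstacle here because the bounded Hessian hypothesis rules out blow-up in finite time; without it one would have to worry about explosion and would need either a Lyapunov function argument or monotonicity of $DI$, but the global Lipschitz property makes the argument routine.
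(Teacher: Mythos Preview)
Your argument is correct, but the paper takes a slightly more elementary route that exploits the additive-noise structure more directly. Rather than working in $L^2_{\mathbb{W}}\bigl(\Omega;C([0,T];\mathbb{R}^n)\bigr)$, the paper fixes a Brownian path $B$ (continuous $\mathbb{W}$-a.s.) and treats the integral equation as a deterministic fixed-point problem in $X=C([0,T];\mathbb{R}^n)$: defining $(\mathcal{F}v)(t)=u_0-\int_0^t A\,DI(v(s))\,ds+\sqrt{2A}B(t)$, the stochastic term is identical in $\mathcal{F}v_1$ and $\mathcal{F}v_2$ and cancels in the difference, so $\|\mathcal{F}v_1-\mathcal{F}v_2\|_X\le T|A|M\|v_1-v_2\|_X$. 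Choosing $T$ with $T|A|M<1$ gives a contraction, hence a unique fixed point, and one then iterates on $[T,2T],[2T,3T],\ldots$ to go global. Your approach via the general SDE existence theorem with global Lipschitz drift and constant diffusion is of course valid and is the right framework when the diffusion is not constant; you even note that the stochastic term is the same in every Picard iterate, which is precisely the observation that makes the paper's pathwise argument possible. The pathwise version buys you a cleaner statement (no need for $L^2$ spaces or It\^o isometry at all) and matches the paper's later infinite-dimensional proof, which also proceeds pathwise.
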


\begin{proof} A solution of the SDE
is a solution of the integral equation
\begin{equation}\label{049}
u(t)=u_0-\int_0^tA\,DI\big(u(s)\big)ds+\sqrt{2A}B(t).
\end{equation}
Define $X=\CC([0,T];\R^n)$ and $\mathcal{F}:X\rightarrow X$ by
\begin{equation}\label{0410}
(\mathcal{F}v)(t)=u_0-\int_0^t A\,DI\big(v(s)\big)ds+\sqrt{2A}B(t).
\end{equation}
Thus $u\in X$ solving (\ref{049}) is a fixed point of $\mathcal{F}$.
We show that $\mathcal{F}$ has a unique fixed point, for $T$
sufficiently small. To this end we study a contraction property
of $\mathcal{F}$:
\begin{eqnarray*}
\|(\mathcal{F}v_1)-(\mathcal{F}v_2)\|_X&=&\sup_{0\leq t\leq
T}\Big|\int_0^t\Big(A\,DI\big(v_1(s)\big)-A\,DI\big(v_2(s)\big)\Big)ds\Big|\\
&\leq&\int_0^T\Big|A\,DI\big(v_1(s)\big)-A\,DI\big(v_2(s)\big)\Big|ds\\
&\leq&\int_0^T|A|M|v_1(s)-v_2(s)|ds\\
&\leq&T|A|M\|v_1-v_2\|_X.
\end{eqnarray*}
Choosing $T: T|A|M<1$ shows that $\mathcal{F}$ is a contraction on
$X$. This argument may be repeated on successive intervals $[T,2T],
[2T,3T],\ldots$ to obtain a unique global solution in
$\CC([0,\infty);\R^n)$. 
\end{proof}

\begin{remark}
Note that, since $A$ is positive-definite symmetric, its eigenvectors $e_j$
form an orthonormal basis for $\R^n$. We write $Ae_j=\alpha_j^2 e_j$. Thus
$$B(t)=\sum_{j=1}^n \beta_j(t) e_j$$
where the $\{\beta_j\}_{j=1}^n$ are an i.i.d.\ collection of standard unit
Brownian motions on $\R$. Thus we obtain
$$\sqrt{A} B(t)=\sum_{j=1}^n \alpha_j \beta_j e_j=:W(t).$$
We refer to $W$ as an $A$-\as{Wiener process}\index{Wiener!process}. 
Such a process is Gaussian with
mean zero and covariance structure
$$\EE W(t) \otimes W(s)= A (t \wedge s).$$
The equation \eqref{048} may be written as 
\begin{equation}\label{148}
\frac{du}{dt}=-ADI(u)+\sqrt{2}\frac{dW}{dt}, \,\, u(0)=u_0.
\end{equation}
\label{rem:cwp}
\end{remark}

\begin{theorem}\label{t:4.10}
Let $u(t)$ solve \eqref{048}.
If $u_0\sim\mu$ then $u(t)\sim\mu$ for all $t>0$. More precisely,
for all $\varphi:\R^n\rightarrow\R^+$ bounded and
continuous, $u_0\sim\mu$ implies
$$\bbE\varphi\big(u(t)\big)=\bbE\varphi(u_0),\,\,\forall t>0.$$
\end{theorem}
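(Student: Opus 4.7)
The plan is to verify that $\mu$ is infinitesimally invariant for the Markov semigroup associated to \eqref{048} and then upgrade this to genuine invariance of the law. Let $\mathcal{L}$ denote the generator of the diffusion, which (using $\sigma\sigma^\top = 2A$) takes the form
$$\mathcal{L}\varphi(u) = -\langle A\,DI(u), D\varphi(u)\rangle + \mathrm{Tr}\bigl(A\,D^2\varphi(u)\bigr).$$
For a test function $\varphi \in C_c^{\infty}(\mathbb{R}^n)$, I would first apply It\^o's formula to $\varphi(u(t))$ along the solution constructed in Theorem \ref{t:4.9}, obtaining
$$\varphi(u(t)) - \varphi(u_0) = \int_0^t \mathcal{L}\varphi(u(s))\,ds + M_t,$$
where $M_t$ is a mean-zero martingale (integrability of the martingale part is clear because $D\varphi$ has compact support and $A$ is bounded).

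The heart of the argument is the adjoint identity $\int_{\mathbb{R}^n}\mathcal{L}\varphi\,d\mu = 0$ for such $\varphi$. This is a one-line integration by parts: since $\varphi$ has compact support there are no boundary terms, and using $D(e^{-I}) = -e^{-I}\,DI$ together with the symmetry of $A$,
$$\int \mathrm{Tr}(A\,D^2\varphi)\,e^{-I}\,du = \int \langle A\,D\varphi,\,DI\rangle\,e^{-I}\,du = \int \langle A\,DI,\,D\varphi\rangle\,e^{-I}\,du,$$
which cancels the drift term in $\mathcal{L}\varphi$ exactly.

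Next I would introduce the Markov semigroup $P_t\varphi(u) = \mathbb{E}[\varphi(u(t)) \mid u(0)=u]$. Using the Lipschitz dependence of the solution on its initial datum (which follows from the same contraction argument as in the proof of Theorem \ref{t:4.9}, together with the uniform bound $|D^2I|\le M$), one checks that $P_t\varphi \in C^2$ with bounded derivatives, so that Kolmogorov's backward equation $\tfrac{d}{dt}P_t\varphi = \mathcal{L}P_t\varphi$ holds. Combining this with the adjoint identity applied to $P_t\varphi$,
$$\frac{d}{dt}\int P_t\varphi\,d\mu = \int \mathcal{L}(P_t\varphi)\,d\mu = 0,$$
so $\int P_t\varphi\,d\mu = \int \varphi\,d\mu$ for every $t\ge 0$ and every $\varphi \in C_c^\infty$. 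This is precisely the statement $u(t) \sim \mu$ when $u_0\sim\mu$, at the level of smooth compactly supported test functions.

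Finally, to reach the claim for arbitrary bounded continuous $\varphi$ I would approximate by $\varphi_n \in C_c^\infty$ with $\varphi_n \to \varphi$ locally uniformly and $\sup_n\|\varphi_n\|_\infty \le \|\varphi\|_\infty$, then pass to the limit using dominated convergence on both sides. The main technical obstacle is this last regularization combined with the preceding semigroup regularity claim: verifying that $P_t\varphi$ is smooth enough for Kolmogorov's equation to be applied pointwise, and that the identity $\int \mathcal{L}(P_t\varphi)\,d\mu = 0$ is valid (a priori the integration by parts required compact support, which $P_t\varphi$ need not have). Both issues are handled by the Lipschitz-in-initial-condition estimate from Theorem \ref{t:4.9} combined with a further compactly supported truncation argument, using the Gaussian-type tails of $e^{-I}$ that one obtains from the Hessian bound to control boundary terms at infinity.
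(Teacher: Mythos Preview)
Your argument is correct and is the dual of what the paper does. The paper works on the Fokker--Planck side: it writes down the forward equation
\[
\partial_t\bar\rho=\nabla\cdot\bigl(-f\bar\rho+\Sigma\nabla\bar\rho\bigr),\qquad f=-A\,DI,\ \Sigma=A,
\]
and verifies directly that $\rho=e^{-I}$ annihilates the right-hand side, because $A\,DI\,\rho+A\nabla\rho=A\,DI\,e^{-I}-A\,DI\,e^{-I}=0$. That is the entire proof in the paper; it is deliberately formal and simply invokes ``the law at time $t$ has density solving Fokker--Planck'' without checking regularity of the density or justifying the PDE.

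You instead work on the generator side: you establish $\int\mathcal{L}\varphi\,d\mu=0$ by the same one-line integration by parts (your display is exactly the adjoint of the paper's divergence calculation), and then use the backward equation $\tfrac{d}{dt}P_t\varphi=\mathcal{L}P_t\varphi$ to propagate. The content of the two arguments is identical; your route is the one that is easier to make fully rigorous, and you correctly flag the two places where care is needed (regularity of $P_t\varphi$ and the lack of compact support when integrating by parts). One small caution: the Hessian bound $|D^2I|\le M$ gives only an \emph{upper} bound on the growth of $I$, hence at most linear growth of $DI$; it does not by itself force Gaussian-type decay of $e^{-I}$. What you actually need for the truncation argument is that $|DI(u)|\,e^{-I(u)}$ and its relatives are integrable, which follows from $\int e^{-I}<\infty$ together with the linear bound on $DI$, not from Gaussian tails per se.
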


\begin{proof}
Consider the additive noise SDE, for
additive noise with strictly positive-definite diffusion
matrix $\Sigma$, 
$$\frac{du}{dt}=f(u)+\sqrt{2\Sigma}\frac{dB}{dt},\,\,u(0)=u_0\sim\nu_0.$$
If $\nu_0$ has pdf $\rho_0$, then the Fokker-Planck equation for
this SDE is
\begin{eqnarray*}
\frac{\partial\brho}{\partial
t}&=&\nabla\cdot(-f\brho+\Sigma\nabla\brho), \,\, (u,t)\in
\R^n\times\R^+,\\
\brho|_{t=0}&=&\rho_0.
\end{eqnarray*}
At time $t>0$ the solution of the 
SDE is distributed according to measure $\nu(t)$
with density $\brho(u,t)$ solving the Fokker-Planck equation.
Thus the initial measure $\nu_0$ is preserved if
$$\nabla\cdot(-f\rho_0+\Sigma\nabla\rho_0)=0$$ and then $\brho(\cdot,t)=\rho_0,\,\, \forall t\geq
0$.

We apply this Fokker-Planck equation to show that $\mu$ is invariant
for equation (\ref{049}). We need to show that
$$\nabla \cdot \big(ADI(u)\rho+A\,\nabla\rho\big)=0$$ if $\rho=e^{-I(u)}$.
With this choice of $\rho$ we have
$$\nabla\rho=-DI(u)e^{-I(u)}=-DI(u)\rho.$$ Thus
$$A\,DI(u)\rho+A\,\nabla\rho=A\,DI(u)\rho-A\,DI(u)\rho=0,$$
so that
$$\nabla\cdot\big(A\,DI(u)\rho+A\,\nabla\rho\big)=\nabla\cdot(0)=0.$$
Hence the proof is complete. \end{proof}

\subsubsection{Motivation for Equation (\ref{44})}
\label{sssec:2}

Using the preceding finite dimensional development, we now motivate
the form of equation (\ref{44}). 
For (\ref{045}) we have, if $\mathcal{H}$ is
$\R^n$,
\begin{equation*}
\mu(du) = \exp\big(-I(u)\big)\,du\;,\qquad 
I(u) = \frac{1}{2}|\mathcal{C}^{-\frac{1}{2}}u|^2+\Phi(u)+\ln Z\;.
\end{equation*}
Thus $$DI(u)=\mathcal{C}^{-1}u+D\Phi(u)$$ and equation (\ref{048}),
which preserves $\mu$, is
$$\frac{du}{dt}=-A\big(\mathcal{C}^{-1}u+D\Phi(u)\big)+\sqrt{2A}\frac{dB}{dt}.$$
Choosing the preconditioner $A=\mathcal{C}$ gives
$$\frac{du}{dt}=-u-\mathcal{C}D\Phi(u)+\sqrt{2\mathcal{C}}\frac{dB}{dt}.$$
This is exactly (\ref{44}) provided $W=\sqrt{\mathcal{C}}B$, where
$B$ is a Brownian motion with covariance $\mathcal{I}$. Then $W$ is
a Brownian motion with covariance $\mathcal{C}$.
This is the finite dimensional analogue of the construction of a $\cC$-Wiener
process in the Appendix. 
We are now in a position to prove Theorems \ref{t:4.13} and \ref{t:4.14}
which are the infinite dimensional analogues of Theorems \ref{t:4.9}
and \ref{t:4.10}.

\vspace{0.2in}
\subsection{Infinite Dimensional Langevin Equation}
\label{ssec:IDL}

\subsubsection{Assumptions on Change of Measure}
\label{sssec:3}

Recall that $\mu_0(\mathcal{X}^r)=1$ for all $r \in [0,s-\frac12)$.
The functional $\Phi(\cdot)$ is assumed to be
defined on $\mathcal{X}^t$ for some $t \in [0,s-\frac12)$, and
indeed we will assume appropriate bounds on the first
and second derivatives, building on this assumption.
(Thus, in this subsection
\ref{sssec:3}, $t$ does not denote time; instead we use $\tau$ to denote the generic time-argument.)
These regularity assumptions on
$\Phi(\cdot)$ ensure that the probability distribution $\mu$ is
not too different from $\mu_0$, when projected into directions
associated with $\phi_j$ for $j$ large. 

For each $u \in \mathcal{X}^t$ the derivative $\nnabla \Phi(u)$ is an element of the
dual $(\mathcal{X}^t)^*$ of $\mathcal{X}^t$ comprising continuous linear
functionals on $\mathcal{X}^t$. However, we may identify
$(\mathcal{X}^t)^*$ with $\mathcal{X}^{-t}$ and view $\nnabla
\Phi(u)$ as an element of $\mathcal{X}^{-t}$ for each $u \in
\mathcal{X}^t$. With this identification, the following identity
holds
\begin{equation*}
\| \nnabla \Phi(u)\|_{\mathcal{L}(\mathcal{X}^t,\R)} = \|
\nnabla \Phi(u) \|_{-t}
\end{equation*}
and the second derivative $D^2 \Phi(u)$ can be identified as
an element of $\mathcal{L}(\mathcal{X}^t, \mathcal{X}^{-t})$. To
avoid technicalities we assume that $\Phi(\cdot)$ is quadratically
bounded, with first derivative linearly bounded and second
derivative globally bounded. Weaker assumptions could be dealt with
by use of stopping time arguments.

\begin{assumptions}
\label{a:4.8}
There exist constants $M_i \in \R^+, i \leq 4$ and $t \in [0,
s - 1/2)$ such that, for all $u \in \mathcal{X}^t$, the functional
$\Phi:\mathcal{X}^t \rightarrow \R$ satisfies
\begin{eqnarray*}
-M_1 \leq \Phi(u) &\leq&  M_2 \, \Big(1 +  \|u\|_t^2\Big);     \\
\| \nnabla \Phi(u)\|_{-t} &\leq& M_3 \, \Big(1 + \|u\|_t\Big); \\
\|D^2 \Phi(u)\|_{\mathcal{L}(\mathcal{X}^t,\mathcal{X}^{-t})}
&\leq& M_4.
\end{eqnarray*}
$\quad\Box$
\end{assumptions}

\begin{example}
The functional $\Phi(u)  =
\frac{1}{2}\|u\|_t^2$ satisfies Assumptions \ref{a:4.8}. To see this note
that we may write $\Phi(u)=\frac12 \langle u, \cK u \rangle$ where
$$\cK=\frac12 \sum_{j=1}^{\infty} j^{2t} \phi_j \phi_j^*.$$
The functional $\Phi: \mathcal{X}^t \to \R^+$ is clearly
well-defined by definition. Its derivative at $u \in \mathcal{X}^t$ is given
by $\cK u=\nnabla \Phi(u) = \sum_{{j \geq 1}} j^{2t} u_j \phi_j$,
where $u_j=\langle \phi_j, u \rangle.$ Furthermore $\nnabla \Phi(u) \in
\mathcal{X}^{-t}$ with $\|\nnabla \Phi(u)\|_{-t} = \|u\|_t$. The
second derivative $D^2 \Phi(u) \in \mathcal{L}(\mathcal{X}^t,
\mathcal{X}^{-t})$ is the linear operator $\cK$, that is the operator
that maps $u \in
\mathcal{X}^t$ to $\sum_{j \geq 1} j^{2t} \langle u,\phi_j\rangle
\phi_j \in \mathcal{X}^t$: its norm satisfies $\|
D^2 \Phi(u) \|_{\mathcal{L}(\mathcal{X}^t, \mathcal{X}^{-t})} = 1$ for any $u
\in \mathcal{X}^t$. \qed
\end{example}

Since the eigenvalues $\gamma_j^2$ of $\mathcal{C}$ decrease as
$\gamma_j\asymp j^{-s}$, the operator $\mathcal{C}$ has a
smoothing effect: $\mathcal{C}^{\alpha} h$ gains $2 \alpha s$ orders
of regularity in the sense that the $\mathcal{X}^{\beta}$-norm of
$\mathcal{C}^{\alpha}h$ is controlled by the $\mathcal{X}^{\beta-2
\alpha s}$-norm of $h \in \mathcal{H}$. Indeed
it is straightforward to show the following:

\begin{lemma}
\label{l:4.10} Under Assumptions \ref{a:4.8}, the following
estimates hold:
\begin{enumerate}
\item The operator $\mathcal{C}$ satisfies
\begin{equation*}
\|\mathcal{C}^{\alpha}h \|_{\beta} \asymp \|h\|_{\beta - 2 \alpha s}.
\end{equation*}
\item The function $\mathcal{C} \nnabla \Phi: \h^t \to \h^t$ is globally Lipschitz on $\h^t$:
there exists a constant $M_5 > 0$ such that
\begin{equation}
\| \mathcal{C}\nnabla \Phi(u) - \mathcal{C}\nnabla \Phi(v) \|_t \leq M_5 \, \| u-v \|_t
\qquad \qquad \forall u,v \in \h^t.
\end{equation}
\item The function $F: \h^t \to \h^t$ defined by
\begin{equation}
\label{eqn:mu}
F(u)=-u-\cC D\Phi(u)
\end{equation}
is globally Lipschitz on $\h^t$.
\item
The functional $\Phi(\cdot): \h^t \to \RR$ satisfies a second order
Taylor formula (for which we extend $\langle \cdot,\cdot \rangle$ from an
inner-product on $\h$ to the dual pairing between
$\h^{-t}$ and $\h^t$.) There exists a constant $M_6 > 0$ such that
\begin{equation} \label{eqn:2nd-Taylor}
\Phi(v) - \Big( \Phi(u) + \langle \nnabla \Phi(u), v-u \rangle\Big)
\leq M_6 \, \|u - v \|_t^2
\qquad \forall u,v \in \h^t.
\end{equation}
\end{enumerate}
\end{lemma}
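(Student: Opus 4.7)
My plan is to treat the four statements as a chain of elementary consequences of the spectral decomposition $\cC\phi_j=\gamma_j^2\phi_j$ together with the uniform second-derivative bound in Assumptions \ref{a:4.8}; the hardest step is really bookkeeping the spaces, not any single estimate.

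For part (1), I would use the spectral decomposition directly. Writing $h=\sum h_j\phi_j$ with $h_j=\langle h,\phi_j\rangle$, one has $\cC^\alpha h = \sum \gamma_j^{2\alpha} h_j\phi_j$, so
\begin{equation*}
\|\cC^\alpha h\|_\beta^2 \;=\; \sum_{j\ge 1} j^{2\beta}\gamma_j^{4\alpha}|h_j|^2 \;\asymp\; \sum_{j\ge 1} j^{2\beta-4\alpha s}|h_j|^2 \;=\; \|h\|_{\beta-2\alpha s}^2,
\end{equation*}
using $\gamma_j\asymp j^{-s}$. Specialising to $\alpha=1$, $\beta=t$ gives the key identity $\|\cC g\|_t\asymp \|g\|_{t-2s}$ that drives part (2).

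For part (2), I would first show that $D\Phi:\h^t\to\h^{-t}$ is globally Lipschitz. Using the fundamental theorem of calculus along the segment joining $u$ and $v$ in $\h^t$, together with the bound $\|D^2\Phi\|_{\mathcal{L}(\h^t,\h^{-t})}\le M_4$, one gets
\begin{equation*}
\|D\Phi(u)-D\Phi(v)\|_{-t} \;\le\; \int_0^1 \|D^2\Phi\bigl(v+r(u-v)\bigr)\|_{\mathcal{L}(\h^t,\h^{-t})}\,\|u-v\|_t\,dr \;\le\; M_4\|u-v\|_t.
\end{equation*}
Now I apply part (1) and the embedding $\h^{-t}\hookrightarrow\h^{t-2s}$, which is valid because $t<s-\tfrac12<s$ gives $-t>t-2s$ and therefore $\|\cdot\|_{t-2s}\le\|\cdot\|_{-t}$. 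Combining these,
\begin{equation*}
\|\cC D\Phi(u)-\cC D\Phi(v)\|_t \;\lesssim\; \|D\Phi(u)-D\Phi(v)\|_{t-2s} \;\le\; \|D\Phi(u)-D\Phi(v)\|_{-t} \;\le\; M_4\|u-v\|_t,
\end{equation*}
which yields $M_5$. Part (3) is then immediate, since the identity $u\mapsto u$ is Lipschitz of constant $1$ on $\h^t$, so $F=-\mathrm{id}-\cC D\Phi$ is Lipschitz with constant $1+M_5$.

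For part (4), I would use Taylor's theorem with integral remainder. For $u,v\in\h^t$ the $C^2$ regularity of $\Phi$ gives
\begin{equation*}
\Phi(v)-\Phi(u)-\langle D\Phi(u),v-u\rangle \;=\; \int_0^1 (1-r)\,\bigl\langle D^2\Phi\bigl(u+r(v-u)\bigr)(v-u),\,v-u\bigr\rangle\,dr,
\end{equation*}
where the pairing is the $\h^{-t}$--$\h^t$ dual pairing extending $\langle\cdot,\cdot\rangle$, as already agreed in the footnote of the lemma. The uniform bound $\|D^2\Phi(w)\|_{\mathcal{L}(\h^t,\h^{-t})}\le M_4$ controls the integrand by $(1-r)M_4\|v-u\|_t^2$, and integrating yields the estimate with $M_6=M_4/2$. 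The only real subtlety throughout is consistently tracking which scale $\h^r$ each object lives in and invoking the embedding $\h^{-t}\hookrightarrow\h^{t-2s}$ at precisely the right moment; once that is set up, everything reduces to the spectral calculation in part (1).
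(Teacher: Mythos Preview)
Your argument is correct, and in fact the paper does not give its own proof of this lemma: in the bibliographical notes it simply states that Lemma~\ref{l:4.10} is proved in \cite{mattingly2011spde}. Your approach---the spectral computation for (1), the mean-value argument for $D\Phi$ combined with the smoothing estimate $\|\cC g\|_t\asymp\|g\|_{t-2s}$ and the embedding $\h^{-t}\hookrightarrow\h^{t-2s}$ for (2), and the integral-form Taylor remainder for (4)---is the natural direct route and is essentially what one would expect the cited reference to contain.
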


\subsubsection{Finite Dimensional Approximation}
\label{sssec:4}

Our analysis now proceeds as follows. First we introduce an
approximation of the measure $\mu$, denoted by $\mu^N$.
To this end we let $P^N$ denote orthogonal projection in $\cH$
onto $X^N:={\rm span}\{\phi_1,\cdots,\phi_N\}$ and denote
by $Q^N$ orthogonal projection in $\cH$
onto $X^\perp:={\rm span}\{\phi_{N+1},\phi_{N+2},\cdots\}$.
Thus $Q^N=I-P^N$. Then define the measure $\mu^N$ by 
\begin{subequations}
\label{x43}
\begin{eqnarray}
\frac{d\mu^N}{d\mu_0}(u)&=&\frac{1}{Z^N}\exp\big(-\Phi(P^N u)\big),\label{x43a}\\
Z^N&=&\int_{X'}\exp\big(-\Phi(P^N u)\big)\mu_0(du) \label{x43b}.
\end{eqnarray}
\end{subequations}
This is a specific example of the approximating family
in \eqref{43} if we define
\begin{equation}
\label{eq:fn}
\Phi^N:=\Phi \circ P^N.
\end{equation}
Indeed if we take $X={\mathcal X}^\tau$ for any $\tau \in (t,s-\frac12)$
we see that $\|P^N\|_{{\mathcal L}(X,X)}=1$ and that, for any $u \in X$,
\begin{align*}
\|\Phi(u)-\Phi^N(u)\| & = \|\Phi(u)-\Phi(P^N u)\| \\
& \le M_3(1+\|u\|_t)\|(I-P^N)u\|_t\\
& \le CM_3(1+\|u\|_{\tau})\|u\|_{\tau}N^{-(\tau-t)}.
\end{align*}
Since $\Phi$, and hence $\Phi^N$, are bounded below by $-M_1$,
and since the function $1+\|u\|_{\tau}^2$ is integrable by the
Fernique Theorem \ref{t:2.5}, the approximation Theorem \ref{t:4.7} applies. 
We deduce that the \as{Hellinger distance}\index{Hellinger distance}
between $\mu$ and $\mu^N$ 
is bounded above by ${\cal O}(N^{-r})$ for any $r<s-\frac12-t$ since
$\tau-t \in (0,s-\frac12-t)$.

We will not use this explicit convergence rate in what follows,
but we will use the idea that $\mu^N$ converges to $\mu$ in order
to prove invariance of the measure $\mu$ under the SDE \eqref{44}. 
The measure $\mu^N$ has a product structure that we will exploit
in the following. We note that any element $u \in \cH$ is uniquely
decomposed as $u=p+q$ where $p \in X^N$ and $q \in X^{\perp}$.
Thus we will write $\mu^N(du)=\mu^N(dp,dq)$, and similar expressions
for $\mu_0$ and so forth, in what follows.

\begin{lemma} \label{l:prod}
Define $\cC^N=P^N\cC P^N$ and $\cC^{\perp}=Q^N\cC Q^N$. Then $\mu_0$
factors as the product of measures $\mu_{0,P}=N(0,\cC^N)$ and
$\mu_{0,Q}=N(0,\cC^{\perp})$ on $X^N$ and $X^\perp$ respectively.
Furthermore $\mu^N$ itself
also factors as a product measure on $X^N\oplus X^\perp$:
$\mu^N(dp,dq)=\mu_P(dp)\mu_Q(dq)$ with $\mu_Q=\mu_{0,Q}$
and 
$$\frac{d\mu_{P}}{d\mu_{0,P}}(u)\propto\exp\big(-\Phi(p)\big).$$
\end{lemma}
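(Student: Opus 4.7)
The plan is to leverage the Karhunen-Lo\`eve representation of $\mu_0$ together with the fact that the projections $P^N$ and $Q^N$ commute with $\mathcal{C}$ (since $\{\phi_j\}$ is an eigenbasis of $\mathcal{C}$) to obtain the factorization of the prior, and then to propagate this product structure to $\mu^N$ using the fact that the Radon--Nikodym derivative $\exp(-\Phi(P^N u))$ depends only on the $X^N$-component $p=P^N u$.

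First I would write any $u\sim\mu_0$ via the KL expansion $u=\sum_{j=1}^{\infty}\gamma_j\xi_j\phi_j$ with $\xi_j\stackrel{\iid}{\sim}N(0,1)$, and split this sum as $u=p+q$ with $p=\sum_{j=1}^N\gamma_j\xi_j\phi_j\in X^N$ and $q=\sum_{j>N}\gamma_j\xi_j\phi_j\in X^{\perp}$. Since the families $\{\xi_j\}_{j\le N}$ and $\{\xi_j\}_{j>N}$ are independent, so are $p$ and $q$; moreover $p$ is centred Gaussian with covariance operator $\sum_{j\le N}\gamma_j^2\phi_j\otimes\phi_j = P^N\mathcal{C}P^N=\mathcal{C}^N$ on $X^N$, and similarly $q\sim N(0,\mathcal{C}^{\perp})$ on $X^{\perp}$. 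This gives $\mu_0(du)=\mu_{0,P}(dp)\,\mu_{0,Q}(dq)$.

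Next I would plug this product decomposition into the definition of $\mu^N$. Because $P^N u=p$, the Radon--Nikodym derivative $\frac{d\mu^N}{d\mu_0}(u)=\frac{1}{Z^N}\exp(-\Phi(p))$ is a function of $p$ alone. Therefore, for any bounded measurable $f$ on $X^N\times X^{\perp}$,
\begin{equation*}
\int f(p,q)\,\mu^N(dp,dq)=\frac{1}{Z^N}\int_{X^{\perp}}\!\!\int_{X^N} f(p,q)\exp(-\Phi(p))\,\mu_{0,P}(dp)\,\mu_{0,Q}(dq).
\end{equation*}
Taking $f\equiv 1$ and using $\mu_{0,Q}(X^{\perp})=1$ yields $Z^N=\int_{X^N}\exp(-\Phi(p))\,\mu_{0,P}(dp)$, which is finite and positive by the same argument used to establish $Z>0$ via the Fernique theorem. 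Hence
\begin{equation*}
\mu^N(dp,dq)=\Bigl(\tfrac{1}{Z^N}\exp(-\Phi(p))\,\mu_{0,P}(dp)\Bigr)\,\mu_{0,Q}(dq)=\mu_P(dp)\,\mu_Q(dq),
\end{equation*}
with $\mu_Q=\mu_{0,Q}$ and $\frac{d\mu_P}{d\mu_{0,P}}(p)\propto\exp(-\Phi(p))$, as claimed.

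There is no real obstacle here; the argument is bookkeeping once one notes that $P^N$ diagonalizes in the same basis as $\mathcal{C}$. The only mild point requiring care is the verification that $\mu_{0,P}$ and $\mu_{0,Q}$ are indeed the correct Gaussian marginals on $X^N$ and $X^{\perp}$ (and independent), which is immediate from the independence of the KL coefficients but warrants an explicit mention so that the product measure interpretation of Fubini's theorem is rigorously justified in the infinite-dimensional setting.
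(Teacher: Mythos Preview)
Your proposal is correct and follows essentially the same approach as the paper: the paper's proof invokes the commutation of $P^N$ and $Q^N$ with $\mathcal{C}$ (which is equivalent to your explicit use of the Karhunen--Lo\`eve expansion in the eigenbasis $\{\phi_j\}$) to factor $\mu_0$, and then observes that $\Phi^N(u)=\Phi(p)$ depends only on $p$ to factor $\mu^N$. Your write-up simply fills in the Fubini bookkeeping that the paper leaves implicit.
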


\begin{proof} 
Because $P^N$ and $Q^N$ commute with $\cC$, and because $P^N Q^N=Q^N P^N=0$,
the factorization of the reference measure $\mu_0$ follows automatically.
The factorization of the measure $\mu$ follows from  the fact
that $\Phi^N(u)=\Phi(p)$ and hence does not depend on $q$.
\end{proof}

To facilitate the proof of the desired measure
preservation property, we introduce the equation
\begin{equation}\label{44x}
\frac{du^N}{dt}=-u^N-\cC  P^N D \Phi^N(u^N)+\sqrt{2}\frac{dW}{dt}.
\end{equation}
By using well-known properties of finite dimensional SDEs,
we will show that, if $u^N(0) \sim \mu^N$, then $u^N(t)
\sim \mu^N$ for any $t>0$. By passing to the limit $N=\infty$
we will deduce that for \eqref{44}, if $u(0) \sim \mu$,
then $u(t) \sim \mu$ for any $t>0$.

The next lemma gathers various regularity estimates on the
functional $\Phi^N( \cdot)$ that are repeatedly
used in the sequel; they follow from the analogous properties
of $\Phi$ by using the structure $\Phi^N=\Phi\circ P^N$.

\begin{lemma}
\label{l:4.10b} Under Assumptions \ref{a:4.8}, the following
estimates hold with all constants uniform in $N$
\begin{enumerate}
\item
The estimates of Assumptions \ref{a:4.8} hold with $\Phi$ replaced
by $\Phi^N$.
\item The function $\mathcal{C} \nnabla \Phi^N: \h^t \to \h^t$ is globally Lipschitz on $\h^t$:
there exists a constant $M_5 > 0$ such that
\begin{equation*}
\| \mathcal{C}\nnabla \Phi^N(u) - \mathcal{C}\nnabla \Phi^N(v) \|_t \leq M_5 \, \| u-v \|_t
\qquad \qquad \forall u,v \in \h^t.
\end{equation*}
\item The function $F^N: \h^t \to \h^t$ defined by
\begin{equation}
\label{eqn:muN}
F^N(u)=-u-\cC P^N D\Phi^N(u)
\end{equation}
is globally Lipschitz on $\h^t$.
\item
The functional $\Phi^N(\cdot): \h^t \to \RR$ satisfies a second order
Taylor formula\ (for which we extend $\langle \cdot,\cdot \rangle$ from an
inner-product on $\h$ to the dual pairing between
$\h^{-t}$ and $\h^t$.) There exists a constant $M_6 > 0$ such that
\begin{equation} \label{eqn:2nd-TaylorN}
\Phi^N(v) - \Big( \Phi^N(u) + \langle \nnabla \Phi^N(u), v-u \rangle \Big)
\leq M_6 \, \|u - v \|_t^2
\qquad \forall u,v \in \h^t.
\end{equation}
\end{enumerate}
\end{lemma}

\subsubsection{Main Theorem and Proof}
\label{sssec:5}

Fix a function $W \in \CC([0,T];\h^t).$
Recalling $F$ defined by \eqref{eqn:mu}, we define a solution of 
\eqref{44} to be a function
$u \in \CC([0,T];\h^t)$ satisfying the integral equation
\begin{equation}
u(\tau) = u_0 + \int_0^\tau F\bigl(u(s)\bigr) \,ds + \sqrt{2}\,W(\tau) \label{eqn:IE}
\qquad \qquad \forall \tau \in [0,T].
\end{equation}
The solution is said to be {\em global} if $T>0$ is arbitrary.
For us, $W$ will be a $\mathcal{C}$-Wiener process and hence random; we
look for existence of a global solution, almost surely with respect to
the Wiener measure, Similarly a solution of \eqref{44x} is a function
$u^N \in \CC([0,T];\h^t)$ satisfying the integral equation
\begin{equation}
u^N(\tau) = u_0 + \int_0^\tau F^N\bigl(u^N(s)\bigr) \,ds + \sqrt{2}\,W(\tau) \label{eqn:IEN}
\qquad \qquad \forall t \in [0,T].
\end{equation}
Again, the solution is random because $W$ is a $\mathcal{C}$-Wiener process.
Note that the solution to this equation is not confined to $X^N$,
because both $u_0$ and $W$ have non-trivial components in $X^{\perp}.$
However within $X^{\perp}$ the behaviour is purely Gaussian and
within $X^N$ it is finite dimensional. We will exploit these two
facts.

The following establishes basic existence, uniqueness, continuity
and approximation properties of the solutions of \eqref{eqn:IE} and
\eqref{eqn:IEN}.

\begin{theorem}\label{t:4.13} For every $u_0 \in \h^t$ and for almost every
$\cC$-\as{Wiener\index{Wiener!process} process} $W$, equation \eqref{eqn:IE}
(respectively \eqref{eqn:IEN}) has a unique global solution. For any
pair $(u_0,W) \in \h^t \times \CC([0,T]; \h^t)$ we define the It{\^o}
map
\begin{equation*}
\Theta \colon \h^t \times \CC([0,T];\h^t) \rightarrow \CC([0,T];\h^t)
\end{equation*}
which maps $(u_0, W)$ to the unique solution $u$
(resp. $u^N$ for \eqref{eqn:IEN}) of the integral equation \eqref{eqn:IE} (resp. $\Theta^N$ for
\eqref{eqn:IEN}).  The map $\Theta$ (resp. $\Theta^N$)
is globally Lipschitz continuous.
Finally we have that $\Theta^N(u_0,W) \to \Theta(u_0,W)$ strongly in $\CC([0,T];\h^t)$ for
every pair $(u_0,W) \in \h^t \times \CC([0,T]; \h^t)$.
\end{theorem}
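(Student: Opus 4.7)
The plan is to treat the integral equations \eqref{eqn:IE} and \eqref{eqn:IEN} by a Banach fixed point argument on $C([0,T];\h^t)$, exploiting the global Lipschitz continuity of $F$ and $F^N$ established in Lemma \ref{l:4.10}(3) and Lemma \ref{l:4.10b}(3), and noting that by Theorem \ref{t:2.4b} the $\cC$-Wiener process $W$ lies in $C([0,T];\h^t)$ almost surely (since $t<s-\frac12$). Throughout, the dependence on $W$ enters only as an additive inhomogeneity, so any contraction argument for the forward map transfers directly to continuity in $W$.

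First I would fix $T>0$, $(u_0,W)\in \h^t\times C([0,T];\h^t)$, and define $\mathcal F:C([0,T];\h^t)\to C([0,T];\h^t)$ by
\[
(\mathcal F v)(\tau)=u_0+\int_0^\tau F\bigl(v(s)\bigr)\,ds+\sqrt2\,W(\tau).
\]
Using the Lipschitz bound $\|F(u)-F(v)\|_t\le (1+M_5)\|u-v\|_t$ from Lemma \ref{l:4.10}, one gets $\|\mathcal F v_1-\mathcal F v_2\|_{C([0,T];\h^t)}\le T(1+M_5)\|v_1-v_2\|_{C([0,T];\h^t)}$, which is a contraction for $T$ small. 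Concatenating on successive intervals $[kT,(k+1)T]$ yields a unique global solution, exactly as in the proof of Theorem \ref{t:4.9}. The same argument, with $F$ replaced by $F^N$, gives unique global solutions to \eqref{eqn:IEN}; note crucially that the Lipschitz constant $M_5$ from Lemma \ref{l:4.10b}(3) is \emph{uniform in }$N$, which will matter below.

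Next, for the Lipschitz continuity of $\Theta$, take two data pairs $(u_0^{(i)},W^{(i)})$ with solutions $u^{(i)}$, set $e(\tau)=u^{(1)}(\tau)-u^{(2)}(\tau)$, and subtract the integral equations to obtain
\[
\|e(\tau)\|_t\le \|u_0^{(1)}-u_0^{(2)}\|_t+(1+M_5)\int_0^\tau\|e(s)\|_t\,ds+\sqrt2\,\|W^{(1)}-W^{(2)}\|_{C([0,T];\h^t)}.
\]
Gronwall's inequality then gives $\|e\|_{C([0,T];\h^t)}\le e^{(1+M_5)T}\bigl(\|u_0^{(1)}-u_0^{(2)}\|_t+\sqrt2\,\|W^{(1)}-W^{(2)}\|_{C([0,T];\h^t)}\bigr)$, which is the required global Lipschitz bound for $\Theta$; the same estimate applies to $\Theta^N$ with a constant independent of $N$.

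Finally, for the convergence $\Theta^N(u_0,W)\to \Theta(u_0,W)$, set $e^N(\tau)=u^N(\tau)-u(\tau)$ and subtract to get
\[
\|e^N(\tau)\|_t\le \int_0^\tau\|F^N(u^N(s))-F(u(s))\|_t\,ds\le (1+M_5)\int_0^\tau\|e^N(s)\|_t\,ds+\int_0^\tau\|F^N(u(s))-F(u(s))\|_t\,ds,
\]
so by Gronwall it suffices to prove that $F^N(u(s))\to F(u(s))$ in $\h^t$ uniformly in $s\in[0,T]$. Since $F^N(v)-F(v)=\cC\nnabla\Phi(v)-\cC P^N\nnabla\Phi(P^N v)$, I would split this as $\cC(\nnabla\Phi(v)-\nnabla\Phi(P^N v))+(\cC-\cC P^N)\nnabla\Phi(P^N v)$, control the first term by the Lipschitz bound of Lemma \ref{l:4.10}(2) together with $P^N v\to v$ in $\h^t$, and control the second by writing it as $Q^N\cC\nnabla\Phi(P^N v)$ and using that $Q^N\to 0$ strongly on $\h^t$ combined with the bound $\|\nnabla\Phi(P^N v)\|_{-t}\le M_3(1+\|v\|_t)$. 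Uniformity in $s\in[0,T]$ follows because $\{u(s):s\in[0,T]\}$ is compact in $\h^t$ (as the continuous image of $[0,T]$), on which $P^N\to I$ converges uniformly. The main obstacle is precisely this uniform convergence: since the Lipschitz and other bounds are global one does not need to stop the process, but the strong operator convergence $P^N\to I$ is non-uniform on bounded sets, so one must pass through the compactness of the trajectory in $\h^t$.
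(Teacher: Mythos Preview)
Your proof is correct and follows essentially the same route as the paper: contraction mapping for existence, subtraction plus Gronwall for Lipschitz continuity of the It\^o map, and the same Gronwall reduction of $\Theta^N\to\Theta$ to showing $\sup_{s\in[0,T]}\|F^N(u(s))-F(u(s))\|_t\to 0$. The only cosmetic difference is in how you split $F-F^N$: you write $\cC D\Phi(v)-\cC P^N D\Phi(P^Nv)=\cC\bigl(D\Phi(v)-D\Phi(P^Nv)\bigr)+Q^N\cC D\Phi(P^Nv)$, whereas the paper uses the equivalent rearrangement $(I-P^N)\cC D\Phi(v)+P^N\bigl(\cC D\Phi(v)-\cC D\Phi(P^Nv)\bigr)$; both yield one term controlled by the Lipschitz estimate times $\|(I-P^N)v\|_t$ and one term of the form $Q^N$ applied to a bounded element of $\h^t$. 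Your handling of the uniform-in-$s$ convergence, via compactness of the trajectory $\{u(s):s\in[0,T]\}$ in $\h^t$ to upgrade strong convergence $Q^N\to 0$ to uniform convergence over the trajectory, is in fact more explicit than the paper, which states somewhat tersely that it ``suffices to bound $\sup_{0\le s\le T}\|u(s)\|_t$'' after noting $u\in C([0,T];\h^t)$.
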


\noindent{\bf Proof.} The existence and uniqueness of local
solutions to the integral equation (\ref{eqn:IE}) is a simple
application of the contraction mapping principle, following
arguments similar to those employed in the proof of Theorem \ref{t:4.9}. 
Extension to a global solution may be achieved by repeating
the local argument on successive intervals.

Now let $u^{(i)}$ solve
$$u^{(i)}=u^{(i)}_0+\int_0^\tau F(u^{(i)})(s)ds +\sqrt{2}W^{(i)}(\tau), \quad\tau\in[0,T],$$
for $i=1,2$. Subtracting and using the Lipschitz property of $F$
shows that $e=u^{(1)}-u^{(2)}$ satisfies
\begin{eqnarray*}
\|e(\tau)\|_t\leq\|u^{(1)}_0-u^{(2)}_0\|_t+L\int_0^\tau\|e(s)\|_tds
+\sqrt{2}\|W^{(1)}(\tau)-W^{(2)}(\tau)\|_t\\
\leq \|u^{(1)}_0-u^{(2)}_0\|_t+L\int_0^\tau\|e(s)\|_tds
+\sqrt{2}\sup_{0\leq s \leq T}\|W^{(1)}(s)-W^{(2)}(s)\|_t.
\end{eqnarray*}
By application of the Gronwall inequality we find that
$$\sup_{0\leq\tau\leq T}\|e(\tau)\|_t\leq C(T)\big(\|u^{(1)}_0-u^{(2)}_0\|_t+
\sup_{0\leq s \leq T}\|W^{(1)}(s)-W^{(2)}(s)\|_t\big)$$ and the
desired continuity is established.

Now we prove pointwise convergence of $\Theta^N$ to $\Theta$.
Let $e=u-u^N$ where $u$ and $u^N$ solve (\ref{eqn:IE}), (\ref{eqn:IEN})
respectively.
The pointwise convergence of $\Theta^N$ to $\Theta$
is established by proving that $e \to 0$ in $\CC([0,T];\h^t)$.
Note that
$$F(u)-F^N(u^N)=\bigl(F^N(u)-F^N(u^N)\big)+\bigl(F(u)-F^N(u)\bigr).$$
Also, by Lemma \ref{l:4.10b}, $\|F^N(u)-F^N(u^N)\|_t \le L\|e\|_t$.
Thus we have
$$\|e\|_t\leq
L\int_0^\tau\|e(s)\|_tds+\int_0^\tau\|F\big(u(s)\big)-F^N\big(u(s)\big)\|_tds.$$
Thus, by Gronwall, it suffices to show that $$\delta^N:=\sup_{0\leq s\leq
T}\|F\big(u(s)\big)-F^N\big(u(s)\big)\|_t$$ tends to zero as
$N\rightarrow\infty$. Note that
\begin{eqnarray*}
F(u)-F^N(u)&=&\cC D\Phi(u)-\cC P^ND\Phi(P^Nu)\\
&=&(I-P^N)\cC D\Phi(u)+P^N\big(\cC D\Phi(u)-\cC D\Phi(P^Nu)\big).
\end{eqnarray*}
Thus, since $\C D\Phi$ is globally Lipschitz on $\mathcal{X}^t$, by Lemma
\ref{l:4.10}, and $P^N$
has norm one as a mapping from $\mathcal{X}^t$ into itself,
$$\|F(u)-F^N(u)\|_t  \le  \|(I-P^N)\cC D\Phi(u)\|_t+C\|(I-P^N)u\|_t.$$
By dominated convergence $\|(I-P_N)a\|_t \to 0$ for any fixed
element $a \in \mathcal{X}^t$. Thus,
because $\cC D\Phi$ is globally Lipschitz, by Lemma
\ref{l:4.10}, and as $u \in \CC([0,T];\mathcal{X}^t)$,
we deduce that it suffices to bound
$\sup_{0\leq s\leq T}\|u(s)\|_t$. 
But such a bound is a consequence of the existence theory outlined
at the start of the proof, based on the proof of Theorem \ref{t:4.9}.
$\quad\Box$

The following is a straightforward corollary of the preceding theorem:

\begin{corollary}\label{c:4.13} 
For any pair 
$(u_0,W) \in \h^t \times \CC([0,T]; \h^t)$ we define the 
point It{\^o} map
\begin{equation*}
\Theta_\tau \colon \h^t \times \CC([0,T];\h^t) \rightarrow \h^t
\end{equation*}
(resp. $\Theta^N_\tau$ for \eqref{eqn:IEN}) 
which maps $(u_0, W)$ to the unique solution $u(\tau)$
of the integral equation \eqref{eqn:IE} (resp. $u^N(\tau)$ for
\eqref{eqn:IEN}) at time $\tau$ .  The map $\Theta_\tau$ (resp. $\Theta^N_\tau$)
is globally Lipschitz continuous.
Finally we have that $\Theta^N_\tau(u_0,W) \to \Theta_\tau(u_0,W)$ for
every pair $(u_0,W) \in \h^t \times \CC([0,T]; \h^t)$.
\end{corollary}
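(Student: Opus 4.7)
The plan is to derive this corollary as a nearly immediate consequence of Theorem \ref{t:4.13}, by composing the already-established It\^o map with the time-$t$ evaluation functional. More precisely, for each fixed $t \in [0,T]$, let $\mathrm{ev}_t \colon C([0,T];\h^t) \to \h^t$ denote the evaluation map $u \mapsto u(t)$. This map is linear, and
$$\|\mathrm{ev}_t(u)\|_t = \|u(t)\|_t \le \sup_{0 \le \tau \le T} \|u(\tau)\|_t = \|u\|_{C([0,T];\h^t)},$$
so $\mathrm{ev}_t$ is bounded linear (hence globally Lipschitz) with Lipschitz constant at most $1$. By construction $\Theta_t = \mathrm{ev}_t \circ \Theta$ and $\Theta^N_t = \mathrm{ev}_t \circ \Theta^N$.

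First I would verify global Lipschitz continuity of $\Theta_t$. Since $\Theta$ is globally Lipschitz from $\h^t \times C([0,T];\h^t)$ into $C([0,T];\h^t)$ by Theorem \ref{t:4.13}, with some constant $L$, and since $\mathrm{ev}_t$ is $1$-Lipschitz, composition gives
$$\|\Theta_t(u_0,W) - \Theta_t(v_0,V)\|_t \le \|\Theta(u_0,W) - \Theta(v_0,V)\|_{C([0,T];\h^t)} \le L\bigl(\|u_0 - v_0\|_t + \|W - V\|_{C([0,T];\h^t)}\bigr),$$
which is the desired Lipschitz estimate. The same argument with $\Theta^N$ in place of $\Theta$ yields the Lipschitz property for $\Theta^N_t$, with a Lipschitz constant uniform in $N$ (inherited from the corresponding uniformity in Theorem \ref{t:4.13}).

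Next I would address pointwise convergence. Theorem \ref{t:4.13} provides $\Theta^N(u_0,W) \to \Theta(u_0,W)$ in $C([0,T];\h^t)$, i.e.\
$$\sup_{0 \le \tau \le T} \bigl\|u^N(\tau) - u(\tau)\bigr\|_t \longrightarrow 0 \quad \text{as } N \to \infty.$$
Applying $\mathrm{ev}_t$, and using the bound $\|\mathrm{ev}_t(f)\|_t \le \|f\|_{C([0,T];\h^t)}$, one obtains
$$\|\Theta^N_t(u_0,W) - \Theta_t(u_0,W)\|_t = \|u^N(t) - u(t)\|_t \le \sup_{0 \le \tau \le T}\|u^N(\tau) - u(\tau)\|_t \to 0,$$
which is the required pointwise convergence.

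There is essentially no obstacle here: the corollary is a purely soft consequence of combining Theorem \ref{t:4.13} with the trivial continuity of evaluation at a fixed time. The only mild point to be careful about is that the Lipschitz constant in Theorem \ref{t:4.13} may depend on $T$; since we take $T$ fixed when speaking of $\Theta_t$ for $t \in [0,T]$, this dependence is harmless and the resulting Lipschitz constant for $\Theta_t$ is also $T$-dependent but uniform in $N$ for $\Theta^N_t$.
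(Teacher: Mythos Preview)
Your argument is correct and is exactly the intended justification: the paper does not supply a proof, merely stating that the result is ``a straightforward corollary of the preceding theorem,'' and your composition of $\Theta$ (resp.\ $\Theta^N$) with the $1$-Lipschitz evaluation map $\mathrm{ev}_t$ is the natural way to make this explicit. The only remark is that the uniformity in $N$ of the Lipschitz constant, which you mention parenthetically, is not asserted in the statement of Theorem~\ref{t:4.13} itself (though it does follow from its proof via Lemma~\ref{l:4.10b}); since the corollary as stated does not claim this uniformity either, your argument stands as written.
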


\begin{theorem}\label{t:4.14} Let Assumptions \ref{a:4.8} hold. Then
the measure $\mu$ given by \eqref{42} is invariant for \eqref{44}:
for all continuous bounded functions $\varphi: \h^t \to \R$ it
follows that, if $\EE$ denotes expectation with respect to the
product measure found from initial condition $u_0 \sim \mu$ and $W
\sim \WW$, the $\cC$-\as{Wiener measure}\index{Wiener!measure} on $\h^t$, then  $\EE
\varphi\bigl(u(\tau)\bigr)= \EE \varphi(u_0)$.
\end{theorem}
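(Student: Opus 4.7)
The plan is to prove invariance of $\mu$ under \eqref{44} by passing to the limit $N \to \infty$ from the finite-dimensional approximation \eqref{44x}. Two ingredients drive the limit, and both are already in place: first, Hellinger convergence $\mu^N \to \mu$, obtained by applying the approximation Theorem \ref{t:4.7} to the potentials $\Phi^N = \Phi \circ P^N$ as already observed in subsubsection \ref{sssec:4}; second, the pointwise continuity $\Theta_t^N(u_0,W) \to \Theta_t(u_0,W)$ of the It\^o maps, established in Corollary \ref{c:4.13}.

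\textbf{Finite-dimensional invariance of $\mu^N$ under \eqref{44x}.} By Lemma \ref{l:prod}, $\mu^N = \mu_P \otimes \mu_{0,Q}$ on $X^N \times X^\perp$. Since $\Phi^N$ depends on $u$ only through $p := P^N u$, and since $\cC$ commutes with $P^N$ (so in particular $P^N \cC Q^N = 0$), the drift simplifies: writing $\cC^N := P^N \cC P^N$ and $q := Q^N u$,
$$F^N(p+q) \;=\; -u - \cC P^N \nnabla \Phi^N(u) \;=\; -p - \cC^N \nnabla \Phi(p) - q.$$
Decomposing $W = W_P + W_Q$ into its independent $\cC^N$- and $\cC^\perp$-Wiener parts, \eqref{44x} splits into
$$dp = \bigl(-p - \cC^N \nnabla \Phi(p)\bigr)\,dt + \sqrt{2}\,dW_P, \qquad dq = -q\,dt + \sqrt{2}\,dW_Q.$$
The first is a finite-dimensional Langevin equation of the form \eqref{148} on $X^N$ with preconditioner $A = \cC^N$ and energy $I(p) = \tfrac12 \langle p,(\cC^N)^{-1} p\rangle + \Phi(p)$; by Theorem \ref{t:4.10} it preserves $\mu_P$. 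The second is an Ornstein--Uhlenbeck equation whose invariant measure is $N(0,\cC^\perp) = \mu_{0,Q}$. Independence of $W_P$ and $W_Q$ then gives invariance of the product measure $\mu^N$.

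\textbf{Limit passage.} For each $N$, invariance yields $\EE^{\mu^N} \varphi\bigl(\Theta_t^N(u_0, W)\bigr) = \EE^{\mu^N} \varphi(u_0)$ when $u_0 \sim \mu^N$ and $W \sim \WW$ independently. The right-hand side converges to $\EE^\mu \varphi(u_0)$ because Hellinger convergence $\mu^N \to \mu$ dominates total-variation convergence, and $\varphi$ is bounded. For the left-hand side, split
$$\EE^{\mu^N}\varphi\bigl(\Theta_t^N(u_0,W)\bigr) - \EE^\mu \varphi\bigl(\Theta_t(u_0,W)\bigr) = \int \varphi\bigl(\Theta_t^N\bigr)\,d(\mu^N - \mu)\,d\WW \;+\; \int \bigl[\varphi(\Theta_t^N) - \varphi(\Theta_t)\bigr]\,d\mu\,d\WW.$$
The first integral tends to zero by boundedness of $\varphi$ and Hellinger convergence of $\mu^N$; the second vanishes by Corollary \ref{c:4.13}, continuity of $\varphi$, and the dominated convergence theorem. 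Combining, $\EE^\mu \varphi\bigl(u(t)\bigr) = \EE^\mu \varphi(u_0)$.

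\textbf{Main obstacle.} The technically delicate step is the finite-dimensional invariance: one must verify carefully that \eqref{44x} decouples cleanly into independent $p$- and $q$-components whose invariant measures are exactly the factors of $\mu^N$, so that Theorem \ref{t:4.10} applies on $X^N$. The algebraic identity $\cC P^N = P^N \cC = \cC^N \oplus 0$ and the product structure of Lemma \ref{l:prod} make this work, but the matching of the SDE decomposition, the measure decomposition, and the preconditioner choice requires care. Once the finite-dimensional step is secured, the limit passage is a standard triangle-inequality argument combining the already-established Hellinger convergence of the marginals with pointwise continuity of the It\^o map.
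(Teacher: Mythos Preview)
Your proposal is correct and follows the same overall strategy as the paper: establish invariance of $\mu^N$ under the finite-dimensional dynamics \eqref{44x} via the $p/q$ decoupling (this is exactly the content of Lemma \ref{l:4.15}), and then pass to the limit using pointwise convergence of the It\^o maps from Corollary \ref{c:4.13}.

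The only organizational difference lies in the limit passage. The paper rewrites all expectations against the common reference measure $\mu_0 \otimes \WW$, inserting the density factors $e^{-\Phi(P^N u_0)}/Z^N$, and then invokes dominated convergence directly (using the uniform lower bound $\Phi^N \geq -M_1$ to dominate). You instead invoke the Hellinger convergence $\mu^N \to \mu$ established in subsection \ref{sssec:4} and use a triangle-inequality split. Both routes are valid; yours is arguably tidier because it packages the convergence of normalization constants into the already-proved Hellinger result, whereas the paper's route is more self-contained and avoids needing Theorem \ref{t:4.7} as an intermediary.
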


\noindent{\bf Proof.} We have that
\begin{equation}\label{419}
\bbE\varphi\big(u(\tau)\big)=\int\varphi\big(\Theta_\tau(u_0,W)\big)\mu(du_0)\mathbb{W}(dW),
\end{equation}
\begin{equation}\label{420}
\bbE\varphi(u_0)=\int\varphi(u_0)\mu(du_0).
\end{equation}
If we solve equation (\ref{44x}) with $u_0\sim\mu^N$ then, using
$\bbE^N$ with the obvious notation,
\begin{equation}\label{421}
\bbE^N\varphi\big(u^N(\tau)\big)=\int\varphi\big(\Theta^N_\tau(u_0,W)\big)\mu^N(du_0)\mathbb{W}(dW),
\end{equation}
\begin{equation}\label{422}
\bbE^N\varphi(u_0)=\int\varphi(u_0)\mu^N(du_0).
\end{equation}
Lemma \ref{l:4.15} below shows that, in fact,
$$\bbE^N\varphi\big(u^N(\tau)\big)=\bbE^N\varphi(u_0).$$
Thus it suffices to show that
\begin{equation}\label{423}
\bbE^N\varphi\big(u^N(\tau)\big)\rightarrow\bbE\varphi\big(u(\tau)\big)
\end{equation}
and
\begin{equation}\label{424}
\bbE^N\varphi(u_0)\rightarrow\bbE\varphi(u_0).
\end{equation}
Both of these facts follow from the dominated convergence theorem as
we now show. First note that
$$\bbE^N\varphi(u_0)=\int\varphi(u_0)e^{-\Phi(P^Nu_0)}\mu_0(du_0).$$
Since $\varphi(\cdot)e^{-\Phi\circ P^N}$ is bounded independently of
$N$, by $(\sup\varphi)e^{M_1}$, and since $(\Phi\circ P^N)(u)$
converges pointwise to $\Phi(u)$ on $\mathcal{X}^t$, we deduce that
$$\bbE^N\varphi(u_0)\rightarrow
\int\varphi(u_0)e^{-\Phi(u_0)}\mu_0(du_0)=\bbE\varphi(u_0)$$ so that
(\ref{424}) holds. The convergence in (\ref{423}) holds by a similar
argument. From \eqref{421} we have
\begin{equation}\label{421b}
\bbE^N\varphi\big(u^N(\tau)\big)=\int\varphi\big(\Theta^N_\tau(u_0,W)\big)e^{-\Phi(P^Nu_0)}\mu_0(du_0)\mathbb{W}(dW).
\end{equation}
The integrand is again dominated by $(\sup\varphi)e^{M_1}$.
Using the pointwise convergence of $\Theta^N_\tau$ to $\Theta_\tau$
on $\mathcal{X}^t\times \CC([0,T];\mathcal{X}^t)$, as proved in Corollary \ref{c:4.13}, as well as the pointwise
convergence of $(\Phi\circ P^N)(u)$ to $\Phi(u)$, the
desired result follows from dominated convergence: we find
that
$$\bbE^N\varphi\big(u^N(\tau)\big) \to \int\varphi\big(\Theta_\tau(u_0,W)\big)e^{-\Phi(u_0)}\mu_0(du_0)\mathbb{W}(dW)=
\bbE\varphi\big(u(\tau)\big).$$
The desired result follows. $\Box$

\begin{lemma} \label{l:4.15}
Let Assumptions \ref{a:4.8} hold. Then the measure $\mu^N$ given by
\eqref{x43} is invariant for \eqref{44x}: for all continuous bounded
functions $\varphi: \h^t \to \R$ it follows that, if $\EE^N$ denotes
expectation with respect to the product measure found from initial
condition $u_0 \sim \mu^N$ and $W \sim \WW$, the $\cC$-\as{Wiener measure}\index{Wiener!measure} on
$\h^t$, then  $\EE^N \varphi\bigl(u^N(\tau)\bigr)= \EE^N \varphi(u_0)$.
\end{lemma}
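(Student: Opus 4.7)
The plan is to exploit the product structure of $\mu^N$ given by Lemma \ref{l:prod}, combined with the fact that $\cC$ commutes with the spectral projections $P^N$ and $Q^N$, to decouple the SDE \eqref{44x} into a finite-dimensional Langevin equation on $X^N$ and an independent infinite-dimensional Ornstein-Uhlenbeck equation on $X^{\perp}$. Invariance of each marginal then follows from already-established results, and the independence of the two drivers lifts this to invariance of the product measure $\mu^N$.

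First I would split the cylindrical Wiener process as $W = W_P + W_Q$ with $W_P := P^N W$ and $W_Q := Q^N W$. Since $\cC$ and $P^N$ share the eigenbasis $\{\phi_j\}$, the representation \eqref{eq:cwp2} makes it immediate that $W_P$ and $W_Q$ are independent, that $W_P$ is a $\cC^N$-Wiener process on $X^N$, and that $W_Q$ is a $\cC^{\perp}$-Wiener process on $X^{\perp}$. Writing $p := P^N u^N$ and $q := Q^N u^N$, and using the chain rule $D\Phi^N(u) = P^N D\Phi(P^N u)$, I would apply $P^N$ and $Q^N$ to \eqref{44x}. The identity $\cC P^N = P^N \cC$ together with $Q^N P^N = 0$ cancels any cross-coupling and produces the decoupled system
\begin{align*}
dp &= \bigl(-p - P^N \cC D\Phi(p)\bigr)\,dt + \sqrt{2}\,dW_P,\\
dq &= -q\,dt + \sqrt{2}\,dW_Q.
\end{align*}

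The $p$-equation lives on the finite-dimensional space $X^N$ and coincides exactly with the Langevin equation of Subsection \ref{sssec:1} with preconditioner $A = \cC^N$ for the potential $I(p) = \tfrac12 \langle p, (\cC^N)^{-1}p \rangle + \Phi(p)$, whose normalized $e^{-I(p)}$ is precisely the Lebesgue density of $\mu_P$ on $X^N$; Assumptions \ref{a:4.8} ensure the $C^2$ regularity and globally bounded Hessian needed, so Theorem \ref{t:4.10} yields that $\mu_P$ is invariant under this flow. For the $q$-equation, expanding in the orthonormal modes as $q = \sum_{j > N} q_j \phi_j$ decouples it into independent one-dimensional Ornstein-Uhlenbeck equations $dq_j = -q_j\,dt + \sqrt{2}\,\gamma_j\,d\beta_j$, each with invariant law $N(0,\gamma_j^2)$; the product over $j > N$ reproduces $\mu_Q = N(0,\cC^{\perp})$.

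Since the two subequations are driven by independent noises and are dynamically uncoupled, starting from $u_0 = p_0 + q_0$ with $(p_0,q_0) \sim \mu_P \otimes \mu_Q = \mu^N$ makes $p(t)$ and $q(t)$ independent with the prescribed marginal invariant laws, so $u^N(t) = p(t) + q(t) \sim \mu^N$ for every $t > 0$; integrating against $\varphi$ yields the claimed identity $\EE^N \varphi(u^N(t)) = \EE^N \varphi(u_0)$. The subtlety that most warrants care is the decoupling step itself: it reduces to the identity $Q^N \cC P^N D\Phi^N = 0$, which follows solely from $[\cC,P^N] = 0$ and $Q^N P^N = 0$. Once the decoupling is established, the proof is purely an assembly of Theorem \ref{t:4.10}, Lemma \ref{l:prod}, and the structure of the $\cC$-Wiener process described by \eqref{eq:cwp2}.
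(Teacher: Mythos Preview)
Your proof is correct and follows essentially the same approach as the paper: decouple \eqref{44x} via the spectral projections into the finite-dimensional Langevin equation on $X^N$ (invariance by Theorem \ref{t:4.10}) and the Ornstein-Uhlenbeck equation on $X^{\perp}$ (invariance of the Gaussian $\mu_Q$), then assemble the product using Lemma \ref{l:prod}. You supply somewhat more detail than the paper---the explicit splitting $W=W_P+W_Q$, the chain rule for $D\Phi^N$, and the verification that Assumptions \ref{a:4.8} give the bounded Hessian needed for Theorem \ref{t:4.10}---but the structure is identical.
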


\noindent{\bf Proof.} Recall from Lemma \ref{l:prod}
that measure $\mu^N$ given by (\ref{x43})
factors as the independent product of two measures on $\mu_P$ on
$X^N$ and $\mu_{Q}$ on $X^{\perp}$. On $X^{\perp}$ the
measure is simply the Gaussian $\mu_{Q}=\mathcal {N}(0,\cC^\perp)$,
whilst $X^N$ the measure $\mu_{P}$ is finite
dimensional with density proportional to
\begin{equation}\label{425}
\exp\Big(-\Phi(p)-\frac{1}{2}\|(\cC^N)^{-\frac{1}{2}}p\|^2\Big).
\end{equation}

The equation (\ref{44x}) also decouples on the spaces $X^N$ and
$X^\perp$. On $X^\perp$ it gives the integral equation 
\begin{equation}\label{426}
q(\tau)=-\int_0^\tau q(s)+\sqrt{2}Q^N W(\tau)
\end{equation}
whilst on $X^N$ it gives the integral equation 
\begin{equation}\label{427}
p(\tau)=-\int_0^\tau \Bigl(p(s)+\cC^N D\Phi\bigl(p(s)\bigr)\Bigr)ds+\sqrt{2}P^N W(\tau).
\end{equation}
Measure $\mu_Q$ is preserved by (\ref{426}), because (\ref{426})
simply gives an (integral equation formulation of) the
Ornstein-Uhlenbeck process with desired Gaussian
invariant measure. On the other hand, equation (\ref{427}) is simply
(an integral equation formulation of)the
Langevin equation for measure on $\R^N$ with density
(\ref{425}) and a calculation with the Fokker-Planck
equation, as in Theorem \ref{t:4.10},
demonstrates the required invariance of $\mu_P$. $\Box$

\vspace{0.2in}
\subsection{Bibliographic Notes}

\begin{itemize}

\item Subsection \ref{ssec:set} describes general
background on Markov processes and invariant
measures. The book \cite{Nor97} is a good starting
point in this area. The book \cite{MT93} provides a good overview
of this subject area, from an applied and computational
statistics perspective. For continuous time Markov chains
see \cite{QZ1}. 

\item Subsection \ref{ssec:4.4} concerns MCMC methods.
The standard RWM was introduced in \cite{MRRT53}
and led, via the paper \cite{Has}, to the development of
the more general class of Metropolis-Hastings methods.
The paper \cite{Tie} is a key
reference which provides a framework for
the study of Metropolis-Hastings methods on
general state spaces. 
The subject of MCMC methods which are invariant with respect to the
target measure $\mu$ on infinite dimensional spaces
is overviewed in the paper \cite{CRSW12}. The
specific idea behind the Algorithm \ref{alg:pcn}
is contained in \cite[equation (15)]{Neal98}, in the finite dimensional
setting.
It is possible to show that, in the limit $\beta\to 0$, suitably 
interpolated output of Algorithm \ref{alg:pcn}
converges to solution of
the equation (\ref{44}): see \cite{PST11}.
Furthermore it is also possible to compute a spectral gap for the
Algorithm \ref{alg:pcn} in the infinite dimensional setting \cite{Vol13}.
This implies the existence of a dimension independent spectral
gap when finite dimensional approximation is used; in contrast
standard Metropolis-Hastings methods, such as Random Walk
Metropolis, have a dimension-dependent spectral gap which shrinks
with increasing dimension \cite{Vol13}.

\item Subsection \ref{ssec:pf} concerns SMC methods
and the foundational work in this area is overviewed
in the book \cite{del2004feynman}. The application
of those ideas to the solution of PDE inverse problems
was first demonstrated in \cite{kantas2013sequential}, where
the inverse problem is to determine the initial condition
of the Navier-Stokes equations from observations. The method
is applied to the elliptic inverse problem, with uniform priors,
in \cite{BJMS15}. The proof of Theorem \ref{th39} follows the very clear
exposition given in \cite{rebeschini2013can} in the context of
filtering for hidden Markov models.

\item Subsections \ref{ssec:start}--\ref{ssec:IDL} 
concern measure preserving continuous time dynamics.
The finite dimensional aspects of this subsection,
which we introduce for motivation, are covered in the
texts \cite{Oks98} and \cite{Gar85}; the first of
these books is an excellent introduction to the
basic existence and uniqueness theory, outlined in
a simple case in Theorem \ref{t:4.9}, whilst the second provides
an in depth treatment of the subject from the viewpoint
of the Fokker-Planck equation, as used in Theorem \ref{t:4.10}.
This subject has a long history which is overviewed
in the paper \cite{HSV05} where the idea is applied to
finding SPDEs which are invariant with respect to the
measure generated by a conditioned diffusion process.
This idea is generalized to certain conditioned
hypoelliptic diffusions in \cite{HSV11}.
It is also possible to study deterministic Hamiltonian
dynamics which preserves the same measure. This idea
is described in \cite{BPSSS11} in the same set-up as
employed here; that paper also contains references to
the wider literature. Lemma \ref{l:4.10} is 
proved in \cite{mattingly2011spde} and Lemma \ref{l:4.10b} in
\cite{PST11}
Lemma \ref{l:4.15} requires knowledge of the invariance of
Ornstein-Uhlenbeck processes
together with invariance of
finite dimensional first order Langevin equations with the form
of gradient dynamics subject to additive noise.
The invariance of the Ornstein-Uhlenbeck process
is covered in \cite{da1996ergodicity}
and invariance of finite dimensional SDEs using
the Fokker-Planck equation is discussed in \cite{Gar85}.
The $\cC$-Wiener process, and its properties, are described
in \cite{DapZab92}.

\item 
The primary focus of this section has been
on the theory of measure-preserving
dynamics, and its relations to algorithms. The SPDEs are of interest in
their own right as a theoretical object, but have particular importance
in the construction of MCMC methods, and in understanding the limiting
behaviour of MCMC methods. It is also important to appreciate
that MCMC and SMC methods are by no means the only tools available to
study the Bayesian inverse problem. In this context we note that
computing the expectation with respect to the 
\as{posterior}\index{posterior} can be reformulated
as computing the ratio of two expectations
with respect to the \as{prior}\index{prior}, 
the denominator being the normalization constant.
effectively in some such high dimensional integration problems; 
\cite{ks05} and \cite{niederreiter} are general references on the QMC methodology.
The paper \cite{kss11} is a survey on the theory of QMC for bounded 
integration domains and is relevant for uniform \as{prior}s\index{prior}.  
The paper \cite{ksww10} 
contains theoretical results for unbounded integration domains 
and is relevant to, for example, Gaussian \as{prior}s\index{prior!Gaussian}.
The use of QMC in plain uncertainty quantification (calculating
the pushforward of a measure through a map) is studied for elliptic PDEs 
with random coefficients in \cite{kss12} (uniform) 
and \cite{gknsss13} (Gaussian). More sophisticated integration tools
can be employed, using polynomial chaos representations of the
\as{prior}\index{prior} measure, and computing \as{posterior}\index{posterior}
expectations in a manner
which exploits sparsity in the map from unknown random coefficients to 
measured data; see \cite{ScSt11,schillings2013sparse}.
Much of this work, viewing uncertainty quantification from
the point of high dimensional integration, has its roots in
early papers concerning plain uncertainty quantification
in elliptic PDEs with random coefficients; the paper
\cite{BTZ} was foundational in this area.

\end{itemize}

\section{Conclusions}
\label{sec:con}

We have highlighted a theoretical treatment for Bayesian inversion
over infinite dimensional spaces. The resulting framework is appropriate
for the mathematical analysis of inverse problems, as well as
the development of algorithms. For example, on the analysis side,
the idea of MAP estimators, which links the Bayesian approach with classical
regularization, developed for Gaussian priors in \cite{DLSV13},
has recently been extended to other prior models in \cite{Hel14};
the study of contraction of the posterior distribution to a Dirac
measure on the truth underlying the data is undertaken in
\cite{ALS12, ASZ12,Vol13}. On the algorithmic side algorithms for
Bayesian inversion in geophysical applications are formulated
in \cite{BT14a,BT14b}, and on the computational statistics side
methods for optimal experimental design are formulated in
\cite{APSG14, AGG14}. All of these cited papers build on the
framework developed in detail here, and first outlined in 
\cite{article:Stuart2010}. It is thus anticipated that the framework
herein will form the bedrock of other, related, developments
of both the theory and computational practice of Bayesian inverse
problems.

\section{Appendix}

\subsection{Function Spaces}
\label{ssec:ifs}

In this subsection we briefly define the Hilbert and
Banach spaces that will be important in our developments of
probability and integration in infinite dimensional spaces. As a consequence
we pay particular attention to the issue of \as{separability}\index{separable}
(the existence of a countable dense subset)
which we require in that context.  We primarily 
restrict our discussion to $\R$ or ${\mathbb C}$-valued 
functions, but the reader will easily be able to extend to 
$\R^n$-valued or $\R^{n \times n}$-valued situations, 
and we discuss Banach-space valued 
functions at the end of the subsection.

\subsubsection{$\ell^p$  and $L^{p}$ Spaces}

\label{sssec:elp}

Consider real-valued sequences $u=\{u_j\}_{j=1}^{\infty} \in \R^{\infty}.$
Let $w \in \R^{\infty}$ denote
a positive sequence so that $w_j>0$ for each $j \in \N$.
For every $p \in [1,\infty)$ we define
$$\ell_w^p=\ell_w^p(\N;\R)=\Bigl\{u \in \R\Big|
\sum_{j=1}^{\infty} w_j |u_j|^p<\infty\Bigr\}.$$
Then $\ell_w^p$ is a Banach space when equipped with the
norm
$$\|u\|_{\ell_w^p}=\Bigl(\sum_{j=1}^{\infty} w_j |u_j|^p\Bigr)^{\frac{1}{p}}.$$
In the case $p=2$ the resulting spaces are Hilbert spaces
when equipped with the inner-product
$$\langle u,v \rangle=\sum_{j=1}^\infty w_j u_j v_j.$$
These $\ell^p$ spaces, with $p\in [1,\infty)$, are separable.
Throughout we simply write $\ell^p$ for the spaces $\ell^p_{w}$
with $w_j \equiv 1$.
In the case $w_j \equiv 1$ we extend the definition of Banach spaces to 
the case $p=\infty$ by defining
$$\ell^{\infty}=\ell^{\infty}(\N;\R)=\Bigl\{u \in \R\Big|
{\rm sup}_{j\in \N} (|u_j|)<\infty\Bigr\}$$
and
$$\|u\|_{\ell^{\infty}}={\rm sup}_{j\in \N} (|u_j|).$$
The space $\ell^\infty$ of bounded 
sequences is \textit{not} separable.
Each element of the sequence $u_j$ is real-valued, but the definitions
may be readily extended to complex-valued, $\R^n$-valued 
and $\R^{n \times n}$-valued sequences, 
replacing $|\cdot|$ by the complex modulus, the vector $\ell^p$ norm 
and the operator $\ell^p$ norm on matrices respectively.

We now extend the idea of $p$-summability to functions, and to
$p$-integrability.
Let $D$ be a bounded open set in $\R^d$ with Lipschitz boundary and 
define the space $~L^p=L^p(D;\R)$ of Lebesgue measurable functions
$f:D\rightarrow \R$ with norm $\|\cdot\|_{L^p(D)}$ defined by
\begin{eqnarray*}
\| f \|_{L^p(D)} := \left\{ \begin{array}{cc}
          \left( \int_{D} |f|^p \, dx \right)^{\frac{1}{p}}
            & \mbox{for } \, 1 \leq p < \infty
             \medskip

               \\
             \mbox{ess} \sup_{D} |f| & \mbox{for } \, p = \infty.
            \end{array}   \right  .
\end{eqnarray*}
In the above definition we have used the notation
$$
\mbox{ess} \sup_{D} |f| = \inf \left\{ C: |f| \leq C \mbox{ a.e.\ on
} D\right\}.
$$
Here $a.e$.\ is with respect to Lebesgue measure and the integral
is, of course, the Lebesgue integral. Sometimes we drop explicit reference to
the set $D$ in the norm and simply write $\|\cdot\|_{L^p}$. For Lebesgue
measurable functions
$f:D\rightarrow \R^n$ the norm is readily extended replacing $|f|$ under the
integral by the vector $p$-norm on $\R^n$. Likewise we may consider 
Lebegue measurable
$f:D\to \R^{n \times n}$, using the operator $p$-norm on
$\R^{n \times n}$. In all these cases we
write $L^p(D)$ as shorthand for $L^p(D;X)$ where $X=\R,\, \R^n$ or $\R^{n \times n}$. 
Then $L^p(D)$ is the vector space of all (equivalence classes of)
measurable functions
$f: D\rightarrow \R$ for which $\|f \|_{L^p(D)} < \infty$.
The space $L^p(D)$ is \as{separable}\index{separable} for $p \in [1,  \infty)$
whilst $L^{\infty}(D)$ is not separable.
We define periodic versions of $L^p(D)$, denoted
by $L^p_{\rm{per}}(D)$, in the case where $D$ is
a unit cube; these spaces are defined as the completion 
of $\CC^{\infty}$
periodic functions on the unit cube, with respect to the $L^p$-norm.
If we define $\bbT^d$ to be the $d$-dimensional unit torus then
we write $L_{\rm per}^p([0,1]^d)=L^p(\bbT^d)$.
Again these spaces are separable for $1 \le p<\infty$, but not
for $p=\infty.$

\subsubsection{Continuous and H\"{o}lder Continuous Functions}
\label{sssec:ch}

Let $D$ be an open and bounded set in $\R^d$ with Lipschitz
boundary.  We will denote by $\CC(\overline{D},\R)$, or simply $\CC(\overline{D})$, 
the space of  continuous functions
$f:\overline{D} \rightarrow \R$.
When equipped with the supremum norm,
$$
\|f \|_{\CC(\overline{D})} = \sup_{x \in \overline{D}} |f(x)|,
$$
$\CC(\overline{D})$ is a Banach spaces.  
Building on this we define the space $\CC^{0,\gamma}(\overline{D})$
to be the space of functions in $\CC(\overline{D})$ which are 
\as{H\"{o}lder}\index{H\"{o}lder spaces}
with any exponent $\gamma \in (0,1]$ with norm 
\begin{equation}
\label{eq:hb}
\|f \|_{\CC^{0,\gamma}(\overline{D})} = \sup_{x \in \overline{D}} |f(x)|+
\sup_{x,y\in \overline{D}}\Bigl(\frac{|f(x)-f(y)|}{|x-y|^{\gamma}}\Bigr).
\end{equation}
The case $\gamma=1$ corresponds to Lipschitz functions.

We remark that $\CC(\overline{D})$ is separable since $\overline{D}\subset \R^d$ is compact here. 
The space of H\"{o}lder functions $\CC^{0,\gamma}(\overline{D};\R)$ 
is, however, {\em not} \as{separable}\index{separable}. 
Separability can be recovered by working in the subset of $\CC^{0,\gamma}(\overline{D};\R)$
where, 
in addition to \eqref{eq:hb} being finite,  
$$\lim_{y \to x} {|f(x) - f(y)| \over |x-y|^\gamma} = 0,$$ 
uniformly in $x;$ 
we denote the resulting separable space by $\CC^{0,\gamma}_0(\overline{D}, \R).$ 
This is analogous to the fact that the space of bounded
measurable functions is not separable, while the space of continuous functions on a compact domain is. 
Furthermore it may be shown that $\CC^{0,\gamma'} \subset \CC_0^{0,\gamma}$ 
for every $\gamma' > \gamma$.
All of the preceding spaces can be generalized to
functions $\CC^{0,\gamma}(\overline{D},\R^n)$ and 
$\CC^{0,\gamma}_0(\overline{D}, \R^n);$ 
they may also be extended to periodic functions
on the unit torus $\bbT^d$ found by identifying opposite faces
of the unit cube $[0,1]^d$. 
The same separability issues arise for these generalizations.

\subsubsection{\as{Sobolev Spaces}\index{Sobolev!space}}
\label{sssec:sob}

We define Sobolev spaces of functions with integer number
of derivatives, extend to fractional and negative derivatives,
and make the connection with \as{Hilbert scales}\index{Hilbert scale}.
Here $D$ is a bounded open set in $\R^d$ with Lipschitz
boundary.  In the context of a function $u \in L^2(D)$
we will use the notation $\pdxi{u}$ to denote the weak derivative with
respect to $x_i$ and the notation 
$\nabla u$ for the weak gradient.

The {\em Sobolev space $W^{r,p}(D)$} consists of all $L^p$-integrable functions
$u:D\to \R$ whose $\alpha^{th}$ order weak derivatives exist and 
are $L^p$-integrable for all $|\alpha|\le r$:
\begin{equation}
W^{r,p}(D) = \left\{u \Big| D^\alpha u \in L^p(D)\mbox{ for }|\alpha|\le r \right\}
\label{eq:matt_star}
\end{equation}
with norm
\begin{equation}\label{eq:circle21}
\|u \|_{W^{r,p}(D)} 
=\left\{ 
\begin{array}{ll}
\left(\sum_{|\alpha|\le r}\|D^{\alpha} u \|^p_{L^p(D)}\right)^{\frac{1}{p}}&\mbox{for }\;1\le p<\infty,\\
\sum_{|\alpha|\le r}\|D^{\alpha} u \|_{L^\infty(D)}&\mbox{for }\;p=\infty.
\end{array}
\right.
\end{equation}
We denote $W^{r,2}(D)$ by $H^r(D)$. 
We define periodic versions of $H^s(D)$, denoted
by $H^s_{\rm{per}}(D)$, in the case where $D$ is
a unit cube $[0,1]^d$; these spaces are defined as the completion 
of $\CC^{\infty}$
periodic functions on the unit cube, with respect to the $H^s$-norm.
If we define $\bbT^d$ to be $d$-dimensional unit torus,
we then write $H^s(\bbT^d)=H^s_{\rm per}([0,1]^d)$.

The spaces $H^s(D)$ with $D$ a bounded open set in $\R^d$,
and $H^s_{\rm per}([0,1]^d)$, are \as{separable}\index{separable}
Hilbert spaces.  In particular if we define the inner-product 
$(\cdot,\cdot)_{L^2(D)}$ on $L^2(D)$ by 
$$(u,v)_{L^2(D)}:=\int_D u(x)v(x)dx$$
and define the resulting norm  $\|\cdot\|_{L^2(D)}$ by
the identity
$$\|u\|_{L^2(D)}^2=(u,u)_{L^2(D)}$$
then the space $H^1(D)$ is 
a separable Hilbert space with inner product
\begin{equation*}
\langle u, v\rangle_{H^1(D)} = (u,v)_{L^2(D)} + (\nabla u, \nabla v)_{L^2(D)}
\end{equation*}
and norm \eqref{eq:circle21} with $p=2.$
Likewise the space $H^1_0(D)$ is a separable Hilbert space with
inner product
\begin{equation*}
\langle u, v\rangle_{H^1_0(D)} =  (\nabla u, \nabla v)_{L^2(D)}
\end{equation*}
and norm
\begin{equation}
\|u \|_{H^1_0(D)} = \|\nabla u \|_{L^2(D)}.
\label{eq:circle212}
\end{equation}

As defined above, Sobolev spaces concern integer numbers of derivatives.
However the concept can be extended to fractional derivatives and there
is then a natural connection to \as{Hilbert scales}\index{Hilbert scale}
of functions. To explain
this we start our development in the periodic setting. 
Recall that, given an element $u$ in 
$L^2(\T^d)$, we can decompose it as a Fourier series:
\begin{equation*}
u(x) = \sum_{k \in \Z^d} u_k e^{2\pi i\langle k,x \rangle}\;,
\end{equation*}
where the identity holds for (Lebesgue) almost every $x \in \T^d$. Furthermore, the $L^2$ norm of $u$ is given by Parseval's 
identity $\|u\|_{L^2}^2 = \sum |u_k|^2$. 
The fractional Sobolev space $H^s(\T^d)$ for $s \ge 0$  is given by the subspace of functions $u \in L^2(\T^d)$
such that
\begin{equation}
\label{e:defHs}
\|u\|_{H^s}^2 := \sum_{k \in \Z^d} (1+4\pi^2|k|^2)^s |u_k|^2 < \infty\;.
\end{equation}
Note that this is a separable Hilbert space by virtue of 
$\ell_w^2$ being separable. Note also that
$H^0(\T^d)=L^2(\T^d)$ and that, for positive integer $s$, the definition agrees
with the definition $H^s(\bbT^d)=W^{s,2}(\bbT^d)$ obtained from
\eqref{eq:matt_star} with the obvious generalization from $D$ to
$\bbT^d$. 
For $s < 0$, we define $H^s(\T^d)$ as the closure of $L^2$
under the norm \eqref{e:defHs}. The spaces $H^s(\T^d)$ 
for $s<0$ may also be defined via duality.
The resulting spaces $H^s$ are separable for all $s \in \R$.

We now link the spaces $H^s(\T^d)$ to a specific 
\as{Hilbert scale}\index{Hilbert scale} of spaces.
Hilbert scales are families of spaces defined by $\CD(A^{s/2})$ for
$A$ a positive, unbounded, self-adjoint operator on a Hilbert space.
To view the fractional Sobolev spaces from this perspective
let $A=I-\triangle$ with domain $H^2(\T^d)$, noting that the eigenvalues of
$A$ are simply $1+4\pi^2|k|^2$ for $k \in \Z^d$. We thus see that,
by the spectral decomposition theorem, $H^s=\CD(A^{s/2})$, 
and we have $\|u\|_{H^s} = \|A^{s/2}u\|_{L^2}$.
Note that we may work in the space of
real-valued functions where the eigenfunctions of $A$,
$\{\varphi_j\}_{j=1}^{\infty}$,
comprise sine and cosine functions; the eigenvalues of $A$,
when ordered on a one-dimensional lattice, then satisfy
$\alpha_j \asymp j^{2/d}$. This is relevant to the more general
perpsective of \as{Hilbert scales}\index{Hilbert scale} that we now introduce.

We can now generalize the previous construction of
fractional Sobolev spaces to more general domains than the torus. 
The resulting spaces do not, in general, coincide with Sobolev
spaces, because of the effect of the boundary conditions 
of the operator $A$ used in the construction.
On an arbitrary bounded open
set $D \subset \R^d$ with Lipschitz boundary
we consider a positive self-adjoint operator
$A$ satisfying Assumption \ref{a:1.3} so that its eigenvalues 
satisfy $\alpha_j \asymp j^{2/d}$; then we define
the spaces ${\mathcal H}^s=\CD(A^{s/2})$ for $s>0.$
Given a Hilbert space $(H,\langle\cdot,\cdot\rangle, \|\cdot\|)$ 
of real-valued functions on a bounded open set $D$ in $\R^d$,
we recall from Assumption \ref{a:1.3} the orthonormal basis for $H$
denoted by $\{\varphi_j\}_{j=1}^{\infty}$. 
Any $u\in H$ can be written as
$$u=\sum_{j=1}^{\infty} \langle u, \varphi_j \rangle \varphi_j.$$
Thus
\begin{equation}
\label{eq:norm2}
\mathcal {H}^s=\Big\{
u: D \to \R \Big|\|w\|_{\mathcal {H}^s}^2<\infty\Big\}
\end{equation}
where, for $u_j=\langle u,\varphi_j \rangle$,
$$\|u\|_{\mathcal {H}^s}^2=\sum_{j=1}^\infty j^{\frac{2s}{d}}|u_j|^2.$$
In fact $\mathcal {H}^s$ is a Hilbert space: for 
$v_j=\langle v,\varphi_j \rangle$ we may define the inner-product
$$\langle u,v \rangle_{\mathcal {H}^s}=\sum_{j=1}^{\infty} j^{\frac{2s}{d}}u_j v_j.$$
For any $s>0$, the Hilbert space
$(\mathcal {H}^s,\langle\cdot,\cdot\rangle_{\mathcal {H}^t}, \|\cdot\|_{\mathcal {H}^t})$ 
is a subset of the original Hilbert space $H$; for $s<0$ the spaces are
defined by duality and are supersets of $H$. Note also that we have
Parseval-like identities showing that the $\mathcal {H}^s$ norm on a function
$u$ is equivalent to the $\ell^2_w$ norm on the sequence $\{u_j\}_{j=1}^{\infty}$ with the choice $w_j=j^{2s/d}$. 
The spaces $\mathcal {H}^s$ are separable Hilbert spaces for any $s \in \R.$

\subsubsection{Other Useful Function Spaces}
\label{sssec:other}

As mentioned in passing,
all of the preceding function spaces can be extended to functions
taking values in $\R^n, \R^{n\times n}$; thus we may then 
write $\CC(D;\R^n), L^p(D;\R^n), H^s(D;\R^n)$, for example. More
generally we may wish to consider functions taking values 
in a separable Banach space $E$.
For example when we are interested in solutions of time-dependent
PDEs then these may be formulated as ordinary differential
equations taking values in a separable Banach space $E$, with norm
$\|\cdot\|_E$. It is then natural to consider  
Banach spaces such as $L^2((0,T);E)$ and $\CC([0,T];E)$ with norms
$$\|u\|_{L^2((0,T);E)}=\sqrt{\Bigl(\int_0^T \|u(\cdot,t)\|_E^2 dt\Bigr)},\quad
\|u\|_{\CC([0,T];E)}=\sup_{t \in [0,T]}\|u(\cdot,t)\|_E.$$
These norms can be generalized in a variety of ways,
by generalizing the norm on the time variable.

The preceding idea of defining Banach space-valued $L^p$ spaces defined
on an interval $(0,T)$ can be taken further to define Banach space-valued
$L^p$ spaces defined on a measure space. Let $(\CM,\nu)$ any countably 
generated measure space, like for example any Polish space
(a separable completely metrizable topological space)
equipped with a positive Radon measure $\nu$. Again let $E$ denote
a separable Banach space. Then
$L^p_{\nu}(\CM;E)$ 
is the space of functions $u: \CM \to E$ with norm (in this
defintion of norm we use Bochner integration, defined in the next
subsection)
$$\|u\|_{L^p_{\nu}(\CM;E)}=\Bigl(\int_{\CM} \|u(x)\|_E^p \nu(dx)\Bigr)^{\frac{1}{p}}.$$
For $p \in (1,\infty)$ these spaces are separable. 
However, separability  fails to hold for $p=\infty.$ 
We will use these Banach spaces in the case where $\nu$ is a probability
measure $\P$, with corresponding expectation $\bbE$, and we then have
$$\|u\|_{L^p_{\P}(\CM;E)}=\Bigl(\bbE \bigl(\|u\|_E^p\bigr) \Bigr)^{\frac{1}{p}}.$$

\subsubsection{Interpolation Inequalities and \as{Sobolev\index{Sobolev!embedding} Embeddings}}
\label{ssec:iise}

Here we state some useful interpolation inequalities, and 
use them to prove a Sobolev embedding result, all in the context of
fractional Sobolev spaces, in the generalized sense defined through
a \as{Hilbert scale}\index{Hilbert scale} of functions.

Let $p,q \in [1,\infty]$ be a pair of conjugate exponents so
that $p^{-1}+q^{-1}=1$. Then for any positive real $a,b$ we have
the Young inequality 
\begin{equation*}
ab \le {a^p \over p} + {b^q \over q}\;.
\end{equation*}
As a corollary of this elementary bound, 
we obtain the following H\"older inequality
Let $(\CM, \mu)$ be a measure space and denote the 
norm $\|\cdot\|_{L^p_{\nu}(\CM;\R)}$ by $\|\cdot\|_{p}.$
For $p,q \in [1,\infty]$ as above and 
$u,v \colon \CM \to \R$ a pair of measurable functions we have
\begin{equation}
\label{hol}
\int_{\CM} |u(x) v(x)|\,\mu(dx) \le \|u\|_p \, \|v\|_q.
\end{equation}
From this H\"older-like inequality the following interpolation bound
results: let $\alpha \in [0,1]$ and let $L$ denote a (possibly unbounded) self-adjoint  operator on the Hilbert space 
$(H,\langle\cdot,\cdot\rangle, \|\cdot\|)$.
Then, the bound
\begin{equation}
\label{eq:interp}
\|L^\alpha u\| \le \|Lu\|^\alpha \|u\|^{1-\alpha}
\end{equation}
holds for every $u \in \CD(L) \subset H.$

Now assume that $A$ is a self-adjoint unbounded operator
on $L^2(D)$ with $D \subset \R^d$ a bounded open set with Lipschitz
boundary. Assume further that $A$ has eigenvalues
$\alpha_j\asymp j^{\frac{2}{d}}$ and define the \as{Hilbert scale}\index{Hilbert
scale} of spaces $\cH^t=\CD(A^{\frac{t}{2}})$. 
An immediate corollary of the bound \eqref{eq:interp}, obtained
by choosing $H = \cH^s$, $L = A^{t-s\over 2}
$,
and $\alpha = (r-s)/(t-s)$, is:
\begin{lemma}
\label{l:intA}
Let Assumption \ref{a:1.3} hold.
Then for any $t>s$, any $r \in [s,t]$ and any $u \in {\mathcal H}^t$
it follows that
\begin{equation*}
\|u\|_{\cH^r}^{t-s} \le \|u\|_{\cH^t}^{r-s} \|u\|_{\cH^s}^{t-r}.
\end{equation*}
\end{lemma}

It is of interest to bound the $L^p$ norm of a function in terms of one of 
the \as{fractional Sobolev norms}\index{Sobolev!space (fractional)}, or more
generally in terms of norms from a \as{Hilbert scale}\index{Hilbert scale}.
To do this we need to not only make assumptions on the eigenvalues of the operator
$A$ which defines the \as{Hilbert scale}\index{Hilbert scale}, 
but also on the behaviour of the corresponding orthonormal
basis of eigenfunctions in $L^{\infty}$. To this end we let Assumption~\ref{def:triangle} hold.
It then turns out that bounding the $L^\infty$ norm is rather straightforward and
we start with this case.

\begin{lemma}\label{lem:Linfty}
Let Assumption \ref{def:triangle} hold and define the resulting Hilbert
scale of spaces $\cH^s$ by \eqref{eq:norm2}. 
Then for every $s > {d \over 2}$, the space $\cH^s$ 
is contained in the space $L^{\infty}(D)$ 
and there exists a constant $K_1$ such that $\|u\|_{L^\infty} \le K_1 \|u\|_{\cH^s}$.
\end{lemma}

\begin{proof}
It follows from Cauchy-Schwarz that
\begin{equation*}
\frac{1}{C}\|u\|_{L^{\infty}} \le \sum_{k\in \Z^d} |u_k| \le \Bigl(\sum_{k\in \Z^d} (1+|k|^2)^s |u_k|^2\Bigr)^{1/2} \Bigl(\sum_{k\in \Z^d} (1+|k|^2)^{-s}\Bigr)^{1/2}\;.
\end{equation*}
Since the sum in the second factor converges if and only if $s > {d\over 2}$, the claim follows.
\end{proof}

As a consequence of Lemma~\ref{lem:Linfty}, we are able to obtain a more general \index{Sobolev!embedding}\as{Sobolev embedding}  for all $L^p$ spaces:

\begin{theorem}[{\bf Sobolev Embeddings}]
\label{t:sob}
Let Assumption \ref{def:triangle} hold, define the resulting Hilbert
scale of spaces $\cH^s$ by \eqref{eq:norm2} 
and assume that $p \in [2,\infty]$.
Then, for every $s > {d \over 2} - {d\over p}$, the space $\cH^s$ is contained in 
the space $L^p(D)$
and there exists a constant $K_2$ such that $\|u\|_{L^p} \le K_2 \|u\|_{\cH^s}$.
\end{theorem}

\begin{proof}
The case $p=2$ is obvious and the case $p=\infty$ has already been shown, so it remains to show the claim for $p \in (2,\infty)$.
The idea is to divide the space of eigenfunctions
into ``blocks'' and to estimate separately the
$L^p$ norm of every block. More precisely, we define a sequence of functions $u^{(n)}$ by
\begin{equation*}
u^{(-1)} = u_0\,\varphi_0 \;,\quad u^{(n)} = \sum_{2^n \le j < 2^{n+1}} u_j\, \varphi_j\;,
\end{equation*}
where the $\varphi_j$ are an orthonormal basis of eigenfunctions for $A$,
so that $u = \sum_{n \ge -1} u^{(n)}$. For $n \ge 0$ the H\"older inequality gives
\begin{equation}\label{e:boundLp}
\|u^{(n)}\|_{L^p}^p \le \|u^{(n)}\|_{L^2}^2 \|u^{(n)}\|_{L^\infty}^{p-2}\;.
\end{equation}
Now set $s' = {d\over 2} + \eps$ for some $\eps>0$ and note that the construction of $u^{(n)}$, together with Lemma~\ref{lem:Linfty}, gives the bounds
\begin{equation}
\|u^{(n)}\|_{L^2} \le K2^{-ns/d} \|u^{(n)}\|_{\cH^s}\;,\quad \|u^{(n)}\|_{L^\infty} \le K_1\|u^{(n)}\|_{\cH^{s'}} \le K 2^{n(s'-s)/d} \|u^{(n)}\|_{\cH^s}\;.
\end{equation}
Inserting this into \eqref{e:boundLp}, we obtain (possibly for an enlarged $K$)
\begin{align*}
\|u^{(n)}\|_{L^p} &\le K\|u^{(n)}\|_{\cH^s} 2^{n \bigl((s'-s){p-2\over p} - {2s\over p}\bigr)/d} = K\|u^{(n)}\|_{\cH^s} 2^{n \bigl(\eps{p-2\over p}+ {d\over 2} - {d\over p} -s\bigr)/d}\\
&\le K\|u\|_{\cH^s} 2^{n \bigl(\eps + {d\over 2} - {d\over p}-s\bigr)/d}\;.
\end{align*}
It follows that $\|u\|_{L^p} \le |u_0| + \sum_{n \ge 0}\|u^{(n)}\|_{L^p} \le K_2\|u\|_{\cH^s}$, provided that the exponent appearing in this
expression is negative which, since $\eps$ can be chosen arbitrarily small, is precisely the case whenever $s > {d \over 2} - {d\over p}$.
\end{proof}

\subsection{Probability and Integration In Infinite Dimensions}
\label{ssec:pmii}

\subsubsection{Product Measure for \as{i.i.d.\ Sequences}\index{i.i.d.\ sequence}}
\label{ssec:iid}

Perhaps the most straightforward setting in which probability measures
in infinite dimensions are encountered is when studying i.i.d.\ sequences
of real-valued random variables. Furthermore, this is our basic building block
for the construction of random functions -- see subsection \ref{ssec:2.1} --
so we briefly overview the subject.
Let $\P_0$ be a probability measure on $\R$ so that $(\R,\cBc(\R),\P_0)$ is a 
probability space and consider
the i.i.d.\ sequence $\xi:=\{\xi_j\}_{j=1}^{\infty}$ with $\xi_1 \sim \P_0$.

The construction of such a sequence can be formalised as follows. We consider $\xi$ as a 
random variable taking values in the space $\R^\infty$ endowed with the product topology, i.e.\
the smallest topology for which the projection maps $\ell_n\colon \xi \mapsto \xi_n$
 are continuous
for every $n$.
This is a complete metric space; an example of a distance generating
the product topology is given by
$$d(x,y)=\sum_{n=1}^\infty 2^{-n}\frac{|x_n-y_n|}{1+|x_n-y_n|}\;.$$ 
Since we are considering a \textit{countable} product, the resulting 
$\sigma$-algebra $\cBc(\R^{\infty})$ coincides with the product $\sigma$-algebra, which is
the smallest $\sigma$-algebra for which all $\ell_n$'s are measurable.

In what follows we need the notion of
the \textit{pushforward} of a probability measure under
a measurable map. If $f:\CB_1 \to \CB_2$ is a measurable map between 
two measurable spaces $\bigl(\CB_i,\cBc(\CB_i)\bigr)$ $i=1,2$
and $\mu_1$ is a probability measure on $\CB_1$ then
$\mu_2=f^\sharp\mu_1$ denotes the pushforward probability measure on $\CB_2$
defined by $\mu_2(A)=\mu_1\bigl(f^{-1}(A)\bigr)$ for all $A \in \cBc(\CB_2)$.
(The notation $f^*\mu$ is sometimes used in place of $f^\sharp\mu$, 
but we reserve this notation for adjoints.)
Recall that in section \ref{sec:prior} we construct random functions via
the random series \eqref{21} whose coefficients are constructed
from an i.i.d sequence. Our interest is in studying the pushforward
measure ${\mathcal F}^\sharp \P_0$ 
where ${\mathcal F}: \R^{\infty} \to X'$ is defined by 
\begin{equation}\label{21F}
{\mathcal F}\xi =m_0+\sum_{j=1}^\infty \gamma_j\xi_j \phi_j.
\end{equation}
In particular section \ref{sec:prior} is devoted to determing
suitable separable Banach spaces $X'$ on which to define the
pushforward measure.

With the pushforward notation at hand, 
we may also describe \as{Kolmogorov's extension theorem} 
\index{Kolmogorov!extension theorem}
which can be stated as follows.

\begin{theorem}{\bf (\as{Kolmogorov Extension})}
\label{theo:extension}\index{Kolmogorov!extension theorem}
Let $\CX$ be a Polish space
and let $\CI$ be an arbitrary set. Assume that, for any finite subset
$A \subset \CI$, we are given a probability measure $\P_A$ on the finite
product space $\CX^A$. Assume furthermore that the family of measures $\{\P_A\}$ is consistent
in the sense that if $B \subset A$ and $\Pi_{A,B}\colon \CX^A \to \CX^B$ denotes the 
natural projection map, then $\Pi_{A,B}^\sharp \P_A = \P_B$. 
Then, there exists a unique probability measure $\P$ on $\CX^\CI$ endowed with the product 
$\sigma$-algebra with the property that $\Pi_{\CI,A}^\sharp \P = \P_A$ for every finite
subset $A \subset \CI$.
\end{theorem}

Loosely speaking, one can interpret this theorem as stating that if one knows the law of any \textit{finite}
number of components of a random vector  or function
then this determines the law of the \textit{whole} 
random vector or function; in particular in the case of the random
function  this comprises {\em uncountably} many components.
This statement is thus highly non-trivial as
soon as the set $\CI$ is infinite since we have \textit{a priori} defined $\P_A$ only
for finite subsets $A\subset \CI$ and the theorem allows us to extend this uniquely also to infinite 
subsets.

As a simple application, we can use this theorem to define 
the infinite product measure $\P=\bigotimes_{k=1}^{\infty} \P_0$
as the measure given by \as{Kolmogorov's Extension Theorem} 
\ref{theo:extension}\index{Kolmogorov!extension theorem} 
if we take as our family of specifications
$\P_A=\bigotimes_{k \in A} \P_0$.
Our i.i.d.\ sequence $\xi$ is then naturally defined as a random sample taken from the probability 
space $\bigl(\R^{\infty},\cBc(\R^{\infty}),\P\bigr)$.
A more complicated example follows from making sense of the
random field perspective on random functions as explained in
subsection \ref{ssec:grf}.

\subsubsection{Probability and Integration on Separable Banach Spaces}
\label{ssec:measures}

We now study probability and integration on separable 
Banach spaces $\CB$; we let $\CB^*$ denote the dual space of bounded
linear functionals on $\CB$. The assumption of separability 
rules out some important function spaces like $L^\infty(D;\R)$, 
but is required in order for the
basic results of integration theory to hold. 
This is because, when considering
a non-separable Banach space $\CB$, it is not clear what the ``natural''
$\sigma$-algebra on $\CB$ is. One natural candidate is
the Borel $\sigma$-algebra,  denoted $\cBc(\CB)$,
namely the smallest $\sigma$-algebra 
containing all open sets; another is the cylindrical $\sigma$-algebra, 
namely the smallest $\sigma$-algebra for which all bounded linear 
functionals on $\CB$ are measurable.
For i.i.d.\ sequences, the analogues of these two
$\sigma$-algebras can be identified whereas, in the general setting,
the cylindrical $\sigma$-algebra can be strictly smaller than the
Borel $\sigma$-algebra. In the case of separable Banach spaces however,
both $\sigma$-algebras agree:

\begin{lemma}\label{prop:determinefinitedim}
Let $\CB$ be a separable Banach space and let $\mu$ and $\nu$ be two 
Borel probability measures on $\CB$. If 
$\ell^\sharp\mu = \ell^\sharp\nu$ for every $\ell \in \CB^*$, then $\mu = \nu$.
\end{lemma}

Thus, as for i.i.d.\ sequences, there is therefore a canonical notion of
measurability. Whenever we refer to (probability) measures on a separable Banach space
$\CB$ in the sequel, we really mean (probability) measures on $\bigl(\CB,\cBc(\CB)\bigr)$.

We now turn to the definition of integration with respect to
probability measures on $\CB$.
Given a (Borel) measurable function $f \colon \Omega \to \CB$
where $(\Omega, \CF, \P)$ is a standard probability space, we say that 
$f$ is integrable with respect to $\P$ 
if the map $\omega \mapsto \|f(\omega)\|$ belongs to 
$L^1_{\P}(\Omega;\R)$. (Note that this map is certainly Borel measurable since
the norm $\|\cdot\|\colon \CB \to \R$ is a continuous, and therefore
also Borel measurable, function.) Given such an integrable function $f$, 
we \textit{define} its Bochner integral by 
\begin{equation*}
\int f(\omega)\,\P(d\omega) = \lim_{n \to \infty}\int f_n(\omega)\,\P(d\omega)\;,
\end{equation*}
where $f_n$ is a sequence of simple functions, for which the
integral on the right-hand side may be defined in the usual way,
chosen such that 
$${\lim_{n\to\infty}}\int\|f_n(\omega) - f(\omega)\|\,\P(d\omega) = 0.$$ 
With this definition the value of the integral does 
not depend on the approximating sequence, it is
linear in $f$, and 
\begin{equation}\label{e:linearBochner}
\int \ell(f(\omega))\,\P(d\omega) = \ell\Bigl( \int f(\omega)\,\P(d\omega)\Bigr)\;,
\end{equation}
for every element $\ell$ in the dual space $\CB^*$.

Given a probability measure $\mu$ on a separable Banach space $\CB$, we now say that
$\mu$ has \textit{finite expectation} if the identity function $x\mapsto x$ is
integrable with respect to $\mu$. If this is the case, we define the
expectation of $\mu$ as
\begin{equation*}
\int_\CB x\,\mu(dx)\;,
\end{equation*}
where the integral is interpreted as a Bochner integral. 

Similarly, it is natural to say that $\mu$ has \textit{\as{finite variance}}\index{variance!finite} if the map
$x \mapsto \|x\|^2$ is integrable with respect to $\mu$. Regarding the 
covariance $C_\mu$ of $\mu$ itself, it is natural to define it as a bounded linear \as{operator}\index{covariance operator}
$C_\mu \colon \CB^* \to \CB$ with the property that
\begin{equation}\label{e:propCmu}
C_\mu \ell = \int_\CB x \ell(x)\,\mu(dx)\;,
\end{equation}
for every $\ell \in \CB^*$. At this stage however, it is not clear whether such
an operator $C_\mu$ always exists solely under the assumption that $\mu$ has
\as{finite variance}\index{variance!finite}. 
For any $x \in \CB$, we define the projection operator
$P_x\colon \CB^* \to \CB$ by
\begin{equation}
\label{eq:px}
P_x \ell = x\,\ell(x)\;,
\end{equation}
suggesting that we define 
\begin{equation}\label{e:defVar}
C_\mu := \int_\CB P_x\,\mu(dx)\;.
\end{equation}
The problem with this definition is that if we view the map $x \mapsto P_x$ as
a map taking values in the space $\CL(\CB^*,\CB)$ 
of bounded linear operators from $\CB^* \to \CB$
then, since this space is not separable in general, it is not clear a priori
whether \eqref{e:defVar} makes sense as a Bochner integral.
This suggests to define the subspace $B_\star(\CB)\subset \CL(\CB^*,\CB)$
given by the closure (in the usual operator norm) of the linear span of 
operators of the type $P_x$ given in \eqref{eq:px} for $x \in \CB$. We 
then have:

\begin{lemma}
\label{lem:BC}
If $\CB$ is separable, then $B_\star(\CB)$ is also separable.
Furthermore, $B_\star(\CB)$ consists of compact operators. 
\end{lemma}

This leads to the following corollary:

\begin{corollary}
\label{cor:var}
Assume that $\mu$ has finite variance so that 
the map $x \mapsto \|x\|^2$ is integrable with respect to $\mu$.
Then the \as{covariance operator}\index{covariance operator} $C_\mu$ defiend by \eqref{e:defVar}
exists as a Bochner integral in $B_\star(\CB)$.
\end{corollary}

\begin{remark}
Once the covariance is defined, the fact that
\eqref{e:propCmu} holds is then an immediate 
consequence of \eqref{e:linearBochner}.
In general, not every element $C \in B_\star(\CB)$ can be realised as the 
covariance of some probability measure. This is the case 
even if we impose the positivity 
condition $\ell(C\ell) \ge 0$, which  by \eqref{e:propCmu} 
is a condition satisfied by every \as{covariance operator.}\index{covariance operator}
For further insight into this issue, see Lemma \ref{prop:trace}
which characteritzes precisely the \as{covariance operators}\index{covariance operator} of
a Gaussian measure in separable Hilbert space.
$\quad\Box$
\end{remark}

Given any probability measure $\mu$ on $\CB$, we can define its 
{\em \as{Fourier transform}}\index{measure! Fourier transform}
$\hat \mu \colon \CB^* \to \mathbb{C}$ by
\begin{equation}\label{e:FT}
\hat \mu(\ell) := \int_\CB e^{i\ell(x)}\, \mu(dx)\;.
\end{equation}
For a Gaussian measure $\mu_0$ on $B$ with mean $a$ and 
\as{covariance operator}\index{covariance operator} $C$, 
it may be shown show that, for any $\ell\in\CB^*$,
the characteristic function is given by
\begin{align}\label{e:FTG}
\hat\mu_0(\ell)=\e^{i\ell(a)-\frac{1}{2}\ell(C\ell)}.
\end{align}
As a consequence of Lemma~\ref{prop:determinefinitedim}, 
it is almost immediate that a measure
is uniquely determined by its Fourier transform, and this is the content of the
following result.

\begin{lemma}\label{prop:FT}
Let $\mu$ and $\nu$ be any two probability measures on a separable Banach space $\CB$. If $\hat \mu(\ell) = \hat \nu(\ell)$ for
every $\ell \in \CB^*$, then $\mu = \nu$.
\end{lemma}

\subsubsection{Probability and Integration on Separable Hilbert Spaces}
\label{ssec:hilbert}

We will frequently be interested in the case where $\CB=\cH$
for $\bigl(\cH, \langle \cdot,\cdot \rangle,\|\cdot\|\bigr)$ 
some separable Hilbert space. Bochner integration
can then, of course, be defined as a special case of the preceding
development on separable Banach spaces. 
We make use of the Riesz representation theorem to identify 
$\cH$ with its dual and
$\cH\otimes \cH$ with a subspace of the space of linear operators on $\cH$.
The \as{covariance operator}\index{covariance operator}
of a measure $\mu$ on $\cH$ may then be viewed as a bounded linear
operator from $\cH$ into itself. The definition \eqref{e:propCmu} of
$C_\mu$ becomes
\begin{equation}
\label{e:propCmu22}
C_\mu \ell =\int_{\cH}\langle \ell,x \rangle x \,\mu(dx)\;,
\end{equation} 
for all $\ell \in \cH$
and \eqref{e:defVar} becomes
\begin{equation}
\label{e:defVar22}
C_\mu=\int_{\cH} x \otimes x\,\mu(dx)\;.
\end{equation}
Corollary \ref{cor:var} shows that we can indeed make sense of the
second formulation as a Bochner integral, provided that $\mu$ has
\as{finite variance}\index{variance!finite} in $\cH$. 

\subsubsection{Metrics on Probability Measures}
\label{ssec:metricpm}

When discussing well-posedness and approximation theory
for the \as{posterior}\index{posterior}
distribution, it is of interest to estimate
the distance between two probability measures and thus we
will be interested in metrics between probability measures.
In this subsection we introduce two useful metrics 
on measures:
the {\em \as{total variation distance}}\index{total
variation distance} and
the {\em \as{Hellinger distance}}\index{Hellinger distance}.
We discuss the relationships between the metrics
and indicate how they may be used to estimate differences
between expectations of random variables under two different
measures. We also discuss the 
{\em \as{Kullback-Leibler divergence}}\index{Kullback-Leibler divergence},
a useful distance measure which does not satisfy the axioms
of a metric, but which may be used to bound both the \as{Hellinger}
\index{Hellinger distance}
and \as{total variation distances}\index{total variation distance}, 
and which is also useful
in defining algorithms for finding the best approximation to
a given measure from within some restricted class of measures,
such as Gaussians. 

Assume that we have two probability measures $\mu$ and $\mu'$
on a separable Banach space denoted by $B$ (actually the considerations
here apply on a Polish space but we do not need this level of generality).
Assume that $\mu$ and $\mu'$ are both absolutely continuous with respect to
a common reference measure $\nu$, also defined on the same 
measure space. Such a measure always exists -- take $\nu=\frac12(\mu+\mu')$ 
for example. 
In the following, all integrals of real-valued functions
over $B$ are simply denoted by $\int$. The following
define two concepts of distance between $\mu$ and $\mu'$.
The resulting metrics that we define are independent
of the choice of this common reference measure.

\begin{definition} \label{def:tvd}
The {\em \as{total variation distance}}\index{total variation
distance} between
$\mu$ and $\mu'$ is
$$\dtv(\mu,\mu')=\frac12 \int \Bigl|
\frac{d\mu}{d\nu}-\frac{d\mu'}{d\nu}\Bigr|
d\nu.  \quad\Box$$
\end{definition}

In particular, if $\mu'$ is absolutely continuous with
respect to $\mu$ then
\begin{equation}
\label{eq:tvduse}
\dtv(\mu,\mu')=\frac12 \int \Bigl|1-\frac{d\mu'}{d\mu}\Bigr|
d\mu.
\end{equation}

\begin{definition} \label{def:hell}
The {\em \as{Hellinger distance}}\index{Hellinger distance} between
$\mu$ and $\mu'$ is
$$\dhh(\mu,\mu')=\sqrt{\frac12 \int \Bigl(
\sqrt\frac{d\mu}{d\nu}-\sqrt\frac{d\mu'}{d\nu}\Bigr)^2
d\nu}.\quad\Box$$
\end{definition}

In particular, if $\mu'$ is absolutely continuous with
respect to $\mu$ then
\begin{equation}
\label{eq:heluse}
\dhh(\mu,\mu')=\sqrt{\frac12 \int \Bigl(1-\sqrt\frac{d\mu'}{d\mu}\Bigr)^2
d\mu}.
\end{equation}

Note that the numerical constant ${1\over 2}$ appearing in both definitions is chosen in such a way as to ensure the bounds
$$0 \le \dtv(\mu,\mu') \le 1\;,\qquad 0 \le \dhh(\mu,\mu') \le 1\;.$$
In the case of the total variation inequality this is an immediate consequence of the triangle inequality,
combined with the fact that both $\mu$ and $\mu'$ are probability measures, so that
$\int{d\mu\over d\nu}\,d\nu = 1$ and similarly for $\mu'$. In the case of the 
\as{Hellinger distance}\index{Hellinger distance}, it 
follows by expanding the square and applying similar considerations.

The Hellinger and total variation distances are related
as follows, which shows in particular that they both generate the same
topology:

\begin{lemma} \label{l:tvhell}
The \as{total variation}\index{total variation diatance}
and \as{Hellinger} \index{Hellinger distance} metrics are related by the inequalities
$$\frac{1}{\sqrt{2}}\dtv(\mu,\mu') \le \dhh(\mu,\mu') \le
\dtv(\mu,\mu')^{\frac12}.$$ 
\end{lemma}

\begin{proof} We have
\begin{align*}
\dtv(\mu,\mu') & =\frac12 \int \Bigl|\sqrt{\frac{d\mu}{d\nu}}-
\sqrt{\frac{d\mu'}{d\nu}}\Bigr|
\Bigl|\sqrt{\frac{d\mu}{d\nu}}+
\sqrt{\frac{d\mu'}{d\nu}}\Bigr| d\nu\\
& \le \sqrt{\Bigl(\frac12 \int \Bigl(
\sqrt\frac{d\mu}{d\nu}-\sqrt\frac{d\mu'}{d\nu}\Bigr)^2
d\nu\Bigr)}
\sqrt{\Bigl(\frac12 \int \Bigl(
\sqrt\frac{d\mu}{d\nu}+\sqrt\frac{d\mu'}{d\nu}\Bigr)^2
d\nu\Bigr)}\\
& \le \sqrt{\Bigl(\frac12 \int \Bigl(
\sqrt\frac{d\mu}{d\nu}-\sqrt\frac{d\mu'}{d\nu}\Bigr)^2
d\nu\Bigr)}
\sqrt{\Bigl( \int \Bigl( 
\frac{d\mu}{d\nu}+\frac{d\mu'}{d\nu}\Bigr)
d\nu\Bigr)}\\
&= \sqrt{2} \dhh(\mu,\mu')
\end{align*}
as required for the first bound.

For the second bound note that, for any positive $a$ and $b$, one has the bound
$|\sqrt a-\sqrt b| \le \sqrt a+\sqrt b$. As a consequence, we have the bound
\begin{align*}
\dhh(\mu,\mu')^2 & \le \frac12 \int \Bigl|\sqrt{\frac{d\mu}{d\nu}}-
\sqrt{\frac{d\mu'}{d\nu}}\Bigr|
\Bigl|\sqrt{\frac{d\mu}{d\nu}}+
\sqrt{\frac{d\mu'}{d\nu}}\Bigr| d\nu\\
& =\frac12 \int \Bigl|\frac{d\mu}{d\nu}-\frac{d\mu'}{d\nu}\Bigr|
d\nu\\
&=\dtv(\mu,\mu')\;,
\end{align*} 
as required.
\end{proof}

\begin{example}
\label{ex:hell}
Consider two Gaussian densities on $\R$: $\cN(m_1,\sigma_1^2)$
and $\cN(m_2,\sigma_2^2)$.
The Hellinger distance\index{Hellinger distance} between them is given by
$$\dhh(\mu,\mu')^2=1-\sqrt{\exp\Bigl(-\frac{(m_1-m_2)^2}{2(\sigma_1^2
+\sigma_2^2)}\Bigr)\frac{2\sigma_1\sigma_2}{(\sigma_1^2+\sigma_2^2)}}.$$
To see this note that
$$\dhh(\mu,\mu')^2=1-{\frac{1}{(2\pi\sigma_1\sigma_2)^{\frac12}}}\int_{\R}\exp(-Q)dx$$
where
$$Q=\frac{1}{4\sigma_1^2}(x-m_1)^2+\frac{1}{4\sigma_2^2}(x-m_2)^2.$$
Define $\sigma^2$ by
$$\frac{1}{2\sigma^2}=\frac{1}{4\sigma_1^2}+\frac{1}{4\sigma_2^2}.$$
We change variable under the integral to $y$ given by
$$y=x-\frac{m_1+m_2}{2}$$
and note that then, by completing the square,
$$Q=\frac{1}{2\sigma^2}(y-m)^2+\frac{1}{4(\sigma_1^2+\sigma_2^2)}(m_2-m_1)^2$$
where $m$ does not appear in what follows and so we do not detail it.
Noting that the integral  is then a multiple of
a standard Gaussian $N(m,\sigma^2)$ gives the desired result.
In particular this calculation shows that
the Hellinger distance\index{Hellinger distance}
between two Gaussians on $\R$ tends to
zero if and only if the means and variances of the two Gaussians
approach one another. Furthermore, by the previous lemma, the
same is true for the 
\as{total variation distance}\index{total variation distance}.
$\quad\qed$
\end{example}

The preceding example generalizes to higher dimension and shows that,
for example, the \as{total variation}\index{total variation distance} and 
\as{Hellinger metrics}\index{Hellinger distance} cannot
metrize weak convergence of probability measures 
(as one can also show that 
convergence in \as{total variation metric}\index{total variation distance} 
implies strong convergence).  They are nonetheless
useful distance measures, for example between families of measures
which are mutually absolutely continuous. Furthermore, the \as{Hellinger 
distance}\index{Hellinger distance} is particularly useful
for estimating the difference between expectation
values of functions of random variables under
different measures. This is encapsulated in the following lemma:

\begin{lemma} \label{l:tvhell2}
Let  $\mu$ and $\mu'$ be two probability measures on a separable Banach 
space $X$. Assume also that $f:X \to E$, where 
$(E,\|\cdot\|)$ is a separable Banach space, 
is measurable and has second moments with respect to
both $\mu$ and $\mu'$. Then
$$\|\bbE^{\mu}f-\bbE^{\mu'}f\| \le 2\Bigl(\bbE^{\mu}\|f\|^2+
\bbE^{\mu'}\|f\|^2\Bigr)^{\frac12}\dhh(\mu,\mu').$$
Furthermore, if $E$ is
a separable Hilbert space and $f:X \to E$ as before has fourth moments, then
$$\|\bbE^{\mu}(f\otimes f)-\bbE^{\mu'}(f\otimes f)\| \le 2\Bigl(\bbE^{\mu}\|f\|^4+
\bbE^{\mu'}\|f\|^4\Bigr)^{\frac12}\dhh(\mu,\mu').$$
\end{lemma}

\begin{proof} 
Let $\nu$ be a reference probability measure as above. We then have the bound
\begin{align*}
\|\bbE^{\mu}f-\bbE^{\mu'}f\|
&\le \int \|f\| \Bigl|\frac{d\mu}{d\nu}-\frac{d\mu'}{d\nu}\Bigr|d\nu\\
& =  \int \Bigl(\frac{1}{\sqrt 2}\Bigl|\sqrt{\frac{d\mu}{d\nu}}-
\sqrt{\frac{d\mu'}{d\nu}}\Bigr|\Bigr)\Bigl(
\sqrt{2}\|f\|\Bigl|\sqrt{\frac{d\mu}{d\nu}}+
\sqrt{\frac{d\mu'}{d\nu}}\Bigr|\Bigr) d\nu\\
& \le \sqrt{\Bigl(\frac12 \int \Bigl(
\sqrt\frac{d\mu}{d\nu}-\sqrt\frac{d\mu'}{d\nu}\Bigr)^2
d\nu\Bigr)}
\sqrt{\Bigl(2 \int \|f\|^2\Bigl(
\sqrt\frac{d\mu}{d\nu}+\sqrt\frac{d\mu'}{d\nu}\Bigr)^2
d\nu\Bigr)}\\
& \le \sqrt{\Bigl(\frac12 \int \Bigl(
\sqrt\frac{d\mu}{d\nu}-\sqrt\frac{d\mu'}{d\nu}\Bigr)^2
d\nu\Bigr)}
\sqrt{\Bigl(4 \int \|f\|^2\Bigl( 
\frac{d\mu}{d\nu}+\frac{d\mu'}{d\nu}\Bigr)
d\nu\Bigr)}\\
&= 2\Bigl(\bbE^{\mu}\|f\|^2+
\bbE^{\mu'}\|f\|^2\Bigr)^{\frac12}\dhh(\mu,\mu')
\end{align*}
as required. 

The proof for $f\otimes f$ follows from the bound
\begin{align*}
\|\bbE^{\mu}(f\otimes f)-\bbE^{\mu'}(f\otimes f)\|
&=\sup_{\|h\|=1}
\|\bbE^{\mu}\langle f,h\rangle f-\bbE^{\mu'}\langle f,h\rangle f\|\\
&\le \int \|f\|^2 \Bigl|\frac{d\mu}{d\nu}-\frac{d\mu'}{d\nu}\Bigr|d\nu\;,
\end{align*}
and then arguing similarly to the first case
but with $\|f\|$ replaced by $\|f\|^2$.
\end{proof}

\begin{remark}
Note, in particular, that choosing $X=E$, and with $f$ chosen
to be the identity mapping, we deduce that the differences between the
mean (resp. \as{covariance operator}\index{covariance operator}) 
of two measures are bounded
above by their \as{Hellinger distance}\index{Hellinger distance}, 
provided that one has some
\textit{a priori} control on the second (resp. fourth) moments.
$\quad\Box$
\end{remark}

We now define a third widely used distance
concept for comparing two 
probability measures. Note, however, that it does
not give rise to a metric in the strict sense,
because it violates both
symmetry and the triangle inequality. 

\begin{definition} \label{defn:KL}
The \as{Kullback-Leibler divergence}\index{Kullback-Leibler
divergence} between
two measures $\mu'$ and $\mu$, with $\mu'$ absolutely
continuous with respect to $\mu$, is
$$\dkl(\mu'||\mu)=\int \frac{d\mu'}{d\mu}\log\Bigl( \frac{d\mu'}{d\mu}\Bigr)
d\mu.\quad\Box$$
\end{definition}

If $\mu$ is also absolutely continuous with respect to $\mu'$, so
that the two measures are equivalent, then
$$\dkl(\mu'||\mu)= -\int \log\Bigl( \frac{d\mu}{d\mu'}\Bigr)
d\mu'$$
and the two definitions coincide.

\begin{example}
\label{ex:kl}
Consider two Gaussian densities on $\R$: $\cN(m_1,\sigma_1^2)$
and $\cN(m_2,\sigma_2^2)$.
The Kullback-Leibler divergence between them is given by
$$\dkl(\mu_1||\mu_2)=\ln\Bigl(\frac{\sigma_2}{\sigma_1}\Bigr)+
\frac12\Bigl(\frac{\sigma_1^2}{\sigma_2^2}-1\Bigr)+\frac{(m_2-m_1)^2}{2\sigma_2^2}.$$
To see this note that
\begin{align*}
\dkl(\mu_1||\mu_2)&=\bbE^{\mu_1}\Bigl(\ln\sqrt\frac{\sigma_2^2}{\sigma_1^2}
+\frac{1}{2\sigma_2^2}|x-m_2|^2-\frac{1}{2\sigma_1^2}|x-m_1|^2\Bigr)\\
&=\ln\frac{\sigma_2}{\sigma_1}+\bbE^{\mu_1}\Bigl(\bigl(\frac{1}{2\sigma_2^2}-
\frac{1}{2\sigma_1^2}\bigr)|x-m_1|^2\Bigr)+\bbE^{\mu_1}
\frac{1}{2\sigma_2^2}\Bigl(|x-m_2|^2-|x-m_1|^2\Bigr)\\
&=\ln\frac{\sigma_2}{\sigma_1}+
\frac12\Bigl(\frac{\sigma_1^2}{\sigma_2^2}- 1\Bigr)+
\frac{1}{2\sigma_2^2}\bbE^{\mu_1}\Bigl(m_2^2-m_1^2+2x(m_1-m_2)\Bigr)\\
&=\ln\frac{\sigma_2}{\sigma_1}+
\frac12\Bigl(\frac{\sigma_1^2}{\sigma_2^2}- 1\Bigr)+
\frac{1}{2\sigma_2^2}(m_2-m_1)^2
\end{align*}
as required. \qed
\end{example}

As for \as{Hellinger distance}\index{Hellinger distance}, 
this example shows that two Gaussians on $\R$
approach one another in the Kullback-Leibler divergence if and only if
their means and variances approach one another. This generalizes
to higher dimensions.
The Kullback-Leibler divergence provides an upper bound
for the square of the \as{Hellinger distance}\index{Hellinger distance}
and for the square of the \as{total variation distance.}\index{total variation distance}

\begin{lemma} \label{lem:klbnd}
Assume that two measures $\mu$ and $\mu'$
are equivalent. Then the bounds
$$\dhh(\mu,\mu')^2 \le \frac12 \dkl(\mu||\mu')\;,\qquad 
\dtv(\mu,\mu')^2 \le \dkl(\mu||\mu')\;,$$ 
hold.
\end{lemma}

\begin{proof}
The second bound follows from the first by using
Lemma~\ref{l:tvhell}, thus it suffices to proof the
first.
In the following we use the fact that
$$x-1 \ge  \log(x)\qquad \forall x \ge 0\;,$$
so that
$$\sqrt{x}-1 \ge \frac12 \log(x)\qquad \forall x \ge 0\;.$$
This yields the bound
\begin{align*}
\dhh(\mu,\mu')^2 &=\frac12 \int \Bigl( \sqrt\frac{d\mu'}{d\mu}-1\Bigr)^2 d\mu
=\frac12\int \Bigl(\frac{d\mu'}{d\mu}+1-2\sqrt{\frac{d\mu'}{d\mu}}\Bigr)d\mu\\
&=\int \Bigl(1-\sqrt{\frac{d\mu'}{d\mu}}\Bigr)d\mu
 \le \frac12\int \Bigl(-\log\frac{d\mu'}{d\mu}\Bigr)d\mu\\
&=\frac12\dkl(\mu||\mu')\;,
\end{align*}
as required.
\end{proof}

\subsubsection{Kolmogorov Continuity Test}

The setting of \as{Kolmogorov's continuity test}\index{Kolmogorov!continuity criterion} is the following. We assume that we
are given a compact domain $D \subset \R^d$,
a complete separable metric space $\CX$,
as well as a collection 
of $\CX$-valued random variables $u: x \in D \mapsto \CX$. At this stage we assume
no regularity whatsoever on the parameter $x$: the distribution
of this collection of random variables is a measure $\mu_0$ on the space $\CX^D$ of
all functions from $D$ to $\CX$ endowed with the product $\sigma$-algebra.
Any consistent family of marginal distributions does yield such a measure by
\as{Kolmogorov's extension Theorem}~\ref{theo:extension} \index{Kolmogorov!extension theorem}. 
With these notations at hand, \as{Kolmogorov's continuity test}\index{Kolmogorov!continuity criterion} can be formulated 
as follows, and enables the extraction of regularity with respect to variation 
of $u(x)$ with respect to $x$.

\begin{theorem}[{\bf \as{Kolmogorov Continuity Test}}]\label{theo:Kolmogorov}
\index{Kolmogorov!continuity criterion}
Let $D$ and $u$ be as above and assume 
that there exist $p > 1$, $\alpha > 0$ and $K>0$ such that 
\begin{equation}
\bbE {\mathsf d}\bigl(u(x), u(y)\bigr)^p \le K|x-y|^{p \alpha + d}\;,\qquad \forall x,y \in D\;,
\end{equation}
where ${\mathsf d}$ denotes the distance function on $\CX$, and
$d$ the dimension of the compact domain $D$. Then, for every $\beta < \alpha$, 
there exists a unique measure $\mu$ on $\CC^{0,\beta}(D, \CX)$ 
such that the canonical process under $\mu$ has the same law as $u$.
\end{theorem}

We have here generalized the notion of H\"older spaces from 
subsection \ref{sssec:ch} to functions taking
values in a Polish space; such generalizations are discussed in
subsection \ref{sssec:other}. 
The notion of {\em canonical process} is defined in subsection \ref{sec:Ito}.

We will frequently use \as{Kolmogorov's continuity test}\index{Kolmogorov!continuity criterion} in the following
setting: we again assume that we
are given a compact domain $D \subset \R^d$,
and now a collection 
$u(x)$ of $\R^n$-valued random variables indexed by $x \in D$. 
We have the following:

\begin{corollary}\label{cor:Kolmogorov}
Assume that there exist $p > 1$, $\alpha > 0$ and $K>0$ 
such that 
\begin{equation*}
\bbE |u(x)-u(y)|^p \le K|x-y|^{p \alpha + d}\;,\qquad \forall x,y \in D\;.
\end{equation*}
Then, for every $\beta < \alpha$, 
there exists a unique measure $\mu$ on $\CC^{0,\beta}(D)$ 
such that the canonical process under $\mu$ has the same law as $u$.
\end{corollary}

\begin{remark}
Recall that $\CC^{0,\gamma'}(D) \subset \CC_0^{0,\gamma}(D)$ for all
$\gamma'>\gamma$ so that, since the interval $\beta < \alpha$ for this
theorem is open, we may interpret the result as  giving an equivalent
measure defined on a separable Banach space. 
\end{remark}

A very useful consequence of \as{Kolmogorov's continuity criterion}\index{Kolmogorov!continuity criterion} is the following
result.  The setting is to consider a \as{random function}\index{random function}
$f$ given by the random series 
\begin{equation}
\label{eq:rf}
u = \sum_{k\ge 0}  \xi_k \psi_k
\end{equation} 
where $\{\xi_k\}_{k \ge 0}$ is an \as{i.i.d.\ sequence}\index{i.i.d.\ sequence}
and the $\psi_k$ are real- or complex-valued
H\"older functions on bounded open  $D \subset \R^d$ satisfying, for some $\alpha \in (0,1]$,
\begin{equation}
\label{eq:holder}
|\psi_k(x)-\psi_k(y)| \le h(\alpha,\psi_k)|x-y|^{\alpha} \quad x,y \in D;
\end{equation}
of course if $\alpha=1$ the functions are Lipschitz.

\begin{corollary}\label{cor:Kol}
Let $\{\xi_k\}_{k \ge 0}$ be countably many centred i.i.d.\
random variables (real or complex) with bounded moments of all orders.
Moreover let $\{\psi_k\}_{k \ge 0}$ satisfy \eqref{eq:holder}. 
Suppose there is some $\delta\in(0,2)$ such that
\begin{equation}\label{e:critKol}
S_1:=\sum_{k\ge 0} \|\psi_k\|^2_{L^\infty}<\infty
\quad\mathrm{and}\quad
S_2:=\sum_{k\ge 0} \|\psi_k\|^{2-\delta}_{L^\infty}h(\alpha,\psi_k)^\delta<\infty\;.
\end{equation}
Then $u$ defined by \eqref{eq:rf} is almost surely finite
for every $x \in D$, and $u$ is H\"older continuous for 
every H\"older exponent smaller than $\alpha \delta / 2$.
\end{corollary}
\begin{proof} 
Let us denote by $\kappa_n(X)$ the $n$th cumulant of a random variable $X$.
The odd cumulants of centred random variables are zero. Furthermore,
using the fact that the cumulants of independent random variables simply add up and 
that the cumulants of $\xi_k$ are all finite by assumption, we obtain for $p \ge 1$ 
the bound
\begin{align*}
\bigl|\kappa_{2p} \bigl(u(x)-u(y)\bigr)\bigr|
&= \Bigl|\sum_{k\ge 0}\kappa_{2p}(\xi_k)\,\bigl(\psi_k(x)-\psi_k(y)\bigr)^{2p}\Bigr|\\
&\lesssim C_p \sum_{k\ge 0}\min\{2^{2p}\|\psi_k\|_{L^\infty}^{2p}, h(\alpha,\psi_k)^{2p}|x-y|^{2p\alpha}\}\\
&\lesssim C_p\sum_{k\ge 0} \|\psi_k\|_{L^\infty}^{(1-\frac{\delta}{2})2p}
 h(\alpha,\psi_k)^{2p.\frac{\delta}{2}}  |x-y|^{2p\alpha. \frac{\delta }{2}}\\ 
&\lesssim  C_p |x-y|^{p\alpha\delta}\;,
\end{align*}
with $C_p$ denoting positive constants depending on $p$ which can change from occurrence to occurrence, and where we used that $\min\{a,bx^2\} \le a^{1-\delta/2} b^{\delta/2} |x|^\delta$
for any $a,b\ge0$ and the finiteness of $S_2$. In a similar way, we obtain $\bigl|\kappa_{2p} u(x)\bigr| < \infty$ for every $p \ge 1$.
Since the random variables $u(x)$ are centred, all moments of even order $2p$, $p \ge 1$, 
can be expressed in
terms of homogeneous polynomials of the even cumulants of order upto $2p$, so that 
\begin{equation*}
\bbE |u(x)-u(y)|^{2p} \lesssim C_p|x-y|^{p\alpha\delta} \;,\qquad \bbE |u(x)|^{2p} < \infty\;,
\end{equation*}
uniformly over $x,y \in D$.
The almost sure boundedness on $L^{\infty}$ follows from the second bound.
The H\"older continuity claim follows from \as{Kolmogorov's continuity test} 
\index{Kolmogorov!continuity criterion}
in the form of Corollary \ref{cor:Kolmogorov}, after noting that $p\alpha\delta=2p\bigl(\frac12\alpha\delta-
\frac{d}{2p}\bigr)+d$ and choosing $p$ arbitrarily large.
\end{proof}

\begin{remark}
Note that \eqref{eq:rf} is simply a rewrite of \eqref{21},
with $\psi_0=m_0$, $\xi_0=1$ and $\psi_k=\gamma_k\phi_k$.
In the case where the $\xi_k$ are standard normal then
the $\psi_k$'s in Corollary~\ref{cor:Kol} form an orthonormal basis
of the Cameron-Martin space (see Definition \ref{def:CM})
of a Gaussian measure.  The criterion \eqref{e:critKol} then provides
an effective way of showing that the measure in question 
can be realised on a space of H\"older continuous functions.$\quad\Box$ 
\end{remark}

\subsection{Gaussian Measures}
\label{sec:GM}

\subsubsection{Separable Banach Space Setting}
\label{sec:GMB}

We start with the definition of a Gaussian measure on a separable
Banach space $B$. There is no equivalent to Lebesgue measure
in infinite dimensions (as it could not be $\sigma$-additive), and so
we cannot define a Gaussian measure by prescribing the form of its density. 
However, note that Gaussian measures on $\R^n$ can be characterised by
prescribing that the projections of the measure onto any one-dimensional 
subspace of $\R^n$ are all Gaussian. This is a property that can readily 
be generalised to infinite-dimensional spaces:

\begin{definition}
A \index{Gaussian measure}\textit{\as{Gaussian probability measure}} $\mu$ on a 
separable Banach space $\CB$ is a Borel measure such that
$\ell^\sharp\mu$ is a Gaussian probability measure on $\R$ for every continuous linear functional $\ell \colon \CB \to \R$. (Here, Dirac measures 
are considered to be Gaussian measures with zero variance.)
The measure is said to be \textit{centred} if $\ell^\sharp\mu$ has mean
zero for every $\ell$. $\quad\Box$
\end{definition}

This is a reasonable definition since, provided
that $\CB$ is separable, the one-dimensional projections 
of any probability measure carry sufficient information 
to characterise it -- see Lemma \ref{prop:determinefinitedim}.  
We now state an important result which controls
the tails of Gaussian distributions: 

\index{Fernique theorem}
\begin{theorem}[{\bf \as{Fernique}}]\label{theo:Fernique}
Let $\mu$ be a Gaussian probability measure on a separable Banach space $\CB$. Then, there exists $\alpha > 0$ such that $\int_\CB \exp(\alpha \|x\|^2)\,\mu(dx) < \infty$.
\end{theorem}

As a consequence of Fernique's theorem and the Corollary \ref{cor:var},
every Gaussian measure $\mu$ admits a compact \as{covariance operator}\index{covariance operator} $C_\mu$
given by \eqref{e:defVar}, because the second moment is bounded. In
fact the techniques used to prove the Fernique theorem show that, if 
$M = \int_\CB \|x\|\,\mu(dx)$, then there is a global constant $K>0$ such that 
\begin{equation}
\int_\CB \|x\|^{2n}\,\mu(dx) \le n! K \alpha^{-n} M^{2n}.
\label{eq:Gmoment}
\end{equation}

Since the \as{covariance operator}\index{covariance operator}, and hence the mean, exist for a Gaussian measure,
and since they may be shown to characterize the measure completely, we write
$N(m,C)$ for a Gaussian with mean $m$ and covariance operator $C$.

Measures in infinite dimensional spaces are typically mutually
singular. Furthermore, two Gaussian measures are either mutually singular
or equivalent (mutually absolutely continuous). The \as{Cameron-Martin 
space}\index{Cameron-Martin!space} plays a key role in characterizing
whether or not two Gaussians are equivalent. 

\begin{definition}\label{def:CM}
The \textit{\as{Cameron-Martin space}}\index{Cameron-Martin!space} 
${\mathcal H}_{\mu}$ of measure $\mu$ on a separable Banach
space $\CB$ is the completion of the linear subspace $\HH_\mu \subset \CB$ 
defined by
\begin{equation}
\HH_\mu = \{ h \in \CB \,:\, \exists\, h^* \in \CB^*\;\text{with}\; h = C_\mu h^*\}\;,
\end{equation}
under the \index{Cameron-Martin!norm}norm $\|h\|_\mu^2 = \langle h,h \rangle_\mu = h^*(C_\mu h^*)$.
It is a Hilbert space when endowed with the scalar product $\langle h,k \rangle_\mu = h^*(C_\mu k^*) = h^*(k) = k^*(h)$.
\end{definition}

The Cameron-Martin space is actually independent of the space $\CB$ in the
sense that, although we may view the measure as living on a range of
separable Hilbert or Banach spaces, the Cameron-Martin space will be the
same in all cases. The space characterizes exactly the directions in
which a centred Gaussian measure may be shifted to obtain an equivalent
Gaussian measure:

\index{Cameron-Martin!theorem}
\begin{theorem}[{\bf \as{Cameron-Martin}}]\label{theo:CM}
For $h \in \CB$, define the map $T_h \colon \CB \to \CB$ by $T_h(x) = x+h$. Then, the measure $T_h^\sharp\mu$ is absolutely continuous
with respect to $\mu$ if and only if $h \in \CH_\mu$. Furthermore, in the latter case,  its Radon-Nikodym derivative
is given by
\begin{equation*}
{d T_h^\sharp\mu \over d\mu}(u) =  \exp \bigl(h^*(u) - \hf  \|h\|_\mu^2\bigr)
\end{equation*}
where $h = C_\mu h^*$.
\end{theorem}

Thus this theorem characterizes the Radon-Nikodym derivative of the
measure $N(h,C_{\mu})$ with respect to the measure $N(0,C_{\mu})$.
Below, in the Hilbert space setting, we also consider changes in the
\as{covariance operator}\index{covariance operator} 
which lead to equivalent Gaussian measures.
However, before moving to the Hilbert space setting, we conclude this
subsection with several useful observations concerning Gaussians
on separable Banach spaces. The {\em topological support}
of measure $\mu$ on the separable Banach space $B$
is the set of all $u\in B$ such that any neighborhood 
of $u$ has a positive measure.

\begin{theorem}
\label{theo:balls}
The topological support of a centred Gaussian measure $\mu$ on $B$ is the
closure of the Cameron-Martin space in $B$. Furthemore the Cameron-Martin
space is dense in $X$. Therefore all balls in $B$ have positive $\mu$-measure.
\end{theorem}

Since the Cameron-Martin space of Gaussian measure $\mu$
is independent of the space on which we
view the measure as living, this following useful theorem shows that the
unit ball in the Cameron-Martin space is compact in any separable
Banach space $X$ for which $\mu(X)=1:$

\begin{theorem}
\label{t:CMcompact}
The closed unit ball in the Cameron-Martin space $\cH_{\mu}$ is compactly 
embedded into the separable Banach space $\CB$.
\end{theorem}

In the setting of Gaussian measures on a separable Banach space, all balls
have positive probability. 
The Cameron-Martin norm is useful in the characterization of small-ball
properties of Gaussians.
Let $B^\delta(z)$ denote a ball of radius $\delta$ in $\CB$ centred at
a point $z \in \cH_{\mu}$.

\begin{theorem}
\label{t:smallball}
The ratio of small ball probabilities under Gaussian measure $\mu$ satisfy
\begin{equation*}
    \lim_{\delta\to 0}
    \frac{\mu\bigl( B^\delta(z_1) \bigr)}
    {\mu\bigl(B^\delta(z_2)\bigr)}
    = \exp\left( \frac12\|z_2\|_\mu^2-\frac12\|z_1\|_\mu^2 \right).
\end{equation*}
\end{theorem}

\begin{example}
\label{ex:sb}
Let $\mu$ denote the Gaussian measure $N(0,K)$ on $\R^n$ with $K$
positive definite. Then Theorem \ref{t:smallball} is the statement that
\begin{equation*}
    \lim_{\delta\to 0}
    \frac{\mu\bigl( B^\delta(z_1) \bigr)}
    {\mu\bigl(B^\delta(z_2)\bigr)}
    = \exp\left( \frac12|K^{-\frac12} z_2|^2-\frac12|K^{-\frac12} z_1|^2 \right)
\end{equation*}
which follows directly from the fact that the Gaussian measure
at point $z \in \R^n$ has Lebesgue density proportional to
$\exp\left(-\frac12|K^{-\frac12} z|^2\right)$ and the fact that the
Lebesgue density is a continuous function. $\quad\Box$
\end{example}

\subsubsection{Separable Hilbert Space Setting}
\label{sec:GMH}

In these notes our approach is primarily based on defining Gaussian
measures on Hilbert space; the Banach spaces which are of full measure
under the Gaussian are then determined via the \as{Kolmogorov continuity
theorem}\index{Kolmogorov!continuity criterion}. In this subsection we develop the theory of Gaussian measures
in greater detail within the Hilbert space setting. Throughout
$\bigl(\cH, \langle \cdot,\cdot \rangle,\|\cdot\|\bigr)$ 
denotes the separable Hilbert space on which the Gaussian is
constructed. 
Actually, in this Hilbert space setting
the \as{covariance operator}\index{covariance operator} $C_\mu$ has 
considerably more structure than just the boundedness implied by
\eqref{eq:Gmoment}: it is trace-class and hence necessarily compact on $\CH$: 

\begin{lemma}\label{prop:trace}
A Gaussian measure $\mu$ on a a separable Hilbert space $\CH$, 
has \as{covariance operator}\index{covariance operator} $C_\mu \colon \CH \to \CH$ 
which is trace class and satisfies 
\begin{equation}\label{e:expnorm}
\int_\CH \|x\|^2\,\mu(dx) = \tr C_\mu\;.
\end{equation}
Conversely, for every positive trace class symmetric operator $K\colon \CH \to \CH$, there exists a Gaussian measure $\mu$ on $\CH$ such
that $C_\mu = K$.
\end{lemma}

Since the \as{covariance operator}\index{covariance operator} 
$C_{\mu}:\cH \to \cH$ of a Gaussian on $\CH$
is a compact operator it follows that if 
operator $C_{\mu}:\cH \to \cH$
has an inverse then it will be a densely-defined unbounded operator on $\cH$;
we call this the {\em \as{precision operator}}\index{precision operator}.
Both the covariance and the precision operators are self-adjoint on appropriate
domains, and fractional powers of them may be defined via the spectral theorem.

\begin{theorem}[{\bf \as{Cameron-Martin Space}\index{Cameron-Martin!space} on Hilbert Space}]
\label{t:cmh}
Let $\mu$ be a Gaussian measure on a Hilbert space $\CH$ with 
strictly positive \as{covariance operator}\index{covariance operator} $K$. Then the the Cameron-Martin
space $\CH_\mu$ consists of the image of $\CH$ under $K^{1/2}$
and the Cameron-Martin norm is given by  
$\|h\|_\mu^2=\|K^{-\frac12} h\|^2$.
\end{theorem}

\begin{example} Consider two Gaussian measures $\mu_i$
on $\cH=L^2(J), J=(0,1)$ both with \as{precision operator}\index{precision operator} 
$L=-\frac{d^2}{dx^2}$ where $\CD(L)=H^1_0(J) \cap H^2(J)$.
(Informally $-L$ is the Laplacian on $J$ with homogeneous
Dirichlet boundary conditions.) 
Assume that $\mu_1 \sim \cN(m,C)$
and $\mu_2 \sim \cN(0,C)$. 
Then $\cH_{\mu_i}$ is the image of $\cH$ under $\cC^{\frac12}$
which is the space $=H^1_0(J)$.
It follows that the measures are equivalent
if and only if $m \in H^1_0(J)$.
If this condition is satisfied then,
from Theorem \ref{t:cmh}, 
the Radon-Nikodym derivative between the two measures is given by
$$\frac{d\mu_1}{d\mu_2}(x) =\exp\Bigl(\langle m,x\rangle_{H^1_0}-
\frac12\|m\|_{H^1_0}^2\Bigr).\quad\Box$$
\end{example}

We now turn to the Feldman-H\'ajek\index{Feldman-H\'ajek Theorem} 
theorem in the Hilbert Space setting.
Let $\{\varphi_j\}_{j=1}^{\infty}$ denote an orthonormal basis for
$\CH$. Then the \textit{\as{Hilbert-Schmidt norm}}\index{Hilbert-Schmidt norm}
of a linear operator $L:\CH \to \CH$ is defined by
$$\|L\|_{\tiny {\rm HS}}^2:=\sum_{j=1}^{\infty} \|L\varphi_j\|^2.$$
The value of the norm is, in fact, independent of the choice of
orthonormal basis.
In the finite dimensional setting the norm is known 
as the \textit{\as{Frobenius norm}}\index{Frobenius norm}.

\begin{theorem}
[{\bf \as{Feldman-H\'ajek} on Hilbert Space}\index{Feldman-H\'ajek Theorem}]
\label{t:GAC}
Let $\mu_i$ with $i=1,2$ be two centred Gaussian measures on some
fixed Hilbert space $\CH$ with means $m_i$ and
strictly positive \as{covariance operators}\index{covariance operator} $C_i$.
Then the following hold:
\begin{enumerate}
\item $\mu_1$ and $\mu_2$ are either singular or equivalent.
\item The measures $\mu_1$ and $\mu_2$ are equivalent Gaussian measures 
if and only if:
\begin{enumerate}
\item The images of $\CH$ under $C_i^{\frac12}$ coincide for $i=1,2$,
and we denote this common image space by $E$;
\item $m_1-m_2 \in E$;
\item  $\|(C_1^{-1/2}C_2^{1/2})(C_1^{-1/2}C_2^{1/2})^* - I\|_{\tiny {\rm HS}} < \infty$. 
\end{enumerate}
\end{enumerate}
\end{theorem}

\begin{remark}
\label{rem:hs} The final condition may be replaced by the condition that
$$\|(C_1^{1/2}C_2^{-1/2})(C_1^{1/2}C_2^{-1/2})^* - I\|_{\tiny {\rm HS}} < \infty$$
and the theorem remains true; this formulation is sometimes useful.$\quad\Box$
\end{remark}

\begin{example} Consider two mean-zero Gaussian measures $\mu_i$
on $\cH=L^2(J), J=(0,1)$ with \as{precision operators}\index{precision operator}
$L_1=-\frac{d^2}{dx^2}+I$ and
$L_2=-\frac{d^2}{dx^2}$ respectively, both with
domain $H^1_0(J) \cap H^2(J)$.
The operators $L_1,L_2$ share the same eigenfunctions
$$\phi_k(x)=\sqrt 2 \sin \left(k\pi x\right)$$
and have eigenvalues
$$\lambda_k(1)=\lambda_k(2)+1, \quad \lambda_k(2)=k^2 \pi^2,$$
respectively.
Thus $\mu_1 \sim \cN(0,C_1)$ and $\mu_2 \sim \cN(0,C_2)$
where, in the basis of eigenfunctions, $C_1$ and $C_2$ are
diagonal with eigenvalues
$$\frac{1}{k^2\pi^2 +1}, \quad \frac{1}{k^2\pi^2}$$
respectively.
We have, for $h_k=\langle h,\phi_k\rangle$,
$$\frac{\pi^2}{\pi^2+1} \le \frac{\langle h,\cC_1 h \rangle}{\langle h,\cC_2 h \rangle}
=\frac{\sum_{k \in \bbZ^+} (1+k^2\pi^2)^{-1}h_k^2}{\sum_{k \in \bbZ^+}
(k\pi)^{-2}h_k^2} \le 1.$$
From this it follows that the Cameron-Martin spaces of the two measures
coincide, and are equal to $H^1_0(J)$.
Notice that
$$T=C^{-\frac12}_1C_2C^{-\frac12}_1-I$$
is diagonalized in the same basis as the $C_i$ and has eigenvalues
$$\frac{1}{k^2\pi^2}.$$
These are square summable and so by Theorem \ref{t:GAC}
the two measures are absolutely continuous with respect to one another.
$\quad\Box$
\end{example}

\subsection{Wiener Processes in Infinite Dimensional Spaces}
\label{sec:Ito}

Central to the theory of stochastic PDEs is the notion of a \index{Wiener!process (cylindrical)}\textit{\as{cylindrical Wiener process}},
which can be thought of as an infinite-dimensional generalisation of a standard $n$-dimensional Wiener process. 
This leads to the notion of the $A$-\as{Wiener process}\index{Wiener!process ($A$-)}
for certain classes of operators $A$.  
Before we proceed to the definition and construction of such Wiener processes
in separable Hilbert spaces, 
let us recall a few basic facts about stochastic processes in general.

In general, a stochastic process $u$ over a probability space $(\Omega,{\cal F},\P)$ and taking values in a separable Hilbert space $\CH$
is nothing but a collection $\{u(t)\}$
of $\CH$-valued random variables indexed by time $t \in \R$ (or taking values in some subset of $\R$). 
By \as{Kolmogorov's Extension Theorem}\index{Kolmogorov!extension theorem} \ref{theo:extension}, 
we can also view this as a map 
$u \colon \Omega \to \CH^\R$, where $\CH^\R$ is endowed
with the product sigma-algebra.
A notable special case which will be of interest here is
the case where the probability space is taken to be 
$\Omega = \CC([0,T], \CH)$ (or some other space of $\CH$-valued continuous
functions) endowed with some Gaussian measure $\P$ and where the process $X$ is given by
\begin{equation*}
u(t)(\omega) = \omega(t)\;,\qquad \omega \in \Omega\;.
\end{equation*}
In this case, $u$ is called the canonical process on $\Omega$. 

The usual (one-dimensional) \as{Wiener process}\index{Wiener!process} is a real-valued centred Gaussian process $B(t)$ such that
$B(0) = 0$ and $\bbE |B(t) - B(s)|^2 = |t-s|$ for any pair of times $s,t$. From our point of view,
the Wiener process on any finite time interval $I$ can always be realised as the canonical process for the Gaussian measure on $\CC(I,\R)$ with
\as{covariance function}\index{covariance function}
$C(s,t) = s \wedge t = \min\{s,t\}$. Note that such a 
measure exists by the \as{Kolmogorov continuity test}\index{Kolmogorov!continuity criterion}, and
Corollary ~\ref{cor:Kolmogorov} in particular.

The standard $n$-dimensional Wiener process $B(t)$
is simply given by $n$ independent copies of a standard one-dimensional
Wiener process $\{\beta_j\}_{j=1}^n$, so that its covariance is given by
\begin{equation*}
\bbE \beta_i(s) \beta_j(t) =  (s\wedge t)\delta_{i,j}\;.
\end{equation*}
In other words, if $u$ and $v$ are any two elements in $\R^n$, we have
\begin{equation*}
\bbE \langle u,B(s) \rangle \langle B(t),v \rangle =  (s\wedge t)\langle u,v \rangle\;.
\end{equation*}
This is the characterisation that we will now extend to an arbitrary separable Hilbert space $\CH$. One natural way of
constructing such an extension is to fix an orthonormal basis $\{e_n\}_{n \ge 1}$ of $\CH$ and a countable
collection $\{\beta_j\}_{j=1}^{\infty}$ 
of independent one-dimensional Wiener processes, and to set
\begin{equation}\label{e:defCylLoose}
B(t) := \sum_{n=1}^{\infty} \beta_n(t)\,e_n\;.
\end{equation}
If we define 
\begin{equation*}
B^N(t) := \sum_{n=1}^{N} \beta_n(t)\,e_n\;
\end{equation*}
then clearly $\bbE \|B^N(t)\|_{\CH}^2=tN$ and so the series will
not converge in $\CH$ for fixed $t>0.$ However the expression
\eqref{e:defCylLoose} is nonetheless the right way to
think of a cylindrical Wiener process on $\CH$; indeed for fixed $t>0$
the truncated series for $B^N$ will converge in a larger space containing
$\CH$. We define the following scale of Hilbert subspaces, 
for $r>0$, by
$$\mathcal{X}^r=\{u\in\mathcal{H}\big|\sum_{j=1}^{\infty} j^{2r}|\langle u,\phi_j\rangle|^2<\infty\}$$
and then extend to superspaces $r<0$ by duality. We use 
$\|\cdot\|_r$ to denote the norm induced by the inner-product 
$$\langle u,v \rangle_r=\sum_{j=1}^{\infty} j^{2r} u_j v_j$$
for $u_j=\langle u,\phi_j \rangle$ and $v_j=\langle v,\phi_j \rangle$.
A simple argument, similar to that used to prove Theorem \ref{t:2.4b},
shows that $\{B^N(t)\}$ is, for fixed $t>0$, Cauchy in $\mathcal{X}^r$
for any $r<-\frac12.$ In fact it is possible to construct a
stochastic process as the limit of the truncated series, living on
the space $\CC([0,\infty),\mathcal{X}^r)$ for any $r<-\frac12$, by
the \as{Kolmogorov Continuity Theorem} \ref{theo:Kolmogorov}
\index{Kolmogorov!continuity criterion}
in the setting where $D=[0,T]$ and $X=\mathcal{X}^r.$
We give details in the more general setting that follows.

Building on the preceding we now discuss construction of a
$\cC$-Wiener process $W$, using
the finite dimensional case described in Remark \ref{rem:cwp} to guide us.
Here $\cC:\CH \to \CH$ is assumed to be trace-class with eigenvalues 
$\gamma_j^2$. Consider the cylindrical Wiener
process given by 
$$B(t)=\sum_{j=1}^\infty \beta_j e_j,$$ 
where $\{\beta_j\}_{j=1}^\infty$ is an i.i.d.\ family of unit Brownian motions
on $\R$ with $\beta_j\in \CC([0,\infty);\R)$. We note that
\begin{equation}
\label{eq:BMH}
\bbE|\beta_j(t)-\beta_j(s)|^2 =|t-s|.
\end{equation}

Since
$\sqrt{\mathcal{C}}e_j=\gamma_j e_j$, the $\cC$-Wiener process
$W=\sqrt{\mathcal{C}}B$ is then
\begin{equation}
W(t)=\sum_{j=1}^\infty\gamma_j\beta_j(t)e_j.
\label{eq:cwp2}
\end{equation}
The following formal
calculation gives insight into the properties of $W$: 
\begin{align*}
\bbE W(t)\otimes W(s)
&=\bbE\Bigl(\sum_{j=1}^\infty \sum_{k=1}^\infty \gamma_j\gamma_k
\beta_j(t) \beta_k(s) e_j\otimes e_k\Bigr)\\
&=\Bigl(\sum_{j=1}^\infty \sum_{k=1}^\infty \gamma_j\gamma_k
\bbE\bigl(\beta_j(t) \beta_k(t)\bigr)e_j\otimes e_k\Bigr)\\
&=\Bigl(\sum_{j=1}^\infty \sum_{k=1}^\infty \gamma_j\gamma_k
\delta_{jk}(t \wedge s) e_j\otimes e_k\Bigr)\\
& =\sum_{j=1}^\infty
\Bigl(\gamma_j^2\phi_j\otimes\phi_j\Bigr) t \wedge s\\
&={\mathcal C}\, (t \wedge s).
\end{align*}
Thus the process has the covariance structure of Brownian motion in time,
and \as{covariance operator}\index{covariance operator} ${\mathcal C}$ in space.
Hence the name ${\mathcal C}$-\as{Wiener process}\index{Wiener!process}.

Assume now that the sequence $\gamma=\{\gamma_j\}_{j=1}^{\infty}$ is such
that $\sum_{j=1}^{\infty} j^{2r}\gamma_j^2=M<\infty$ for some $r \in \R.$
For fixed $t$ it is then possible to construct a
stochastic process as the limit of the truncated series
\begin{equation*}
W^N(t)=\sum_{j=1}^N\gamma_j\beta_j(t)e_j,
\end{equation*}
by means of a Cauchy sequence argument
in $L^2_{\P}(\Omega;\mathcal{X}^r)$. 
Similarly $W(t)-W(s)$ may be defined for any $t,s$.
We may then also discuss the regularity of this process in time.
Together equations \eqref{eq:BMH},\eqref{eq:cwp2} give
$\bbE\|W(t)-W(s)\|_r^2=M^2|t-s|.$
It follows that
$\bbE\|W(t)-W(s)\|_r \le M|t-s|^{\frac12}.$ Furthermore, since $W(t)-W(s)$ is
Gaussian, we have by \eqref{eq:Gmoment} that
$\bbE\|W(t)-W(s)\|_r^{2q} \le K_q|t-s|^{q}.$
Applying the \as{Kolmogorov continuity test}\index{Kolmogorov!continuity criterion} 
of Theorem \ref{theo:Kolmogorov} 
then demonstrates that the process given by \eqref{eq:cwp2} may be
viewed as an element of the space $C^{0,\alpha}([0,T];\mathcal{X}^r)$ for
any $\alpha<\frac12.$
Similar arguments may be used to study the cylindrical Wiener process,
showing that it lives in $C^{0,\alpha}([0,T];\mathcal{X}^r)$
for $\alpha<\frac12$ and $r<-\frac12.$

\subsection{Bibliographical Notes}

\begin{itemize}

\item Subsection \ref{ssec:ifs} introduces various Banach
and Hilbert spaces, as well as the notion of 
separablity;  see \cite{Yos95}. 
In the context of PDEs, see \cite{evans} and \cite{robinson1},
for all of the function spaces defined in 
subsections \ref{sssec:elp}--\ref{sssec:sob}; 
Sobolev spaces are developed in detail in \cite{AF03}.
The nonseparability of the H\"older spaces $C^{0,\beta}$ and
the separability of $C^{0,\beta}_0$ is discussed in \cite{hairernotes}.
For asymptotics of the eigenvalues of the Laplacian operator see
\cite[Chapter 11]{Str08}.
For discussion of the more general spaces of $E$-valued functions 
over a measure space $({\mathcal M},\nu)$ we refer the reader to \cite{Yos95}.
Subsection \ref{ssec:iise} concerns Sobolev embedding theorems,
building rather explicitly on the case of periodic functions.
The corresponding embedding results in domains 
with more general boundary conditions or even on more general manifolds
or unbounded domains, we refer to the 
comprehensive series of monographs \cite{Tri83,Tri92,Tri06}.
The interpolation inequality of \eqref{eq:interp} and Lemma \ref{l:intA}
may be found in \cite{robinson1}; see also Proposition 6.10
and Corollary 6.11 of \cite{hairernotes}. The proof of Theorem \ref{t:sob}
closely follows that given in \cite[Theorem 6.16]{hairernotes}, and is
a slight generalization to the \as{Hilbert scale}\index{Hilbert space} 
setting used here.

\item Subsection~\ref{ssec:pmii} briefly introduces the theory of
probability measures on infinite dimensional spaces. 
We refer to the extensive treatise by Bogachev \cite{BogMT},
and to the much shorter but more
readily accessible book by Billingsley \cite{Bill68}, 
for more details.
The subject of independent sequences of random variables, as overviewed
in subsection \ref{ssec:iid} in the i.i.d.\ case, 
is discussed in detail in \cite[section 1.5.1]{Giu06}.
The \as{Kolmogorov Extension Theorem} \ref{theo:extension}\index{Kolmogorov!extension theorem} is proved in numerous
texts in the setting where $X=\R$ \cite{Oks98}; since any Polish space
is isomorphic to $\R$ it may be stated as it is here.
Proofs of Lemmas \ref{prop:determinefinitedim} and \ref{prop:FT}
may be found in \cite{hairernotes}, where they appear as Proposition 3.6
and Propostion 3.9 respectively.
For (\ref{e:FTG}) see \cite[Chapter 2]{DapZab92}.
In subsection \ref{ssec:measures} we introduce the Bochner
integral; see  \cite{Bochner,Hildebrandt} for further details. 
Lemma \ref{lem:BC} and the resulting Corollary \ref{cor:var}
are stated and proved in \cite{bog98}. 
The topic of metrics on probability measures, introduced
in subsection \ref{ssec:metricpm} 
is overviewed in \cite{GS02}, where detailed references to the
literature on the subject may also be found; the second
inequality in Lemma \ref{lem:klbnd} is often termed the \textit{Pinsker
inequality} and can be found in \cite{CK11}.
Note that the choice of normalization constants
in the definitions of the \as{total variation}\index{total variation distance} 
and \as{Hellinger metrics}\index{Hellinger distance}
differs in the literature.
For a more detailed account of material on weak convergence
of probability measures we refer, for example,
to \cite{Bill68,BogMT,Vil03}.
A proof of the \as{Kolmogorov continuity 
test}\index{Kolmogorov!continuity criterion}
as stated in Theorem \ref{theo:Kolmogorov}
can be found in \cite[p.~26]{RevYor} for simple
case of $D$ an interval and $X$ a separable Banach space;
the generalization given here may be found in a forthcoming
uptodate version of \cite{hairernotes}.

\item The subject of Gaussian measures, as introduced in subsection \ref{sec:GM},
is comprehensively studied in \cite{bog98} in the setting of locally convex
topological spaces, including separable Banach spaces as a special case. See also 
\cite{Lif95} which is concerned with Gaussian random functions.
The Fernique Theorem \ref{theo:Fernique} 
is proved in \cite{Fernique} and the reader
is directed to \cite{hairernotes} for a very clear exposition. 
In \theo{theo:Fernique} it is possible to take for $\alpha$ any value smaller than ${1/(2\|C_\mu\|)}$ and this value is sharp: see \cite[Thm~4.1]{Ledoux}.
See \cite{bog98,Lif95} for more details on the Cameron-Martin space, and proof
of Theorem \ref{theo:CM}.
Theorem \ref{theo:balls} follows from Theorem 3.6.1 and Corollary 3.5.8 of
\cite{bog98}: Theorem 3.6.1 shows that the topological support is the closure
of the Cameron-Martin space in $B$ and Corollary 3.5.8 shows that the
Cameron-Martin space is dense in $B$.
The \index{reproducing kernel}\textit{\as{reproducing kernel} Hilbert space} 
for $\mu$ (or just \textit{reproducing kernel} for short) appears widely
in the literature and is isomorphic
to the Cameron-Martin space in a natural way. 
There is considerable confusion between the two  as a result.
We retain in these notes the terminology from \cite{bog98}, but the reader
should  keep in mind that there are authors who use a slightly different terminology.
Theorem \ref{t:smallball} as stated is a consequence of Proposition 3 in
section 18 in \cite{Lif95}. 
Turning now to the Hilbert space setting we note that Lemma \ref{prop:trace}
is proved as Proposition 3.15, and Theorem \ref{t:cmh} appears as Exercise 3.34,
in \cite{hairernotes}.
See \cite{bog98,DapZab92,MR0488264} for alternative developments of
the Cameron-Martin\index{Cameron-Martin!Theorem} and Feldman-H\'ajek theorems.\index{Feldman-H\'ajek Theorem}
The original statement of the \as{Feldman-H\'ajek} Theorem \ref{t:GAC}\index{Feldman-H\'ajek Theorem} can be found in \cite{Feldman,Hajek}. Our statement 
of Theorem \ref{t:GAC} mirrors
Theorem 2.23 of \cite{DapZab92} and Remark \ref{rem:hs} is Lemma 6.3.1(ii)
of \cite{bog98}. 
Note that we have not stated a result analogous to Theorem \ref{theo:CM}
in the case where of two equivalent Gaussian measures with differing
covariances. Such a result can be stated, but is technically
complicated in general because the ratio of normalizations constants
of approximating finite dimensional measures can blow-up as the
limiting infinite dimensional Radon-Nikodym derivative is attained;
see Corollary 6.4.11 in \cite{bog98}.

\item Subsection \ref{sec:Ito} contains a discussion of cylindrical
and $\cC$-Wiener processes. The development is given in more detail in
section 3.4 of \cite{hairernotes}, and in section 4.3 of \cite{DapZab92}.

\end{itemize}

\noindent{\bf Acknowledgements}
The authors are indebted to Martin Hairer for help in the development
of these notes, and in particular for considerable help in
structuring the Appendix, for the proof of Theorem \ref{t:sob}
(which is a slight generalization to \as{Hilbert scales}\index{Hilbert scale}
of Theorem 6.16 in \cite{hairernotes}) and for the proof of Corollary \ref{cor:Kol}
(which is a generalization of Corollary 3.22 in \cite{hairernotes} to the 
non-Gaussian setting and to H\"older, rather than Lipschitz, functions \{$\psi_k$\}).
They are also grateful to Joris Bierkens, Patrick Conrad, Matthew Dunlop, 
Shiwei Lan, Yulong Lu, Daniel Sanz-Alonso,
Claudia Schillings and Aretha Teckentrup for
careful proof-reading of the notes and related comments.
AMS is grateful for various hosts who gave him the
opportunity to teach this material in short course form
at TIFR-Bangalore (Amit Apte), G\"ottingen (Axel Munk), PKU-Beijing (Teijun Li), ETH-Zurich (Christoph
Schwab) and Cambridge CCA (Arieh Iserles), 
a process which led to refinements of the material; the authors
are also grateful to the students
on those courses, who provided useful feedback.
The authors would also like to thank Sergios Agapiou and Yuan-Xiang Zhang for
help in the preparation of these lecture notes, including type-setting,
proof-reading, providing the proof of Lemma \ref{l:2.3} and delivering 
problems classes related to the short courses. AMS is also
pleased to acknowledge the financial support of EPSRC, ERC and ONR 
over the last decade whilst the research that underpins this work 
has been developed.

\end{document}